\newtheorem{definition}{Definition}[section]
\newtheorem{lemma}[definition]{Lemma}
\newtheorem{corollary}[definition]{Corollary}
\newtheorem{proposition}[definition]{Proposition}
\newtheorem{theorem}[definition]{Theorem}
\newtheorem{fact}[definition]{Fact}
\newtheorem{remark}[definition]{Remark}
\newtheorem{example}[definition]{Example}
\def\N{{\mathbb N}}
\def\R{{\mathbb R}}
\def\T{{\operatorname{T}}}
\def\Hess{\mathrm{Hess}}
\def\mean{\mathbf{Mean}}
\def\meanm{\mathbf{MM}}
\def\CH{\mathrm{CH}}
\def\vol{\mathbf{vol}}
\def\crit{\mathrm{mean}}
\def\minmax{mean\xspace}
\def\H{{\mathbb H}}
\DeclareFontFamily{OT1}{pzc}{}
\DeclareFontShape{OT1}{pzc}{m}{it}{<-> s * [0.99] pzcmi7t}{}
\DeclareMathAlphabet{\mathscr}{OT1}{pzc}{m}{it}
\newcommand{\cM}{\mathscr{co}\mathcal{M}\!\mathscr{in}}
\newcommand{\co}{^*\mathbb{R}^{d,1}}
\newcommand{\mess}{\mathbf{Mess}}
\newcommand{\earth}{\mathbf{InfEarth}}
\newcommand{\gold}{\mathbf{Gold}}
\newcommand{\teich}{\operatorname{Teich}_S}
\newcommand{\length}{\mathbf{length}}
\newcommand{\infdef}{\mathbf{InfDef}}
\newcommand{\shape}{\mathbf{shape}}
\newcommand{\cod}{\mathcal{C}\hspace{-0.06 cm}od^\Gamma}
\newcommand{\Cod}{\mathbf{Cod}}
\newcommand{\hess}{\mathrm{Hess}\;}
\newcommand{\grad}{\mathrm{grad}}
\newcommand{\II}{\textsc{I\hspace{-0.05 cm}I}}
\newcommand{\gh}{g_{\mathbb{H}^ d}}
\newcommand{\gco}{g_{\mathscr{co}\mathcal{M}^{d+1}}}
\providecommand{\keywords}[1]{\textbf{\textit{Keywords---}} #1}
\title{Quasi-Fuchsian co-Minkowski manifolds}
\author{Thierry Barbot and Fran\c{c}ois Fillastre}
\begin{document}

\maketitle

 \begin{abstract}
This survey is an introduction to the geometry of co-Minkowksi space, the space of unoriented spacelike hyperplanes of the Minkowski space. Affine deformations of  cocompact lattices of hyperbolic isometries act on it, in a way similar to the way  that quasi-Fuchsian groups act on hyperbolic space. In particular, there is a convex core. There is also  a unique ``mean'' hypersurface, i.e. with traceless second fundamental form. The mean distance between the mean hypersurface and the lower boundary of the convex core endows the space of affine deformations of a given lattice with an asymmetric norm. The symmetrization of the asymmetric norm is simply the volume of the convex core.

In dimension $2+1$, the asymmetric norm is the total length of the bending lamination of the lower boundary component of the convex core. We obtain an extrinsic proof  of a theorem of Thurston saying that, on the tangent space of Teichm\"{u}ller space, the total length of measured geodesic laminations is an asymmetric norm.

We also exhibit and comment the Anosov-like character of these deformations, similar to
the Anosov character of the quasi-Fuchsians representations pointed out in \cite{anosovdomain}.

 \end{abstract}

\keywords{Co-Minkowski space, compact hyperbolic manifolds, Earthquake norm, Codazzi tensors, convex core, Anosov representation}

\tableofcontents

\section{Introduction}

\paragraph{Action of hyperbolic isometries on model spaces}

Let $\H^d/\Gamma$ be an oriented compact hyperbolic manifold. In the Klein projective model, the hyperbolic space $\H^{d+1}$ is the interior of a ball, and some features of the action of $\Gamma$  can be described looking at the exterior of the ball, naturally endowed with a Lorentzian structure of constant curvature one, and called de Sitter space. Using affine duality with respect to the unit sphere, de Sitter space can be seen as the space of totally geodesic hypersurfaces of $\H^d$.

Since the work of G. Mess \cite{Mes07,Mes07+}, the action of cocompact lattices of $O(d,1)$ on Lorentzian constant curvature model spaces attracted attention from geometers, see e.g. the surveys \cite{fil-smi,bar-sur}. Apart from de Sitter space, Anti-de Sitter space has constant curvature $-1$ and Minkowski space is the flat one. As we said, de Sitter space is the dual of the hyperbolic space, and Anti-de Sitter space is its own dual, see e.g. \cite{FS-hyp}. Co-Minkowski space is the dual of Minkowski space. More precisely, it is the space of spacelike hyperplanes of Minkowski space. It comes with a degenerate metric of constant curvature one.

In other terms, if one wants to look at the action of subgroups of $O(d,1)$ on $d+1$ dimensional model spaces\footnote{We call a $d$-dimensional \emph{model space} the quotient by the antipodal map of a pseudo-sphere in $\R^{d+1}$, see \cite{FS-hyp}.}, up to duality, it is the same to consider
Lorentzian model spaces or constant curvature $-1$  model spaces:
\begin{center}
{\renewcommand{\arraystretch}{2}
\begin{tabular}{c c c}
  \hline
  Curvature $-1$ spaces & $\overset{dual}{\longleftrightarrow}$ & Lorentzian spaces  \\   \hline
 Hyperbolic space & $\longleftrightarrow$ & de Sitter space \\
 co-Minkowski space & $\longleftrightarrow$ & Minkowski space  \\
 Anti-de Sitter space &$\longleftrightarrow$ & Anti-de Sitter space \\
\end{tabular}}
\end{center}

\begin{figure}
\begin{center}
\psfrag{H}{Hyperbolic space}
\psfrag{C}{Co-Minkowski space}
\psfrag{A}{Anti-de Sitter space}
\psfrag{R}{Riemannian}
\psfrag{D}{Degenerate}
\psfrag{L}{Lorentzian}
\includegraphics[width=0.9\hsize]{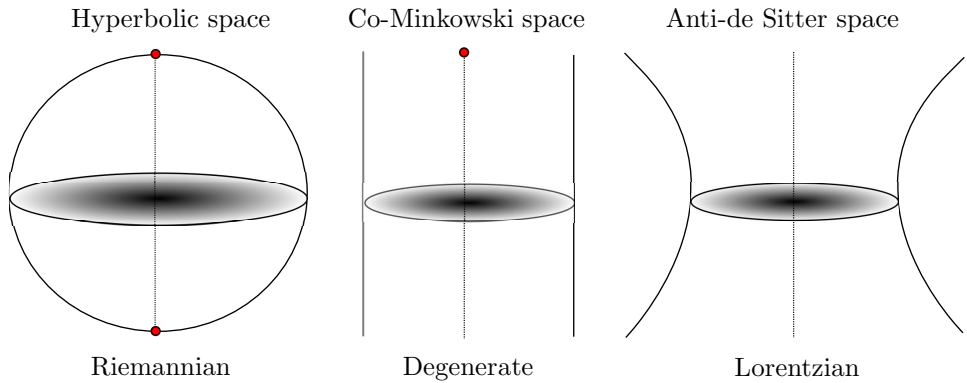}
\end{center}\caption{Affine models of the three $3$d model spaces of constant curvature $-1$. Shadowed discs are totally geodesics embedded hyperbolic planes.}\label{fig:models}
\end{figure}

\paragraph{Co-Minkowski space}

The first part of this survey is an elementary introduction to co-Minkowski space $^*\R^{d,1}$. This space has recently attracted attention under the name "half-pipe", as introduced by J. Dancinger in \cite{dan-thesis,dan}\footnote{The surface  $\cM^{1+1}$ in Figure~\ref{fig:co tilde} would deserve the name \emph{half-pipe}.
The name co-Minkowki space comes from the particular situation of this \emph{co-pseudo-Euclidean space}, see the corresponding entry in the \emph{Encyclopædia of Mathematics}.}, and used in recent works \cite{andreathesis,sch-survey,DMS},
see also \cite{FS-hyp}.

We will focus on a ``Klein model'' of co-Minkowski space as the subspace $B^d\times \R$ of the affine space $\R^{d+1}$, where $B^d$ is an open unit ball, see Figure~\ref{fig:models}.
	In general, the interest of an affine model is that (unparameterized)  geodesics are affine segments, so for example some affine notions as convexity or convex hull are easily tractable. In the particular case of
	co-Minkowski space, many analogues of  classical differential geometry results are easier than the original ones, for example:
	\begin{itemize}
	\item the (smooth) hypersurfaces carrying a non-degenerate induced metric are all hyperbolic, and when they are metrically complete, they are    graphs of functions on the ball $B^d$,
	\item the shape operator of graph hypersurfaces gives symmetric Codazzi tensors on the hyperbolic space $\H^d$,
	\item actually, the correspondence between complete hyperbolic hypersurfaces and hyperbolic symmetric Codazzi tensor is one-to-one, that gives a simplified co-Minkowski version of the fundamental theorem of hypersurfaces (Section~\ref{sec: extr}),
\item complete hyperbolic hypersurfaces such that the trace of the shape operator vanishes are called \emph{\minmax surfaces}; existence and uniqueness of such hypersurfaces are straightforward consequence of classical theory of elliptic PDE on the ball (Section~\ref{sec minmax}),
\item the functions whose graph is  a boundary of the convex hull of the graph of a continuous map $b:\partial B^d\to \R$  are solutions of the classical
Monge--Amp\`ere equation (Section~\ref{sec convex hull}).
\end{itemize}

Another nice feature of the cylinder model of co-Minkowski space is that it allows an easy definition of
  \emph{degenerations} of hyperbolic or Anti-de Sitter manifolds to a co-Minkowski manifold,
as Figure~\ref{fig:models} heuristically suggests. In turn,
 co-Minkowski geometry as a \emph{transitional geometry} between the hyperbolic geometry and the AdS geometry was the main motivation of \cite{dan,dan-thesis}, see also \cite{andreathesis,FS-hyp}. Unfortunately, such considerations are out of the scope of the present survey.

\paragraph{The action of $H^1(\Gamma,\R^{d,1})$}

By duality, the group of isometries of Minkowski space, that is $O(d,1) \ltimes \R^{d,1}$, acts on co-Minkowski space, preserving the degenerate metric (see Remark~\ref{iso group larger}). For our purpose, it will be more relevant to restrict ourselves to the action of
$O_0(d,1) \ltimes \R^{d,1}$, where
$O_0(d,1)$ is the connected component of  the identity of $O(d,1)$. If $\Gamma$ is a Kleinian cocompact subgroup of $O_0(d,1)$, then the representations of $\Gamma$ into
$O_0(d,1) \ltimes \R^{d,1}$
 are parametrized by maps $\tau:\Gamma\to \R^{d,1}$ satisfying a cocycle relation. Let $Z^1(\Gamma,\R^{d,1})$ be the space of cocycles.

The choice of two totally geodesic embedding of $\H^d$ (on which $\Gamma$ acts) into co-Minkowski space will give different cocycles, related by a coboundary conditions.
So  we are interested in the space
 $H^1(\Gamma,\R^{d,1})$, the quotient of the space of cocycles by the coboundaries. From an extrinsic point of view, the vector space  $H^1(\Gamma,\R^{d,1})$ is the space of deformations of $\Gamma$ into the group of affine
isometries, up to conjugacy by translations.
But
$H^1(\Gamma,\R^{d,1})$ encodes
many much informations:
\begin{itemize}
\item  due to Mostow rigidity theorem, for $d>2$, it is not possible to non-trivially deform $\Gamma$ among Kleinian subgroups of $O(d,1)$. But it is possible to look at deformations of the canonical representation of $\Gamma$ into $O(d+1,1)$, that corresponds to the deformation of the flat conformal structure of $\H^d/\Gamma$.  At an infinitesimal level,
the deformations are parametrized by
$H^1(\Gamma,\mathfrak{so}(d+1,1))$.
Due to the well-known splitting $\mathfrak{so}(d+1,1)=\mathfrak{so}(d,1)\oplus \R^{d,1}$, we have that
 $$H^1(\Gamma,\mathfrak{so}(d+1,1))=H^1(\Gamma,\mathfrak{so}(d,1))\oplus H^1(\Gamma,\R^{d,1})$$
but due to the Calabi--Weil infinitesimal rigidity theorem  $H^1(\Gamma,\mathfrak{so}(d,1))$ reduces to $0$ \cite[8.10]{kapovich}.

On the other hand, $H^1(\Gamma,\R^{2,1})$ is also isomorphic, as a linear space, to the tangent
space of the Teichm\"uller space at (the conjugacy class of) $\Gamma$, when we consider the Teichm\"uller space
as the space of discrete, faithful representations of $\Gamma$ into the isometries of the hyperbolic plane up to conjugacy,  see Section~\ref{cas 2+1};
\item there is a natural
isomorphism between $H^1(\Gamma,\R^{d,1})$ and the space of traceless symmetric Codazzi tensors on $\H^d/\Gamma$ (see Proposition~\ref{prop:ident H1 cod} for a proof using extrinsic co-Minkowski geometry), and the space of  traceless symmetric Codazzi tensors parametrizes the space of infinitesimal  deformations of the flat conformal structure of $\H^d/\Gamma$, as well as the space of infinitesimal deformations of the Riemannian metric of $\H^d/\Gamma$ preserving the total volume and the harmonicity of the curvature \cite{lafontaine};
\item $H^1(\Gamma,\R^{d,1})$ parametrizes the space of future complete flat globally hyperbolic maximal Cauchy compact spacetimes (in short, future complete flat  GHMC spacetimes), with $\Gamma$ as the linear part of the holonomy, see \cite{Mes07,Mes07+,Bar05,Bon05} for more details and precise definitions. The universal covers of such spacetimes isometrically embed as convex sets in Minkowski space, whose duals in co-Minkowski space define the convex cores that will be mentioned below, see Remark~\ref{rem flat ghmc}.
\end{itemize}

As a consequence of the first point, for $d=2$, $H^1(\Gamma,\R^{d,1})$ is a vector space of dimension $(6g-6)$, where $g$ is the genus of $\H^2/\Gamma$. For $d>3$, it is not clear whether $H^1(\Gamma,\R^{d,1})$  is trivial or not. A classical result is that it has dimension at least $r$ if $\H^d/\Gamma$ contains $r$ disjoint embedded totally geodesic hypersurfaces \cite{lafontaine,kourou,JM}. We give an elementary co-Minkowski proof of this fact in Section~\ref{space of cocycle}.
See for example \cite{stamping} and \cite{JM} for more informations, and  \cite{BS} for up-to-date references about this question.

The action of  $\Gamma_\tau$, that is $\Gamma$ deformed by an element $\tau$ of $Z^1(\Gamma,\R^{d,1})$, onto co-Minkowski space is also interesting in its own. Namely, here too, it is a baby toy model, this time comparing to the study of quasi-Fuchsian hyperbolic manifolds on the one hand, and to AdS GHMC manifolds on the other one
(they are the Lorentzian analogues of  quasi-Fuchsian hyperbolic manifolds). We will focus on the following aspects. Let $\tau \in Z^{1}(\Gamma,\R^{d,1})$.
\begin{itemize}
\item  There exists a smooth hypersurface invariant under the action of $\Gamma_\tau$. This is a simple illustration of the general  ``Ehresmann--Weil--Thurston principle'', see Proposition~\ref{prop exist smooth cvx}.
\item The group $\Gamma_\tau$ acts freely and properly discontinuously on co-Minkowski space, and the quotient gives a $(d+1)$-dimensional manifold homeomorphic to $\H^d/\Gamma\times \R$ (see Lemma~\ref{lem:action propre}).
\item The co-Minkowski manifold $^*\R^{d,1}/\Gamma_\tau$
has a \emph{convex core}, i.e. it contains a non-empty compact convex set. So the action of $\Gamma_\tau$ on co-Minkowski space is \emph{convex cocompact} in the sense of \cite{DGK1,DGK2}.
\item The co-Minkowski manifold $^*\R^{d,1}/\Gamma_\tau$
contains a unique ``\minmax'' hypersurface, that is with vanishing mean curvature. This situation is reminiscent of  \emph{almost Fuchsian manifolds}, a particular case of quasi-Fuchsian manifolds which contain a unique minimal surface, see \cite{KS}.
\item Moreover, $^*\R^{d,1}/\Gamma_\tau$ is foliated by CMC hypersurfaces, equidistant to the \minmax hypersurface, see Remark~\ref{rem: foliation CMC}.
\end{itemize}

We consider that  co-Minkowski space is a toy model, because with a pedestrian approach, we are able to give an almost self-contained presentation of the different properties evoked above.

\paragraph{An asymmetric norm}

Until this point, all the mentioned results were previously more or less known, at least under the form of  dual statements in Minkowski space.
Also, the present survey  contains the following original contribution.

As we said, the quotient of co-Minkowski space by $\Gamma_\tau$ has a convex core, and a unique \minmax hypersurface,  contained in the convex core.
The mean distance between the lower boundary component of the convex core and this \minmax hypersurface gives a non-negative number, which is uniquely defined by the class in  $H^1(\Gamma,\R^{d,1})$ of $\tau$.
This gives a map from $H^1(\Gamma,\R^{d,1})$ to $\R_+$, which is actually an asymmetric norm on $H^1(\Gamma,\R^{d,1})$, see Section~\ref{sec asy}. We will call it the \emph{$S_1$ norm} (see Remark~\ref{remS1} for the signification of $S_1$).

The symmetrization
of the $S_1$ norm is:
\begin{itemize}
\item the volume of the convex core;\footnote{This fact was noted to the first author by Andrea Seppi.}
\item a ``mean distance'' between the future complete and the past complete flat GHMC having the same holonomy  (see
Remark~\ref{rem flat ghmc}).

\end{itemize}

In dimension $2$, it appears that this asymmetric norm corresponds to the \emph{earthquake norm} introduced by Thurston in \cite{thu98}. In particular, we obtain a new proof of Theorem~5.2 in \cite{thu98}, saying that the earthquake norm is an asymmetric norm on the tangent of Teichm\"uller space. The tangent space of Teichm\"uller space can be identified with the space of measured geodesic laminations, and the earthquake norm in the total length of the lamination, see Section~\ref{cas 2+1}.

In turn, the volume of the convex core is the sum of the total length of the bending laminations of its boundary. Here again, this result should be compared with its more involved analogues in the hyperbolic and anti-de Sitter cases \cite{brock,BST}.

Using two successive identifications of the tangent space of Teichm\"uller space with its cotangent space and a formula of Wolpert, the earthquake norm defines another asymmetric norm on the tangent space of Teichm\"uller space, the \emph{length norm}, see \eqref{eq length norm} for a formula. The length norm
defines an asymmetric Finsler structure on Teichm\"uller space, that in turn defines a distance, now called the \emph{Thurston asymmetric distance}, and introduced by Thurston in \cite{thu98}. This distance recently attracted attention \cite{PT,PS,walsh}.
Note that the earthquake norm also induces an asymmetric distance on Teichm\"uller space, but, to the best of our knowledge, nothing is known about this distance.

\paragraph{Anosov feature}

In the third and last part of the present survey, we see that co-Minkowski space is also a baby toy model for the
theory of Anosov representations, which has known during the recent years, after the pioneering work of F. Labourie \cite{labourie} a series of development (see \cite{anosovproper}, \cite{amalgam}, \cite{anosovdomain}, \cite{pressure}, \cite{kapovich2}, see also \cite{bar-sur} for a complementary discussion on
Anosov representations in the context of Lorentzian geometry, and \cite{Ghosh} for a proof of the Anosov
character of the representations considered in the present survey).

Once more, it turns out that in the context of co-Minkowski space the theory of Anosov representations
reduces to a particularly simple form. Moreover, this point of view provides a proof of the fact that convergence of cocycle implies \emph{uniform} convergence of limit curves (Lemma \ref{le:tauuniform}).

\paragraph{Acknowledgement}
The authors would  like to thank the organizers of the conference ``Moduli spaces and applications in geometry, topology, analysis and mathematical physics'' in Beijing to offer them the opportunity of writing the present paper.

The present paper is also part of the Math Amsud 2014 project n°38888QB-GDAR.

\section{Co-Minkowski geometry}

Co-Minkowski space is the space of (unoriented) spacelike hyperplanes of Minkowski space. We first investigate the space of oriented spacelike hyperplanes  (Section~\ref{sec:co}).
Then we introduce a cylindrical  affine model for co-Minkowski space, similar to the Klein ball model of hyperbolic space (Section~\ref{sec: cyl}). In the cylindrical model,
the co-Minkowski space is the cylinder $B^d\times \R$, where $B^d$ is the open unit ball of $\R^d$ centered at the origin.  In particular, extrinsic co-Minkowski geometry of graphs of maps $h:B^d\to \R$
can be investigated (Section~\ref{sec:extrin}).

\subsection{Definition of co-Minkowski space}\label{sec:co}

\subsubsection{Space of spacelike hyperplanes}

Let us recall that the \emph{Minkowski space} $\R^{d,1}$ of Lorentzian geometry is the affine space $\R^{d+1}$ endowed with the bilinear form
$$\langle x,y\rangle_{d,1}=x_1y_1+\cdots+x_dy_d-x_{d+1}y_{d+1}~. $$
 A hyperplane $P$ of  $\R^{d,1}$ is \emph{spacelike} (resp. \emph{timelike}, \emph{lightlike}) if the restriction of $\langle \cdot,\cdot\rangle_{d,1}$ to $P$ is positive-definite (resp. has signature $(+,\ldots,+,-)$, is degenerate).
The isometry group of $\R^{d,1}$ is $O(d,1) \ltimes \R^{d,1}$: it is made of translations and linear transformations preserving  $\langle \cdot,\cdot\rangle_{d,1}$.

Linear spacelike hyperplanes are parametrized by the set of \emph{future} unit normal vectors (for $\langle \cdot,\cdot\rangle_{d,1}$):
$$\mathcal{H}^d :=\{x\in \R^{d,1} | \langle x,x\rangle_{d,1}=-1, x_{d+1}>0 \}~. $$
Let $g_{\mathcal{H}^d}$ be the metric induced by
$\langle \cdot,\cdot\rangle_{d,1}$ on the tangent spaces of
$\mathcal{H}^d$.
It is well known that $(\mathcal{H}^d,g_{\mathcal{H}^d})$
is a model of the $d$-dimensional hyperbolic space.

Let $P$ be an affine spacelike  hyperplane of $\R^{d,1}$. If $n\in \mathcal{H}^d\subset \R^{d,1}$ is the  future  timelike unit normal to $P$, then there exists $h\in \R$ such that
$$P=\{y\in \R^{d,1} | \langle y,n\rangle_{d,1} = h \}~. $$
This defines a point $$\tilde{P}^*=(n,h)$$ in $\R^{d+1}\times \R=\R^{d+2}$. More precisely, the point $\tilde{P}^*$ belongs to one of the connected component of the degenerate quadric
$$\cM^{d+1}:=\{x\in \R^{d+2} | \langle x,x\rangle_{d,1,0}=-1 \}
$$
where
$$\langle (x_1,\ldots,x_{d+1},x_{t}),(y_1,\ldots,y_{d+1},y_{t})\rangle_{d,1,0}=x_1y_1+\cdots+x_{d}y_{d}-x_{d+1}y_{d+1}~. $$
Note that $\cM^{d+1}$ is the space of oriented spacelike hyperplanes of Minkowski space. See Figure~\ref{fig:co tilde}.

We will denote by $\gco$ the degenerate $(0,2)$-tensor induced by  $\langle \cdot,\cdot\rangle_{d,1,0}$ on the tangents
spaces of $\cM^{d+1}$. The connected component
$\cM^{d+1}_+=\cM^{d+1}\cap \{x_{d+1}>0 \}$ of $\cM^{d+1}$ containing the point $\tilde P^*$
is homeomorphic to $\mathcal{H}^d\times \R$. In those coordinates,
the degenerate metric $\gco$ on $\cM^{d+1}_+$ writes as
$$\gco=g_{\mathcal{H}^d} + 0 \mbox{d}x_{t}~. $$

\begin{figure}
\begin{center}
\psfrag{mink}{$\R^{d,1}$}
\psfrag{comink}{ $\cM^{d+1}$}
\psfrag{v}{$n$}
\psfrag{p}{$P$}
\psfrag{b}{$B^1$}
\psfrag{xu}{$\binom{x}{1}$}
\psfrag{x}{$x$}
\psfrag{hx}{$h_x$}
\psfrag{ps}{$P^*$}
\psfrag{hv}{$h$}
\psfrag{tps}{$\tilde{P}^*$}
\psfrag{c}{$\cM^{d+1}_+$}
\psfrag{cc}{$\co$}
\psfrag{H}{$\mathcal{H}^d$}
\psfrag{xd1}{$x_{d+1}$}
\psfrag{xd2}{$x_{t}$}
\psfrag{x1}{$x_1$}
\psfrag{o}{$0$}
\includegraphics[width=0.8\hsize]{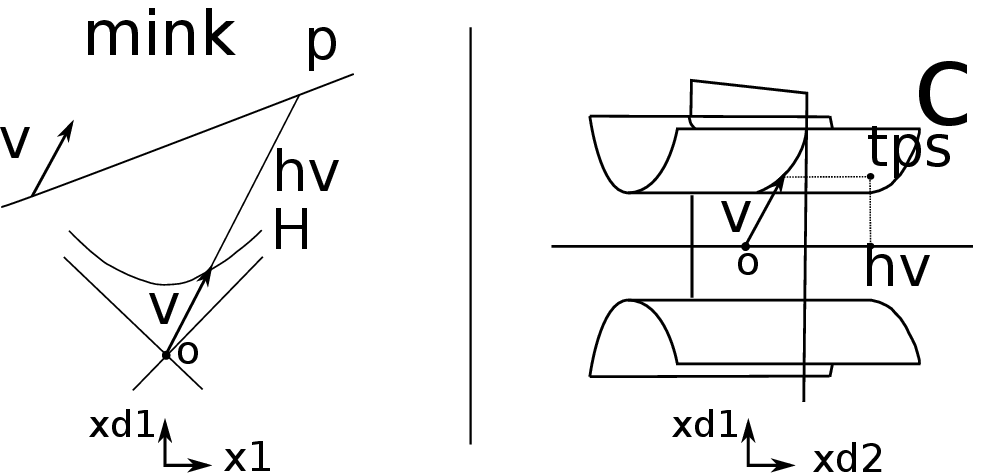}
\end{center}\caption{The dual of a spacelike hyperplane of Minkowski space $\R^{d,1}$ in $\cM^{d+1}$. In the picture, $d=1$.}\label{fig:co tilde}
\end{figure}

We also introduce the fibration:
$$\mathfrak{\pi}: \cM^{d+1}_+ \to \mathcal{H}^d$$
mapping $(x_1,\ldots, x_{d+1}, x_{t})$ to $(x_1,\ldots,x_{d+1})$. It is a principal $\mathbb{R}$-bundle;
it is an isometry, and the fibers are precisely tangent to the kernel of the degenerate metric $\gco$.

\subsubsection{Isometries}

As the ``metric'' $\gco$ is degenerate, it will be more relevant to consider a group acting on $\cM^{d+1}$.
 As an isometry of Minkowski space sends spacelike hyperplanes onto spacelike hyperplanes, it acts naturally on $\cM^{d+1}$. This is the way we define the isometry group of $\cM^{d+1}$.
More precisely,  it is immediate that if
$$P=\{y\in \R^{d,1} | \langle y,n\rangle_{d,1} = h \} $$
is a spacelike hyperplane of $\R^{d,1}$ and $A\in O(d,1)$, so that
$\tilde{P}^*=(n,h)$,  then
$$\widetilde{AP}^*=(An,h)$$ and if $v\in \R^{d,1}$,
$$\widetilde{P+v}^*=(n,\langle v,n\rangle_{d,1}+h)~.$$

So  $ \mbox{O}(d,1)\ltimes \R^{d,1}$  acts linearly on $\cM^{d+1}$
 via the representation
\begin{equation}\label{eq:isom co}(A,v)\mapsto
\left(
\begin{array}{ccc|c}

  & & & 0 \\
   & \raisebox{-1pt}{{\huge\mbox{{$A$}}}}  & & \vdots  \\
  & & & 0 \\ \hline
   & ^tvJA &  & 1
\end{array}
\right)~, \end{equation}
where $J=\operatorname{diag}(1,\ldots,1,-1)$ (recall that $A\in O(d,1)$ if and only if $^tA=JA^{-1}J$).
So we define the \emph{isometry group} of $\cM^{d+1}$ as
 $ O(d,1)\ltimes \R^{d,1}$ with the action on $\R^{d+2}$ induced by
the representation \eqref{eq:isom co}.
In particular, the group structure on
 $ O(d,1)\ltimes \R^{d,1}$ is
 \begin{equation}\label{group structure}
 (A_1,v_1)\cdot (A_2,v_2)=(A_1A_2,v_1+A_1v_2)~.
 \end{equation}

\begin{remark} \label{rk:affinefiber}{\rm
		
		Let $O_+(d,1)$ be the subgroup of $O(d,1)$ preserving $\mathcal H^{d}$. Then, $O_+(d,1) \ltimes \R^{d,1}$
		preserves the connected component $\cM^{d+1}_+$, and the fibration $\mathfrak{\pi}$ is $O_+(d,1) \ltimes \R^{d,1}$-equivariant. The elements of $O_+(d,1) \ltimes \R^{d,1}$ inducing the identity map on $\mathcal H^d$
		are precisely the translations (elements of $\R^{d,1}$).
		
		Every fiber of $\mathfrak{\pi}$ admits a natural affine structure, for which they are individually isomorphic
		the real line. The action of $O_+(d,1) \ltimes \R^{d,1}$ preserves this affine structure along the fibers.
	}\end{remark}

\begin{remark}\label{iso group larger}{\rm

The isometry group of $\cM^{d+1}$ is smaller than the group of
transformations preserving the degenerate metric $\gco$. For example, for $c>0$, the map $H_c:\R^{d+2}\to \R^{d+2}$,
$H_c(x)=(x_1,x_2,\ldots,x_{d+1},cx_{t})$, preserves $\langle \cdot,\cdot\rangle_{d,1,0}$ (hence it preserves $\cM^{d+1}$ and $\gco$), but by definition it is not an isometry of $\cM^{d+1}$.

There does not exist any (non-degenerate) semi-Riemannian metric on $\cM^{d+1}$ invariant under the isometry group of $\cM^{d+1}$ \cite[Fact~2.27]{FS-hyp}.
}\end{remark}

\subsubsection{Connection, geodesics}

We have now the hypersurface $\cM^{d+1}$ in $\R^{d+2}$
together with an ``isometry group" and a degenerate metric $\gco$. As those elements are coming from the  degenerate form
$\langle \cdot,\cdot\rangle_{d,1,0}$ on the ambient $\R^{d+2}$, there is no obvious metric notion of
``unit normal vector'' to $\cM^{d+1}$. Nevertheless, we can proceed similarly to classical affine differential geometry \cite{nomizu}.
Namely, at a point $x\in \cM^{d+1}$, let us define as a ``normal field'' the vector field $\operatorname{N}(x)=x$. Obviously, $\operatorname{N}$ is  transverse to  $\cM^{d+1}$ and invariant under the group of isometries of $\cM^{d+1}$.
 The choice of this normal field allows to define a connection $\nabla^{\mathscr{co}\mathcal{M}^{d+1}}$ on $\cM^{d+1}$ induced by the canonical connection $D$ of the ambient linear space $\R^{d+2}$:
$$D_YX=\nabla^{\mathscr{co}\mathcal{M}^{d+1}}_YX + \langle X,Y\rangle_{d,1,0}\operatorname{N}~. $$

The following facts are easily checked, see  \cite[Section~4.2]{FS-hyp}.

\begin{fact}\label{fact geod com tilde}
The connection $\nabla^{\mathscr{co}\mathcal{M}^{d+1}}$ has the following properties:
\begin{itemize}[nolistsep]
\item it is torsion free,
\item compatible with the degenerate metric $\gco$,
\item invariant under isometries,
\item its (unparameterized) geodesics are intersection of $\cM^{d+1}$ with linear planes of $\R^{d+2}$.
\end{itemize}
\end{fact}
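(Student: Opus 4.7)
\textbf{Proof plan for Fact \ref{fact geod com tilde}.} All four properties flow from the Gauss-type splitting
\[
D_Y X \;=\; \nabla^{\mathscr{co}\mathcal{M}^{d+1}}_Y X \,+\, \langle X, Y\rangle_{d,1,0}\,\operatorname{N},
\]
so my first task is to check that the formula indeed defines a tangential object. Differentiating the identity $\langle X, x\rangle_{d,1,0}=0$ (valid for any $X$ tangent to $\cM^{d+1}$) in the direction of a tangent field $Y$ yields $\langle D_Y X, x\rangle_{d,1,0} = -\langle X, Y\rangle_{d,1,0}$. Since $\operatorname{N}=x$ satisfies $\langle \operatorname{N},\operatorname{N}\rangle_{d,1,0}=-1$ and $\langle \operatorname{N}, v\rangle_{d,1,0}=0$ for every $v\in T_x\cM^{d+1}$, the splitting $\R^{d+2}=T_x\cM^{d+1}\oplus \R\,\operatorname{N}(x)$ is respected by $\langle\cdot,\cdot\rangle_{d,1,0}$ in the $\operatorname{N}$-direction, and the computation above shows that $D_Y X-\langle X,Y\rangle_{d,1,0}\operatorname{N}$ is tangent to $\cM^{d+1}$.

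Torsion-freeness and metric compatibility are then immediate formal consequences. Since the bilinear form is symmetric and $D$ is torsion-free,
\[
\nabla_X Y - \nabla_Y X \;=\; D_X Y - D_Y X \;=\; [X,Y].
\]
For compatibility, $X\langle Y,Z\rangle_{d,1,0} = \langle D_X Y, Z\rangle_{d,1,0} + \langle Y, D_X Z\rangle_{d,1,0}$ since $D$ preserves $\langle\cdot,\cdot\rangle_{d,1,0}$; expanding each ambient derivative via the splitting and using $\langle \operatorname{N}, Z\rangle_{d,1,0}=\langle \operatorname{N}, Y\rangle_{d,1,0}=0$ kills the extra terms, yielding the desired identity for $\gco$. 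The degeneracy of $\gco$ is harmless here precisely because the only directions it fails to see are tangent to $\cM^{d+1}$, not to $\operatorname{N}$.

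Invariance under the isometry group is a bookkeeping exercise. Every element of the isometry group acts by an affine map of $\R^{d+2}$ whose linear part preserves $\langle\cdot,\cdot\rangle_{d,1,0}$ and $\cM^{d+1}$ (inspect \eqref{eq:isom co}); therefore it preserves $D$, the form, and the radial field $\operatorname{N}(x)=x$ (for translations the latter requires only noting that $\operatorname{N}$ is preserved by the linear action, while the translations act trivially on $\mathcal{H}^d$-coordinates, and in any case both $D$ and $\operatorname{N}$ are preserved by affine maps with linear part fixing $\operatorname{N}$). Consequently the formula defining $\nabla^{\mathscr{co}\mathcal{M}^{d+1}}$ is itself equivariant.

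For the geodesic characterization, a curve $\gamma$ in $\cM^{d+1}$ satisfies $\nabla_{\gamma'}\gamma'=0$ if and only if, viewing $\gamma$ as a curve in $\R^{d+2}$,
\[
\gamma''(t) \;=\; \langle \gamma'(t),\gamma'(t)\rangle_{d,1,0}\,\gamma(t).
\]
Since $\gamma''(t)\in \mathrm{span}(\gamma(t))$, the $2$-plane $\Pi:=\mathrm{span}(\gamma(0),\gamma'(0))$ is an invariant subspace for the resulting ODE system, and uniqueness forces $\gamma(t)\in \Pi$ for all $t$. Conversely, given any linear $2$-plane $\Pi\subset\R^{d+2}$ meeting $\cM^{d+1}$, pick $x_0\in \Pi\cap\cM^{d+1}$ and any $v_0\in \Pi\cap T_{x_0}\cM^{d+1}$; the geodesic through $(x_0,v_0)$ is contained in $\Pi\cap\cM^{d+1}$ by the argument just given, and since $\Pi\cap\cM^{d+1}$ is one-dimensional, the unparameterized geodesic coincides with it.

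\textbf{Expected obstacle.} Nothing in the argument is deep; the only genuinely subtle point is keeping clear which quantities are allowed to pair nontrivially with $\operatorname{N}$ given the degeneracy of $\langle\cdot,\cdot\rangle_{d,1,0}$. Once one checks that $\operatorname{N}$ lies \emph{outside} the kernel of the form while being transverse to $T\cM^{d+1}$, the Gauss decomposition works exactly as in the nondegenerate case, and all four items reduce to a handful of one-line computations.
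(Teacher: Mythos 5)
The paper states this Fact without proof (deferring the verification to \cite{FS-hyp}), and your argument via the Gauss-type decomposition with the affine normal $\operatorname{N}(x)=x$ is exactly the intended one: the tangency check, torsion-freeness, compatibility with $\gco$, and the ODE argument $\gamma''=\langle\gamma',\gamma'\rangle_{d,1,0}\,\gamma$ for the geodesics are all correct. The one wrinkle is your parenthetical about translations in the invariance step, which is unnecessary and slightly garbled: by \eqref{eq:isom co} the whole group $O(d,1)\ltimes\R^{d,1}$, translations included, acts on $\R^{d+2}$ by \emph{linear} maps preserving $\langle\cdot,\cdot\rangle_{d,1,0}$, so it automatically preserves $D$, the form, and the radial field $\operatorname{N}$, and no separate case is needed.
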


It follows from the last point that the intersection of
 $\cM^{d+1}$ with linear $k$-planes of $\R^{d+2}$ are totally geodesic. Those intersections will play a fundamental role as the following fact shows.

 \begin{fact}\label{fact hyp k tilde}
 The intersection of
 $\cM^{d+1}$ with a linear $k$-planes of $\R^{d+2}$ is isometric
 (for the metric induced by $\gco$) to the hyperbolic space of dimension $k$.

Moreover, $\nabla^{\mathscr{co}\mathcal{M}^{d+1}}$ coincides with the Levi-Civita connection of the hyperbolic metric on any such subspace.
 \end{fact}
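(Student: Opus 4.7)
The plan is to reduce the claim to the classical hyperboloid model of hyperbolic space in Minkowski space, exploiting the fact that the kernel of the degenerate form $\langle\cdot,\cdot\rangle_{d,1,0}$ is precisely the $x_t$-axis, along which the fibration $\mathfrak{\pi}$ is constant.

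Let $V\subset\R^{d+2}$ be the linear subspace in question. In order for $V\cap\cM^{d+1}$ to carry a non-degenerate induced metric (which is implicit in the statement), $V$ must be transverse to the $x_t$-axis. The linear projection
\[
p:\R^{d+2}\to\R^{d+1},\qquad (x_1,\ldots,x_{d+1},x_t)\mapsto (x_1,\ldots,x_{d+1}),
\]
then restricts to a linear isomorphism $p|_V: V\to V':=p(V)$. From the identity $\langle u,u\rangle_{d,1,0}=\langle p(u),p(u)\rangle_{d,1}$, valid for every $u\in\R^{d+2}$, the map $p|_V$ is an isometry between $(V,\langle\cdot,\cdot\rangle_{d,1,0}|_V)$ and $(V',\langle\cdot,\cdot\rangle_{d,1}|_{V'})$, and it sends $V\cap\cM^{d+1}_+$ bijectively onto $V'\cap\H^d$.

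If moreover $V$ has dimension $k+1$ and the form restricted to $V$ has signature $(k,1)$, then $V'$ is a Lorentzian $(k{+}1)$-plane in $\R^{d,1}$, and $V'\cap\H^d$ is, by the classical hyperboloid picture, a totally geodesic copy of $\H^k$. Combined with the isometry from the previous paragraph, $V\cap\cM^{d+1}_+$ is isometric to $\H^k$, establishing the first assertion.

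For the statement about the connection, I would argue that both $\nabla^{\mathscr{co}\mathcal{M}^{d+1}}$ and the Levi-Civita connection of the hyperboloid model $\H^k\subset\R^{k,1}$ are defined by the \emph{same} recipe: subtract from the flat ambient connection its component along the position-vector normal field $\operatorname{N}(x)=x$. Since the linear isomorphism $p|_V$ intertwines the flat ambient connections of $\R^{d+2}$ and $\R^{d+1}$ and carries the position-vector normal on $\cM^{d+1}$ to the position-vector normal on $\H^d$, the two induced connections agree under the isometry $p|_V$. The only real hypothesis needed is the transversality of $V$ to the degenerate direction, implicit in the requirement that the induced metric be non-degenerate; beyond that, the proof is a matter of unravelling the definitions, and I do not expect any serious obstacle.
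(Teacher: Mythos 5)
Your proof is correct, and it takes a slightly different route from the paper's. The paper disposes of the Fact in one line by homogeneity: it asserts that some isometry of $\cM^{d+1}$ (an element of $O(d,1)\ltimes\R^{d,1}$ acting via the representation \eqref{eq:isom co}) carries the given linear plane to one contained in the slice $\{x_t=0\}$, where the intersection with $\cM^{d+1}$ is visibly the standard hyperboloid picture; since both the degenerate metric and the connection are isometry-invariant (Fact~\ref{fact geod com tilde}), the claim follows. You instead push the plane down along the degenerate direction via the projection $p$ (i.e.\ the fibration $\mathfrak{\pi}$), using the identity $\langle u,u\rangle_{d,1,0}=\langle p(u),p(u)\rangle_{d,1}$ to see that $p|_V$ is an isometry onto a Lorentzian subspace of $\R^{d,1}$, and then invoke the classical hyperboloid model. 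Your version has two small advantages: it does not require verifying the transitivity claim the paper leaves implicit (namely that every linear functional on $p(V)$ recording the $x_t$-graph of $V$ can be realized as $\langle v,\cdot\rangle_{d,1}$ and hence killed by a translation), and it makes explicit why the connection statement holds, since $p$ intertwines the flat ambient connections and the position-vector normal fields defining both induced connections. Two cosmetic points: what you call transversality of $V$ to the $x_t$-axis should be stated as $V\cap\operatorname{span}(e_{d+2})=\{0\}$ (for $\dim V\le d+1$ genuine transversality is impossible), and your dimension convention (a $(k{+}1)$-dimensional $V$ yielding $\H^k$) differs by one from the paper's loose phrasing, which is harmless.
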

\begin{proof}
Immediate as one can always find an isometry of  $\cM^{d+1}$ sending a linear $k$-plane to a linear $k$-plane contained in $\{x_{t}=0\}$.
\end{proof}

\subsubsection{Co-Minkowski space}

The co-Minkowski space is the space of unoriented spacelike hyperplanes of Minkowski space, that is the quotient of $\cM^{d+1}$ by the antipodal map.

\begin{definition}
The  \emph{co-Minkowski space} $\co$ is the following subspace of the projective space:
$\co=\cM^{d+1}/\{\pm \operatorname{Id}\}$, endowed with the push-forward of the degenerate metric $\gco$, denoted by $g_{\co}$.
\end{definition}

The connection  $\nabla^{\mathscr{co}\mathcal{M}^{d+1}}$
also induces a connection $\nabla^{\co}$ on $\co$.

We define the \emph{isometry group} of $\co$ as the image of $ \mbox{O}(d,1)\ltimes \R^{d,1}$ into $\mbox{PGL}(d+2)$, by a projective quotient of   the representation
given by \eqref{eq:isom co}.

The map $\mathfrak{\pi}: \cM^{d+1}_+ \to \mathcal{H}^d$ induces a $\R$-fibration $^*\pi:  {\co} \to \mathcal H^d$, which is an isometry, and $ \mbox{O}(d,1)\ltimes \R^{d,1}$-equivariant.

In will be interesting to work in a particular affine model of co-Minkowski space. This will be the cylindrical coordinates introduced in the next section.

\subsection{Cylindrical model}\label{sec: cyl}

\subsubsection{Klein ball model of the hyperbolic space}\label{sec klein}

We have seen that the subspace $\mathcal{H}^d$ of Minkowski space,
endowed with the induced metric, is a model of the hyperbolic space.
It is isometric to the
subset  $\{x\in \R^{d,1}| \langle x,x\rangle_{d,1} <0 \}$ of the projective space $\mathbb{P}(\R^{d,1})$ endowed with the push-forward metric.

The \emph{Klein ball model} of the hyperbolic space
is the image of the projective model
of the hyperbolic space
in the affine chart $\{x_{d+1}=1\}$. As a set, it is the open Euclidean unit ball $B^d$. The push-forward of the hyperbolic metric on $B^d$ is denoted by $\gh$. We will sometimes  use the notation $\H^d$ to designate the hyperbolic space $(B^ d,\gh)$.
In the remainder of this section, we give explicit formulas relating the hyperbolic geometry on $B^d$ to the standard Euclidean geometry on $B^d$, that will be needed in the sequel of the paper.

If $x\in B^d$, then the vector  $\binom{x}{1}$ of $\R^{d,1}$ rescaled by the factor $L^{-1}(x)$ belongs to $\mathcal{H}^d$, where
$$L(x)=\sqrt{1-\|x\|^2} $$
and $\|\cdot\|$ is the Euclidean norm on $B^d$:
$$\|(x_1,\ldots,x_d)\|=\sqrt{x_1^2+\cdots+x_d^2}~,$$
see Figure~\ref{fig:hyp}.

\begin{figure}
\begin{center}
\psfrag{H}{$\mathcal{H}^d$}
\psfrag{x1}{$\binom{x}{1}$}
\psfrag{lx1}{$L^{-1}(x)\binom{x}{1}$}
\psfrag{B}{$B^d$}
\psfrag{o}{$0$}

\includegraphics[width=0.4\hsize]{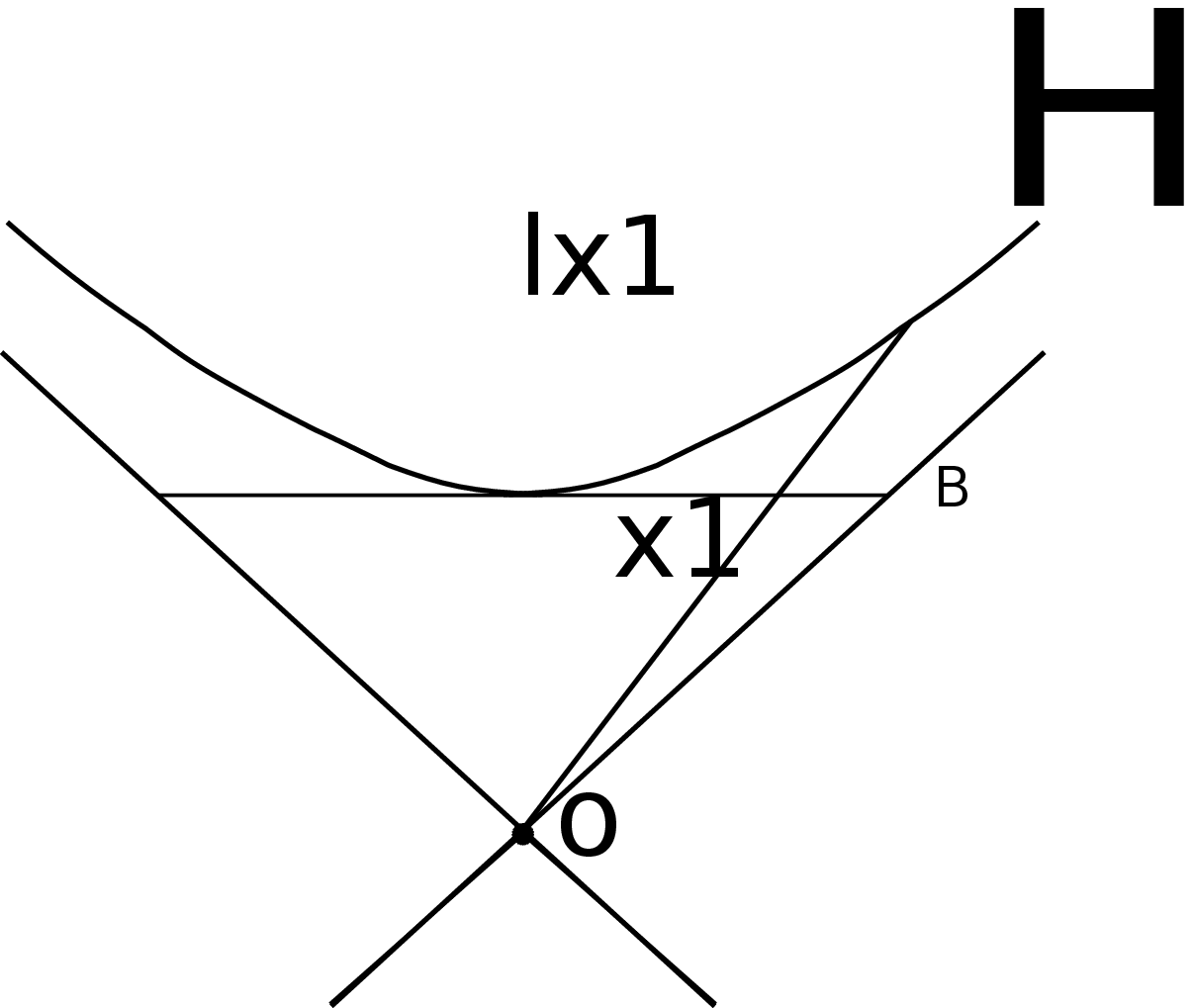}
\end{center}\caption{The hyperboloid $\mathcal{H}^d$ and the Klein ball model of the hyperbolic space.}\label{fig:hyp}
\end{figure}

The expression of the hyperbolic metric $\gh$ in the Klein ball model is:
\begin{equation}\label{met hyp}\gh(x)(X,Y)=L(x)^{-2}\langle X,Y\rangle_d + L(x)^{-4}\langle x,X \rangle_d\langle x,Y\rangle_d~.\end{equation}
where $\langle \cdot,\cdot\rangle_d$ is the standard Euclidean metric
on $\R^d\supset B^d$, $x\in B^d$, $X,Y\in T_xB^d\cong \R^d$.
In order to help computations, one may note that
\begin{equation}\label{eq der L-1}
D_XL^{-1}(x)=L^{-3}(x)\langle x,X\rangle_d
\end{equation}
and
\begin{equation}\label{eq hess L}\operatorname{Hess} L=-L\gh~,\end{equation}
where $\operatorname{Hess}$ is the usual Hessian on $\R^d$.

 If $\omega_{B^d}$ if the restriction to $B^d$ of the Euclidean volume form, and $\omega_{\H^d}$ is the  volume form on $B^d$ associated to the hyperbolic metric $g_{\H^d}$, from \eqref{met hyp} one obtains

\begin{equation}\label{eq:volume}\omega_{B^d}=L^{d+1}\omega_{\H^d}~. \end{equation}

The main feature of the Klein ball model of the hyperbolic space is that the (unparameterized) geodesics of $\gh$ are exactly the affine segments in $B^d$. This is straightforward, as the geodesics of $\mathcal{H}^d$ are the intersections of
$\mathcal{H}^d$  with linear timelike planes of $\R^{d,1}$.
This gives the following correspondence between the connections, see \cite[Lemma 4.17]{FS-hyp}.
\begin{proposition}[Weyl formula]
If $\nabla^{\H^d}$ is the Levi-Civita  connection of $\gh$
and $D$ is the canonical connection  on $B^d$, then
\begin{equation}\label{eq con}\nabla^{\H^d}_X Y = D_XY + L^{-2}(x)(\langle x,X\rangle_d Y + \langle x,Y\rangle_d X)~. \end{equation}
\end{proposition}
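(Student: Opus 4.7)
The plan is to invoke the uniqueness of the Levi-Civita connection: it suffices to check that the map
\[
\widetilde\nabla_X Y := D_X Y + L^{-2}(x)\bigl(\langle x,X\rangle_d\, Y + \langle x,Y\rangle_d\, X\bigr)
\]
is torsion-free and compatible with $\gh$. Torsion-freeness is immediate: the difference $T(X,Y) := \widetilde\nabla_X Y - D_X Y$ is symmetric in $(X,Y)$, and $D$ is torsion-free.

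For metric compatibility, I would first record two identities: the scalar derivative $D_X(L^{-2}) = 2L^{-4}\langle x,X\rangle_d$, a direct consequence of \eqref{eq der L-1}, and the auxiliary formula $\langle x, T(X,Y)\rangle_d = 2L^{-2}\langle x,X\rangle_d\langle x,Y\rangle_d$ (obtained by pairing $T(X,Y)$ with $x$). I would then expand both sides of
\[
X\bigl(\gh(Y,Z)\bigr) = \gh(\widetilde\nabla_X Y, Z) + \gh(Y, \widetilde\nabla_X Z)
\]
using the explicit form \eqref{met hyp}. On the left, applying $D_X$ by the product rule produces, in addition to the terms involving $D_XY$ and $D_XZ$, extra contributions from the derivatives of $L^{-2}$, $L^{-4}$, and from $D_X\langle x,Y\rangle_d = \langle X,Y\rangle_d + \langle x,D_XY\rangle_d$. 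On the right, the corrections $\gh(T(X,Y),Z) + \gh(Y,T(X,Z))$ deliver exactly those extra terms, precisely because $T$ was designed so that its $\langle x,\cdot\rangle_d$-components absorb the derivatives of $L^{-2}$ and $L^{-4}$.

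The main obstacle is pure bookkeeping: after reducing everything to common powers $L^{-2}, L^{-4}, L^{-6}$, three families of terms must cancel separately—those of type $L^{-4}\langle x,X\rangle_d\langle Y,Z\rangle_d$ and its symmetrizations, those of mixed type $L^{-4}\langle x,Y\rangle_d\langle X,Z\rangle_d$, and the triple-product term $L^{-6}\langle x,X\rangle_d\langle x,Y\rangle_d\langle x,Z\rangle_d$. Once all three balance, metric compatibility holds, and uniqueness of the Levi-Civita connection of $\gh$ forces $\widetilde\nabla = \nabla^{\H^d}$, which is \eqref{eq con}.

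A more conceptual alternative is to use the isometric embedding $\phi: B^d \to \mathcal{H}^d$, $\phi(x) = L^{-1}(x)\binom{x}{1}$, together with the fact that the Levi-Civita connection of $\mathcal H^d \subset \R^{d,1}$ is obtained from the flat Minkowski connection by subtracting the component along the unit normal, which is the position vector $\phi(x)$ itself. A short computation yields $d\phi_x(X) = L^{-1}(X,0) + L^{-2}\langle x,X\rangle_d\, \phi(x)$; differentiating $d\phi(Y)$ along $d\phi(X)$ in $\R^{d,1}$, removing the $\phi(x)$-component, and reading off the $B^d$-component recovers \eqref{eq con}. This route has the merit of making the origin of the correction tensor $T$ geometrically transparent.
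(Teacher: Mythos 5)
Your proof is correct, but it takes a genuinely different route from the paper. The paper does not verify the formula by computation at all: it observes that the unparameterized geodesics of $\gh$ in the Klein model are exactly the affine segments of $B^d$, so that $\nabla^{\H^d}$ and $D$ are projectively equivalent, and then invokes the classical fact (Weyl's theorem --- whence the name ``Weyl formula'') that two torsion-free connections with the same unparameterized geodesics differ by a term $\alpha(X)Y+\alpha(Y)X$ for a one-form $\alpha$; here $\alpha = \operatorname{d}\log(L^{-1}) = L^{-2}\langle x,\cdot\rangle_d$, and the details are delegated to the cited reference. Your first argument instead checks directly that the candidate $\widetilde\nabla$ is torsion-free and compatible with $\gh$ and concludes by uniqueness of the Levi-Civita connection; I checked the bookkeeping and the three families of terms ($L^{-4}\langle x,X\rangle_d\langle Y,Z\rangle_d$, the mixed $L^{-4}\langle x,Y\rangle_d\langle X,Z\rangle_d$ terms coming from $D_X\langle x,Y\rangle_d=\langle X,Y\rangle_d+\langle x,D_XY\rangle_d$, and the $L^{-6}$ triple products) do cancel exactly as you claim, using $D_X(L^{-4})=4L^{-6}\langle x,X\rangle_d$. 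Your second argument, pulling back the Gauss formula for $\mathcal{H}^d\subset\R^{d,1}$ through $\phi(x)=L^{-1}(x)\binom{x}{1}$, is also sound and is closest in spirit to how the paper treats everything else in this section. What you gain is a self-contained verification independent of Weyl's projective-equivalence theorem; what the paper's route buys is brevity and a conceptual explanation of why only a first-order correction of that specific symmetric form can appear.
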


\begin{corollary}
If $\operatorname{Hess}^{\H^2}$ is the Hessian given by $\nabla^{\H^2}$,  then, for a smooth map $f:B^d\to \R$,
\begin{equation}\label{nabla2}\operatorname{Hess}^{\H^2}f(x)(X,Y)=\operatorname{Hess}f(x)(X,Y) - L^{-2}(x)(\langle x,X\rangle_d  \operatorname{d}f(x)(Y) + \langle x,Y\rangle_d  \operatorname{d}f(x)(Y))~. \end{equation}

Also,
\begin{equation}\label{eq hyp hess}L^{-1}(x)\hess f(x)(X,Y)=\left( \operatorname{Hess}^{\H^2}(L^{-1}f)(x)(X,Y) - (L^{-1}f)(x) \gh(x)(X,Y)\right)~.\end{equation}

\end{corollary}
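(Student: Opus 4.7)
The plan is to derive both formulas essentially as direct consequences of the Weyl formula \eqref{eq con}, together with the explicit derivatives \eqref{eq der L-1} and \eqref{eq hess L} of $L$.

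For \eqref{nabla2}, I would use the intrinsic definition of the Hessian attached to a torsion-free connection $\nabla$, namely $\hess^{\nabla} f(X,Y) = X(Yf) - (\nabla_X Y)f$, applied to both the flat connection $D$ on $B^d$ and to $\nabla^{\H^d}$. Subtracting the two Hessians gives
\[
\hess^{\H^d} f(X,Y) - \hess f(X,Y) = \bigl(D_X Y - \nabla^{\H^d}_X Y\bigr)f.
\]
Inserting \eqref{eq con} immediately turns the right-hand side into $-L^{-2}\bigl(\langle x,X\rangle_d\, df(Y) + \langle x,Y\rangle_d\, df(X)\bigr)$, which is exactly \eqref{nabla2}. (I would extend $X,Y$ as constant vector fields on $\R^d$ so that the computation is unambiguous; since both sides are tensorial in $X,Y$ at the point $x$, this is harmless.)

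For \eqref{eq hyp hess}, I would apply the identity \eqref{nabla2} just obtained to the function $g := L^{-1}f$. This reduces the problem to computing $\hess(L^{-1}f)$ and the correction term in terms of $f$ alone. I would expand $\hess(L^{-1}f)$ by the Leibniz rule for the Euclidean Hessian,
\[
\hess(L^{-1}f) = L^{-1}\hess f + f\,\hess(L^{-1}) + dL^{-1}\otimes df + df \otimes dL^{-1},
\]
and similarly expand $d(L^{-1}f)$ inside the correction term of \eqref{nabla2}. Using \eqref{eq der L-1}, the mixed terms $dL^{-1}\otimes df + df\otimes dL^{-1}$ cancel against the $L^{-1}df$ part of the correction, leaving only contributions proportional to $f$. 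It remains to verify that
\[
f\,\hess(L^{-1})(X,Y) - 2L^{-5}f\langle x,X\rangle_d\langle x,Y\rangle_d = (L^{-1}f)\,\gh(X,Y);
\]
this is a direct calculation using $\hess(L^{-1})(X,Y) = 3L^{-5}\langle x,X\rangle_d\langle x,Y\rangle_d + L^{-3}\langle X,Y\rangle_d$ (obtained by differentiating \eqref{eq der L-1}, or equivalently by combining \eqref{eq hess L} with the chain rule) and then recognizing the bracket $L^{-4}\langle x,X\rangle_d\langle x,Y\rangle_d + L^{-2}\langle X,Y\rangle_d$ as $\gh(X,Y)$ via \eqref{met hyp}. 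Rearranging yields \eqref{eq hyp hess}.

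The substantive step is really just the second one: the first identity is formal, while the second requires keeping track of several cancellations between $L^{-2}$ and $L^{-4}$ terms. I expect that the main obstacle is book-keeping, in particular checking that the $f$-proportional terms reorganize exactly into $(L^{-1}f)\,\gh$; everything else is formal consequence of the Weyl formula.
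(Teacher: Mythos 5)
Your proof is correct and follows essentially the same route as the paper's: \eqref{nabla2} is obtained from the Weyl formula \eqref{eq con} together with the connection definition of the Hessian (equation \eqref{hess con}), and \eqref{eq hyp hess} from \eqref{nabla2}, the product rule \eqref{eq: hessien produit}, \eqref{eq der L-1} and \eqref{met hyp}; you merely write out explicitly the cancellations that the paper's two-line proof leaves implicit. Note in passing that your symmetric form of the correction term in \eqref{nabla2}, with $\operatorname{d}f(x)(X)$ in the last factor, is the correct one --- the displayed equation in the statement contains a typo there.
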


\begin{proof}
 \eqref{nabla2} follows from \eqref{eq con} and
\begin{equation}\label{hess con}\operatorname{Hess}^{\H^2}f(x)(X,Y)=X.Y.f(x) - \operatorname{d}f(x)(\nabla^{\H^d}_XY)~.\end{equation}

Finally, \eqref{eq hyp hess} comes from \eqref{nabla2}, \eqref{met hyp} and
\begin{equation}\label{eq: hessien produit}\hess f g = f \hess g + g\hess  f +  \operatorname{d}f \otimes  \operatorname{d}g +  \operatorname{d}g \otimes  \operatorname{d}f~. \end{equation}
\end{proof}

\begin{fact}\label{facts laplacian}
If $\Delta$ is the Euclidean Laplacian on $B^d$, then
\begin{equation}\label{trace hessien}\operatorname{tr}_{\gh} \operatorname{Hess} f (x) = L^2(x)( \Delta f - \operatorname{Hess} f(x)(x,x))~. \end{equation}

If $\Delta^{\H^d}$ is the Laplacian on $B^d$ given by $g_{\H^d}$, then
\begin{equation}\label{eq delta}\operatorname{tr}_{\gh} L^{-1}\hess f=\Delta^{\H^d} (L^{-1}f) - d (L^{-1}f)~.\end{equation}

\end{fact}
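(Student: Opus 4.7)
The plan is to prove the two identities separately; both reduce to routine contractions once the inverse of $\gh$ in the Klein model is in hand.

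For \eqref{trace hessien}, I would first invert $\gh$. Writing $g_{ij}$ for the components of $\gh$ in the standard Euclidean coordinates on $B^d$, formula \eqref{met hyp} gives $g_{ij}(x) = L^{-2}\delta_{ij} + L^{-4}\,x_i x_j$, a rank-one perturbation of a scalar multiple of the Euclidean metric. A direct Sherman--Morrison computation, which simplifies cleanly thanks to $1 + L^{-2}\|x\|^2 = L^{-2}$ (equivalent to $1-\|x\|^2 = L^2$), yields
\[
g^{ij}(x) = L^2\bigl(\delta^{ij} - x_i x_j\bigr).
\]
Contracting this inverse with the Euclidean Hessian $\partial_i\partial_j f$, and using $\delta^{ij}\partial_i\partial_j f = \Delta f$ together with $x_i x_j\partial_i\partial_j f = \hess f(x)(x,x)$, yields \eqref{trace hessien} at once.

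For \eqref{eq delta}, the quickest approach is to take the $\gh$-trace of the previously established identity \eqref{eq hyp hess}, which one may rewrite as
\[
L^{-1}\hess f = \operatorname{Hess}^{\H^d}(L^{-1}f) - (L^{-1}f)\,\gh.
\]
By the very definition of the hyperbolic Laplacian, $\operatorname{tr}_{\gh}\operatorname{Hess}^{\H^d} g = \Delta^{\H^d} g$ for any smooth function $g$, and $\operatorname{tr}_{\gh}\gh = d$. Specializing to $g = L^{-1}f$ then gives \eqref{eq delta} in a single line.

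The only step with any real computational content is the inversion of $\gh$; everything else reduces to tracing linear identities. No substantive obstacle is expected.
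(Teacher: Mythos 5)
Your proof is correct. For the second identity \eqref{eq delta} you do exactly what the paper does: take the $\gh$-trace of \eqref{eq hyp hess} and use $\operatorname{tr}_{\gh}\operatorname{Hess}^{\H^d}=\Delta^{\H^d}$ and $\operatorname{tr}_{\gh}\gh=d$, so there is nothing to compare there.

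For \eqref{trace hessien} your route differs slightly from the paper's, and arguably improves on it. The paper never writes down the inverse metric: it introduces the endomorphism $A$ with $\operatorname{Hess}f(x)(X,Y)=\gh(x)(AX,Y)$ and computes $\operatorname{tr}A=\sum_i\langle Ae_i,e_i\rangle_d$ in a Euclidean orthonormal frame adapted to $x$ (with $e_1=x/\|x\|$), handling the radial direction separately. You instead invert the rank-one perturbation $g_{ij}=L^{-2}\delta_{ij}+L^{-4}x_ix_j$ by Sherman--Morrison; the identity $L^{-2}+L^{-4}\|x\|^2=L^{-4}$ indeed collapses the correction term and gives the clean closed form $g^{ij}=L^2(\delta^{ij}-x_ix_j)$, after which \eqref{trace hessien} is a one-line contraction. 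The two arguments are the same linear algebra organized differently: the adapted-frame computation avoids any inversion but requires the separate treatment of $e_1$ (and the paper's displayed intermediate formula for $\langle Ae_1,e_1\rangle_d$ in fact carries a sign typo that cancels only in the final sum), whereas your explicit $g^{ij}$ is uniform in all directions and reusable elsewhere. Both are complete; no gap.
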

\begin{proof}
Let $A$ be the linear operator such that
$\operatorname{Hess}f(x)(X,Y)=\gh(x)(AX,Y)$.
For $x\not=0$, let $(e_i)_{1,\cdots,d}$ be an orthonormal Euclidean basis of $T_xB^d$, such that $e_1=x/\|x\|$.
The definition of $A$ and \eqref{met hyp} give, for $i>1$,
$$\langle Ae_i,e_i\rangle_d=L^2(x)g_{\H^d}(x)(Ae_i,e_i)=L^2(x)\operatorname{Hess} f(x)(e_i,e_i)~, $$
and
$$\langle A e_1,e_1\rangle_d = L^2(x)\operatorname{Hess} h(x)(e_1,e_1) + L^{-2}(x) \langle x,Ax\rangle_d~. $$
Also from the definition of $A$ and \eqref{met hyp},
$$L^{-2}(x)\langle x,Ax\rangle_d= L^2(x) g_{\H^d}(x)(x,Ax)= L^2(x) \operatorname{Hess} f(x)(x,x)~. $$
\eqref{trace hessien} follows from $\operatorname{tr}_{\gh} \operatorname{Hess} f (x)=\sum_{i=1}^d \langle Ae_i,e_i\rangle_d$.
Also, \eqref{eq delta} is immediate from \eqref{eq hyp hess}.
\end{proof}

Let us end this section with some basic facts about (smooth) hyperbolic Codazzi tensors.

\begin{definition}\label{def.hypcodazzi}
  A $(0,2)$-tensor $C$ on $\H^d$ is a \emph{(hyperbolic) Codazzi tensor}  if it satisfies the
 the Codazzi equation on $\H^d$:
$$(\nabla^{\H^d}_X)C(Y,Z)=(\nabla_Y^{\H^d})C(X,Z)~. $$
\end{definition}

\begin{lemma}
Let $C$ be a $(0,2)$-tensor on $B^d$. Then $C$ is a  hyperbolic Codazzi tensor if and only if
$$D_X(LC)(Y,Z)=D_Y(LC)(X,Z)~. $$
\end{lemma}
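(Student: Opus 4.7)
The plan is a short direct computation, bootstrapping off the Weyl formula \eqref{eq con}. Write $\nabla^{\H^d} = D + K$, where $K$ is the symmetric $(1,2)$-tensor $K(X,Y)=L^{-2}(\langle x,X\rangle_d Y + \langle x,Y\rangle_d X)$. The Leibniz rule for covariant differentiation of a $(0,2)$-tensor then gives
$$(\nabla^{\H^d}_X C)(Y,Z) = (D_X C)(Y,Z) - C\bigl(K(X,Y),Z\bigr) - C\bigl(Y,K(X,Z)\bigr),$$
so, expanding $K$,
$$(\nabla^{\H^d}_X C)(Y,Z) = (D_X C)(Y,Z) - L^{-2}\bigl[\,2\langle x,X\rangle_d C(Y,Z) + \langle x,Y\rangle_d C(X,Z) + \langle x,Z\rangle_d C(Y,X)\,\bigr].$$

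Next I antisymmetrize in $X\leftrightarrow Y$. Since the Codazzi tensors to which we will apply this lemma are symmetric in their two arguments (the usual convention, consistent with the shape-operator origin), the $\langle x,Z\rangle_d$ term pairs against its swapped image and drops out, while the coefficient in front of $\langle x,X\rangle_d C(Y,Z) - \langle x,Y\rangle_d C(X,Z)$ collapses from $2$ to $1$ after the swap. This gives
$$(\nabla^{\H^d}_X C)(Y,Z)-(\nabla^{\H^d}_Y C)(X,Z) = (D_X C)(Y,Z)-(D_Y C)(X,Z) - L^{-2}\bigl[\langle x,X\rangle_d C(Y,Z) - \langle x,Y\rangle_d C(X,Z)\bigr].$$

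On the other side, the Leibniz rule for the flat connection $D$ combined with the identity $D_X L = -L^{-1}\langle x,X\rangle_d$ (an immediate reformulation of \eqref{eq der L-1}) yields
$$D_X(LC)(Y,Z) = L\,(D_X C)(Y,Z) - L^{-1}\langle x,X\rangle_d\, C(Y,Z).$$
Antisymmetrizing and dividing by the strictly positive function $L$ on $B^d$ produces exactly the Codazzi defect computed just above, so
$$L\bigl[(\nabla^{\H^d}_X C)(Y,Z)-(\nabla^{\H^d}_Y C)(X,Z)\bigr] = D_X(LC)(Y,Z) - D_Y(LC)(X,Z),$$
and the equivalence follows. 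The only mild obstacle is the bookkeeping in the cancellation after antisymmetrization; the rest is a mechanical application of the two Leibniz rules, the point being that the conformal-type rescaling by $L$ exactly absorbs the first-order discrepancy between $\nabla^{\H^d}$ and $D$ that is relevant for the Codazzi equation.
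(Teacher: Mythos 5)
Your proof is correct and follows essentially the same route as the paper's: both expand the hyperbolic Codazzi defect via the Weyl formula \eqref{eq con} and then absorb the resulting first-order terms into $D(LC)$ using \eqref{eq der L-1}. Your explicit remark that the cancellation of the $\langle x,Z\rangle_d$ terms requires $C$ to be symmetric is in fact more careful than the paper, whose displayed intermediate identity silently uses that same symmetry even though the lemma is stated for arbitrary $(0,2)$-tensors.
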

\begin{proof}
The definition of Codazzi tensor means that
$$X.C(Y,Z)-C(\nabla^{\H^d}_XY,Z)-C(Y,\nabla_X^{\H^d}Z)= Y.C(X,Z)-C(\nabla^{\H^d}_YX,Z)-C(X,\nabla_Y^{\H^d}Z)~.$$
Developing this expression using  \eqref{eq con}, one obtains, at a point $x$,
$$D_XC(x)(Y,Z)-L^{-2}(x)\langle x,X\rangle_dC(Y,Z)= D_YC(x)(X,Z)-L^{-2}(x)\langle x,Y\rangle_dC(X,Z)~.$$
Writing $C=L^{-1}LC$, developing the above expression and using \eqref{eq der L-1} leads to the result.
\end{proof}

\begin{fact}
Let $S$ be a  $(0,2)$-tensor on $B^d$. If $D_XS(Y,Z)=D_YS(X,Z)$, then there exists
a function $F=(F_1,\ldots,F_n)$ with $F_i:B^d\to \R$ such that $S$ is the Jacobian matrix of $F$.
\end{fact}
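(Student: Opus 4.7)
The plan is to recognize that the Codazzi-type condition $D_X S(Y,Z) = D_Y S(X,Z)$ is really a closedness condition on a family of 1-forms parameterized by the last slot of $S$, and then to invoke the Poincaré lemma on the ball.

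First, I would fix the standard basis $e_1,\dots,e_d$ of $\R^d$, viewed as constant vector fields on $B^d$, and set $S_{ij}(x) := S(x)(e_i,e_j)$. Because $D$ is the canonical flat connection and $e_i,e_j$ are $D$-parallel, one has $(D_{e_k}S)(e_i,e_j) = \partial_k S_{ij}$. Substituting $X=e_k$, $Y=e_i$, $Z=e_j$ into the hypothesis produces the coordinate identity
\begin{equation*}
\partial_k S_{ij}(x) = \partial_i S_{kj}(x) \quad \text{for all } i,j,k \text{ and all } x\in B^d .
\end{equation*}

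Next, for each fixed $j \in \{1,\dots,d\}$ I would introduce the $1$-form
\begin{equation*}
\omega^{(j)} := \sum_{i=1}^d S_{ij}\, dx^i
\end{equation*}
on $B^d$. Its exterior derivative is $d\omega^{(j)} = \sum_{k<i}(\partial_k S_{ij} - \partial_i S_{kj})\, dx^k\wedge dx^i$, which vanishes by the identity above. So each $\omega^{(j)}$ is closed.

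Since $B^d$ is star-shaped with respect to the origin (in particular simply connected), the Poincaré lemma applies: for each $j$ there exists a smooth $F_j:B^d\to \R$ with $dF_j = \omega^{(j)}$, i.e.\ $\partial_i F_j = S_{ij}$. One can even write $F_j$ explicitly by a radial integration $F_j(x)=\int_0^1 \sum_i S_{ij}(tx)\, x^i\, dt$. Setting $F=(F_1,\dots,F_d):B^d\to\R^d$, the Jacobian matrix of $F$ has entries exactly $S_{ij}$ (up to the usual transpose convention), which concludes the argument. There is no real obstacle here beyond the coordinate bookkeeping; the entire content of the statement is Poincaré's lemma for closed $1$-forms on a simply connected domain, applied slotwise.
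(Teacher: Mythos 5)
Your argument is exactly the paper's: for each fixed $j$ you form the $1$-form $\sum_i S_{ij}\,\mathrm{d}x^i$, note that the hypothesis written in coordinates makes it closed, and apply the Poincar\'e lemma on the ball to obtain the potentials $F_j$. The proposal is correct and adds only the explicit coordinate verification and the radial integration formula, which the paper leaves implicit.
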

\begin{proof}
Let $\Omega_j=\sum_{i=1}^dS_{ij}\mbox{d}x^i$. As $\frac{\partial S_{ij}}{\partial x_k}=\frac{\partial S_{kj}}{\partial x_i}$, $\mbox{d}\Omega_j=0$, so by Poincaré Lemma, there exists a function $F_j:B^d\to \R$ such that $\mbox{d}F_j=\Omega_j$.
\end{proof}

\begin{fact}
Let $F=(F_1,\ldots,F_d)$ with $F_j:B^d\to \R$. Then there exists $f:B^d\to \R$ with $\frac{\partial f}{\partial x_i}=F_i$ if and only if $\frac{\partial F_i}{\partial x_j}=\frac{\partial F_j}{\partial x_i}$.

In other term, the Jacobian matrix of $F$ is a Hessian matrix (namely the one of $f$) if and only if it is a symmetric matrix.
\end{fact}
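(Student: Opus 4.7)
The plan is to treat the two implications separately and reduce both to standard facts already invoked in the excerpt (the symmetry of second partials and the Poincar\'e lemma on $B^d$).

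For the ``only if'' direction, I would suppose that $f:B^d\to \R$ satisfies $\partial f/\partial x_i = F_i$. Then $\partial F_i/\partial x_j = \partial^2 f/\partial x_j\partial x_i$, which equals $\partial^2 f/\partial x_i\partial x_j = \partial F_j/\partial x_i$ by Schwarz's theorem (applicable because $f$ is $C^2$ whenever the $F_i$ are $C^1$, which is the implicit smoothness assumption here). This direction is essentially a one-line computation.

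For the ``if'' direction, I would form the $1$-form $\omega = \sum_{i=1}^{d} F_i\, \mathrm{d}x^i$ on $B^d$. The symmetry condition $\partial F_i/\partial x_j = \partial F_j/\partial x_i$ is exactly the statement that $\mathrm{d}\omega = 0$, so $\omega$ is closed. Since $B^d$ is a star-shaped (in particular simply connected) open subset of $\R^d$, the Poincar\'e lemma gives the existence of $f:B^d\to \R$ with $\mathrm{d}f = \omega$, i.e.\ $\partial f/\partial x_i = F_i$ for every $i$. This is strictly analogous to the argument used in the immediately preceding fact, where the Poincar\'e lemma was applied row-by-row to each $\Omega_j = \sum_i S_{ij}\,\mathrm{d}x^i$.

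There is no real obstacle: both directions are standard multivariable calculus. If anything, the only point worth being slightly careful about is making the regularity hypothesis explicit (the $F_i$ are assumed $C^1$, so that both Schwarz's theorem and the Poincar\'e lemma apply); otherwise the proof is essentially a reference to the same two tools (symmetry of mixed partials, Poincar\'e lemma on the ball) used just above in the excerpt.
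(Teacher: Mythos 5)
Your proof is correct and follows exactly the paper's argument: Schwarz's theorem for the ``only if'' direction, and closedness of $\omega=\sum_i F_i\,\mathrm{d}x^i$ plus the Poincar\'e lemma on the ball for the ``if'' direction. The extra care about the $C^1$ regularity hypothesis is a reasonable addition but does not change the route.
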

\begin{proof}One implication is Schwarz's theorem.  On the other direction, the one-form $\omega=\sum_{i=1}^d F_i\mbox{d}x^i$ is closed by hypothesis, hence exact by Poincaré Lemma, and it suffices to set $\omega=\mbox{d}f$. \end{proof}

We finally obtain the following classical result \cite{ferus,OS83,bsep}.

\begin{lemma}\label{lem tens cod}
Let $C$ be a  $(0,2)$-tensor on $B^d$. Then $C$ is a symmetric hyperbolic  Codazzi tensor if and only if there exists $f:B^d\to \R$ such that
$$C=L^{-1}\hess f~. $$
\end{lemma}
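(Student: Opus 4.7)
The plan is to simply chain together the previous lemma and the two subsequent facts, which have essentially been set up to make this statement nearly a corollary.

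First I would handle the easy direction $(\Leftarrow)$. Suppose $C = L^{-1}\hess f$ for some smooth $f : B^d \to \R$. Then $C$ is symmetric because the Euclidean Hessian is symmetric. To check the Codazzi equation, I use the preceding lemma, which reduces the problem to showing $D_X(LC)(Y,Z) = D_Y(LC)(X,Z)$. But $LC = \hess f$, and this identity for $\hess f$ is simply the symmetry of the third partial derivatives of $f$ (Schwarz's theorem applied component-wise, since $D$ is the flat connection on $\R^d$).

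For the harder direction $(\Rightarrow)$, suppose $C$ is a symmetric hyperbolic Codazzi tensor. Set $S = LC$. The preceding lemma gives $D_X S(Y,Z) = D_Y S(X,Z)$, which in coordinates reads $\partial_i S_{jk} = \partial_j S_{ik}$. I now apply the first fact (a Poincar\'e-lemma argument): there exists a map $F = (F_1, \ldots, F_d) : B^d \to \R^d$ such that $S$ is the Jacobian matrix of $F$, i.e.\ $S_{ij} = \partial F_j / \partial x_i$. Since $C$ is symmetric and $L$ is a scalar function, $S$ is symmetric as well, so $\partial F_j/\partial x_i = \partial F_i/\partial x_j$. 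The second fact then yields a function $f : B^d \to \R$ with $\partial f/\partial x_i = F_i$, hence $S_{ij} = \partial^2 f/(\partial x_i \partial x_j)$, i.e.\ $S = \hess f$. Dividing by $L$ gives $C = L^{-1}\hess f$.

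There is no real obstacle here: each step has been packaged by the preceding lemmas and facts precisely for this conclusion. The only mild subtlety is keeping straight which derivative conventions (flat $D$ versus hyperbolic $\nabla^{\H^d}$) appear where, but the reduction $C \leftrightarrow LC$ via the preceding lemma handles this bookkeeping cleanly.
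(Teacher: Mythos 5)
Your proof is correct and follows exactly the route the paper intends: the paper presents the preceding lemma (reduction of the hyperbolic Codazzi equation to $D_X(LC)(Y,Z)=D_Y(LC)(X,Z)$) and the two Poincar\'e-lemma facts precisely so that Lemma~\ref{lem tens cod} follows by chaining them, which is what you do. No discrepancy with the paper's argument.
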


\subsubsection{Affine representation of co-Minkowski space}

To keep track of some relevant affine notions such as convexity,  we will work in an affine model of  co-Minkowski space.
Namely, we will consider the affine model of co-Minkowski space given by the central projection of $\cM^{d+1}_+$ onto the hyperplane $\{x_{d+1}=1\}$ of $\R^{d+2}$. Observe that in doing so, we favor the coordinate $x_{d+1}$, i.e. we distinguish the future
timelike vector $(0, \ldots, 0, 1)$ of $\R^{d,1}$. We will go back on this remark in Section \ref{sec:anosov}.
In the hyperplane $\{x_{d+1}=1\}$, the image of $\cM^{d+1}$  is
the cylinder $B^d\times \R$, where $B^d$ is the open unit ball centered at the origin of $\R^{d}$.

We denote by $\pi: B^d \times \R \to B^d$  the projection on the first factor. It corresponds
to the fibration $\mathfrak{\pi}: \cM^{d+1}_+ \to \mathcal{H}^d$.
We will call \emph{vertical lines} the fibers of $\pi$. They correspond
to parallel spacelike hyperplanes in Minkowski space.

\begin{remark}{\rm In those coordinates $B^d\times \R\subset\R^{d+1}$, the degenerate metric $g_{\co}$ of co-Minkowski space is
$g_{\H^d}+0\operatorname{d}x_{t}^2.$
The degenerate metric $g_{\co}$ defines a
''distance'' between points of co-Minkowski space.
Actually this distance is nothing but the
the Klein projective metric:
if $x,y\in  B^d\times \R$, then they are on a line meeting $\partial B^{d}\times \R \cup\{\infty\}$ either at two distinct points $I,J$, or at $I=J=\infty$. Then the Klein projective distance is  $d(x,y)=\frac{1}{2}|\ln [x,y,I,J]|$, where $[\cdot,\cdot,\cdot,\cdot]$ is the cross-ratio, see \cite{FS-hyp}.
}\end{remark}

\begin{remark}{\rm
The \emph{boundary at infinity} of co-Minkowski space is $ \partial B^d\times \R$. It parametrizes the set of lightlike affine hyperplanes of Minkowski space, and it is called
\emph{Penrose boundary} in \cite{Bar05}.
%The notation $\barco$ will denote $\bar B^d\times \R$. and $\partial\co$ will denote the boundary
%$ \partial B^d\times \R$.
Note that $ (\R^d\setminus \bar{B}^d)\times \R$ parametrizes the set of affine timelike hyperplanes of Minkowski space, but we don't need to consider it.
}\end{remark}

\begin{figure}
\begin{center}
\psfrag{mink}{$\R^{d,1}$}
\psfrag{comink}{$\co$}
\psfrag{v}{$v$}
\psfrag{p}{$P$}
\psfrag{b}{$B^d$}
\psfrag{xu}{$\binom{x}{1}$}
\psfrag{x}{$x$}
\psfrag{hx}{$h$}
\psfrag{ps}{$P^*$}
\psfrag{hv}{$h_v$}
\psfrag{tps}{$\tilde{P}^*$}
\psfrag{c}{$\cM^2$}
\psfrag{cc}{$\co$}
\psfrag{H}{$\mathcal{H}^1$}

\includegraphics[width=0.8\hsize]{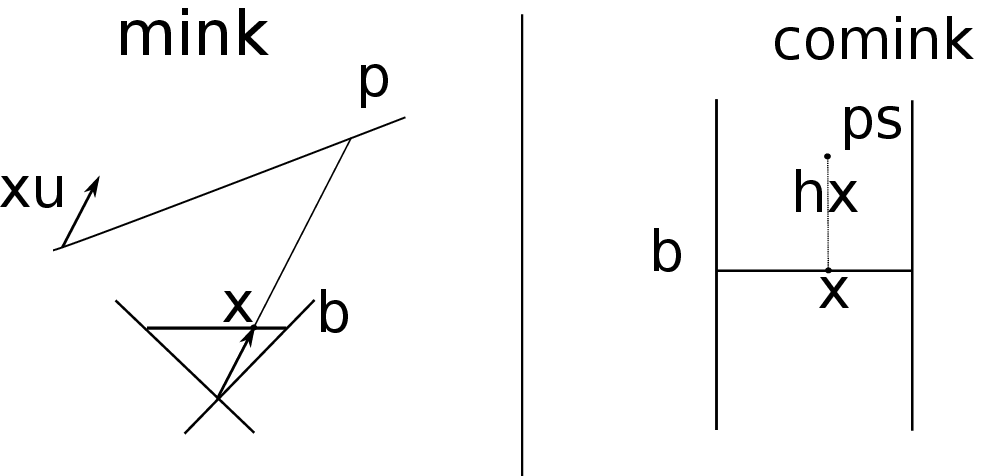}
\end{center}\caption{The dual $P^*$ of the hyperplane $P=\{y \in \R^{d,1} | \langle \binom{x}{1},y \rangle_{d,1} =h \} $.}\label{fig:cylindre}
\end{figure}

The interest of an affine model is essentially given by the following facts. The first one is an immediate consequence of the last point of Fact~\ref{fact geod com tilde}.

\begin{fact}
(Unparameterized) geodesics of $\co$ in the cylindrical model $B^d\times \R$ are (affine) geodesic segments.
\end{fact}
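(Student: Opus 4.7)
The plan is to invoke the last item of Fact~\ref{fact geod com tilde}, which asserts that the unparameterized geodesics of $\cM^{d+1}$ are precisely the curves of the form $\cM^{d+1}\cap\Pi$, where $\Pi$ ranges over the linear $2$-planes of $\R^{d+2}$, and then to transport this statement across the identification of $\co$ with the cylindrical model $B^d\times\R$. By construction, that identification is obtained by restricting the central projection $\varphi$ of $\R^{d+2}\setminus\{x_{d+1}=0\}$ from the origin onto the affine hyperplane $\{x_{d+1}=1\}$: the map $\varphi$ sends $\cM^{d+1}_+$ bijectively onto $B^d\times\R$, and descends, through the quotient by the antipodal map, to a diffeomorphism $\co\to B^d\times\R$ carrying the connection $\nabla^{\co}$ to the connection induced from the ambient $\R^{d+2}$.

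The key observation is then that $\varphi$ sends every linear $2$-plane $\Pi$ not contained in $\{x_{d+1}=0\}$ to the affine line $\Pi\cap\{x_{d+1}=1\}$, since rays through the origin map to points and $\Pi$ is spanned by two such rays. Any $\Pi$ giving rise to a geodesic of $\cM^{d+1}_+$ meets $\{x_{d+1}>0\}$, hence is not horizontal, so its image under $\varphi$ is indeed an affine line. The geodesic itself maps into this line, and intersecting with the open convex set $B^d\times\R$ shows that its image is an affine segment, which by construction is the corresponding geodesic of $\co$ in the cylindrical model.

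There is essentially no obstacle: the whole argument reduces to the elementary fact that central projection from the origin turns linear $2$-planes into affine lines, combined with Fact~\ref{fact geod com tilde}. The only minor point to check is that $\Pi$ is not horizontal, which follows automatically from the nonemptiness of the corresponding geodesic in $\cM^{d+1}_+$.
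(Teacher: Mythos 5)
Your proposal is correct and follows the same route as the paper, which simply declares the statement an immediate consequence of the last point of Fact~\ref{fact geod com tilde}: geodesics of $\cM^{d+1}$ are intersections with linear planes, and the central projection onto $\{x_{d+1}=1\}$ defining the cylindrical model sends such planes to affine lines. Your write-up just makes explicit the details the paper leaves to the reader.
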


The second fact follows from Fact~\ref{fact hyp k tilde} and by construction.

\begin{fact}
The intersection of $B^d\times \R$ with any affine $k$-plane not containing a vertical line, with the metric induced by $g_{\co}$, is isometric to the hyperbolic space of dimension $k$.

In particular, $B^d\times \{0\}\cong B^d$  is the Klein ball model of the $d$-dimensional hyperbolic space.
\end{fact}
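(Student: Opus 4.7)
The strategy is to transport the statement back to the ambient space $\cM^{d+1}_+\subset\R^{d+2}$ via the central projection used to define the cylindrical model, and then invoke Fact~\ref{fact hyp k tilde}. By construction, the central projection $p:\R^{d+2}\setminus\{x_{d+1}=0\}\to\{x_{d+1}=1\}$ restricts to a bijection from $\cM^{d+1}_+$ onto $B^d\times\R$, and the degenerate metric $g_{\co}$ on the cylindrical model is tautologically the pushforward of $\gco$ under $p$. In particular $p$ is an isometry $(\cM^{d+1}_+,\gco)\to(B^d\times\R,g_{\co})$.

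Given an affine $k$-plane $\Pi\subset\{x_{d+1}=1\}$ meeting $B^d\times\R$, let $\tilde\Pi\subset\R^{d+2}$ denote the linear $(k+1)$-plane spanned by $\Pi$ together with the origin (the cone on $\Pi$ from $0$). Then $p$ restricts to an isometry between $\tilde\Pi\cap\cM^{d+1}_+$ and $\Pi\cap(B^d\times\R)$. The key observation is the following equivalence: \emph{$\tilde\Pi$ contains the direction $e_t:=(0,\ldots,0,1)\in\R^{d+2}$ --- which generates the kernel of $\gco$ --- if and only if $\Pi$ contains a vertical line}. Indeed, if $e_t\in\tilde\Pi$ then for every $q\in\Pi$ the line $q+\R e_t$ lies in $\tilde\Pi\cap\{x_{d+1}=1\}=\Pi$; conversely, if $\{y\}\times\R\subset\Pi$, the cone from the origin over this line is a linear $2$-plane inside $\tilde\Pi$ and it already contains $e_t$.

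Under the hypothesis that $\Pi$ contains no vertical line, $\tilde\Pi$ is therefore transverse to the kernel direction of $\gco$; since the isometry group of $\cM^{d+1}$ fixes $e_t$ (by direct inspection of \eqref{eq:isom co}) the proof of Fact~\ref{fact hyp k tilde} applies verbatim and yields that $\tilde\Pi\cap\cM^{d+1}_+$, with the induced metric, is isometric to $k$-dimensional hyperbolic space. Transporting by $p$ proves the first claim. For the particular statement, $B^d\times\{0\}$ is the image under $p$ of $\cM^{d+1}_+\cap\{x_t=0\}=\mathcal{H}^d$, the hyperboloid model of $\H^d$, and Section~\ref{sec klein} identifies the pushforward metric on $B^d$ as the Klein ball metric $\gh$. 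The only real technical point is the translation of the hypothesis ``$\Pi$ contains no vertical line'' into the transversality condition $e_t\notin\tilde\Pi$ needed to apply Fact~\ref{fact hyp k tilde}; once this is recognized, the proof is essentially bookkeeping.
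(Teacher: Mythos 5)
Your proof is correct and follows the same route the paper takes: the paper simply asserts that the fact ``follows from Fact~\ref{fact hyp k tilde} and by construction,'' and your argument is precisely that one-liner made explicit --- pulling the affine $k$-plane back through the central projection to a linear $(k+1)$-plane in $\R^{d+2}$ and applying Fact~\ref{fact hyp k tilde}. Your verification that ``$\Pi$ contains no vertical line'' is equivalent to ``$\tilde\Pi$ does not contain the degenerate direction $e_t$'' is exactly the bookkeeping the paper leaves implicit (and is a worthwhile addition, since Fact~\ref{fact hyp k tilde} tacitly needs that transversality for its isometry argument to apply).
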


When $k=d$, we will call the intersection of $B^d\times \R$ with a $d$-plane not containing a vertical line a \emph{hyperbolic hyperplane}.

\begin{remark}{\rm
As every non-degenerate tangent plane of co-Minkowski space is isometric to the tangent plane of a hyperbolic space,  the sectional curvature of
co-Minkowski space is $-1$.
}\end{remark}

\subsubsection{Duality}\label{sec duality}

This cylindrical affine  model can be directly described from Minkowski space as follows. Let $P$ be an affine spacelike hyperplane of $\R^{d,1}$, and let
$(x,1)$ be a normal vector, with $x\in B^d$.
 Then there exists a number $h$ such that
$$P=\{y \in \R^{d,1} | \langle \binom{x}{1},y \rangle_{d,1} =h \} $$
and $P$ defines a point $P^*=(x,h)\in  B^d\times \R$, see Figure~\ref{fig:cylindre}.

Let us give more precisions about the ``duality'' between
Minkowski space and  co-Minkowski space. We already know that if $P$ is a spacelike hyperplane of Minkowski space, then $P^*$ is a point in $\co$. Conversely, if $P$ is a hyperbolic hyperplane of $\co$, let $P^*$ be the intersection of all the hyperplanes of Minkowski space whose duals are points in $P$.
For future reference, let us express this fact in terms of the cylindrical coordinates $B^d\times \R$.
\begin{fact}\label{equation plan dual}
Let $P$ be a hyperbolic hyperplane of co-Minkowski space, which is the graph of the affine function $h:B^d\to \R$, $h(x)=\langle x,v\rangle_d +c$. Then the point $P^*$ dual to $P$ has coordinates
$P^*=(v,-c)\in \R^d\times \R=\R^{d,1}$.

In other terms, if $P$ is a point of Minkowski space, then the hyperplane $P^*$ in co-Minkowski space is the graph of the affine map $h:B^d\to \R$, $h(x)=\langle P,\binom{x}{1}\rangle_{d,1}$.
\end{fact}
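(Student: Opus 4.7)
The plan is to unwind the definitions given just before the Fact: a point of the cylindrical model $B^d\times\R$ encodes a specific spacelike hyperplane of Minkowski space via $(x,h)\leftrightarrow\{y\in\R^{d,1}:\langle\binom{x}{1},y\rangle_{d,1}=h\}$, and the dual $P^*$ of a hyperbolic hyperplane $P\subset\co$ is the intersection in $\R^{d,1}$ of all the hyperplanes corresponding to points of $P$.

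First I would fix a generic point on $P$: since $P$ is the graph of $h(x)=\langle x,v\rangle_d+c$, such a point has coordinates $(x,h(x))$ with $x\in B^d$. By the duality convention recalled above, $(x,h(x))$ represents the affine spacelike hyperplane
\[
H_x=\bigl\{y\in\R^{d,1}\;:\;\langle \tbinom{x}{1},y\rangle_{d,1}=h(x)\bigr\}\subset\R^{d,1}.
\]
By definition $P^*=\bigcap_{x\in B^d}H_x$, so I must determine which $(w,t)\in\R^d\times\R=\R^{d,1}$ satisfy $\langle\binom{x}{1},(w,t)\rangle_{d,1}=h(x)$ for every $x\in B^d$.

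Expanding the bilinear form gives
\[
\langle x,w\rangle_d-t=\langle x,v\rangle_d+c\qquad\text{for all }x\in B^d.
\]
Evaluating at $x=0$ forces $t=-c$, after which differentiating (or taking a basis of $x$'s) forces $w=v$. Hence $P^*=(v,-c)$, proving the first assertion.

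For the ``in other terms'' part, I would simply check the converse computation: for $P=(v,-c)\in\R^{d,1}$,
\[
\langle P,\tbinom{x}{1}\rangle_{d,1}=\langle v,x\rangle_d-(-c)\cdot 1=\langle x,v\rangle_d+c,
\]
which is exactly the value $h(x)$ of the affine function whose graph is the hyperbolic hyperplane dual to $P$. There is no real obstacle here; the Fact is a direct bookkeeping computation in the bilinear form $\langle\cdot,\cdot\rangle_{d,1}$, and the only thing to keep an eye on is the sign convention coming from the last coordinate of the Minkowski inner product, which is what produces the $-c$ (rather than $+c$) in the dual point.
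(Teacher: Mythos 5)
Your proof is correct and follows essentially the same route as the paper: both unwind the duality definition and compute $\langle\binom{x}{1},(w,t)\rangle_{d,1}=h(x)$ directly. If anything, your version is marginally more complete, since you pin down that the intersection $\bigcap_{x}H_x$ is \emph{exactly} the single point $(v,-c)$ (via $x=0$ and then varying $x$), whereas the paper only verifies that $(v,-c)$ lies in every $H_x$.
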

\begin{proof}
Let us fix $x\in B^d$. Then the point $X=(x,\langle v,x\rangle_d+c)\in B^d\times \R$ of co-Minkowski space belongs to $P$. Its dual is the spacelike hyperplane of Minkowski space defined as
$$X^*=\{(y,y_{d+1})\in \R^d\times \R | \langle \binom{x}{1},\binom{y}{y_{d+1}}\rangle_{d,1}=\langle v,x \rangle_d + c \} $$
i.e. $X^*=\{(y,y_{d+1})\in \R^d\times \R | \langle \binom{x}{1},\binom{y}{y_{d+1}}\rangle_{d,1}=\langle \binom{x}{1},\binom{v}{-c} \rangle_{d,1} \} $ and obviously $(v,-c)$ belongs to this hyperplane. As $x$ was arbitrary, $(v,-c)$ belongs to all the hyperplanes
dual to the points of $P$, that is the definition of $P^*$.

\end{proof}

The proof of the following facts are left to the reader.

\begin{fact}\label{dual plans}
\begin{enumerate}[nolistsep]
\item If $P$ is a hyperbolic hyperplane in co-Minkowski space $\co$, then $P^*$ is a point in Minkowski space $\R^{d,1}$ and $(P^*)^*=P$.
\item Let $P$ and $Q$ be two hyperbolic hyperplanes in $\co$.
\begin{enumerate}
\item if $P$ and $Q$ meet in $\co$ then $P^*$ and $Q^*$ are joined by a spacelike segment in $\R^{d,1}$.
\item if $P$ is strictly above $Q$  in  $\bar B^d\times \R$, then $Q^*-P^*$ is a future directed timelike segment in $\R^{d,1}$.
\item if $P$ and $Q$ have a common point in $\partial B^d\times \R$, then $P^*$ and $Q^*$ are joined by a lightlike segment.
\end{enumerate}
\end{enumerate}
\end{fact}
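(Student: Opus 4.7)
The plan is to parameterize every hyperbolic hyperplane by its defining affine data and then translate each geometric configuration into a routine inequality between a vector length and a scalar. By Fact~\ref{equation plan dual}, every hyperbolic hyperplane $P$ of $\co$ is the graph of a unique affine function $h_P(x)=\langle v_P,x\rangle_d+c_P$, and its dual is $P^*=(v_P,-c_P)\in\R^{d,1}$. Claim~(1) then follows by one more application of Fact~\ref{equation plan dual}: the dual of the point $P^*=(v_P,-c_P)$ is the graph of $x\mapsto\langle P^*,\binom{x}{1}\rangle_{d,1}=\langle v_P,x\rangle_d+c_P=h_P(x)$, which is exactly $P$.

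For claim~(2), I would set $w:=v_Q-v_P$ and $k:=c_Q-c_P$, so that $Q^*-P^*=(w,-k)$ and $\langle Q^*-P^*,Q^*-P^*\rangle_{d,1}=\|w\|^2-k^2$. All three subcases then reduce to comparing $\|w\|$ with $|k|$. For (a), a common point $x\in B^d$ of $P$ and $Q$ satisfies $\langle x,w\rangle_d+k=0$, so by Cauchy--Schwarz together with $\|x\|<1$ we obtain $|k|<\|w\|$, giving $\|w\|^2-k^2>0$ (spacelike). For (b), $h_P>h_Q$ throughout $\bar B^d$ is equivalent to $\sup_{\|x\|\le 1}(\langle x,w\rangle_d+k)<0$, i.e.\ $\|w\|+k<0$; this forces $k<0$ and $k^2>\|w\|^2$, so $Q^*-P^*$ is timelike, and its last coordinate $-k>0$ shows it is future-directed.

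For (c), the implicit meaning in the trichotomy is that $P$ and $Q$ touch on $\partial B^d\times\R$ but not in the open set $\co$, so the affine hyperplane $\{\langle x,w\rangle_d+k=0\}\subset\R^d$ meets $\bar B^d$ only at its boundary and is therefore tangent to the unit sphere. Its Euclidean distance from the origin equals $|k|/\|w\|=1$, hence $\|w\|^2=k^2$ and $Q^*-P^*$ is lightlike. The only real subtlety I foresee is this interpretation of case~(c): one must observe that the three geometric situations correspond to the strict, equality, and reverse-strict cases of the Cauchy--Schwarz bound for $\langle x,w\rangle_d$ on $\bar B^d$, and that a boundary-only contact is exactly the equality case (a direct check also rules out the mixed possibility of an interior and a boundary intersection for distinct $P,Q$, since a secant affine hyperplane of $B^d$ cannot simultaneously be tangent to $\partial B^d$). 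Once this is in place, every implication in the fact is a one-line computation.
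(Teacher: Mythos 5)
Your proof is correct, and since the paper explicitly leaves this Fact to the reader, your computation via Fact~\ref{equation plan dual} --- writing $Q^*-P^*=(v_Q-v_P,\,-(c_Q-c_P))$ and reducing each case to comparing $\|v_Q-v_P\|$ with $|c_Q-c_P|$ via Cauchy--Schwarz on $\bar B^d$ --- is exactly the intended argument. The only blemish is the parenthetical in case~(c): a secant affine hyperplane of $B^d$ \emph{does} meet both the interior and the boundary of the ball, so the ``mixed possibility'' is not literally ruled out; what is true, and what your argument actually uses, is that (c) must be read as contact \emph{only} in $\partial B^d\times \R$, which forces the hyperplane $\{\langle x,v_Q-v_P\rangle_d+c_Q-c_P=0\}$ to be tangent to the unit sphere and hence yields the lightlike equality.
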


The vector space structure of Minkowski space corresponds via duality to the vector space structure on the space of restrictions to $B^d$ of affine maps.

\begin{fact}\label{fact vector dual}
Let $h_Q$ and $h_P$ be the restriction to $B^d$ of affine maps, such that their graphs are the hyperbolic hyperplanes $P, Q$ of co-Minkowski space, and let $\lambda\in \R$. Then the graph of $h_P+\lambda h_Q$ is dual to the point
$P^*+\lambda Q^*$ of Minkowski space.

\end{fact}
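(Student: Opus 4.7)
The plan is to deduce this fact as an essentially immediate consequence of Fact~\ref{equation plan dual}, by unpacking the affine coordinates on both sides and invoking linearity of the correspondence $h \mapsto (v,-c)$.

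First I would write down the affine expressions. Since $P$ is a hyperbolic hyperplane that is the graph of $h_P$, by hypothesis $h_P$ is the restriction to $B^d$ of an affine map, say $h_P(x) = \langle x, v_P\rangle_d + c_P$ for some $v_P \in \R^d$ and $c_P \in \R$. Similarly $h_Q(x) = \langle x, v_Q\rangle_d + c_Q$. Applying Fact~\ref{equation plan dual} directly, I get $P^* = (v_P, -c_P)$ and $Q^* = (v_Q, -c_Q)$ as points of $\R^{d,1} = \R^d \times \R$.

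Next I would observe that $h_P + \lambda h_Q$ is again the restriction to $B^d$ of an affine map, namely
\[
(h_P + \lambda h_Q)(x) = \langle x, v_P + \lambda v_Q\rangle_d + (c_P + \lambda c_Q).
\]
In particular its graph is a hyperbolic hyperplane of $\co$, so Fact~\ref{equation plan dual} applies to it and tells me that the dual point in $\R^{d,1}$ is
\[
\bigl(v_P + \lambda v_Q,\ -(c_P + \lambda c_Q)\bigr) = (v_P, -c_P) + \lambda(v_Q, -c_Q) = P^* + \lambda Q^*,
\]
which is exactly the claim.

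There is no real obstacle here: once one translates the geometric statement into coordinates via Fact~\ref{equation plan dual}, the content is just that the map sending an affine function $x \mapsto \langle x,v\rangle_d + c$ to the vector $(v,-c) \in \R^{d,1}$ is $\R$-linear. The only thing worth flagging is that a hyperbolic hyperplane of $\co$ is by definition (as stated just before Fact~\ref{equation plan dual}) exactly the graph of such an affine function, so the hypothesis that $P$ and $Q$ are hyperbolic hyperplanes is precisely what makes $h_P,h_Q$ (and hence $h_P + \lambda h_Q$) fall into the range of applicability of Fact~\ref{equation plan dual}.
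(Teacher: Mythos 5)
Your proof is correct and is essentially the intended argument: the paper leaves this fact to the reader, and the natural verification is exactly the one you give, namely reading off $P^*=(v_P,-c_P)$ and $Q^*=(v_Q,-c_Q)$ from Fact~\ref{equation plan dual} and noting that the assignment $x\mapsto\langle x,v\rangle_d+c\ \longmapsto\ (v,-c)$ is $\R$-linear. Your remark that the graph of $h_P+\lambda h_Q$ is again a hyperbolic hyperplane (so that Fact~\ref{equation plan dual} applies to it) is the only point that needed checking, and you handle it correctly.
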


\begin{remark}\label{rem dual}{\rm
A convex spacelike hypersurface $S$ of Minkowski space is the boundary of the intersection of half-spaces bounded by spacelike hyperplanes. A hypersurface is \emph{F-convex} if it is the boundary of  a spacelike convex hypersurface such that any spacelike vector hyperplane is the direction of a support plane, and if the surface is in the future side of its support planes. Each support plane $P$ has a normal vector of the form $\binom{x}{1}$ for $x\in B^d$, so there is $h(x)\in \R$ such that $$P=\{y | \langle y,\binom{x}{1}\rangle_{d,1}=h(x) \}~.$$
The graph $S^*$ of the function $h$ in  $B^d\times \R$ is actually a convex hypersurface, see \cite{fv,BF}. In more classical terms, $h$ is the \emph{support function} of the convex set $K$ bounded by $S$:
\begin{equation}\label{eq support}h(x)=\operatorname{max}_{k\in K} \langle \binom{x}{1},k\rangle_{d,1}~. \end{equation}
Let us suppose furthermore that $S$ is the graph of a function $f:\R^d\to \R$. Then if $k\in K$ there is $y\in \R^d$ such that $k=\binom{y}{f(y)}$, and from \eqref{eq support},
$$h(x)=\operatorname{max}_{y\in \R^d} \langle\binom{x}{1},\binom{y}{f(y)}\rangle_{d,1}=
\operatorname{max}_{y\in \R^d}\{\langle x,y\rangle_d-f(y) \}~,$$
i.e. $h$ is nothing but the conjugate (Legendre--Fenchel dual) of $f$.

In the same way, convex hypersurfaces of Minkowski space which are in the past side of their support planes have dual hypersurfaces in the cylindrical model of co-Minkowski space, which are graphs of concave function $h:B^d\to \R$.
}\end{remark}

\begin{example}\label{ex hyp}{\rm

The dual surface of the hyperboloid $\{y | \langle y,y \rangle_{d,1}=-t^2, y_{d+1}>0 \} $
is the graph of the function $B^d\to  \R$, $x\mapsto -tL(x)$. Note that this function is convex (see \eqref{eq hess L}). In the same way,  dual surface of the hyperboloid $\{y | \langle y,y \rangle_{d,1}=-t^2, y_{d+1}<0 \} $ is the graph of the concave function $h(x)=tL(x)$.
}\end{example}

\begin{remark}\label{rem normal coord}{\rm Any hypersurface in Minkowski space which is an envelope of spacelike hyperplanes has a dual hypersurface in co-Minkowski space. This is more easily seen in the other way. For any $C^2$ function $h:B^d\to \R$, there exists a map $\chi:B^d \to \R^{d,1}$, the \emph{normal representation}, such that $P=\{y  | \langle y,\binom{x}{1}\rangle_{d,1}=h(x) \}$
is tangent to $\chi(B^d)$ at the point $\chi(x)$, see \cite[2.12]{fv}. Pay attention to the fact that $\chi$ is in general not a regular map, and that the concept of tangent hyperplane has to be understood in a generalized sense. The simplest example is when $h$ is the restriction to $B^d$ of an affine map: its graph is a hyperplane $P$ in the cylindrical model $B^d\times \R$ of co-Minkowski space, and  $\chi(B^d)$ is reduced to a point, the dual point of $P$ in Minkowski space.
}\end{remark}

\begin{remark}{\rm
The duality between $\R^{d,1}$ and $\co$ can also be seen
in $\R^{d+2}$, looking at $\R^{d,1}$ as a degenerate quadric in $\R^{d+2}$. See \cite[Section~2.5]{FS-hyp} for more details.
}\end{remark}

\subsubsection{Isometries in cylindrical coordinates}

Let us write the action of the isometry group of co-Minkowski space in the cylindrical coordinates $ B^d\times \R$. First let us state some facts about the action of hyperbolic isometries on $B^d$.  The group $\mbox{O}_+(d,1)$ acts by isometries on the hyperbolic
 space $\mathcal{H}^d$, and hence on the Klein ball model. More precisely, let $x\in B^d$ and $A \in \mbox{O}_+(d,1)$.
We will denote by $A\cdot x$ the image of $x$ by the isometry of the Klein ball model defined by $A$. We have
\begin{equation}\label{eq:klein}
\frac{1}{\left(A\binom{x}{1}\right)_{d+1}}A\binom{x}{1}=\binom{A\cdot x}{1}~.
\end{equation}

Note that as $A$ is a linear isometry of Minkowski space $\R^{d,1}$, we have
$$|\left(A{\textstyle \binom{x}{1}}\right)_{d+1} |^2(\| A\cdot x \|^2 - 1)=\|x\|^2-1 $$
i.e.
\begin{equation}\label{eq: LL affine}\left(A{\textstyle\binom{x}{1}}\right)_{d+1}=\frac{L(x)}{L(A\cdot x)}~,\end{equation}
so, together with \eqref{eq:klein}, one obtains
\begin{equation}\label{A klein}
A\binom{x}{1}=\frac{L(x)}{L(A\cdot x)}\binom{A\cdot x}{1}~.
\end{equation}

For simplicity, let us  fix also the following coordinate system; every element $(x_1, \ldots , x_{d+1})$ of $\R^{d+1}$ has
a horizontal component $\bar{x}=(x_1, \ldots , x_d)$ and a vertical component $x_{d+1}$.
If  $\langle \bar{x}, \bar{y}\rangle_d$ is the scalar product of horizontal elements, we have, for $x,y\in \R^{d,1}$,
$\langle x,y\rangle_{d,1}=\langle \bar{x},\bar{y}_{d}\rangle_d-x_{d+1}y_{d+1}$.

\begin{lemma}
Let $(x,h)\in B^d\times \R$ and $(A,v)\in  O_+(d,1)\ltimes \R^{d,1}$.
Then the isometry of co-Minkowski space defined by $(A,v)$ acts on the cylindrical coordinates as follows:
\begin{equation}\label{action isom}(A,v)(x,h)=\left(A\cdot x, \frac{L(A\cdot x)}{L(x)}h+\langle A\cdot x,\bar{v}\rangle_{d}-v_{d+1} \right)~.\end{equation}
%\begin{equation}\label{action isom}(v,A)(h,x)=\left( L(A\cdot x)L(x)^{-1}(h+(x,1)JA^{-1}v ), A\cdot x \right)\end{equation}

\end{lemma}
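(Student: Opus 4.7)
The plan is to lift the point $(x,h) \in B^d\times\R$ to the corresponding point of $\cM^{d+1}_+\subset\R^{d+2}$, apply the matrix in \eqref{eq:isom co}, and then re-project centrally to the chart $\{x_{d+1}=1\}$ to read off the new cylindrical coordinates. Throughout, the key identity will be the Klein-action formula \eqref{A klein}, which handles everything about how $A$ moves the first coordinate and how the normalizing factor $L$ transforms.

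First I would identify how cylindrical coordinates $(x,h)$ correspond to a point of $\cM^{d+1}_+$. Centrally projecting $\cM^{d+1}_+$ to $\{x_{d+1}=1\}$ means that the ray through $(n,h_0)\in \mathcal H^d\times\R$ lands on the slice at the unique rescaling making the $(d+1)$st coordinate equal to $1$. Writing $n=L(x)^{-1}\binom{x}{1}$ as in Section~\ref{sec klein}, this rescaling factor is $L(x)$, and so $(x,h)$ comes from the point of $\cM^{d+1}_+$ with $n=L(x)^{-1}\binom{x}{1}$ and $h_0 = h/L(x)$.

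Now I would apply the matrix in \eqref{eq:isom co} to the vector $(n,h_0)=(L(x)^{-1}x,\,L(x)^{-1},\,h/L(x)) \in \R^{d+2}$. The first $d+1$ coordinates transform by $A$, giving $An$; the last coordinate becomes $\langle v, An\rangle_{d,1}+h_0$. Using \eqref{A klein}, I compute
\[
An \;=\; L(x)^{-1} A\binom{x}{1} \;=\; L(A\cdot x)^{-1}\binom{A\cdot x}{1}.
\]
Thus the image point lies on the ray from the origin through $\bigl(A\cdot x,\,1,\,\star\bigr)$ with scaling parameter $L(A\cdot x)^{-1}$, so the central projection simply multiplies all coordinates by $L(A\cdot x)$. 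The new first coordinate is therefore $A\cdot x$, as expected, and the new last coordinate is
\[
L(A\cdot x)\Bigl[\langle v, An\rangle_{d,1}+\tfrac{h}{L(x)}\Bigr]
\;=\;\langle v,\,\tbinom{A\cdot x}{1}\rangle_{d,1} + \tfrac{L(A\cdot x)}{L(x)}h,
\]
where in the first term I used $An = L(A\cdot x)^{-1}\binom{A\cdot x}{1}$, so the prefactor $L(A\cdot x)$ cancels. Splitting $\langle v,\binom{A\cdot x}{1}\rangle_{d,1}=\langle \bar v,A\cdot x\rangle_d - v_{d+1}$ yields exactly \eqref{action isom}.

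No serious obstacle arises: the whole argument is mechanical once the correspondence $h\leftrightarrow h_0 = h/L(x)$ between the slice coordinate and the fiber coordinate on $\cM^{d+1}_+$ is set up correctly. The one place to be careful is not to confuse the two; it is the identity \eqref{A klein} that absorbs the nontrivial rescaling along the $\mathcal H^d$-factor and leaves a clean multiplicative factor $L(A\cdot x)/L(x)$ in front of $h$.
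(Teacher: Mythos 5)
Your proof is correct, and it takes a slightly different route from the paper's. The paper never leaves Minkowski space: it reads $(x,h)$ as the hyperplane $\{y \mid \langle \binom{x}{1},y\rangle_{d,1}=h\}$, computes separately how the defining equation transforms under the linear part $(A,0)$ (via \eqref{A klein}) and under a translation $(\operatorname{Id},v)$, and then composes using the group law \eqref{group structure}. You instead lift $(x,h)$ to the point $\bigl(L(x)^{-1}\binom{x}{1},\,h/L(x)\bigr)$ of the quadric $\cM^{d+1}_+\subset\R^{d+2}$, apply the explicit matrix representation \eqref{eq:isom co} in a single step, and centrally reproject to the chart $\{x_{d+1}=1\}$. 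The two computations are algebraically isomorphic and both hinge on \eqref{A klein}, but yours makes visible exactly where the cylindrical model comes from (the factor $L(A\cdot x)/L(x)$ appears as the ratio of the two rescalings needed to land on the affine slice), at the cost of the extra bookkeeping step $h_0=h/L(x)$ relating the fiber coordinate on $\cM^{d+1}_+$ to the slice coordinate --- a step you set up correctly and which is consistent with the paper's duality convention $\tilde P^*=(n,h_0)$ versus $P^*=(x,h)$. The paper's version avoids that identification entirely but pays for it by splitting into two cases and recombining.
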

\begin{proof}

When the isometry is linear, i.e. when $v=0$, the elements of the image of $(x,h)$ by $(A,v)$ are
elements of $\R^{d,1}$ satisfying:
\begin{eqnarray*}
% \nonumber to remove numbering (before each equation)
  h &=& \langle \binom{x}{1}, A^{-1}\binom{y}{y_{d+1}} \rangle_{d,1} \\
   &=& \langle A\binom{x}{1}, \binom{y}{y_{d+1}} \rangle_{d,1} \\
   & \stackrel{\eqref{A klein}}{=} & \langle \frac{L(x)}{L(A\cdot x)}\binom{A.x}{1}, \binom{y}{y_{d+1}} \rangle_{d,1}~.
\end{eqnarray*}

Therefore, the image of $(x, h)$ by $(A, 0)$ is $(A\cdot x,\frac{L(A\cdot x)}{L(x)}h)$.

In the case of a translation by a vector $v = \binom{\bar{v}}{v_d}$ we have:
\begin{eqnarray*}
% \nonumber to remove numbering (before each equation)
  h &=& \langle \binom{x}{1}, \binom{y}{y_{d+1}} - \binom{\bar{v}}{v_{d+1}}\rangle_{d,1} \\
   &=& \langle \binom{x}{1}, \binom{y}{y_{d+1}} \rangle_{d,1} - \langle x, \bar{v} \rangle_d + v_{d+1}~.
\end{eqnarray*}

Hence the image of $(x,h)$ by the translation is
$$(x,h + \langle x, \bar{v} \rangle_d - v_{d+1})~.$$
The Lemma follows because from \eqref{group structure}, $(A,v)=(\operatorname{Id},v)(A,0)$.
\end{proof}

\begin{remark}\label{rem inv tL}{\rm
There is an easy way to see the action of $O_+(d,1)$ in the coordinates $B^d\times \R$. Actually,
$ B^d\times \R$ is foliated by the graphs of the functions $tL$, $t\in \R$.
Note that those graphs are, for $t\not= 0$, the duals of the two-sheeted hyperboloids centered at the origin in Minkowski space, see Example~\ref{ex hyp}. Observe that for the sheet with positive (respectively negative)  $x_{d+1}$, the parameter $t$ is negative (respectively positive).
Hence if $(x,h)\in B^d\times \R$ belongs to the graph of $tL$ for some $t$,
then for any $A\in O_+(d,1)$, $(A,0)(x,h)$ still belongs to the graph of $tL$, and of course its projection onto $B^d\times\{0\}$ is
$(A\cdot x, 0)$, see Figure~\ref{fig action}.
}\end{remark}

\begin{figure}
\begin{center}
\psfrag{b}{$B^d$}
\psfrag{x}{$x$}
\psfrag{ax}{$A\cdot x$}
\psfrag{hx}{$(x,h)$}
\psfrag{ahx}{$(A,0)(x,h)$}
\psfrag{h2x}{$(x,h')$}
\psfrag{ah2x}{$(A,0)(x,h')$}

\includegraphics[width=0.3\hsize]{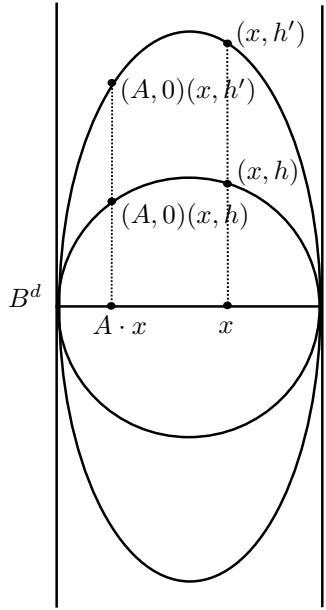}
\end{center}\caption{Action of $(A,0)$ on $B^d\times \R$.}\label{fig action}
\end{figure}

\begin{remark}{\rm
In order to fully understand the action of $O(d,1)$ onto co-Minkowski space, we have to describe the action of $-\operatorname{Id}\in O(d,1)$ onto
$B^d\times \R$.  It is actually straightforward that
\begin{equation}\label{-id}(-\operatorname{Id},0)(x,h)=(x,-h)~. \end{equation}
}\end{remark}

We now describe the action of the isometries of co-Minkowski space on functions.
Let $S$ be a hypersurface in Minkowski space which is the graph of a map $h:B^d\to\R$. Then, for $(A,v)\in O_+(d,1)\ltimes \R^{d,1}$, due to \eqref{action isom}, the hypersurface $(A,v)S$ is the graph of the map $(A,v)h:B^d\to\R$ defined as
\begin{equation}\label{def action h}(A,v)h(x):= \frac{L( x)}{L(A^{-1}\cdot x)}h(A^{-1}\cdot x)+\langle x,\bar{v}\rangle_{d}-v_{d+1}~. \end{equation}

\begin{lemma}\label{lem:action hessien}
Let $h:B^d\to \R$ be a $C^2$ map and   $(A,v)\in O_+(d,1)\ltimes \R^{d,1}$.
Then
$$\operatorname{Hess}[(A,v)h](x)(X,Y)=\frac{L(x)}{L( A^{-1}\cdot x)}\operatorname{Hess} h (A^{-1}\cdot x)(DA^{-1}(x)X,DA^{-1}(x)Y)~. $$
\end{lemma}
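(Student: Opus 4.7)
The plan is to strip off the contributions that trivially vanish, then recognize that the remaining piece is best treated as a function on hyperbolic space, on which $A^{-1}$ acts by isometries.

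First I would note that the summand $\langle x,\bar v\rangle_d-v_{d+1}$ in the definition \eqref{def action h} is an affine function of $x$, so its (Euclidean) Hessian is zero. Consequently it suffices to establish the formula for $v=0$, i.e.\ for the function
\[
[(A,0)h](x)=\frac{L(x)}{L(A^{-1}\cdot x)}\,h(A^{-1}\cdot x).
\]

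Now I would use \eqref{eq hyp hess}, which relates the Euclidean Hessian of any $C^2$ function $f:B^d\to\R$ to the hyperbolic Hessian of $L^{-1}f$:
\[
L^{-1}\operatorname{Hess} f = \operatorname{Hess}^{\H^d}(L^{-1}f) - (L^{-1}f)\, \gh.
\]
Setting $F:=L^{-1}h$, the crucial observation is that
\[
L^{-1}(x)\,[(A,0)h](x)=\frac{h(A^{-1}\cdot x)}{L(A^{-1}\cdot x)}= F(A^{-1}\cdot x),
\]
so $L^{-1}[(A,0)h]=F\circ A^{-1}$. In other words, the seemingly awkward prefactor $L(x)/L(A^{-1}\cdot x)$ is exactly what is needed to make $(A,0)h$ behave like a pullback by $A^{-1}$ at the level of $L^{-1}$-rescaled functions on $\H^d$.

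The main step is then to use that $A^{-1}$, acting on the Klein model $(B^d,\gh)$, is a hyperbolic isometry, so the hyperbolic Hessian and the hyperbolic metric are both invariant under pullback:
\[
\operatorname{Hess}^{\H^d}(F\circ A^{-1})(x)(X,Y)=\operatorname{Hess}^{\H^d}F(A^{-1}\cdot x)(DA^{-1}(x)X,DA^{-1}(x)Y),
\]
\[
\gh(x)(X,Y)=\gh(A^{-1}\cdot x)(DA^{-1}(x)X,DA^{-1}(x)Y).
\]
Substituting these into \eqref{eq hyp hess} applied to $f=(A,0)h$ yields
\[
L^{-1}(x)\operatorname{Hess}[(A,0)h](x)(X,Y)=\bigl(\operatorname{Hess}^{\H^d}F-F\,\gh\bigr)(A^{-1}\cdot x)(DA^{-1}X,DA^{-1}Y),
\]
and applying \eqref{eq hyp hess} in the reverse direction at the point $A^{-1}\cdot x$ identifies the right-hand side with $L^{-1}(A^{-1}\cdot x)\operatorname{Hess} h(A^{-1}\cdot x)(DA^{-1}X,DA^{-1}Y)$. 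Rearranging the $L$ factors gives the claimed formula. There is no genuine obstacle; the only point to be careful about is keeping straight which base point ($x$ versus $A^{-1}\cdot x$) each tensor is evaluated at when moving \eqref{eq hyp hess} back and forth.
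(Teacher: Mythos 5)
Your argument is correct, and it takes a genuinely different route from the paper's. The paper proves the lemma by a direct Euclidean computation: after the same reduction to $v=0$, it expands $\operatorname{Hess}\bigl(\tfrac{L}{L\circ A^{-1}}(h\circ A^{-1})\bigr)$ via the product rule \eqref{eq: hessien produit} and the chain rule \eqref{hessien compose}, and kills the unwanted terms using two ad hoc identities (that $\tfrac{L}{L\circ A}$ is affine, and a relation obtained by differentiating \eqref{eq:klein} twice). You instead observe that $L^{-1}\cdot[(A,0)h]=(L^{-1}h)\circ A^{-1}$ --- i.e.\ the prefactor $L(x)/L(A^{-1}\cdot x)$ is exactly the cocycle making $(A,0)h$ a pullback at the level of $L^{-1}$-rescaled functions --- and then combine \eqref{eq hyp hess} with the naturality of $\operatorname{Hess}^{\H^d}$ and $\gh$ under the hyperbolic isometry $A^{-1}$. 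The bookkeeping of base points in your final two displays is right, and there is no circularity, since \eqref{eq hyp hess} is derived in the paper independently of this lemma. What your route buys is conceptual clarity: it shows the statement is precisely the isometry-equivariance of the tensor $L^{-1}\operatorname{Hess}h$ (the second fundamental form of the graph), which is the content the paper later extracts as Fact~\ref{lem:hessien inv}; the one ingredient you use that the paper does not state explicitly --- that an isometry pulls back the Riemannian Hessian --- is standard and uncontroversial. The paper's computation, by contrast, stays entirely within Euclidean calculus on $B^d$ and does not presuppose any Riemannian naturality, at the cost of being less illuminating.
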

\begin{proof}
As $(\operatorname{Id},v)h$ is the sum of $h$ with an affine function, we clearly have $\operatorname{Hess}[(\operatorname{Id},v)h](x)=\operatorname{Hess} h (x)$. So we need to check the result only for $(A,0)$.
As
$$\operatorname{Hess}[(A,0)h] = \operatorname{Hess} \left( \frac{L}{L\circ A^{-1}}(h\circ A^{-1})\right)~,$$
the result follows from the rules \eqref{eq: hessien produit}
and
\begin{equation}\label{hessien compose}\operatorname{Hess}(f\circ g)(x)(X,Y)=\operatorname{Hess}f(g(x))( \operatorname{d}g(x)(X), \operatorname{d}g(x)(Y))+ \operatorname{d}f(g(x))(\operatorname{Hess}g(x)(X,Y))~, \end{equation}
 using the two following facts during the computations:
\begin{itemize}[nolistsep]
\item  $\frac{L}{L\circ A}$ is an affine map by \eqref{eq: LL affine}, so has null Hessian;
\item Differentiating two times \eqref{eq:klein} we obtain
$$A{\textstyle\binom{X}{0}}_{d+1}DA(x)(Y) + A\textstyle{\binom{Y}{0}}_{d+1}DA(x)(X)+A\textstyle{\binom{x}{1}}_{d+1}\operatorname{Hess}A(x)(X,Y)=0~,   $$
so using \eqref{eq: LL affine} again,
$$ \operatorname{d}\frac{L}{L\circ A}\otimes  \operatorname{d} A +  \operatorname{d}A\otimes  \operatorname{d}\frac{L}{L\circ A}+ \frac{L}{L\circ A}\operatorname{Hess}A=0~.   $$
\end{itemize}
\end{proof}

\begin{lemma}\label{iso convexe}
Let $h:B^d\to \R$ be a convex map. Then for $(A,v)\in O_+(d,1)\ltimes \R^{d,1}$, $(A,v)h$ is a convex map.
\end{lemma}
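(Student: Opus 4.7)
The plan is to reduce the assertion to Lemma~\ref{lem:action hessien} together with the well-known characterization of convexity by non-negativity of the Hessian. First, suppose $h$ is $C^2$. Then convexity of $h$ is equivalent to $\operatorname{Hess} h(y)(Z,Z) \geq 0$ for every $y \in B^d$ and every $Z \in T_yB^d$. I would apply Lemma~\ref{lem:action hessien} with $X=Y$ to get
$$\operatorname{Hess}[(A,v)h](x)(X,X) = \frac{L(x)}{L(A^{-1}\cdot x)}\, \operatorname{Hess} h(A^{-1}\cdot x)\bigl(DA^{-1}(x)X,\, DA^{-1}(x)X\bigr).$$
Since $L>0$ on $B^d$, the prefactor is strictly positive; the remaining factor is non-negative by convexity of $h$ applied at $y = A^{-1}\cdot x$ with vector $Z = DA^{-1}(x)X$. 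Hence the Euclidean Hessian of $(A,v)h$ is positive semi-definite everywhere on $B^d$, so $(A,v)h$ is convex.

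For a general convex $h$ (not necessarily $C^2$), I would pass to the smooth case by standard mollification: on any relatively compact $\Omega \Subset B^d$, convolve $h$ with a smooth, non-negative, radially symmetric, compactly supported mollifier $\rho_\epsilon$ to produce a family of $C^\infty$ convex functions $h_\epsilon$ converging to $h$ locally uniformly on $B^d$. The explicit action formula \eqref{def action h} shows that $h \mapsto (A,v)h$ is continuous with respect to local uniform convergence, since it consists of precomposition with the diffeomorphism $A^{-1}\cdot(\,\cdot\,)$, multiplication by the positive smooth factor $L/(L\circ A^{-1})$, and addition of an affine term. Each $(A,v)h_\epsilon$ is convex by the $C^2$ case, and a locally uniform limit of convex functions is convex, so $(A,v)h$ is convex.

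The main conceptual observation is that the distortion factor $L(x)/L(A^{-1}\cdot x)$ appearing in the Hessian identity is \emph{positive}, so the sign of the quadratic form is preserved by the action; the rest is a routine regularization. I do not expect any real obstacle beyond carefully applying Lemma~\ref{lem:action hessien}. Alternatively, one could give a proof-free-of-regularity argument via duality (Remark~\ref{rem dual}): convex graphs in $B^d \times \R$ correspond to F-convex hypersurfaces in $\R^{d,1}$, and $O_+(d,1) \ltimes \R^{d,1}$ plainly preserves both spacelikeness and future-convexity of hypersurfaces in Minkowski space.
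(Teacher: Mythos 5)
Your proof is correct, but it takes a different route from the paper. The paper's proof is the one you relegate to your last sentence: it passes through the duality of Remark~\ref{rem dual}, observing that the epigraph of a convex $h$ is the support function of a future convex set in Minkowski space, that $(A,v)$ with $A\in O_+(d,1)$ sends future convex sets to future convex sets, and that the support function of the image is $(A,v)h$ --- no regularity hypothesis and no computation needed. Your primary argument instead works entirely inside the cylindrical model: Lemma~\ref{lem:action hessien} with $X=Y$ shows that the Euclidean Hessian of $(A,v)h$ is the pullback of that of $h$ under the diffeomorphism $x\mapsto A^{-1}\cdot x$ scaled by the positive factor $L(x)/L(A^{-1}\cdot x)$, so positive semi-definiteness is preserved; the reduction of the general case to the $C^2$ case by mollification is sound (convolution with a non-negative kernel preserves convexity, the action \eqref{def action h} is continuous for local uniform convergence, and pointwise limits of convex functions are convex; the only care needed is that $h_\epsilon$ lives on a slightly smaller domain, which your restriction to $\Omega\Subset B^d$ handles, local convexity on the convex set $B^d$ being enough to conclude). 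What each approach buys: the paper's duality argument is shorter and regularity-free, but leans on the unproved assertions of Remark~\ref{rem dual}; your computation is longer but self-contained, and it isolates the precise reason convexity survives, namely the positivity of the conformal distortion factor --- the same mechanism that underlies the invariance of the mean curvature measure in Lemma~\ref{lem: action sur mesure}.
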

Note that from \eqref{-id}, $(-\operatorname{Id},0)h$ is concave if $h$ is convex.
\begin{proof}
The simplest way to see this is to argue that the dual of the  epigraph of $h$ is a future convex set in Minkowski space, see Remark~\ref{rem dual}. The isometry $(A,v)$ will send
this future convex set to a future convex set (because $A\in O_+(d,1)$), whose support function is exactly  $(A,v)h$, hence convex.
\end{proof}

\subsubsection{Connection in cylindrical coordinates}\label{con}

Clearly,
the restriction of the vector field  $\frac{\partial }{\partial x_t}=(0,\ldots,0,1)$ of $\R^{d+2}$ to
$\cM^{d+1}$ is invariant under the action of the isometries of $\cM^{d+1}$. It is also immediate to see that $\frac{\partial }{\partial x_t}$ is parallel:  $\nabla^{\mathscr{co}\mathcal{M}^{d+1}} \frac{\partial }{\partial x_t}=0$. We will denote by $\T$ the image of
$\frac{\partial }{\partial x_t}$ in co-Minkowski space. An elementary computation (see Figure~\ref{fig Tb}) shows that in the cylindrical coordinates
$B^d\times \R$,
\begin{equation}\label{def T}\T=L\frac{\partial}{\partial x_{t}}~. \end{equation}

\begin{figure}
\begin{center}
\psfrag{O}{$0$}
\psfrag{1}{$\displaystyle\frac{\partial}{\partial x_{t}}$}
\psfrag{T}{$\T(x,h)$}
\psfrag{x}{$(x,h,1)$}
\psfrag{o}{$\T$}
\psfrag{Lx}{$L^{-1}(x)(x,h,1)$}

\includegraphics[width=0.5\hsize]{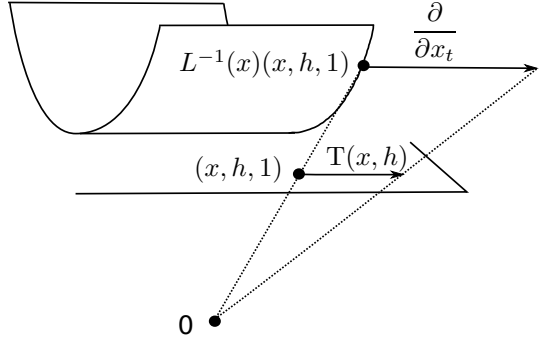}
\end{center}\caption{By Thales theorem, $\T=L\frac{\partial}{\partial x_t}$.}\label{fig Tb}
\end{figure}

In particular, $\T$ is invariant under the action of $O_+(d,1)\ltimes \R^{d,1}$, and  $\T$ is parallel: $\nabla^{\co}\T =0$. Observe that the trajectories of the flow generated by $\T$ are the vertical lines. In Minkowski space, the flow generated by $\T$ corresponds to parallel displacement of spacelike hyperplanes.

With the help of $\T$, one can express the connection $\nabla^{\co}$ in the cylindrical
 coordinates. Namely, at each point $(x,h)\in  B^d\times \R$, we set $\T(x,h)$ as the vector basis for the $\R$-component of the tangent space.
Hence a vector field $X$ of $ B^d\times \R$ can be written
$X=X_h+X_\T\T$, with $X_h\in T_xB^d$ and
$X_\T\in \R$. If  $Y$ is another vector field of $B^d\times \R$, then
\begin{equation}\label{def con}\nabla^{\co}_YX= \nabla_{Y_h}X_h + Y_h(X_\T)\T +Y_\T[\T,X]~. \end{equation}
This is easily checked using the definition of the connection  $\nabla^{\co}$ and the fact that $\T$ is parallel.

\subsubsection{Volume form}\label{sec volume form}

For future reference, let us mention that a volume form $\omega_{\mathscr{co}\mathcal{M}^{d+1}}$ is also given on
 $\cM^{d+1}$. For $v_1,\ldots, v_{d+1}$  vectors of $\R^{d+2}$ tangent to
 $\cM^{d+1}$, set $$ \omega_{\mathscr{co}\mathcal{M}^{d+1}}  (v_1,\ldots,v_{d+1}):=\omega_{\R^{d+2}}(v_1,\ldots,v_{d+1},\operatorname{N})~$$
 (recall that $\operatorname{N}$ is the vector field $\operatorname{N}(x)=x$ on $\cM^{d+1}$).
This  form is invariant under orientation preserving isometries and parallel for
 $\nabla^{\mathscr{co}\mathcal{M}^{d+1}}$. It induces a parallel  form
 $\omega_{\co}$ on co-Minkowski space, invariant under orientation preserving isometries, and called the \emph{volume form} of co-Minkowski space.

In the cylindrical coordinates,  $\omega_{\co}$  is defined as follows.
At a point of
$ B^d\times \R$, let $v_1,\ldots,v_d$ be an oriented free family of non-vertical tangent vectors. In particular, $v_1,\ldots,v_d$ are tangent to a hyperbolic hyperplane, so, keeping the same notation, we can consider a family $v_1,\ldots,v_d$ of oriented orthonormal vectors fields, such that
$v_1,\ldots,v_d,\T$ is positively oriented. Then $\omega_{\co}$ is the unique $(d+1)$-form which is equal to $1$ when evaluated at such a family of vectors.

\subsection{Extrinsic geometry of graphs}\label{sec:extrin}

Let $h:B^d\to \R$ be a $C^2$ map. Its graph $S$ is a hypersurface in
$B^d\times \R$, hence in co-Minkowski space if one uses the cylindrical coordinates.
Note that  the graph is always transverse to the vertical vector field $\T$ defined by \eqref{def T}, so the metric induced on
$S$ by the ambient degenerate metric $g_{\co}$ of co-Minkowski space is  always a hyperbolic metric, that does not give too much informations. But still, some informations can be obtained from the extrinsic geometry of $S$.
To do so, we will consider the vector field $\T$  as the normal vector to $S$.

\subsubsection{Second fundamental form and mean curvature}\label{sec: extr}

Let $h:B^d\to \R$ be a $C^2$ map and let $S$ be its graph.
Any vector field of $S$ can be written
$X+\operatorname{d}h(X)L^{-1}\T$, where $X$ is a
 vector field of $B^d$.

\begin{fact} For any smooth vector field $X$ on $B^d$ and
$C^2$ map $h:B^d\to \R$,
\begin{equation}\label{con surf}\nabla^{\co}_{(Y +L^{-1}\operatorname{d}h(Y)\T)}(X+L^{-1}\operatorname{d}h(X)\T)=\nabla^{\H^d}_YX + L^{-1}\operatorname{d}h(\nabla^{\H^d}_YX) \T + L^{-1}\hess h(X,Y) \T~. \end{equation}
\end{fact}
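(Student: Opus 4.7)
The plan is to apply the general formula \eqref{def con} to the tangent vector fields $\tilde X := X + L^{-1}\operatorname{d}h(X)\T$ and $\tilde Y := Y + L^{-1}\operatorname{d}h(Y)\T$, viewing $X, Y$ as horizontal vector fields independent of the vertical coordinate $x_t$, and then reducing the resulting expression via the Weyl formula \eqref{eq con} and the definition of the Euclidean Hessian of $h$.

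First I would decompose $\tilde X_h = X$, $\tilde X_\T = L^{-1}\operatorname{d}h(X)$ and similarly for $\tilde Y$, and interpret the horizontal piece $\nabla_{Y_h}X_h$ appearing in \eqref{def con} as $\nabla^{\H^d}_Y X$ (this is legitimate because the horizontal slices of the cylindrical model are totally geodesic copies of $\H^d$ on which $\nabla^{\co}$ restricts to the hyperbolic connection, by Fact~\ref{fact hyp k tilde}). Formula \eqref{def con} then yields
\[
\nabla^{\co}_{\tilde Y}\tilde X = \nabla^{\H^d}_Y X + Y\bigl(L^{-1}\operatorname{d}h(X)\bigr)\T + L^{-1}\operatorname{d}h(Y)\,[\T,\tilde X]~.
\]
The first term already matches the horizontal piece on the right-hand side of the claimed formula, so it remains to identify the sum of the two scalar coefficients of $\T$ with $L^{-1}\operatorname{d}h(\nabla^{\H^d}_Y X) + L^{-1}\hess h(X,Y)$.

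Next I would compute $[\T,\tilde X]$ using $\T = L\,\partial_{x_t}$, the Leibniz rules for Lie brackets, and the fact that $X$ and $L^{-1}\operatorname{d}h(X)$ depend only on the horizontal coordinate: one gets $[\T,X] = -L^{-1}X(L)\T$ and $[\T, L^{-1}\operatorname{d}h(X)\T]=0$, so $[\T,\tilde X] = -L^{-1}X(L)\T$. Expanding the Leibniz rule in $Y(L^{-1}\operatorname{d}h(X))$ and substituting $X(L) = -L^{-1}\langle x,X\rangle_d$ and $Y(L) = -L^{-1}\langle x,Y\rangle_d$ (both consequences of \eqref{eq der L-1}), the coefficient of $\T$ becomes
\[
L^{-3}\langle x,Y\rangle_d\operatorname{d}h(X) + L^{-3}\langle x,X\rangle_d\operatorname{d}h(Y) + L^{-1}Y(\operatorname{d}h(X))~.
\]

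Finally, to identify this with $L^{-1}\operatorname{d}h(\nabla^{\H^d}_Y X) + L^{-1}\hess h(X,Y)$, I would invoke the Weyl formula \eqref{eq con} to get $\operatorname{d}h(\nabla^{\H^d}_Y X) = \operatorname{d}h(D_Y X) + L^{-2}\bigl(\langle x,Y\rangle_d\operatorname{d}h(X) + \langle x,X\rangle_d\operatorname{d}h(Y)\bigr)$, together with the defining identity $\hess h(X,Y) = Y(\operatorname{d}h(X)) - \operatorname{d}h(D_Y X)$ of the Euclidean Hessian; the $\operatorname{d}h(D_Y X)$ terms cancel and the formula drops out. The only delicate point is the bookkeeping in the $\T$-coefficient: a priori one produces a linear combination involving $X(L)$, $Y(L)$, $\langle x, X\rangle_d$, $\langle x, Y\rangle_d$, and $Y(\operatorname{d}h(X))$, and it is only after substituting the Weyl correction into $\operatorname{d}h(\nabla^{\H^d}_Y X)$ that the Euclidean terms $\operatorname{d}h(D_Y X)$ conspire to cancel, leaving the clean combination $L^{-1}\hess h(X,Y)$.
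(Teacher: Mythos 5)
Your proposal is correct and follows essentially the same route as the paper: apply \eqref{def con} to the decomposed tangent fields, compute $[\T,\tilde X]=-L^{-1}X(L)\T$ using $\T=L\,\partial_{x_t}$ and \eqref{eq der L-1}, and then identify the resulting $\T$-coefficient with $L^{-1}\operatorname{d}h(\nabla^{\H^d}_YX)+L^{-1}\hess h(X,Y)$. The only (cosmetic) difference is in the last step, where the paper passes through the hyperbolic Hessian via \eqref{hess con} and then converts to the Euclidean one by \eqref{nabla2}, while you work with the Euclidean Hessian directly and absorb the correction terms via the Weyl formula \eqref{eq con}; since \eqref{nabla2} is itself derived from \eqref{eq con}, the two bookkeepings are equivalent.
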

\begin{proof}
First let $k\in \{1,\ldots,d\}$. As $X$ does not depend on the
$\frac{\partial}{\partial x_{t}}$ direction, and as $\T^k=0$,
$[\T,X]^k=\T^i\frac{\partial X^k}{\partial x_i} - X^i\frac{\partial \T^k}{\partial x_i}=0, $
and
$[\T,X]^{t}= - X^i\frac{\partial \T^{t}}{\partial x_i} = -X(L). $
Also, as $L^{-1}\operatorname{d}h(X)$ does not depend on the vertical coordinate,  $[\T, L^{-1}\operatorname{d}h(X)\T]=0$.
At the end of the day, if we are at a point $x\in B^d$,

$$[\T,X+L^{-1}\operatorname{d}h(X)\T ]=-X(L)\frac{\partial }{\partial x_{t}}= -X(L)L^{-1}\T=L^{-2}\langle x,X\rangle\T~. $$

 So from \eqref{def con},
$$\nabla^{\co}_{(Y+L^{-1}\operatorname{d}h(Y)\T)}(X+L^{-1}\operatorname{d}h(X)\T)=\nabla_YX + (Y(L^{-1}\operatorname{d}h(X))+L^{-3}\operatorname{d}h(Y)\langle x,X\rangle_d)\T ~. $$

We have $Y(L^{-1}\operatorname{d}h(X))=L^{-1}(Y(X(h))+Y(L^{-1})\operatorname{d}h(X)$, and from \eqref{hess con}, $L^{-1}(Y(X(h))=L^{-1}\operatorname{Hess}^{\H^d}h(X,Y)+L^{-1}\operatorname{d}h(\nabla_YX)$. Also, if we are at the point $x$, $Y(L^{-1})=\langle x,Y\rangle_d L^{-3}$:

$$\nabla^{\co}_{(Y+L^{-1}\operatorname{d}h(Y)\T)}(X+L^{-1}\operatorname{d}h(X)\T)=\nabla_YX +L^{-1}\operatorname{d}h(\nabla_YX)\T + L^{-1}\mathcal{X}\T $$
with $\mathcal{X}=\operatorname{Hess}^{\H^d}h(X,Y)+L^{-2}\langle x,Y\rangle_d \operatorname{d}h(X)+L^{-2}\langle x,X\rangle_d \operatorname{d} h(Y)$, and by \eqref{nabla2}, $\mathcal{X}=\operatorname{Hess} h(X,Y)$.
\end{proof}

 Given two vector fields tangent to $S$, the graph of $h$, then their co-Minkowski connection decomposes as a part tangent to $S$, and a part colinear to $\T$, where $\T$ may be think as a unit normal vector field to $S$.
Mimicking  the classical theory of surfaces, we define the \emph{second fundamental form} $\II_h$ of $S$ as the colinearity factor.
More precisely, Equation \eqref{con surf} says that
for $x\in B^d$ and $X,Y\in T_xB^d$,
\begin{equation}\label{def II}\II_h(x)(X,Y)=L^{-1}(x)\hess h (x)(X,Y)~.\end{equation}

\begin{remark}\label{rem II cod}{\rm
From Lemma~\ref{lem tens cod}, the second fundamental form is a symmetric Codazzi tensor on $\H^d$, and any symmetric Codazzi tensor on the hyperbolic space is the second fundamental form of a unique hypersurface in co-Minkowski space. This is a kind of ``fundamental theorem for hypersurfaces'' in co-Minkowski space, with the condition about the first fundamental form reduced to the hypothesis that the metric is hyperbolic. Note that here there is no Gauss condition, i.e. for $d=2$ there is no  relation between the curvature of the induced metric and the determinant of the second fundamental form.
}\end{remark}

The \emph{shape operator} $\shape(h)$ of $S$ is
the symmetric linear mapping associated to the second fundamental form by the hyperbolic metric: $\II_h(X,Y)=g_{\H^d}(\shape(h)(X),Y)$.
From \eqref{eq hyp hess}, if $\grad^{\H^d}$ is the gradient for $g_{\H^d}$, we have
$$\shape(h)(X)=\nabla^{\H^d}_X \grad^{\H^d}(L^{-1}h) - (L^{-1}h)X~. $$

%\subsubsection{Mean curvature}

The \emph{mean curvature} $\mean (h)$ of the graph of $h$ is the trace for the hyperbolic metric of the shape operator  times $1/d$. From the definition or  Fact~\ref{facts laplacian}, it can be written in different ways:
with the help of the the Euclidean
Laplacian $\Delta$
\begin{equation}\label{mean eucl}
\mean(h)(x)=\frac{1}{d} \operatorname{Tr}_{\gh} \left(L^{-1}\hess h\right)(x)=\frac{1}{d}L(x)(
\Delta h(x) - \hess h(x)(x,x))~,
\end{equation}
or with the help of the hyperbolic Laplacian $\Delta^{\H^d}$
\begin{equation}\label{mean hyp}\mean(h)(x)=\frac{1}{d}\Delta^{\H^d} (L^{-1}h)(x) - (L^{-1}h)(x)~.
\end{equation}

\begin{remark}\label{rem H positif}
{\rm  Let us suppose that $h:B^d\to \R$ is $C^2$ and convex. Using a basis of eigenvectors, it follows from
\eqref{mean eucl} that $\mean(h)$ is non-negative, and that if $\mean(h)=0$ then $h$ is affine.
}\end{remark}

\begin{proposition}[{\cite{li95}}]\label{lem Li}
If the graph of a $C^2$ convex function $h:B^d\to \R$ has its mean curvature bounded from above, then $h$  has a  continuous extension to $\bar B^d$.
\end{proposition}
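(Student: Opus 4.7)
The plan is to reformulate the hypothesis as an elliptic inequality on the hyperbolic ball, reduce to the case of non-positive mean curvature by subtracting an explicit barrier, and then combine this with convexity to extract the required a priori bounds.

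Using formula \eqref{mean hyp}, the hypothesis $\mean(h)\le C$ becomes
$$\Delta^{\H^d}(L^{-1}h)\;-\;d\,L^{-1}h\;\le\;dC\qquad\text{on } B^d.$$
By Example~\ref{ex hyp} together with \eqref{eq hess L}, the convex function $-CL$ has constant mean curvature $C$. Since $\mean$ is linear in $h$ (it is the trace for $g_{\H^d}$ of the Codazzi tensor $L^{-1}\hess h$), the auxiliary function $\tilde h:=h+CL$ satisfies $\mean(\tilde h)\le 0$, equivalently $\tilde u:=L^{-1}\tilde h=L^{-1}h+C$ is a subsolution of $\Delta^{\H^d}-d$. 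Because $CL$ extends continuously to $\bar B^d$, proving the continuous extension of $h$ is the same problem as for $\tilde h$.

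Next, the convexity of $h$ gives a uniform lower bound for free: the tangent-plane inequality $h(x)\ge h(0)+\langle\nabla h(0),x\rangle_d$ yields $h\ge h(0)-|\nabla h(0)|$ on $B^d$, so $h$ already extends to a lower semi-continuous function $\bar B^d\to\R\cup\{+\infty\}$. The problem therefore reduces to showing that this extension is everywhere finite, after which boundedness together with convexity on the bounded open convex set $B^d$ yields continuity of the extension by a standard convex analysis fact.

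The heart of the argument, and its main obstacle, is the matching upper bound for $\tilde h$ near the boundary. I would combine a barrier with the maximum principle for $\Delta^{\H^d}-d$ on compact hyperbolic balls exhausting $B^d$: for each $\xi_0\in\partial B^d$, construct an explicit positive supersolution of $\Delta^{\H^d}-d$ that dominates $\tilde u$ on the boundary of the exhausting ball, hence dominates it inside. Letting the exhaustion grow and tuning the supersolution to have the right boundary behaviour near $\xi_0$ would yield an upper bound for $\tilde h$ on $B^d$ that is uniform near $\xi_0$, and then by compactness of $\partial B^d$ a global upper bound. A parallel route, via Remark~\ref{rem dual}, views $h$ as the support function of a convex set $K\subset\R^{d,1}$ whose boundary is a convex spacelike hypersurface of Minkowski space; the assumption $\mean(h)\le C$ translates to a quantitative lower bound on the principal curvatures of $\partial K$, and Li's asymptotic analysis for such hypersurfaces prevents $\partial K$ from escaping to null infinity in any direction, which is equivalent to the desired upper bound on $h$.
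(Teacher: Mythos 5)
The proposal as written does not close the main step, and the way it splits the two hypotheses cannot work. Your plan is: convexity gives a lower bound, and the elliptic inequality $\Delta^{\H^d}(L^{-1}h)-d\,L^{-1}h\le dC$ gives an upper bound near $\partial B^d$ via barriers on exhausting balls. But the second half is both circular and directionally wrong. Circular, because the comparison on an exhausting ball $B(0,R)$ requires the barrier to dominate $L^{-1}\tilde h$ on $\partial B(0,R)$ as $R\to 1$, which is exactly the boundary control you are trying to establish. Directionally wrong, because for the operator $\mathcal{L}=\Delta^{\H^d}-d$ the inequality $\mathcal{L}\tilde u\le 0$ makes $\tilde u$ a supersolution, and comparison with another supersolution $W$ satisfying $W\ge\tilde u$ on $\partial\Omega$ gives nothing: you would need $\mathcal{L}W\le\mathcal{L}\tilde u$, i.e.\ a \emph{lower} bound on $\mathcal{L}\tilde u$, which comes from convexity ($\mean(h)\ge 0$, Remark~\ref{rem H positif}), not from the hypothesis $\mean(h)\le C$. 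More fundamentally, the upper bound cannot be extracted from the PDE plus a lower bound at all: the function $h(x)=|x-\xi|^2/L(x)$ is nonnegative, smooth, and satisfies $\mean(h)\equiv 0$ (since $L^{-1}h$ is the eigenfunction $P(\cdot,\xi)^{-1}$ with $\Delta^{\H^d}P^{-1}=dP^{-1}$), yet it blows up at every boundary point other than $\xi$. So convexity must enter the upper-bound estimate itself, and your reduction to $\tilde h=h+CL$ discards it ($\tilde h$ is no longer convex). The closing appeal to ``Li's asymptotic analysis'' is a citation of the source, not an argument.

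There is a second, independent error: the claimed ``standard convex analysis fact'' that a bounded convex function on $B^d$ has a continuous extension to $\bar B^d$ is false. On the disc, $h(x,y)=x^2/(1-y)$ is convex (a perspective function), bounded by $2$, yet its radial limit at $(0,1)$ is $0$ while its limit along $\partial B^2$ is $2$; consistently, its mean curvature is unbounded. So even after boundedness one must still prove continuity of the boundary function. The paper's proof handles both issues at once by working along radii: convexity discards the non-radial Hessian terms in \eqref{mean eucl}, so $\mean(h)\le C$ gives $h_\theta''(r)\le dC(1-r^2)^{-3/2}$, while convexity bounds $h_\theta'$ below by $h_\theta'(1/2)$; since $(1-r)^{-1/2}$ is integrable, the radial limits exist and depend continuously on $\theta$ by dominated convergence. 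If you want to salvage a barrier approach you would need a genuine gradient estimate in the style of Cheng--Yau/Treibergs for convex spacelike graphs of bounded mean curvature, which is substantially harder than the one-line radial integration above.
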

\begin{proof}
Suppose that there is $C$ such that for any $x\in B^d$,
$\mean(h)(x) < C$. Let $\theta \in \partial B^d$, and let $h_\theta$ be the restriction of $h$ to the segment parametrized by $r\in [0,1[$ from the origin to $\theta$. Let us also denote
$l(r)=\sqrt{1-r^2}$. By \eqref{mean eucl},
$$h_\theta''(r) < C l(r)^{-3}~. $$

For $1/2<r<1$, we write
$$h_\theta'(r) \leq h_\theta'(1/2) +  C \int_{1/2}^{r} l^{-3}$$
and as, for $1/2<t<1$,  $(1-t^2)^{-1}< (1-t)^{-1}$, we have
\begin{equation}\label{eq:int maj}h_\theta'(r) < h_\theta'(1/2)+ 2 C (1-r)^{-1/2}~.\end{equation}

Also, as $h$ is convex, $h_\theta$ is convex, hence for $1/2<r<1$,
\begin{equation}\label{derivee croissante}
h'_\theta(1/2) \leq h'_\theta(r)~.
\end{equation}

Let us define
$$g(\theta)=\int_{1/2}^1 h_\theta'  - h_\theta(1/2)~. $$

As $\int_{1/2}^1 (1-r)^{-1/2}\mbox{d}r$ is finite, by \eqref{eq:int maj} and \eqref{derivee croissante}, $g(\theta)$ is well defined.
Also, together with \eqref{eq:int maj}, \eqref{derivee croissante} and the Dominated convergence theorem, $g$ is continuous.
\end{proof}

\subsubsection{Mean surfaces}\label{sec minmax}

\begin{definition}
A hypersurface $S$ of co-Minkowski space is called \emph{\minmax} if  it is the graph of a
$C^2$ function $h:B^d\to \R$ with $\mean(h)=0$.

Abusing terminology, the function $h$ itself may be also called \minmax.
\end{definition}

 Note that when $d=2$, the \minmax surface is not critical for the area functional, as all the graphs of functions $B^2\to \R$ in co-Minkowski space have the same area form (because they are all isometric to the hyperbolic plane).

Due to \eqref{mean eucl}, $h$ is \minmax if and only
if for any $x\in B^d$, $\Delta h(x) - \hess h(x)(x,x)=0$. This is an elliptic equation with only second-order terms, that allows to apply strong results of PDE theory. For this, we have to consider boundaries conditions.

\begin{definition}
Let $b:\partial B^d\to \R$ be a continuous map.  A continuous function $h:B^d\to\R$ is called a \emph{$b$-map} if it extends continuously as $b$ on $\partial B^d$.
\end{definition}

\begin{proposition}\label{prop critical gal}
For any continuous function $b:\partial B^d \to \R$, there is a unique $C^\infty$ smooth
\minmax $b$-map, denoted by $h^{\crit}_b$.
\end{proposition}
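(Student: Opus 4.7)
By \eqref{mean eucl}, being \minmax is equivalent to the linear second-order PDE
$$\L h(x) := \sum_{i,j=1}^d(\delta_{ij}-x_ix_j)\,\partial_{ij}h(x) = 0$$
on $B^d$. The symbol matrix $(\delta_{ij}-x_ix_j)$ has eigenvalues $1$ (with $(d-1)$-dimensional eigenspace orthogonal to $x$) and $L(x)^2$ (in the radial direction), so $\L$ is uniformly elliptic on every compactly contained subdomain of $B^d$ and degenerates only at $\partial B^d$, in the radial direction. Uniqueness then follows immediately: $\L$ has no zero-order term and a nonnegative principal part, so the weak maximum principle applies and any $h\in C^2(B^d)\cap C^0(\bar B^d)$ with $\L h=0$ attains its extrema on $\partial B^d$. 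The difference of two \minmax $b$-maps has zero boundary data and is $\L$-harmonic, hence vanishes.

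For existence, first assume $b$ is smooth and fix a smooth extension $\tilde b$ to $\bar B^d$. Use exhaustion: for each $r\in(0,1)$, $\L$ is uniformly elliptic with smooth coefficients on the closed Euclidean ball $\bar B_r$ of radius $r$, so classical Schauder theory yields a unique $h_r\in C^\infty(\bar B_r)$ with $\L h_r=0$ on $B_r$ and $h_r=\tilde b$ on $\partial B_r$. The maximum principle gives $\|h_r\|_{L^\infty}\le\|\tilde b\|_{L^\infty}$, and interior Schauder estimates on each compact $K\Subset B^d$ yield $C^k$ bounds independent of $r$. A diagonal subsequence converges in $C^\infty_{\mathrm{loc}}(B^d)$ to a function $h\in C^\infty(B^d)$ satisfying $\L h=0$.

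The main obstacle is continuity up to $\partial B^d$. Combining \eqref{eq hess L} with \eqref{trace hessien} yields the explicit identity
$$\L L = -d/L < 0$$
on $B^d$, so $L$ is a strict supersolution vanishing on $\partial B^d$. For any $x_0\in\partial B^d$ the functions $\tilde b\pm CL$ with $C$ large enough then serve as upper and lower barriers at $x_0$: comparison with $h_r$ on a fixed neighbourhood $U$ of $x_0$ in $\bar B_r$ produces a modulus of continuity at $x_0$ that is uniform in $r$, and passes to $h$ in the limit, giving $h(x)\to \tilde b(x_0)=b(x_0)$ as $x\to x_0$. Finally, a merely continuous $b$ is approximated uniformly by smooth $b_n$; the maximum principle applied to the differences $h_n-h_m$ shows $\{h_n\}$ is Cauchy in $C^0(\bar B^d)$, so its uniform limit $h$ is continuous on $\bar B^d$ with $h|_{\partial B^d}=b$, and by interior elliptic regularity is $C^\infty$ on $B^d$ with $\L h=0$.
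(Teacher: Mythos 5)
Your proof is correct and follows essentially the same route as the paper, which simply delegates each step to Gilbarg--Trudinger: the weak maximum principle for uniqueness, solvability of the Dirichlet problem for a second-order elliptic operator with only second-order terms on a ball for existence, and interior Schauder theory for smoothness. Your explicit barrier computation $\L L=-d/L$ (combining \eqref{eq hess L} with \eqref{trace hessien}) is a nice self-contained substitute for the boundary-regularity input that the paper obtains by citation.
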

\begin{proof}
The uniqueness is classical from the ellipticity of $L^{-1}\mean$  \cite[Theorem~3.3]{GT}.
Existence follows from the
fact that the elliptic equation $\Delta f(x) - \hess f(x)(x,x)=0$
has only second-order terms and that the domain is a ball, see \cite[Corollary~6.24']{GT}. Dividing the equation by $L^2$, we obtain a strictly elliptic equation, and regularity theorems apply, e.g. \cite[Corollary~8.11]{GT}.
\end{proof}

\begin{lemma}\label{conv min}
If $b_n:\partial B^d \to  \R$ are continuous functions uniformly converging to  $b:\partial B^ d\to \R$, then $h_{b_n}^{\crit}$  is converging to $h_b^{\crit}$.
\end{lemma}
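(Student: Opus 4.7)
The plan is to reduce uniform convergence on $\bar B^d$ to a one-line maximum-principle estimate for the linear operator defining the \minmax equation. Set
$$\mathcal{L}h(x) := \Delta h(x) - \hess h(x)(x,x) = \sum_{i,j=1}^d (\delta_{ij} - x_i x_j)\,\partial_i \partial_j h(x).$$
By \eqref{mean eucl}, a $C^2$ function on $B^d$ is \minmax if and only if $\mathcal{L}h = 0$, since the factor $L(x)/d$ in front is strictly positive there. The coefficient matrix $a^{ij}(x) = \delta_{ij} - x_i x_j$ is positive definite on $B^d$: its eigenvalues are $L(x)^2 > 0$ in the radial direction $x$ and $1$ in the tangential directions. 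Hence $\mathcal{L}$ is strictly (although not uniformly) elliptic on $B^d$, and has no zeroth-order term.

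Next I would set $u_n := h_{b_n}^{\crit} - h_b^{\crit}$. By Proposition~\ref{prop critical gal}, both $h_{b_n}^{\crit}$ and $h_b^{\crit}$ are smooth on $B^d$ and extend continuously as a $b_n$-map (resp.\ $b$-map) to $\bar B^d$, so $u_n$ is continuous on $\bar B^d$, smooth on $B^d$, satisfies $\mathcal{L}u_n = 0$ on $B^d$, and takes boundary values $b_n - b$ on $\partial B^d$. Applying the weak maximum principle (see e.g.\ \cite[Cor.~3.2]{GT}) to $\pm u_n$ gives
$$\|u_n\|_{L^\infty(\bar B^d)} \le \|b_n - b\|_{L^\infty(\partial B^d)},$$
and the right-hand side tends to $0$ by hypothesis. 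Therefore $h_{b_n}^{\crit} \to h_b^{\crit}$ uniformly on $\bar B^d$.

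The only potentially subtle point is the boundary degeneracy of $\mathcal{L}$: the radial eigenvalue $L(x)^2$ vanishes as $x$ approaches $\partial B^d$, so $\mathcal{L}$ is not uniformly elliptic on the closed ball. This would forbid the use of, say, global Schauder estimates up to the boundary, but it is harmless for the weak maximum principle, which requires only strict ellipticity on the open set together with continuity of the function up to the boundary — precisely what Proposition~\ref{prop critical gal} provides. In particular, no auxiliary barriers or further regularity arguments are needed, and the argument is essentially complete once the form of the PDE is identified.
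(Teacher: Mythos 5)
Your proof is correct and is essentially the paper's own argument: the paper likewise observes that $h^{\crit}_{b_n}-h^{\crit}_b$ satisfies the \minmax{} equation with boundary data $b_n-b$ and invokes the maximum principle \cite[Theorem~3.1]{GT} to bound it (and its negative) by $\sup|b_n-b|$. Your additional remarks on the explicit form of the operator and the harmlessness of the boundary degeneracy are accurate but not needed beyond what the paper already does.
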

\begin{proof}
Let $b_n$ such that the supremum of $|b_n-b|$ is arbitrarily small. Then $\mean(h^{\crit}_{b_n}-h^ {\crit}_b)=0$,  with boundary data $b_n-b$. By  the maximum principle \cite[Theorem~3.1]{GT}, $h^{\crit}_{b_n}-h^ {\crit}_b$ is arbitrarily small. The same conclusion holds for $h^ {\crit}_b-h^{\crit}_{b_n}$.
\end{proof}

\begin{remark}{\rm
For a continuous map $b:\partial B\to \R$, it is possible to associate to $h^\crit_b$ a (non-regular and non convex) dual hypersurface in Minkowski space,  see
Remark~\ref{rem normal coord}. For $d=2$, at points of regularity, this surface has zero mean curvature. We refer to  \cite{fv} for more details.
}\end{remark}

\subsubsection{Convex hull}\label{sec convex hull}

Let $b:\partial B^d\to \R$ be a continuous map.  Let
$$\mathcal{A}_b=\{a | a:\R^d\to\R \mbox{ is an affine function and }a|_{\partial B^d}\leq b \} $$
and for $x\in B^d$, let us define
\begin{equation}\label{eq:h-}h_b^-(x) := \sup \{ a(x) |
a \in \mathcal{A}_b \}~,\end{equation}
and
\begin{equation}\label{def h-}h_b^+(x):=-h_{-b}^-(x)~.\end{equation}

\begin{proposition}\label{prop h+}
For any $x\in B^d$, $h_b^-(x)$  defines
a convex $b$-map $h_b^-:B^d\to \R$.
Moreover, if $h:B^d\to \R$ is a convex $b$-map, then $h_b^-\geq h$.

For any $x\in B^d$, $h_b^+(x)$  defines
a  concave $b$-map $h_b^+:B^d\to \R$.
Moreover, if $h:B^d\to \R$ is a concave $b$-map, then $h_b^+\leq h$.

In general, we have $h_b^+ \geq h_b^-$.
If $h_b^+=h_b^-$, then $b$ is the restriction to $\partial B^d$ of an affine map of $\R^d$.
\end{proposition}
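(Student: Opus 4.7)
The plan is to verify each assertion in turn; the main analytic input is the boundary regularity of $h_b^-$, handled by a barrier argument. First, I would check that $h_b^-$ is a well-defined real-valued function on $B^d$. Since the constant function $a \equiv \min_{\partial B^d} b$ belongs to $\mathcal{A}_b$, the supremum in \eqref{eq:h-} is not $-\infty$. Any affine function attains its extrema on $\partial \bar B^d$, so each $a \in \mathcal{A}_b$ satisfies $a(x) \leq \max_{\partial B^d} b$ for every $x \in B^d$, and the supremum is finite. As a pointwise supremum of affine (in particular convex) functions, $h_b^-$ is itself convex, and hence automatically continuous on the open set $B^d$.

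To establish the continuous extension of $h_b^-$ to $\partial B^d$ with boundary values $b$, fix $\xi \in \partial B^d$ and $\epsilon > 0$. For a lower barrier I would take the affine function $a_\xi(x) = (b(\xi) - \epsilon) - C(1 - \langle x, \xi \rangle_d)$; on $\partial B^d$ one has $1 - \langle y, \xi \rangle_d = \tfrac{1}{2}|y-\xi|^2$, so by continuity of $b$ at $\xi$ and boundedness of $b$ on $\partial B^d$, choosing $C$ large enough forces $a_\xi \leq b$ throughout $\partial B^d$, putting $a_\xi \in \mathcal{A}_b$ and giving $h_b^- \geq a_\xi$. Symmetrically, for an upper barrier I would take $L_\xi(x) = (b(\xi) + \epsilon) + C'(1 - \langle x, \xi \rangle_d)$ with $C'$ large enough that $L_\xi \geq b$ on $\partial B^d$; then, applying the maximum principle to the affine function $a - L_\xi$ for each $a \in \mathcal{A}_b$ yields $a \leq L_\xi$ on $B^d$, and taking the supremum gives $h_b^- \leq L_\xi$. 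Since $a_\xi$ and $L_\xi$ are continuous at $\xi$ with values $b(\xi) \mp \epsilon$ there, letting $x \to \xi$ and then $\epsilon \to 0$ squeezes $\lim_{x \to \xi} h_b^-(x) = b(\xi)$.

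The maximality property is then a one-line consequence of convex analysis: given any convex $b$-map $h$ and any $x_0 \in B^d$, a supporting affine function $a$ at $x_0$ satisfies $a(x_0) = h(x_0)$ and $a \leq h$ on $B^d$, and by passing to the continuous boundary extension $a \leq b$ on $\partial B^d$. Thus $a \in \mathcal{A}_b$, so $h(x_0) = a(x_0) \leq h_b^-(x_0)$. The parallel statements for $h_b^+$ follow mechanically from the definition $h_b^+ = -h_{-b}^-$, which interchanges convex with concave and reverses the inequality.

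Finally, the difference $h_b^+ - h_b^-$ is concave on $B^d$ (sum of a concave function and the negative of a convex one) and extends continuously to $0$ on $\partial B^d$. Since every interior point lies on a chord joining two boundary points, concavity along this chord gives $h_b^+ - h_b^- \geq 0$ at that point. If this difference vanishes identically, then $h_b^+ = h_b^-$ is simultaneously convex and concave on $B^d$, hence affine there, and its boundary trace $b$ is the restriction of an affine map on $\R^d$. The one step requiring genuine care is the boundary-regularity argument for $h_b^-$, where the barrier constants $C, C'$ must be chosen depending on $\epsilon$ and the local behaviour of $b$ near $\xi$ so as to dominate $b$ uniformly on all of $\partial B^d$.
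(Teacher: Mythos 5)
Your proof is correct, and it is essentially the standard argument: the paper itself disposes of the properties of $h_b^-$ by citing the proof of Theorem~1.5.2 in Gutiérrez's book, which is exactly the barrier construction and supporting-hyperplane argument you spell out, with the $h_b^+$ statements obtained from $h_b^+=-h_{-b}^-$ and the final claim from the fact that a function both convex and concave is affine. The only difference is that you make the delegated steps explicit, which is a welcome addition rather than a deviation.
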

\begin{proof}
The properties of $h^-_b$ are proved in the proof of Theorem~1.5.2 in \cite{Gu01}. The properties of $h^+_b$ then follows immediately from \eqref{def h-}. The last property is then obvious, as affine maps are the only ones being in the same time convex and concave.
\end{proof}

Let $\Lambda(b)$ be the graph of $b:\partial B^d\to \R$
in $\partial B^d\times \R$, and let
 $\CH(b)$ be
 the  affine convex hull  of $\Lambda(b)$ in $\R^{d+1}$, that is, the smallest  convex set of $\R^{d+1}$ containing $\Lambda(b)$.
Note that as $\bar B^d\times \R$ is a convex set containing
$\Lambda(b)$, then $\CH(b)\subset \bar B^d\times \R$.

\begin{lemma}
The boundary of  $\CH(b)$ is the union of the graphs of
$h_b^+$ and $h_b^-$.
\end{lemma}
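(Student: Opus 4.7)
The plan is to identify $\CH(b)$ explicitly as the ``sandwich''
$$K := \{(x,t)\in\bar B^d\times\R \ :\ h_b^-(x)\leq t\leq h_b^+(x)\}$$
and then read the boundary off this description. Since $\Lambda(b)$ is the graph of a continuous function on the compact set $\partial B^d$, it is compact; hence $\CH(b)$ is compact and convex. Note also that from the very definition of $h_b^\pm$ (as extrema of affine functions comparable to $b$ on $\partial B^d$), both $h_b^-$ and $h_b^+$ extend continuously by $b$ on $\partial B^d$, so $\Lambda(b)\subset\partial K$.

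First I would prove the easy inclusion $\CH(b)\subset K$. For any affine $a\in\mathcal A_b$, the closed half-space $\{(y,s):s\geq a(y)\}$ is convex and contains $\Lambda(b)$, hence contains $\CH(b)$; taking the supremum over $a\in\mathcal A_b$ gives $t\geq h_b^-(x)$ for every $(x,t)\in\CH(b)$. The analogous argument with affine functions above $b$ on $\partial B^d$ gives $t\leq h_b^+(x)$. Projection of $\CH(b)$ onto the first factor lies in $\bar B^d$ (indeed $\bar B^d$ is convex and contains $\partial B^d$), so $\CH(b)\subset K$.

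The harder inclusion $K\subset\CH(b)$ is where the real work lies, and it is a Hahn--Banach separation argument. Assume $(x_0,t_0)\in K$ but $(x_0,t_0)\notin\CH(b)$. By strict separation of a point from a compact convex set, there exists an affine functional $\phi(x,t)=\alpha\cdot x+\beta t+c$ with $\phi\leq 0$ on $\CH(b)$ and $\phi(x_0,t_0)>0$. Restricted to $\Lambda(b)$, this reads $\alpha\cdot y+\beta b(y)+c\leq 0$ for all $y\in\partial B^d$. If $\beta>0$, the affine function $a(y)=-(\alpha\cdot y+c)/\beta$ satisfies $a\geq b$ on $\partial B^d$, so $h_b^+(x_0)\leq a(x_0)=-(\alpha\cdot x_0+c)/\beta$; combined with $t_0\leq h_b^+(x_0)$ this yields $\alpha\cdot x_0+\beta t_0+c\leq 0$, contradicting $\phi(x_0,t_0)>0$. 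The case $\beta<0$ is symmetric, producing an element of $\mathcal A_b$ and using $t_0\geq h_b^-(x_0)$. In the degenerate case $\beta=0$, the inequality $\alpha\cdot y+c\leq 0$ on $\partial B^d$ extends to $\bar B^d$ by convexity, contradicting $\alpha\cdot x_0+c>0$. The main obstacle, as I see it, is precisely this case analysis: recognizing that the affine functional produced by separation is, up to a sign, one of the competitors in the definition of $h_b^\pm$, and correctly handling the vertical-hyperplane case $\beta=0$.

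Having established $\CH(b)=K$, the description of $\partial\CH(b)$ follows at once. For $x\in B^d$, the vertical slice $K\cap(\{x\}\times\R)$ is the compact interval $[h_b^-(x),h_b^+(x)]$, whose endpoints are boundary points of $\CH(b)$; for $x\in\partial B^d$, the slice collapses to the single point $(x,b(x))$ of $\Lambda(b)$, which lies on both graphs since $h_b^\pm(x)=b(x)$. Therefore
$$\partial\CH(b)\ =\ \mathrm{graph}(h_b^-)\cup\mathrm{graph}(h_b^+),$$
as claimed.
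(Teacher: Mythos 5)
Your proof is correct and is in substance the same as the paper's: the paper's one-line argument invokes the fact that $\CH(b)$ is the intersection of all half-spaces containing $\Lambda(b)$, and your Hahn--Banach separation argument (with the case analysis on the sign of $\beta$ matching non-vertical supporting half-spaces to competitors in the definitions of $h_b^\pm$) is exactly the detailed justification of that fact. The only step you gloss over is that points with $h_b^-(x)<t<h_b^+(x)$, $x\in B^d$, are \emph{interior} points of $K$ (needed for $\partial\CH(b)\subset\mathrm{graph}(h_b^-)\cup\mathrm{graph}(h_b^+)$), but this follows immediately from the continuity of the convex function $h_b^-$ and the concave function $h_b^+$ on the open ball.
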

\begin{proof}
This follows from the definitions of $h_b^+$ and $h_b^-$, because
 $\CH(b)$ is the intersection of all the half-spaces containing $\Lambda(b)$.
\end{proof}

The set $\CH(b)$ satisfies the local geodesic property: for any $x\in \CH(b)\setminus \Lambda(b)$, $x$ lies in an open segment contained in $\CH(b)\setminus \Lambda(b)$ \cite[Theorem~4.19]{smith}.

\begin{lemma}\label{lem:crit conv}
The \minmax surface given by the boundary condition $b:\partial B^d\to \R$  is contained in the convex hull $\CH(b)$:
$$h_b^-\leq h^{\crit}_b \leq h_b^+~.$$
\end{lemma}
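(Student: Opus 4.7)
The plan is to exploit two observations: first, the mean curvature operator $h \mapsto \mean(h)$ is linear in $h$ (via formula \eqref{mean eucl}), with a strictly elliptic linearization after dividing by $L$; second, affine functions have vanishing Hessian, so by \eqref{def II} their second fundamental form is identically zero, and in particular $\mean(a) = 0$ for every affine $a$. Thus every $a \in \mathcal{A}_b$ is itself a (trivial) \minmax function.

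To prove $h_b^- \leq h_b^{\crit}$, fix $a \in \mathcal{A}_b$ and compare $a$ with $h_b^{\crit}$. By linearity, $\mean(h_b^{\crit} - a) = 0$ on $B^d$, and on $\partial B^d$ the difference extends continuously to $b - a|_{\partial B^d} \geq 0$. Applying the maximum principle (\cite[Theorem~3.1]{GT}) to the strictly elliptic operator $L^{-1}\mean$, already invoked in Lemma~\ref{conv min}, we obtain $h_b^{\crit} - a \geq 0$ on $B^d$, i.e.\ $a \leq h_b^{\crit}$. Taking the supremum over all $a \in \mathcal{A}_b$ and using the definition \eqref{eq:h-} of $h_b^-$ yields
\[
h_b^- \leq h_b^{\crit}.
\]

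For the upper bound $h_b^{\crit} \leq h_b^+$, apply the same argument to the boundary datum $-b$. By uniqueness in Proposition~\ref{prop critical gal} together with the linearity of the \minmax equation, $h_{-b}^{\crit} = -h_b^{\crit}$. The first step of the proof applied to $-b$ gives $h_{-b}^- \leq h_{-b}^{\crit} = -h_b^{\crit}$, and by definition \eqref{def h-} of $h_b^+$ this is equivalent to $h_b^{\crit} \leq h_b^+$.

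The only potentially delicate point is the validity of the maximum principle up to the boundary: this is where the continuous extension of $h_b^{\crit}$ to $b$ on $\partial B^d$ (built into Proposition~\ref{prop critical gal}) is used, together with the continuity of the affine test functions $a$. No further regularity of $h_b^\pm$ is required, since these appear only as suprema/infima of families of affine functions against which $h_b^{\crit}$ is compared pointwise.
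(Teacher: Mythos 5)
Your proof is correct and follows essentially the same route as the paper: for each affine $a\in\mathcal{A}_b$ one notes $\mean(a)=0$, applies the maximum principle \cite[Theorem~3.1]{GT} to the difference with $h^{\crit}_b$ to get $a\leq h^{\crit}_b$, takes the supremum to obtain $h_b^-\leq h^{\crit}_b$, and deduces the upper bound by replacing $b$ with $-b$ and using $h_b^+=-h_{-b}^-$ together with $h_{-b}^{\crit}=-h_b^{\crit}$. The paper's version is just slightly terser (it applies the maximum principle directly to $a-h^{\crit}_b$ without spelling out the linearity of $\mean$), so there is nothing to change.
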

\begin{proof}
Let $a\in \mathcal{A}_b$. By the maximum principle \cite[Theorem~3.1]{GT},
$a-h^{\crit}_b$ attains its maximal value on $\partial B$. But on $\partial B^d$,
$a\geq b$, so on $B^d$, $a-h^{\crit}_b\leq  a|_{\partial B^d}-b = b-b =0$, i.e. $a\leq h^{\crit}_b$. Then by definition of $h^-_b$,
$h^-_b \leq h_b^{\crit}$. Similarly, one proves that
$ h_{-b}^- \leq h_{-b}^{\crit}=-h_b^{\crit}$ i.e.
$h_b^+=-h_{-b}^- \geq h_b^{\crit}$.
\end{proof}

\begin{lemma}\label{le:btau}
If $(b_n)_{n \in \N}$ is a sequence of continuous functions from $\partial B^d$ into $\R$ converging uniformly to  $b:\partial B^d\to \R$, then $(h^-_{b_n})_{n \in \N}$ (resp. $(h^+_{b_n})_{n \in \N}$)  is converging to $h^-_b$ (resp. $h^+_b$).
\end{lemma}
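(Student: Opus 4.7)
The plan is to prove uniform convergence directly from the definition of $h^-_b$ as a sup of affine functions, exploiting the fact that translating an affine function by a constant preserves affinity, so sublevel conditions at the boundary behave well under uniform perturbations.

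First, I would set $\varepsilon_n = \sup_{\partial B^d} |b_n - b|$ and observe that by hypothesis $\varepsilon_n \to 0$. The first key step is to show the inclusion relationship between $\mathcal{A}_b$ and $\mathcal{A}_{b_n}$ up to a constant shift. Concretely, if $a \in \mathcal{A}_b$, then on $\partial B^d$ one has $a \leq b \leq b_n + \varepsilon_n$, so $a - \varepsilon_n \leq b_n$ on $\partial B^d$, and since $a - \varepsilon_n$ is still affine this gives $a - \varepsilon_n \in \mathcal{A}_{b_n}$. Taking suprema over $a \in \mathcal{A}_b$ at a given point $x \in B^d$ yields
\begin{equation*}
h^-_b(x) - \varepsilon_n \leq h^-_{b_n}(x).
\end{equation*}

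Next I would run the symmetric argument, swapping the roles of $b$ and $b_n$: if $a \in \mathcal{A}_{b_n}$, then $a \leq b_n \leq b + \varepsilon_n$ on $\partial B^d$, so $a - \varepsilon_n \in \mathcal{A}_b$, whence $h^-_{b_n}(x) - \varepsilon_n \leq h^-_b(x)$. Combining the two bounds gives
\begin{equation*}
|h^-_{b_n}(x) - h^-_b(x)| \leq \varepsilon_n
\end{equation*}
for every $x \in B^d$, i.e.\ uniform convergence of $h^-_{b_n}$ to $h^-_b$ on $B^d$. The statement for $h^+$ then follows immediately from the identity $h^+_b = -h^-_{-b}$ from \eqref{def h-} together with the fact that $-b_n \to -b$ uniformly on $\partial B^d$.

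There is essentially no obstacle in this argument; the only subtle point worth a one-line remark is that the shifted function $a - \varepsilon_n$ remains affine (it is the composition of the original affine map with a translation of the target), which is what lets the argument close. Note that this proof uses only the definition of $h^-_b$ as a supremum of affine minorants; it does not need the identification of the boundary of $\CH(b)$ with the graphs of $h^\pm_b$, nor the existence statement of Proposition~\ref{prop h+}.
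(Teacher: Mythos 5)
Your proof is correct and follows essentially the same route as the paper's: both rest on the observation that if $a\in\mathcal{A}_b$ then $a-\varepsilon_n\in\mathcal{A}_{b_n}$ because subtracting a constant preserves affinity, and then compare the two suprema in both directions, with the $h^+$ case reduced to the $h^-$ case via $h^+_b=-h^-_{-b}$. Your bookkeeping (shifting the entire family $\mathcal{A}_b$ rather than picking a near-optimal $a$ at each point $x$) is marginally cleaner and makes the uniformity of the convergence explicit, but the underlying idea is identical.
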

\begin{proof}
Let $\epsilon >0$ and $x\in B^d$. Then there exists an affine function $a$ such that $h_b^-(x) \geq a(x)$, $a(x)+\epsilon \geq h_b^-(x)$ and $a|_{\partial B^d} \leq b$.
In particular, for $n$ large enough,
$a|_{\partial B^d}-\epsilon \leq b_n$.
As $a|_{\partial B^d}-\epsilon $ is an affine function,
then $h^-_{b_n}(x)\geq a(x)-\epsilon$. As $a$ was chosen such that  $a(x)+\epsilon \geq h_b^-(x)$, then
$h^-_{b_n}(x)+2\epsilon \geq h_b^-(x)$. A similar conclusion holds, exchanging the roles of $b$ and $b_n$.
\end{proof}

\begin{remark}\label{rem domain of dependance}{\rm
The dual in Minkowski space of  the epigraph of a convex $b$-map is a convex set. Its \emph{domain of dependence}, or \emph{Cauchy domain}, denoted by $\Omega_b^-$, is the interior of the intersection of the future side of all the lightlike hyperplanes containing it.
This intersection is nothing but the dual of the
the epigraph of $h^-_b$.
The domain of dependence $\Omega_b^-$ is future complete. Considering $h^+_b$ instead of $h^-_b$, and concave figures instead of convex ones, we obtain the domain of dependence $\Omega_b^+$. See Figure~\ref{fig:mean distance} and  \cite{Bar05,Bon05} for more details.
}\end{remark}

\begin{remark}{\rm
The function $h^\crit_b$ is the solution of the Dirichlet problem for an elliptic  linear equation. The convex function
$h_b^-$ is the solution of the Dirichlet problem for the
Monge--Amp\`ere equation, see \cite{Gu01}.
}\end{remark}

\subsubsection{The mean curvature measure}\label{sec mean curv measure}

For a $C^2$ function $h:B^d\to \R$, we have defined in Section~\ref{sec: extr}
the mean curvature function, which
is non-negative if $h$ is convex by Remark~\ref{rem H positif}.
For a convex $C^2$ function $h:B^d\to \R$, let us define the
\emph{mean curvature measure}
$$\meanm(h)= d\mean(h)\omega_{\H^d}~, $$
where $\omega_{\H^d}$ is the volume form given by the hyperbolic metric on $B^d$. By
\eqref{eq:volume} and \eqref{mean eucl},
for any $\varphi\in C^0_0(B^d)$ (here the subscript $0$ means ``with compact support''),
$$\meanm(h)(\varphi)=\int_{B^d}\left(\Delta h(x) - \operatorname{Hess} h(x)(x,x)\right)L^{-d}(x)\varphi(x) \operatorname{d}x~. $$

If moreover $\varphi \in C^\infty_0(B^d)$, by integration by part:

\begin{equation}\label{eq measure mean}\meanm(h)(\varphi)=\int_{B^d}\left(\Delta \varphi(x)  - \operatorname{Hess} \varphi(x)(x,x)\right) h(x) L^{-d}(x) \operatorname{d}x~. \end{equation}

For any convex function $h:B^d\to \R$, let us define
$\meanm(h)$ as the linear form on $C_0^\infty(B^d)$ defined by \eqref{eq measure mean}. On any compact ball $K$ contained in $B^d$, by standard
convolution, one can find a sequence $(h_i)_{i \in \N}$ of $C^\infty$ convex functions uniformly approximating $h$.
For any $C^\infty$ function $\varphi$ whose support is included in $K$, we clearly have
$\meanm(h_j)(\varphi)\to \meanm(h)(\varphi)$.
As $\meanm(h_j)$ is a measure, it is also a distribution, and
the preceding limit says that $\meanm(h)$ is also a distribution on $K$ \cite[Theorem~2.1.8]{hormander}.
Actually, as  the $\meanm(h_j)$ are measures, then
$\meanm(h)$ is a measure on $K$ \cite[Theorem~2.1.9, Theorem~2.1.7]{hormander}. Changing $K$ and using the localization property of distribution \cite[Theorem~2.2.4]{hormander}, it follows that $\meanm(h)$ is a measure on $B^d$.
More precisely, $\meanm(h)$ is a Radon measure on $B^d$.

The following result is given by \cite[Theorem~2.1.9, Theorem~2.1.7]{hormander}.

\begin{lemma}\label{lem conv measures}
Let $(h_n)_{n \in \N}$ be a sequence of convex functions from $ B^d$ into $\R$
converging to a convex function $h: B^d \to \R$.
Then the sequence of measures $(\meanm(h_n))_{n \in \N}$ weakly converges to $\meanm(h)$.
\end{lemma}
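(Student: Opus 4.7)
The plan is to first show that $\meanm(h_n) \to \meanm(h)$ as distributions on $B^d$, and then invoke the very same results from H\"ormander already cited just above the lemma in order to promote this to weak convergence of Radon measures.

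For the distributional convergence, I would test against an arbitrary $\varphi \in C_0^\infty(B^d)$, with compact support $K \subset B^d$, and write, using \eqref{eq measure mean},
$$\meanm(h_n)(\varphi) - \meanm(h)(\varphi) = \int_K \bigl(\Delta\varphi(x) - \operatorname{Hess}\varphi(x)(x,x)\bigr)\bigl(h_n(x) - h(x)\bigr) L^{-d}(x)\,\mathrm{d}x.$$
The factor $(\Delta\varphi - \operatorname{Hess}\varphi(x,x))L^{-d}$ is continuous on $B^d$ and hence bounded on the compact set $K$, so it suffices to show that $h_n \to h$ uniformly on $K$. This uniform convergence is a classical consequence of convexity: a sequence of finite convex functions on an open convex subset of $\R^d$ that converges pointwise to a finite convex function automatically converges uniformly on every compact subset. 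This settles $\meanm(h_n)(\varphi) \to \meanm(h)(\varphi)$ for every $\varphi \in C_0^\infty(B^d)$, i.e.\ $\meanm(h_n) \to \meanm(h)$ in the sense of distributions.

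Since each $\meanm(h_n)$ is a positive Radon measure (as established in the paragraph preceding the lemma) and the limit distribution $\meanm(h)$ is likewise positive, the distributional convergence of a sequence of positive distributions is equivalent to weak (vague) convergence of the corresponding Radon measures, by \cite[Theorems~2.1.7 and~2.1.9]{hormander}. This gives the desired conclusion.

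\textbf{Main obstacle.} The only non-routine step is the uniform convergence on compact subsets of $B^d$ of the sequence $(h_n)$; once this is granted by the classical compact-convergence result for convex functions, the remainder of the argument is a direct combination of the integration-by-parts formula \eqref{eq measure mean} with the distributional framework already set up just before the statement of the lemma.
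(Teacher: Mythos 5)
Your proof is correct and follows essentially the same route as the paper, which simply attributes the lemma to H\"ormander's Theorems~2.1.7 and~2.1.9; you supply the one step the paper leaves implicit, namely that pointwise convergence of convex functions gives locally uniform convergence, hence distributional convergence of $\meanm(h_n)$ via \eqref{eq measure mean}, before invoking the positivity argument to upgrade to weak convergence of measures. This is exactly the intended argument, written out in full.
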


Recall the action of isometries on functions defined by
\eqref{def action h}. Recall also from Lemma~\ref{iso convexe} that if $h$ is convex, then $(A,v)h$ is convex for $(A,v)\in O_+(d,1)\ltimes \R^{d,1}$.

\begin{lemma}\label{lem: action sur mesure}
Let $\varphi\in C^0_0(B^d)$ and $(A,v)\in O_+(d,1) \ltimes \R^{d,1}$. Then:
$$\meanm((A,v)h)(\varphi)=\meanm(h)(\varphi \circ A)~.$$
\end{lemma}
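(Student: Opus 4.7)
The plan is to establish the identity first for $h$ of class $C^2$ by a direct pointwise computation, and then extend to a general continuous convex $h$ by smooth convex approximation together with Lemma~\ref{lem conv measures}.

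For smooth $h$, notice first that the translation part $x\mapsto \langle x,\bar{v}\rangle_d - v_{d+1}$ appearing in \eqref{def action h} is affine, so its Euclidean Hessian vanishes and $\hess[(A,v)h]=\hess[(A,0)h]$; it thus suffices to treat the linear element $(A,0)$. Combining Lemma~\ref{lem:action hessien} with the definition $\II_h=L^{-1}\hess h$ of the second fundamental form, the factor $L(x)/L(A^{-1}\cdot x)$ cancels with the factor $L^{-1}(x)$, producing the clean pullback identity
$$\II_{(A,v)h}(x)(X,Y) = \II_h(A^{-1}\cdot x)\bigl(DA^{-1}(x)X,\,DA^{-1}(x)Y\bigr).$$
Since $A^{-1}\in O_+(d,1)$ acts as a hyperbolic isometry of $(B^d,\gh)$, its differential at $x$ is a linear isometry between $(T_xB^d,\gh(x))$ and $(T_{A^{-1}\cdot x}B^d,\gh(A^{-1}\cdot x))$. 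Taking the $\gh$-trace in a $\gh(x)$-orthonormal basis therefore yields $\mean((A,v)h)(x) = \mean(h)(A^{-1}\cdot x)$. Using the invariance of $\omega_{\H^d}$ under the hyperbolic action of $A^{-1}$ and performing the change of variable $y=A^{-1}\cdot x$, one obtains
$$\meanm((A,v)h)(\varphi) = \int_{B^d} d\,\mean(h)(A^{-1}\cdot x)\,\varphi(x)\,\omega_{\H^d}(x) = \int_{B^d} d\,\mean(h)(y)\,\varphi(A\cdot y)\,\omega_{\H^d}(y) = \meanm(h)(\varphi\circ A).$$

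To handle a general continuous convex $h$, I would approximate $h$ by a sequence $(h_n)$ of smooth convex functions $B^d\to\R$ converging to $h$ (pointwise, and uniformly on compact subsets of $B^d$) via standard mollification. By Lemma~\ref{iso convexe} each $(A,v)h_n$ is convex, and the explicit formula \eqref{def action h}, combined with the local boundedness of $L$ and $1/L$ away from the boundary, shows that $(A,v)h_n$ converges to $(A,v)h$ in the same sense. Since $\varphi\circ A$ again lies in $C^0_0(B^d)$ (because $A$ is a homeomorphism of $B^d$ sending compact sets to compact sets), two applications of Lemma~\ref{lem conv measures} allow passage to the limit on both sides of the identity just established in the smooth case. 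The main technical point is verifying the clean cancellation of the weight factors in the pullback of the second fundamental form; once that is in hand, the invariance of $\gh$ and $\omega_{\H^d}$ under $O_+(d,1)$ does the rest.
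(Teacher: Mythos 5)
Your proof is correct and follows essentially the same route as the paper's: establish the identity for $C^2$ functions by combining Lemma~\ref{lem:action hessien} with the change of variables under the hyperbolic isometry $A$, then pass to general convex $h$ by smooth convex approximation and Lemma~\ref{lem conv measures}. You simply make explicit some steps the paper leaves implicit (the cancellation of the weight factors $L(x)/L(A^{-1}\cdot x)$ and the details of the approximation).
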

\begin{proof}

We will prove the result for a $C^2$ function $h$, the general result follows by approximation.
In the $C^2$ case, the result follows because by definition
$$\meanm(h)(\varphi \circ A)=\int_{B^d} (\varphi \circ A)(x) (\operatorname{Tr}_{\gh} L^{-1}\hess h)(x) \mbox{d}\omega_{\H^d}(x) $$
so by a change of variable, as $A$ is a hyperbolic isometry,
$$\meanm(h)(\varphi \circ A)=\int_{B^d} \varphi(x) (\operatorname{Tr}_{\gh} L^{-1}\hess h)(A^{-1}\cdot x) \mbox{d}\omega_{\H^d}(x)~, $$
and by
Lemma~\ref{lem:action hessien},
$$(\operatorname{Tr}_{\gh} L^{-1}\hess h)(A^{-1}\cdot x)=
(\operatorname{Tr}_{\gh} L^{-1}\hess [(A,v)h])(x)~. $$

\end{proof}

\subsubsection{The fundamental  example  of a wedge}

Let us consider an elementary example to give a geometric insight on the mean curvature measure introduced in the previous section. This example will make clear that, for well-chosen convex functions, this measure is a kind of "pleating measure", similar
	to the notion developed by Thurston for isometric pleated embeddings of hyperbolic surfaces in the $3$-dimensional hyperbolic space, see sections \ref{simlicial} and \ref{cas 2+1}.

Let $l$ be the intersection of $B^d$ with  an affine hyperplane  of
$\R^{d}$, which separates $B^d$ into two connected components $l^-$ and $l^+$, where $l^-$ is the component containing the origin $0$ of the coordinates of $\R^{d}$. Let $p_l$ be the (Euclidean) orthogonal projection of $0$ onto $l$, and let $n_l=p_l/\|p_l\|$. If $l$ is a vector hyperplane, then $l^-$ is chosen arbitrarily, and $n_l$ is the (Euclidean) unit normal vector pointing to $l^+$.

\begin{definition}\label{def can map}
The \emph{canonical map} $h_l:B^d\to \R$ associated to $l$ is
defined as $h_l(x)=\frac{1}{L(p_l)}\langle x-p_l,n_l\rangle$.
\end{definition}

Observe that $h_l$ is an affine map vanishing on $l$.
Let $\mathbf{1}_A$ be the indicator function of a set $A$.

\begin{definition}
A \emph{wedge} on a hyperplane $l$ is a continuous map $h: B^d\to \R$
of the form $h=h_-+(h_+-h_-)\mathbf{1}_{l^+}$ where $h_-,h_+$ are two affine maps.

The \emph{angle} of a wedge (in the co-Minkowski sense) is the unique real number $\alpha$ such that, with the notations above,
\begin{equation}\label{def angle}h_+-h_-=\alpha h_l~.\end{equation}
\end{definition}

The wedge is therefore a piecewise affine map, admitting $l$ as a locus of non-differentiability (if the angle is nonzero).

\begin{fact}\label{fact convex wedge}
A wedge is convex and different from an affine map if and only its angle is positive.
\end{fact}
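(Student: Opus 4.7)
The plan is to rewrite the wedge as either the pointwise maximum or the pointwise minimum of the two affine functions $h_-$ and $h_+$, depending on the sign of the angle $\alpha$, and then invoke the elementary fact that the maximum of affine functions is convex while a nonaffine minimum of affine functions is not.

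First I would record the key sign information about the canonical map. By Definition~\ref{def can map}, $h_l$ is affine and $h_l(x)=\frac{1}{L(p_l)}\langle x-p_l, n_l\rangle$; since $n_l$ points into $l^+$ and $h_l$ vanishes on $l$, we have $h_l>0$ on $l^+$ and $h_l<0$ on $l^-$. Combined with the defining relation $h_+-h_-=\alpha h_l$, this gives: on $l$, $h_+=h_-$; on $l^+$, the difference $h_+-h_-$ has the sign of $\alpha$; on $l^-$, it has the opposite sign.

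Next I would split into cases according to the sign of $\alpha$. If $\alpha=0$ then $h_+=h_-$ and $h=h_-$ is affine, excluded. If $\alpha>0$, then $h_+>h_-$ on $l^+$ and $h_+<h_-$ on $l^-$; since by construction $h=h_+$ on $l^+$ and $h=h_-$ on $l^-\cup l$, we can identify
\[
h=\max(h_-,h_+)
\]
everywhere on $B^d$. The pointwise maximum of two affine functions is convex, and it is affine only when the two functions coincide, which here is excluded because $\alpha\neq 0$. Hence $h$ is convex and nonaffine. Symmetrically, if $\alpha<0$ the same comparison yields $h=\min(h_-,h_+)$, which is concave and nonaffine, hence not convex (a function that is both convex and concave must be affine).

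The argument is essentially bookkeeping once the sign of $h_l$ across $l$ is understood; there is no real obstacle. The only point to be careful about is the convention that $l\subset l^-\cup l$ (i.e.\ $\mathbf{1}_{l^+}$ vanishes on $l$), so that $h=h_-=h_+$ on $l$ and the max/min identification is valid everywhere, not just off $l$.
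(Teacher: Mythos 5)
Your proof is correct and follows essentially the same route as the paper's: both reduce the question to the sign of $h_+-h_-=\alpha h_l$ on $l^+$, the paper asserting directly that convexity is equivalent to $h_+>h_-$ there while you make that equivalence precise by identifying $h$ with $\max(h_-,h_+)$ (resp.\ $\min$) according to the sign of $\alpha$. The extra bookkeeping about the sign of $h_l$ on $l^\pm$ and the case $\alpha<0$ is a welcome tightening of the paper's terse argument, but it is not a different method.
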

\begin{proof}
By definition, $h_l$ is positive on $l^+\setminus l$. And $h$ is strictly convex if and  only if on  $l^+\setminus l$,
$h_+=h_-+\alpha h_l > h_-$, that is true if and only if $\alpha >0$.
\end{proof}

\begin{remark}\label{rem angle wedge}{\rm
The hyperplane $l$ in $B^d$ defines a  timelike vector hyperplane in Minkowski space, namely, if $B^d$ is identified with the Klein ball model of the hyperbolic space in $\R^{d,1}$,  the vector hyperplane passing through $l\times \{1\}$. Let $v_l$ be its unit spacelike normal vector pointing to the side containing $l^+$. Then it is easy to see that
\begin{equation}\label{vl}v_l=\frac{1}{p_l}\binom{n_l}{\|p_l\|}~,\end{equation} and so  the canonical map $h_l$ is the restriction to $B^d\times \{1\}$ of the  linear map $(x,x_{d+1})\mapsto\langle \binom{x}{x_{d+1}},v_l\rangle_{d,1} $. If $l$ is a vector hyperplane, then $v_l=\binom{n_l}{0}$.
Moreover, if $P_+$ and $P_l$ are the duals of the graphs of $h_+$ and $h_-$, then $P_+-P_-$ is colinear to $v$, that expresses the definition
\eqref{def angle} (compare also with Fact~\ref{fact vector dual}).
The absolute value of $\alpha$ is the Minkowski length of the spacelike segment $P_+-P_-$.  See Figure~\ref{fig dual wedge}.
}\end{remark}

\begin{fact}\label{fact h can isom}
Let $A\in O_+(d,1)$. Then
$h_{A\cdot l}=\frac{L}{L\circ A^{-1}}h_l\circ A^{-1}$.
\end{fact}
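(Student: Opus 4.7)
The key is to use the Minkowski-linear description of $h_l$ given in Remark~\ref{rem angle wedge}, namely
\[
h_l(x) = \langle {\textstyle\binom{x}{1}}, v_l \rangle_{d,1},
\]
where $v_l \in \R^{d,1}$ is the unit spacelike normal to the timelike vector hyperplane $V_l \subset \R^{d,1}$ spanned by $l \times \{1\}$, pointing to the side containing $l^+ \times \{1\}$. This identity is exactly what turns the Fact into a one-line computation using the $O_+(d,1)$-equivariance of the whole construction.

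The plan is as follows. First, I claim that $v_{A \cdot l} = A v_l$. Indeed, $A \in O_+(d,1)$ is a linear isometry of $\R^{d,1}$, so it sends the unit spacelike vector $v_l$ to a unit spacelike vector, and it sends the vector hyperplane $V_l$ to $A V_l$. But by formula \eqref{A klein}, for every $x \in l$ one has $A \binom{x}{1} = \frac{L(x)}{L(A\cdot x)}\binom{A\cdot x}{1}$, so $A V_l$ is precisely the vector hyperplane spanned by $(A\cdot l) \times \{1\}$, i.e.\ $V_{A\cdot l}$. Hence $Av_l$ is a unit spacelike normal to $V_{A\cdot l}$; the orientation convention (choosing the side that contains the image $A \cdot l^+$) then fixes the sign and gives $v_{A\cdot l} = Av_l$.

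Second, I compute directly:
\[
h_{A\cdot l}(x) \;=\; \langle {\textstyle\binom{x}{1}}, v_{A\cdot l}\rangle_{d,1} \;=\; \langle {\textstyle\binom{x}{1}}, Av_l\rangle_{d,1} \;=\; \langle A^{-1}{\textstyle\binom{x}{1}}, v_l\rangle_{d,1},
\]
the last equality because $A$ is a linear isometry of $\langle\cdot,\cdot\rangle_{d,1}$. Applying \eqref{A klein} to $A^{-1}$ gives
\[
A^{-1}{\textstyle\binom{x}{1}} \;=\; \frac{L(x)}{L(A^{-1}\cdot x)}{\textstyle\binom{A^{-1}\cdot x}{1}},
\]
so
\[
h_{A\cdot l}(x) \;=\; \frac{L(x)}{L(A^{-1}\cdot x)}\,\langle{\textstyle\binom{A^{-1}\cdot x}{1}}, v_l\rangle_{d,1} \;=\; \frac{L(x)}{L(A^{-1}\cdot x)}\,h_l(A^{-1}\cdot x),
\]
which is exactly the stated formula.

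The only delicate step is tracking the sign: one has to be sure that choosing the normal pointing towards $A\cdot l^+$ agrees with $A$ applied to the normal pointing towards $l^+$. This is just the observation that the image by $A$ of the closed half-space $\overline{l^+}\times\{1\}$, intersected with the Klein ball, is the closed half-space on the $Av_l$-side of $A\cdot l$, so the two orientation conventions match by construction. With that in hand the computation above is purely formal, driven by the fact that $h_l$ is affine-linear in Minkowski coordinates and that $A$ acts by a genuine linear isometry on $\R^{d,1}$; the scalar factor $L/(L\circ A^{-1})$ comes from nothing but the projective rescaling in \eqref{A klein} relating the Klein-ball action to the Minkowski linear action.
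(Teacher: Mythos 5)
Your proof is correct and follows essentially the same route as the paper's: both rely on the Minkowski-linear expression $h_l(x)=\langle\binom{x}{1},v_l\rangle_{d,1}$ from Remark~\ref{rem angle wedge}, the equivariance $v_{A\cdot l}=A(v_l)$ (which the paper simply asserts as clear, while you justify the orientation matching), and formula \eqref{A klein} applied to $A^{-1}$.
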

\begin{proof}
With the notations of Remark~\ref{rem angle wedge}, we clearly have
$v_{A\cdot l}=A(v_l)$, hence
$h_{A\cdot l}(x)=\langle \binom{x}{1},A(v_l)\rangle_{d,1}=\langle A^{-1}\binom{x}{1},v_l\rangle_{d,1}$, and by \eqref{A klein},
$A^{-1}(\binom{x}{1})=\frac{L(x)}{L(A^{-1}\cdot x)}\binom{A^{-1}\cdot x}{1}$.
\end{proof}

\begin{fact}\label{fact angle isom}
The image of the graph of a wedge by an orientation preserving co-Minkowski isometry is the graph of a wedge of same angle.
\end{fact}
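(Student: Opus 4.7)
The plan is to compute the action on functions explicitly via formula \eqref{def action h}. I first reduce to the case of a linear isometry $(A,0)$: a pure translation $(\operatorname{Id},v)$ adds the affine function $x \mapsto \langle x,\bar v\rangle_d - v_{d+1}$ to $h$, hence it sends a wedge on $l$ to a wedge on the same hyperplane with pieces shifted by this affine map. The difference $h_+ - h_-$ is unaffected, so the angle is trivially preserved, and only the linear part $(A,0)$ has to be analyzed.

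For the linear part, formula \eqref{def action h} reduces to $(A,0)h(x) = \frac{L(x)}{L(A^{-1}\cdot x)}\, h(A^{-1}\cdot x)$, which is linear in $h$. Inserting $h = h_- + (h_+ - h_-)\mathbf{1}_{l^+}$ and using $\mathbf{1}_{l^+}\circ A^{-1} = \mathbf{1}_{A(l^+)}$, I obtain
\begin{equation*}
(A,0)h \;=\; (A,0)h_- \;+\; \bigl((A,0)h_+ - (A,0)h_-\bigr)\,\mathbf{1}_{A(l^+)}.
\end{equation*}
The pieces $(A,0)h_\pm$ remain affine because their graphs, being images under a co-Minkowski isometry of hyperbolic hyperplanes (Fact~\ref{fact vector dual}), are again hyperbolic hyperplanes, which in the cylindrical model are graphs of affine maps. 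Since $A(l^+)$ is one of the two components of $B^d \setminus A\cdot l$, this exhibits $(A,0)h$ as a wedge on $A\cdot l$.

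To read off the angle, I would invoke Fact~\ref{fact h can isom}: $h_{A\cdot l} = \frac{L}{L\circ A^{-1}}\, h_l\circ A^{-1}$, whose right-hand side is exactly $(A,0)h_l$ by \eqref{def action h}. Linearity of the action on functions then yields
\begin{equation*}
(A,0)h_+ - (A,0)h_- \;=\; (A,0)(h_+ - h_-) \;=\; \alpha\,(A,0)h_l \;=\; \alpha\, h_{A\cdot l},
\end{equation*}
so the image is a wedge on $A\cdot l$ of angle $\alpha$, measured relative to the side $A(l^+)$.

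The main obstacle is matching this with the sign convention built into the definition of the angle, which labels the sides of $l$ according to which one contains the origin of $B^d$. A generic orientation-preserving hyperbolic isometry $A$ does not preserve this labeling, so $A(l^+)$ may coincide with $(A\cdot l)^-$, which would flip the sign of the computed angle. However, by Remark~\ref{rem angle wedge}, $|\alpha|$ equals the intrinsic Minkowski length of the spacelike segment $P_+ - P_-$ between the duals of the two affine pieces, an isometry-invariant quantity, so the fact holds when ``same angle'' is read as this intrinsic magnitude. The orientation-preservation hypothesis is what excludes the involution $(-\operatorname{Id},0)$ of \eqref{-id}, which acts on functions by $h\mapsto -h$ and would send a convex wedge to a concave one, reversing the sign of $\alpha$ in any convention.
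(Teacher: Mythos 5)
Your proof is correct, but it takes a genuinely different route from the paper's. The paper disposes of the statement in one line by pure duality: by Remark~\ref{rem angle wedge}, the angle (in absolute value) is the Minkowski length of the spacelike segment $P_+-P_-$ joining the duals of the two affine pieces, a co-Minkowski isometry acts on duals as a Minkowski isometry, and Minkowski isometries preserve the length of spacelike segments. Your computation works instead directly on the function side via \eqref{def action h}: it has the merit of actually \emph{verifying} that the image is a wedge, on the explicit hyperplane $A\cdot l$ with explicit affine pieces $(A,v)h_\pm$ --- a point the paper's proof leaves implicit in the sketched duality between wedges and segments --- and of identifying the coefficient through Fact~\ref{fact h can isom}, whose right-hand side is indeed $(A,0)h_l$. (Your citation of Fact~\ref{fact vector dual} for the affineness of $(A,v)h_\pm$ is slightly off target; the cleaner reference is that isometries act projectively on the cylinder and send hyperbolic hyperplanes to hyperbolic hyperplanes, cf.\ Fact~\ref{dual plans}.)

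One genuine simplification is available at the point where you retreat to ``same angle up to sign'': the sign worry is vacuous, because the angle of a wedge does not depend on which component of $B^d\setminus l$ is labelled $l^+$. Swapping the labels replaces $n_l$ by $-n_l$, hence $h_l$ by $-h_l$, and simultaneously exchanges the roles of $h_+$ and $h_-$, hence replaces $h_+-h_-$ by $-(h_+-h_-)$; the ratio $\alpha$ in \eqref{def angle} is unchanged. Consequently your identity $(A,0)(h_+-h_-)=\alpha\,h_{A\cdot l}$ already yields the angle on the nose, with its sign, regardless of whether $A(l^+)$ is the component of $B^d\setminus A\cdot l$ avoiding the origin. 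Your closing observation about $(-\operatorname{Id},0)$, which sends $h$ to $-h$ and so negates the angle, is exactly the right explanation of why the orientation hypothesis is needed.
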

\begin{proof}
The result is obvious from Remark~\ref{rem angle wedge}, as a co-Minkowski isometry acts as a Minkowski isometry on the dual objects, and hence sends a spacelike segment to a spacelike segment of same length.
\end{proof}

The choice of the normal $n_l$ gives an orientation on the vector hyperplane $l$, which is also  isometric to
$\H^{d-1}$. We denote by $\omega_l^{\mathbb{H}}$ its volume form for the hyperbolic metric.

\begin{lemma}\label{lem:mesure = angle}
Let $h$ be a convex wedge of angle $\alpha$ on a hyperplane $l$. Then the following identity holds:
$$\meanm(h)=\alpha\omega_l^{\mathbb{H}}~.$$
\end{lemma}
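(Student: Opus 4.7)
The plan is to use isometry invariance to reduce to a standard wedge, then simplify using linearity of $\meanm$, and finally compute by smooth convex mollification. By Fact~\ref{fact angle isom} and Lemma~\ref{lem: action sur mesure}, both sides of the identity transform equivariantly under $O_+(d,1) \ltimes \R^{d,1}$: an element $(A,v)$ sends $h$ to another wedge of angle $\alpha$ on $A \cdot l$, and simultaneously sends $\omega_l^{\mathbb{H}}$ to $\omega_{A\cdot l}^{\mathbb{H}}$. Since $O_+(d,1)$ acts transitively on (oriented) totally geodesic hyperplanes of $\H^d$, I may assume $l = \{x_1 = 0\} \cap B^d$ with $n_l = e_1$; then $p_l = 0$, $L(p_l) = 1$, Definition~\ref{def can map} yields $h_l(x) = x_1$, and the wedge takes the simple form $h = h_- + \alpha\, x_1^+$ with $x_1^+ := \max(x_1, 0)$. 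Formula \eqref{eq measure mean} is linear in $h$, and $\meanm(h_-) = 0$ because $h_-$ is affine and hence has vanishing mean curvature (Remark~\ref{rem H positif}); the proof therefore reduces to showing $\meanm(\alpha\, x_1^+) = \alpha\, \omega_l^{\mathbb{H}}$.

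Next I would pick a standard smooth convex mollification $\rho_\epsilon: \R \to \R$ of $t \mapsto t^+$, for instance the double antiderivative of a nonnegative bump function of mass $1$ supported in $[-\epsilon, \epsilon]$, so that $\rho_\epsilon \to t^+$ uniformly on compact sets, $\rho_\epsilon'' \geq 0$, and $\rho_\epsilon''$ tends weakly to the Dirac mass $\delta_0$ on $\R$. Setting $h_\epsilon(x) := \alpha\, \rho_\epsilon(x_1)$, the function $h_\epsilon$ is smooth and convex, and $h_\epsilon \to \alpha\, x_1^+$ locally uniformly; Lemma~\ref{lem conv measures} then yields $\meanm(h_\epsilon) \to \meanm(\alpha\, x_1^+)$ weakly on $B^d$.

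For the limit computation, since $h_\epsilon$ depends only on $x_1$ one has $\Delta h_\epsilon(x) = \alpha\, \rho_\epsilon''(x_1)$ and $\hess h_\epsilon(x)(x,x) = \alpha\, \rho_\epsilon''(x_1)\, x_1^2$; plugging these into \eqref{mean eucl} and using $\omega_{\H^d} = L^{-(d+1)} \omega_{B^d}$ from \eqref{eq:volume} gives
\[
\meanm(h_\epsilon)(\varphi) = \alpha \int_{B^d} \rho_\epsilon''(x_1)\,(1 - x_1^2)\, \varphi(x)\, L^{-d}(x)\, dx.
\]
Writing $x = (x_1, \bar x)$ with $\bar x \in \R^{d-1}$ and applying Fubini, the map
\[
x_1 \mapsto (1-x_1^2) \int_{\{\bar x:\, \|\bar x\|^2 < 1-x_1^2\}} \varphi(x_1,\bar x)\,(1 - x_1^2 - \|\bar x\|^2)^{-d/2}\, d\bar x
\]
is continuous and compactly supported in $(-1,1)$, so the mollifier property selects its value at $x_1 = 0$, yielding
\[
\meanm(\alpha\, x_1^+)(\varphi) = \alpha \int_{B^{d-1}} \varphi(0, \bar x)\,(1 - \|\bar x\|^2)^{-d/2}\, d\bar x.
\]
Since the induced hyperbolic metric on $l \cap B^d = \{0\} \times B^{d-1}$ is the Klein model of $\H^{d-1}$, the relation \eqref{eq:volume} in dimension $d-1$ identifies $\omega_l^{\mathbb{H}}$ with $(1 - \|\bar x\|^2)^{-d/2}\, d\bar x$, which closes the argument. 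The only technical point is the passage from the smooth mean curvature measures to their distributional limit, guaranteed by Lemma~\ref{lem conv measures} once the convex mollification is in place.
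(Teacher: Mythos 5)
Your proof is correct, but it follows a genuinely different route from the one in the paper. The paper works directly with the general wedge $h=h_-+\alpha h_l\mathbf{1}_{l^+}$: it computes the distributional Hessian $\partial_{ij}h=\alpha\,\partial_i h_l\,(n_l)_j\,\mathrm{d}S$ by integrating by parts against test functions in \eqref{eq measure mean}, reads off the resulting surface measure $\alpha(\langle n_l,\operatorname{grad}h_l\rangle + \langle n_l,x\rangle\langle \operatorname{grad}h_l,x\rangle )L^{-d}\,\mathrm{d}S$ supported on $l$, and only at the very end normalizes $l$ to a vector hyperplane via Lemma~\ref{lem: action sur mesure} and Fact~\ref{fact angle isom} to identify $L^{-d}\,\mathrm{d}S$ with $\omega_l^{\mathbb{H}}$. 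You perform that equivariance reduction at the outset, strip off the affine part by linearity of \eqref{eq measure mean}, and then replace the distributional integration by parts by a smooth convex mollification of $t\mapsto t^+$, using Lemma~\ref{lem conv measures} to pass to the limit. What your route buys is that every differentiation is carried out on a smooth function --- the singular part of the measure only appears through the weak limit $\rho_\epsilon''\to\delta_0$ --- and the reduction to the one-variable model $\alpha x_1^+$ makes the final integral completely explicit; the Fubini slice function is indeed continuous and compactly supported in $(-1,1)$ because $\varphi$ is supported away from $\partial B^d$, so the mollifier limit is legitimate. What it costs is the detour through the approximation lemma, where the paper's direct computation is shorter and treats an arbitrary hyperplane in a single formula before specializing. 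Both arguments close with the same identification of $\omega_l^{\mathbb{H}}$ with $(1-\|\bar x\|^2)^{-d/2}\,\mathrm{d}\bar x$ coming from \eqref{eq:volume} in dimension $d-1$.
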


The simplest illustration of the lemma is
for $d=1$, $l=\{0\}$ and $h(x)=|x|=-x+2x\mathbf{1}_{\R_+}$. Then the angle is equal to $2$, and
$h''$ in the sense of distributions is equal to $2\delta(0)$.

\begin{proof}

From \eqref{def angle}, $h=h_-+\alpha h_l\mathbf{1}_{l^+}$, so as $h_-$ is affine, in the sense of distributions, $\partial_{ij}h=\alpha \partial_{ij}( h_l\mathbf{1}_{l^+})$. By successive integrations by part, for $\phi\in C^\infty_0$, using that $h_l=0$ on $l$ and that $h_l$ is affine, we obtain, in the sense of distributions,
$\partial_{ij}h=\alpha \partial_i h_l (n_l)_j\mbox{d}S$,  where $\mbox{d}S$ is  the (Euclidean) area form on $l$ ($n_l$ is an inward normal vector for $l^+$).

Hence by \eqref{eq measure mean} and \eqref{def angle}, the measure  $\meanm(h)$ is given by, for $x\in l$,
\begin{equation}\label{eq mesure angle}\alpha(\langle n_l,\operatorname{grad}h_l\rangle + \langle n_l,x\rangle\langle \operatorname{grad}h_l,x\rangle )L^{-d}(x)\operatorname{d}S(x)~.\end{equation}

Let us first consider that $l$ is the intersection of $B^d$ with a vector hyperplane.
Then,  $\langle n_l,x\rangle=0$, $\langle n_l,\operatorname{grad}h_l\rangle=1 $, and
from \eqref{eq:volume},  $L^{-d}(x)\operatorname{d}S=\operatorname{d}\omega_l^{\mathbb{H}}$. At the end of the day, \eqref{eq mesure angle} becomes
$\alpha  \operatorname{d}\omega_l^{\mathbb{H}},$
that is the wanted result when $l$ is defined by a vector hyperplane.
The general case follows by performing an orientation-preserving isometry sending $l$ to a vector hyperplane, and using Lemma~\ref{lem: action sur mesure} and Fact~\ref{fact angle isom}.
\end{proof}

\begin{figure}
\begin{center}
\psfrag{mink}{$\R^{d,1}$}
\psfrag{comink}{$\co$}
\psfrag{v}{$v_l$}
\psfrag{av}{$\alpha v$}
\psfrag{p}{$P_-$}
\psfrag{b}{$B^d$}
\psfrag{q}{$P_+$}
\psfrag{l}{$l$}
\psfrag{lt}{$\tilde{l}$}
\psfrag{o1}{$l_-$}
\psfrag{o2}{$l_+$}
\psfrag{ps}{$P_-^*$}
\psfrag{Qs}{$P_+^*$}
\psfrag{hp}{$h_-$}
\psfrag{hq}{$h_+$}

\includegraphics[width=0.8\hsize]{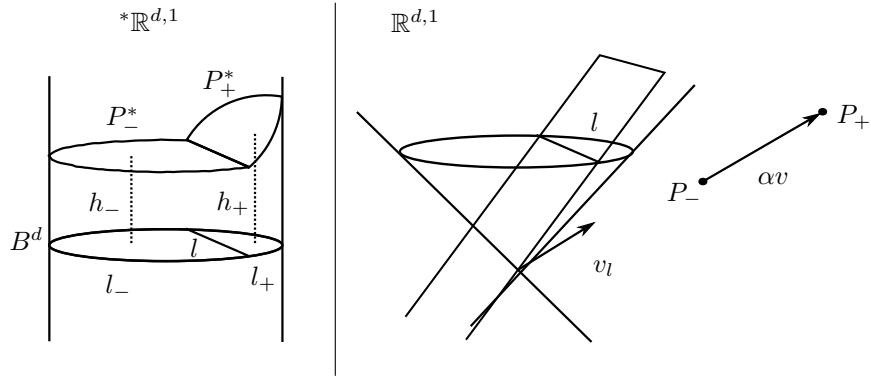}
\end{center}\caption{The spacelike segment in Minkowski space dual to a convex wedge in co-Minkowski space.}\label{fig dual wedge}
\end{figure}

\begin{remark}\label{remark christ}{\rm
Given a hyperplane $l$ of $B^d$ weighted by a positive number $\alpha$, it is almost clear how to construct a convex wedge
 in co-Minkowski space with angle $\alpha$. This construction can be easily extended to non intersecting weighted hyperplanes (see Section~\ref{simlicial}), or to a ``polyhedral case'', i.e. weighted hyperplanes are allowed to meet to form a convex cellulation of $\H^d$, together with a natural compatibility conditions at the weights, see  \cite[4.4]{fv} and \cite{FS} for the $d=2$ case. This is a polyhedral version of the \emph{Christoffel problem}, whose aim is to find a convex hypersurface in Minkowski space prescribing the dual Mean curvature measure --- called the \emph{area measure of order one} in this setting. The Christoffel problem in Minkowski space is the subject of \cite{fv}.

The polyhedral construction is also a version of the classical \emph{Maxwell-Cremona correspondence} or \emph{Maxwell lift}, see \cite{I-mc}.

}\end{remark}

\section{Action of cocompact hyperbolic isometry groups}

\subsection{Translation parts as cocycles}\label{space of cocycle}

Let $\Gamma$ be a subgroup of $O_+(d,1)$ such that
$\mathcal{H}^d/\Gamma$ is a compact oriented hyperbolic manifold.
A \emph{cocycle} $\tau\in Z^1(\Gamma, \R^{d,1})$  is a map $\tau:\Gamma\to \R^{d,1}$ satisfying, for $A,B\in \Gamma$,
 $$\tau(AB)=\tau(A) + A(\tau(B))~. $$
 Let us denote

$$\Gamma_\tau=\{(A,\tau(A)) | A\in \Gamma \}~. $$

From \eqref{group structure}, $\Gamma_\tau$ is a subgroup of the isometry group of  Minkowski space. In turn, it defines
a group of isometries of co-Minkowski space, that we will also denote by $\Gamma_\tau$.

In the cylindrical coordinates $B^d\times \R$ of co-Minkowski space,
$\Gamma$ acts freely and properly discontinuously on $B^d\times \{0\}$. As co-Minkowski space is the product manifold $B^d\times \R$, due to \eqref{action isom}, the following result is trivial, but worth to notice.

\begin{lemma}\label{lem:action propre}
The action of $\Gamma_\tau$ on $\co$ is free and properly discontinuous.
\end{lemma}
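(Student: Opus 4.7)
The plan is to use the fact that the projection $\pi : B^d \times \R \to B^d$ is $\Gamma_\tau$-equivariant, with $\Gamma_\tau$ acting on the base $B^d$ through the projection $(A,\tau(A))\mapsto A$ onto the cocompact lattice $\Gamma \subset O_+(d,1)$. This reduces both properties to the already-known corresponding properties for the action of $\Gamma$ on the Klein ball $B^d$ by hyperbolic isometries.

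First I would record equivariance: formula \eqref{action isom} shows that $(A,\tau(A))\cdot(x,h)$ has first coordinate $A\cdot x$, so $\pi \circ (A,\tau(A)) = A \circ \pi$. Since $\mathcal{H}^d/\Gamma$ is a compact hyperbolic manifold, the action of $\Gamma$ on $B^d \cong \mathcal{H}^d$ is free and properly discontinuous.

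For proper discontinuity, I would take an arbitrary compact set $K \subset B^d \times \R$, and note that $\pi(K)$ is compact in $B^d$. If $(A,\tau(A)) \in \Gamma_\tau$ satisfies $(A,\tau(A))\cdot K \cap K \neq \emptyset$, then applying $\pi$ gives $A \cdot \pi(K) \cap \pi(K) \neq \emptyset$; by proper discontinuity of $\Gamma$ on $B^d$, only finitely many $A \in \Gamma$ can satisfy this. Since the map $A \mapsto (A,\tau(A))$ is a bijection of $\Gamma$ onto $\Gamma_\tau$, this finishes proper discontinuity.

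For freeness, suppose $(A,\tau(A))\cdot(x,h) = (x,h)$ for some $(x,h) \in B^d \times \R$. The first coordinate yields $A \cdot x = x$, and freeness of the $\Gamma$-action on $B^d$ forces $A = \operatorname{Id}$. The cocycle relation $\tau(\operatorname{Id}) = \tau(\operatorname{Id}) + \operatorname{Id}(\tau(\operatorname{Id}))$ gives $\tau(\operatorname{Id}) = 0$, so $(A,\tau(A)) = (\operatorname{Id},0)$ is the identity of $\Gamma_\tau$. Thus the stabilizer of any point is trivial. I do not expect any real obstacle here: the proof is essentially a one-line reduction via the equivariant fibration $\pi$, as already noted in Remark~\ref{rk:affinefiber}.
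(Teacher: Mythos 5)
Your proof is correct and follows exactly the route the paper has in mind: the paper states the lemma is immediate because, by formula \eqref{action isom}, the action on $B^d\times\R$ projects equivariantly onto the free, properly discontinuous action of $\Gamma$ on $B^d$. You have simply written out the details of this reduction (pushing compact sets and fixed points down through $\pi$), which the paper leaves implicit.
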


A \emph{coboundary} is a particular cocycle
of the form
$$\tau(A)=Av-v$$
for a given $v\in \R^{d,1}$. The group
$H^1(\Gamma,\R^{d,1})$ is the quotient of the space of cocycles by the space of coboundaries: two cocycles are in relation if and only if they differ by a coboundary.

In the following, we make the implicit assumption that
we are looking at $\Gamma$ such that  $H^1(\Gamma,\R^{d,1})$ is not reduced to zero.

Let us give a criterion of non-triviality.
Let us suppose that the compact hyperbolic manifold $\mathcal{H}^d/\Gamma$ contains $n$ disjoints embedded totally geodesic hypersurfaces $H_1,\ldots,H_n$.  Also, let us set some positive weights $\omega_i$ to each $H_i$.
This is actually a \emph{simplicial measured geodesic lamination} $\lambda$ on $\mathcal{H}^d/\Gamma$.

A lift to $B^d$ of a $H_i$ is a hyperplane
$l$. Recall from \eqref{vl} that a vector $v_l$ of $\R^{d,1}$ is assigned to any such $l$. Let us denote by $\tilde{L}$ the set of the lifts the $H_i$.
Let us fix an arbitrary base point $\tilde{x}\in B^d\setminus \tilde{L}$.
Then define, for $A\in \Gamma$, and for any path $c:[0,1]\to B^d$, transverse to $\tilde{L}$ and joining
$\tilde{x}$ to $A\cdot \tilde{x}$:
\begin{equation}\label{taulambda}\tau_\lambda(A)=\sum_{j \in c([0,1])\cap \tilde{L}}  \omega_j v_j~. \end{equation}

Clearly, the definition of $\tau_\lambda$ is independent from the choice of the path $c$ among paths transverse to $\tilde{L}$ joining the same endpoints.

\begin{fact}\label{fact: tau gamma 1}
With the notations above,  $\tau_\lambda\in Z^1(\Gamma,\R^{d,1})$.
\end{fact}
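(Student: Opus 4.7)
The plan is to verify the cocycle relation $\tau_\lambda(AB) = \tau_\lambda(A) + A(\tau_\lambda(B))$ by exploiting that the definition of $\tau_\lambda(A)$ is independent of the choice of transverse path from $\tilde{x}$ to $A\cdot\tilde{x}$. First I would check this path-independence: two transverse paths with the same endpoints cobound a $2$-chain which, after a generic small perturbation, intersects each codimension-one leaf $l\in \tilde L$ in a $1$-manifold, so the signed count of intersections with $l$ is the same for both paths, giving the same weighted sum $\sum \omega_j v_j$.

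Next, I would compute $\tau_\lambda(AB)$ using the concatenated path $c = c_1 \cdot (A\circ c_2)$, where $c_1$ is a transverse path from $\tilde x$ to $A\cdot\tilde x$ and $c_2$ is a transverse path from $\tilde x$ to $B\cdot\tilde x$. Since $A$ is an isometry preserving $\tilde L$ (because $\tilde L$ is the preimage of the $H_i$'s under the covering map, and $\Gamma$ acts by deck transformations), the composition $A\circ c_2$ is still transverse to $\tilde L$ and joins $A\cdot\tilde x$ to $AB\cdot\tilde x$. Therefore
\begin{equation*}
\tau_\lambda(AB) = \sum_{j \in c_1 \cap \tilde L} \omega_j v_j \;+\; \sum_{j \in (A\circ c_2) \cap \tilde L} \omega_j v_j.
\end{equation*}
The first sum is exactly $\tau_\lambda(A)$.

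For the second sum, the substitution $j = A\cdot k$ with $k\in c_2\cap \tilde L$ is a bijection between $(A\circ c_2)\cap \tilde L$ and $c_2\cap \tilde L$. The weight $\omega_j$ only depends on which $H_i$ the hyperplane $j$ projects to, and since $A\cdot k$ and $k$ project to the same $H_i$, we have $\omega_{A\cdot k} = \omega_k$. For the vectors, by the description in Remark~\ref{rem angle wedge} the vector $v_l$ is the unit spacelike Minkowski normal to the timelike hyperplane determined by $l$; since $A\in O_+(d,1)$ is a linear isometry preserving the orientation of this normal (here one uses that $A$ sends the chosen positive side $l^+$ to $(A\cdot l)^+$, which is the content of the fact that $\Gamma$ preserves the transverse orientation of each $H_i$ within its lifts), one obtains $v_{A\cdot k} = A(v_k)$. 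Hence
\begin{equation*}
\sum_{j \in (A\circ c_2)\cap \tilde L} \omega_j v_j \;=\; \sum_{k \in c_2 \cap \tilde L} \omega_k\, A(v_k) \;=\; A\!\left( \sum_{k \in c_2 \cap \tilde L} \omega_k v_k\right) \;=\; A(\tau_\lambda(B)),
\end{equation*}
using that $A$ is linear. Combining the two pieces yields the cocycle relation.

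The only subtle point I expect is verifying the equivariance $v_{A\cdot l} = A(v_l)$ with the correct sign; this is the step where one must check that the orientation convention chosen to define $v_l$ (via the normal $n_l$ pointing to $l^+$) is actually $\Gamma$-equivariant, which follows because the $H_i$ are two-sidedly embedded in the oriented quotient $\mathcal H^d/\Gamma$ so that each $\Gamma$-orbit of lifts carries a coherent choice of transverse side. Path-independence is essentially a transversality/homotopy argument and should go through without difficulty once one notes that $B^d\setminus \tilde L$ is locally path-connected.
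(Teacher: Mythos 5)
Your proof is correct and follows essentially the same route as the paper's: concatenate a transverse path from $\tilde x$ to $A\cdot\tilde x$ with the $A$-translate of one from $\tilde x$ to $B\cdot\tilde x$, split the weighted sum accordingly, and use the equivariance $v_{A\cdot l}=A(v_l)$ together with the linearity of $A$. The extra care you take with path-independence and with the orientation convention behind $v_{A\cdot l}=A(v_l)$ only fills in details the paper treats as immediate.
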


\begin{proof}
Let $A,B\in \Gamma$. Let $c_A,c_B:[0,1]\to B^d$ be
paths transverse to $\tilde{L}$, and joining $\tilde{x}$ to $A\cdot\tilde{x}$ and
$B\cdot \tilde{x}$ respectively. Let $c_{AB}$ be the concatenation of
$c_A$ with $A\cdot c_B$. This is a path joining $\tilde{x}$ to $(AB)\cdot \tilde{x}$ and transverse to $L$, so
$$\tau_\lambda(AB)=\sum_{j\in (c_{AB}([0,1])\cap\tilde{L})}\omega_jv_j
= \sum_{j\in (c_{A}([0,1])\cap\tilde{L})}\omega_jv_j  +\sum_{j\in (A\cdot c_{B}([0,1])\cap\tilde{L})}\omega_jv_j ~.$$

By definition of $v_l$, we clearly have $v_{A\cdot l}=A(v_l)$,  and  $A$ acts linearly on $\R^{d,1}$, so
$$\tau_\lambda(AB)=\sum_{j\in (c_{A}([0,1])\cap\tilde{L})}\omega_jv_j  +A\left(\sum_{j\in (c_{B}([0,1])\cap\tilde{L})}\omega_jv_j\right)=\tau(A)+A(\tau(B))~. $$

\end{proof}

\begin{fact}\label{fact: tau gamma 2}
Let $\tau'_\lambda$ be the cocycle defined by \eqref{taulambda}, but
choosing another basepoint $\tilde{x}'$. Then $\tau'_\lambda-\tau_\lambda$ is a coboundary.
\end{fact}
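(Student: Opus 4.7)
The plan is to exhibit an explicit translation vector $v \in \R^{d,1}$ such that $\tau'_\lambda(A) - \tau_\lambda(A) = Av - v$ for every $A \in \Gamma$. I would fix once and for all a path $\sigma:[0,1]\to B^d$ transverse to $\tilde{L}$ joining $\tilde{x}$ to $\tilde{x}'$, and set
\[
v = \sum_{j\in \sigma([0,1])\cap\tilde{L}} \omega_j v_j,
\]
with the signs dictated by the (implicit) orientation convention in \eqref{taulambda}. Because the defining formula of $\tau_\lambda$ and $\tau'_\lambda$ depends only on the endpoints of the path (and not on the particular transverse representative), $v$ depends only on the two basepoints $\tilde{x}, \tilde{x}'$.

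The key step is a concatenation argument. Given $A \in \Gamma$, I would pick any transverse path $c'$ from $\tilde{x}'$ to $A\cdot \tilde{x}'$ that computes $\tau'_\lambda(A)$. The concatenation of $\sigma$, then $c'$, then $A\cdot \bar{\sigma}$ (the translate by $A$ of the reverse of $\sigma$) is a transverse path from $\tilde{x}$ to $A\cdot \tilde{x}$, and by path-independence it computes $\tau_\lambda(A)$. The three pieces should contribute $v$, $\tau'_\lambda(A)$, and $-A(v)$ respectively: the first by definition of $v$; the second by choice of $c'$; the third by combining $\Gamma$-invariance of $\tilde{L}$ and of the weights $\omega_j$, the equivariance $v_{A\cdot l} = A(v_l)$ from Fact~\ref{fact h can isom}, the linearity of $A$ on $\R^{d,1}$, and the sign flip produced by reversing the path. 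Rearranging gives $\tau_\lambda(A) = v + \tau'_\lambda(A) - A(v)$, i.e.\ $\tau'_\lambda(A) - \tau_\lambda(A) = A(v) - v$, as required.

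The only real difficulty I anticipate is bookkeeping orientations. Although \eqref{taulambda} is written as an unsigned sum, the path-independence asserted just after it tacitly requires oriented intersection numbers (otherwise a path could backtrack across a leaf and produce spurious contributions). Verifying that these signs transform correctly under path reversal and under $A \in O_+(d,1)$ boils down to the fact that $A$ is orientation-preserving and sends the transverse orientation chosen on a leaf $l$ to that chosen on $A\cdot l$, which is exactly what Fact~\ref{fact h can isom} encodes at the level of the normal vectors $v_l$.
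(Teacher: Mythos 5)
Your proof is correct and follows essentially the same route as the paper's: fix a transverse path from $\tilde{x}$ to $\tilde{x}'$, concatenate it with a path computing $\tau'_\lambda(A)$ and with the reversed $A$-translate of the first path, and use path-independence together with $v_{A\cdot l}=A(v_l)$ to identify the difference of cocycles as the coboundary of the vector accumulated along the connecting path (up to an inessential sign convention on $v$). Your remark about the tacit need for oriented intersection counts is a fair observation about the setup of \eqref{taulambda}, but it does not affect the validity of the argument, which matches the paper's.
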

\begin{proof}
For any $A\in \Gamma$,
let $c:[0,1]\to B^d$ be a path transverse to $\tilde{L}$ joining $\tilde{x}$ to $A\cdot \tilde{x}$, and  let $c':[0,1]\to B^d$ be a path transverse to $\tilde{L}$ joining $\tilde{x}'$ to $A\cdot \tilde{x}'$. Let $\bar c:[0,1]\to B^d$ be any path transverse to $\tilde{L}$ joining $\tilde{x}$ to $\tilde{x}'$.
Then the concatenation $c^*$ of $\bar c$ with $c'$ and $-A\cdot \bar{c}$ is a transverse path joining $\tilde{x}$ to $A\cdot \tilde{x}$, so
$$\tau_\lambda(A)=\sum_{j\in (c^*([0,1])\cap\tilde{L})}\omega_jv_j =
\sum_{j\in (\bar{c}([0,1])\cap\tilde{L})}\omega_jv_j +\sum_{j\in (c'([0,1])\cap\tilde{L})}\omega_jv_j -\sum_{j\in (A\cdot\bar{c}([0,1])\cap\tilde{L})}\omega_jv_j $$
$$=
\sum_{j\in (\bar{c}([0,1])\cap\tilde{L})}\omega_jv_j +\tau'_\lambda(A) -A\left(\sum_{j\in (\bar{c}([0,1])\cap\tilde{L})}\omega_jv_j\right) $$
so if $v$ is the vector $-\sum_{j\in (\bar{c}([0,1])\cap\tilde{L})}\omega_jv_j$ we have $\tau_\gamma(A)-\tau'_\gamma(A)=Av-v$.
\end{proof}

So for each choice of positive weights, we have constructed an element of
$H^1(\Gamma,\R^{d,1})$.
Clearly, a linearly independent change in the weights will produce
a different element in $H^1(\Gamma,\R^{d,1})$, hence we have a simple geometric proof of the following classical result (see the Introduction).

\begin{theorem}
If $\H^d/\Gamma$ contains  $n$ disjoints embedded totally geodesic hypersurfaces, then the dimension of $H^1(\Gamma,\R^{d,1})$ is $\geq n$.
\end{theorem}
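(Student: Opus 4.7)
The plan is to define an $\R$-linear map $\Phi : \R^n \to H^1(\Gamma,\R^{d,1})$ and show that it is injective. For $(\omega_1,\dots,\omega_n) \in \R^n$, let $\lambda_i$ denote the simplicial measured geodesic lamination carried by $H_i$ with unit weight, and set $\lambda = \sum_i \omega_i \lambda_i$. By Facts~\ref{fact: tau gamma 1}--\ref{fact: tau gamma 2} and the linearity of formula~\eqref{taulambda} in the weights, the assignment $\Phi(\omega_1,\dots,\omega_n) := [\tau_\lambda]$ defines a linear map, and the theorem reduces to checking that its kernel is trivial.

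Suppose therefore that $\tau_\lambda$ is a coboundary: there exists $v\in \R^{d,1}$ with $\tau_\lambda(A)=Av-v$ for every $A\in\Gamma$. I will show each $\omega_i$ is zero by evaluating this identity on a loop that ``sees'' only $H_i$. Using that the $H_j$'s are pairwise disjoint compact embedded submanifolds of $\H^d/\Gamma$, for each index $i$ pick a closed loop $\gamma_i$ in $\H^d/\Gamma$ whose closed geodesic representative (i) crosses $H_i$ transversely in exactly one point, (ii) is disjoint from $H_j$ for every $j\ne i$, and (iii) is not Minkowski-orthogonal to $H_i$ at that crossing point. Conditions (i)--(ii) are realizable by taking a short transverse arc in a tubular neighborhood of $H_i$ disjoint from the remaining $H_j$'s and closing it up; condition (iii) is generic.

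Let $A_i\in \Gamma$ be the class of $\gamma_i$. Applying formula \eqref{taulambda} to any lift of $\gamma_i$ to $B^d$, only the single crossing with a single lift $\tilde H_i$ of $H_i$ contributes, giving $\tau_\lambda(A_i) = \omega_i\,v_{\tilde H_i}$. Combined with the coboundary condition, this reads
\[
\omega_i\,v_{\tilde H_i} \;=\; A_iv-v \;\in\; (A_i-I)(\R^{d,1}).
\]
Since $A_i\in O_+(d,1)$ is a Minkowski isometry, its Minkowski-adjoint is $A_i^{-1}$, and a short computation gives $(A_i-I)(\R^{d,1}) = \ker(A_i-I)^{\perp}$ for the Minkowski form. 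Now $\ker(A_i-I)$ is the $(d-1)$-dimensional spacelike subspace orthogonal to the two-dimensional plane spanned by the axis of $A_i$, so $(A_i-I)(\R^{d,1})$ is exactly this axis plane. Hence $v_{\tilde H_i}$ belongs to the image if and only if $v_{\tilde H_i}$ lies in the axis plane of $A_i$, i.e.\ if and only if the axis of $A_i$ is orthogonal to $\tilde H_i$---forbidden by condition (iii). Therefore $\omega_i=0$. Since this holds for every $i$, $\Phi$ is injective and $\dim H^1(\Gamma,\R^{d,1})\ge n$.

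The main obstacle is when $H_i$ is separating, because no loop can meet a separating embedded hypersurface in a single transverse point. In that case I would replace $\gamma_i$ by a loop crossing $H_i$ twice with opposite orientations, disjoint from the other $H_j$'s; the corresponding contribution becomes $\omega_i(v_{l_1}-v_{l_2})$ for two distinct lifts $l_1,l_2$ of $H_i$. A genericity argument on $\gamma_i$ (ensuring in particular that $l_2\ne A_i l_1$, which can be achieved by routing $\gamma_i$ through a non-trivial part of $\pi_1$ between the two crossings) should again place $v_{l_1}-v_{l_2}$ outside $(A_i-I)(\R^{d,1})$ and force $\omega_i=0$. This is the step where the argument requires the most care.
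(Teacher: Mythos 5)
Your construction of the linear map $\Phi:\R^n\to H^1(\Gamma,\R^{d,1})$ is exactly the paper's (Facts~\ref{fact: tau gamma 1} and~\ref{fact: tau gamma 2} together with the linearity of \eqref{taulambda} in the weights); the paper then simply \emph{asserts} injectivity (``clearly, a linearly independent change in the weights will produce a different element''), so you are attempting to supply precisely the step the authors gloss over. Unfortunately the key linear-algebra step fails for $d\geq 3$. For a loxodromic $A_i\in SO_+(d,1)$ write $\R^{d,1}=P\oplus P^{\perp}$, where $P$ is the timelike plane of the axis, $A_i|_P$ is a boost and $A_i|_{P^{\perp}}=R\in O(d-1)$ is a possibly nontrivial rotation. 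Then $\ker(A_i-\operatorname{Id})=\ker(R-\operatorname{Id})$ and $\operatorname{Im}(A_i-\operatorname{Id})=P\oplus\operatorname{Im}(R-\operatorname{Id})$, which is strictly larger than the axis plane whenever $R\neq\operatorname{Id}$; for $d=3$ and $R$ a nontrivial rotation it is all of $\R^{3,1}$, so the coboundary identity evaluated at such an $A_i$ imposes no constraint at all and your condition (iii) rules out nothing. Your identification of $(A_i-\operatorname{Id})(\R^{d,1})$ with the axis plane is valid only when the rotational part is trivial (e.g.\ $d=2$ with $\Gamma\subset SO_+(2,1)$), and you have no way to guarantee that the loops you pick have trivial rotational part.

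There are two further gaps. First, the existence of a loop meeting $H_i$ transversely in a single point while avoiding all $H_j$, $j\neq i$, is not justified: besides the separating case (which you acknowledge and leave as a sketch), even when $H_i$ is non-separating the union $\bigcup_j H_j$ may separate the two local sides of $H_i$, so ``closing up'' the short transverse arc can force crossings of the other hypersurfaces. Second, the separating case itself is only a plan, and you say so. A route that bypasses all of this, and which is the one implicitly underlying the paper, is to use the function $h_\lambda=\sum_j \omega_j h_{l_j}$ (extended verbatim to signed weights): it is $\tau_\lambda$-equivariant, and its distributional Hessian is the signed measure $\sum_j\omega_j\,\omega_{l_j}^{\H}$ by the computation of Lemma~\ref{lem:mesure = angle}. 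If $\tau_\lambda$ were a coboundary, then by Facts~\ref{fact couboundary}, \ref{lem:diff} and \ref{ext fuch} the function $h_\lambda$ would differ from an affine map by a bounded $\Gamma$-invariant function vanishing at $\partial B^d$, which forces every wedge angle $\omega_j$ to vanish; alternatively, for positive weights Proposition~\ref{prop length simplicial} gives $\|\tau_\lambda\|_{S^1}=\length(\lambda)>0$, whereas coboundaries have vanishing $S_1$ norm. I would rebuild the injectivity argument along these lines rather than by evaluating on individual group elements.
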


\subsection{Equivariant maps}

Let $\tau \in Z^1(\Gamma,\R^{d,1})$.
We will give more details on the action of $\Gamma_\tau$ by looking at particular functions. The analysis is simplified using the cylindrical coordinates of co-Minkowski space.
We say that a continuous map $h:B^d\to \R$ is \emph{$\Gamma$-invariant} if its graph is invariant for the action of $\Gamma$, i.e. for all $A\in \Gamma$, $(A,0)h=h$ (recall \eqref{def action h}):
$$\forall x\in B^d,\, (L^{-1} h)(A\cdot x)= (L^{-1}h)(x)~, $$
in other terms, $h$ is $\Gamma$-invariant if and only if
$L^{-1}h$ is invariant for the action of $\Gamma$.
In particular, if $h$ is $\Gamma$-invariant, as the action of $\Gamma$ is cocompact on $B^d$, $L^{-1}h$ is bounded.
Note that the function $L$  is obviously
 $\Gamma$-invariant (see Remark~\ref{rem inv tL} for a geometric viewpoint).

\begin{fact}\label{ext fuch}
Let $h$ be a $\Gamma$-invariant function. Then $h$ extends continuously as the constant zero function on $\partial B^d$.
\end{fact}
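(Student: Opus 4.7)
The plan is to exploit cocompactness of the $\Gamma$-action on $B^d$ together with the explicit form of $\Gamma$-invariance given just above the statement.

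First I would unpack the definition: the graph of $h$ in $B^d\times \R$ being invariant under $\Gamma$ (acting via $(A,0)$) is exactly the condition, by \eqref{action isom} or equivalently the displayed line before the fact, that the function $L^{-1}h:B^d\to\R$ is invariant under the isometric action of $\Gamma$ on the Klein ball model $(B^d,\gh)$. In other words, $L^{-1}h$ descends to a continuous function on the quotient hyperbolic manifold $\mathcal{H}^d/\Gamma$.

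Next, since $\mathcal{H}^d/\Gamma$ is compact by hypothesis, any continuous function on it is bounded. So there exists a constant $M\geq 0$ such that
\[
|L^{-1}(x)\,h(x)|\leq M \quad\text{for every } x\in B^d,
\]
which rearranges to $|h(x)|\leq M L(x)=M\sqrt{1-\|x\|^2}$ on $B^d$.

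Finally, as $\|x\|\to 1$ the right-hand side tends to $0$. Hence $h$ extends continuously to $\bar B^d$ by setting $h\equiv 0$ on $\partial B^d$, and in particular this extension equals the zero function on the boundary sphere.

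There is essentially no obstacle here: the only conceptual point is recognizing that the boundary $\partial B^d$ of the cylindrical model is "at infinity" from the intrinsic hyperbolic viewpoint, so that a boundedness statement for the intrinsic representative $L^{-1}h$ translates into vanishing of the extrinsic representative $h$ at the boundary. Cocompactness is what supplies that boundedness.
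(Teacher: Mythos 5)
Your proof is correct and follows exactly the paper's own argument: cocompactness of the $\Gamma$-action makes $L^{-1}h$ bounded, hence $|h|\leq ML$, and $L$ vanishes continuously on $\partial B^d$. The paper's proof is the same squeeze $c_1L\leq h\leq c_2L$, so there is nothing to add.
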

\begin{proof}
There exists two constants $c_1,c_2$ such that $c_1\leq L^{-1}h \leq c_2$, so $c_1L \leq h \leq c_2 L$, and the result follows.
\end{proof}

\begin{definition}\label{def equ} A continuous map $h:B^d\to \R$ is \emph{$\tau$-equivariant}
if its graph is invariant for the action of $\Gamma_\tau$, i.e. for all $A\in \Gamma$,
$(A,\tau(A))h=h,$
using the notation introduced in \eqref{def action h}.
\end{definition}

The vector space structure of $ Z^1(\Gamma, \R^{d,1})$ fits well with the vector space structure of maps, as the following lemma shows. Its proof is trivial from Definition~\ref{def equ}.

\begin{fact}\label{lem:diff}
Let $\tau_1,\tau_2\in Z^1(\Gamma, \R^{d,1})$ and let
$h_1$ and $h_2$ be $\tau_1$ and $\tau_2$-equivariant maps respectively, and $\alpha\in \R$.
Then $h_1+\alpha h_2$ is $(\tau_1+\alpha\tau_2)$-equivariant.
In particular, the difference between two $\tau$-equivariant map is a $\Gamma$-invariant map.
\end{fact}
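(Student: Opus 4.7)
The plan is to verify the claim by direct unpacking of formula \eqref{def action h}, exploiting the fact that the action of $(A,v)$ on a function is \emph{affine} in the pair $(h,v)$.

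First I would observe that the defining formula
$$(A,v)h(x) = \frac{L(x)}{L(A^{-1}\cdot x)}\,h(A^{-1}\cdot x) + \langle x,\bar v\rangle_d - v_{d+1}$$
splits, for each fixed $A \in \Gamma$, into a term that is $\R$-linear in $h$ alone (and does not involve $v$) and a term that is $\R$-linear in $v$ alone (and does not involve $h$). Consequently, for any scalars and any cocycles,
\[
(A,\,\tau_1(A)+\alpha\tau_2(A))(h_1+\alpha h_2) \;=\; (A,\tau_1(A))h_1 \;+\; \alpha\,(A,\tau_2(A))h_2.
\]

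Second, by the $\tau_i$-equivariance of $h_i$, the right-hand side equals $h_1 + \alpha h_2$. Since this holds for every $A \in \Gamma$, and since $\tau_1 + \alpha\tau_2$ is easily seen to be a cocycle (the cocycle relation is linear in $\tau$), the function $h_1 + \alpha h_2$ is $(\tau_1+\alpha\tau_2)$-equivariant by Definition~\ref{def equ}.

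For the ``In particular'' clause, I would apply the above with $\tau_1 = \tau_2 = \tau$, $\alpha = -1$, and $h_1, h_2$ both $\tau$-equivariant. Then $\tau_1 + \alpha\tau_2 = 0$, so $h_1 - h_2$ is $0$-equivariant; spelling out \eqref{def action h} with $v = 0$ gives exactly the $\Gamma$-invariance condition $(A,0)(h_1-h_2) = h_1-h_2$. There is no genuine obstacle in this argument: it is a one-line consequence of the affine dependence of the action on $(h,v)$, as the statement itself warns.
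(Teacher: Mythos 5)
Your proof is correct and is exactly the argument the paper has in mind: the paper simply declares the fact ``trivial from Definition~\ref{def equ}'', and your verification via the affine (separately linear in $h$ and in $v$) structure of formula \eqref{def action h}, followed by the specialization $\tau_1=\tau_2=\tau$, $\alpha=-1$ for the $\Gamma$-invariance clause, is the intended direct unpacking of that definition. No gaps.
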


\begin{fact}\label{fact unique cocycle}
If there are $\tau,\tau'\in  Z^1(\Gamma, \R^{d,1})$ such that
there is a map $\tau$ and $\tau'$-equivariant, then $\tau=\tau'$.
\end{fact}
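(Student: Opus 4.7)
The plan is to reduce the claim to the statement that the zero function is $\sigma$-equivariant only for $\sigma = 0$, and then to read this off directly from the explicit formula \eqref{def action h}.

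First, suppose $h : B^d \to \R$ is simultaneously $\tau$-equivariant and $\tau'$-equivariant. Apply Fact~\ref{lem:diff} with $\tau_1 = \tau$, $\tau_2 = \tau'$, $h_1 = h_2 = h$, and $\alpha = -1$: the function $h - h = 0$ is then $(\tau - \tau')$-equivariant. Set $\sigma := \tau - \tau' \in Z^1(\Gamma, \R^{d,1})$; the goal reduces to showing $\sigma \equiv 0$.

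Next, unravel what it means for the zero function to be $\sigma$-equivariant using the action formula \eqref{def action h}. For every $A \in \Gamma$ and every $x \in B^d$,
\[
0 = (A, \sigma(A)) \cdot 0 \,(x) = \frac{L(x)}{L(A^{-1}\cdot x)} \cdot 0 + \langle x, \overline{\sigma(A)} \rangle_d - \sigma(A)_{d+1}.
\]
Thus the affine function $x \mapsto \langle x, \overline{\sigma(A)} \rangle_d - \sigma(A)_{d+1}$ vanishes identically on the open ball $B^d$. Since an affine function on $B^d$ that is identically zero must have all its coefficients zero, one obtains $\overline{\sigma(A)} = 0$ and $\sigma(A)_{d+1} = 0$, i.e. $\sigma(A) = 0$ in $\R^{d,1}$. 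As $A \in \Gamma$ is arbitrary, $\sigma \equiv 0$, which means $\tau = \tau'$.

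There is essentially no obstacle here: the argument is a one-line consequence of the previous fact together with the explicit form of the action on functions. The only tiny point worth checking is that the ``translation part'' in \eqref{def action h} is genuinely an affine function of $x$ that does not interact with the linear part of the cocycle action, which is evident from the formula.
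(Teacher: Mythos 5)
Your proof is correct and follows essentially the same route as the paper: both arguments unravel the definition of equivariance via the explicit action formula \eqref{def action h} and conclude that the affine translation parts must coincide, because an affine function vanishing on the open ball $B^d$ has all coefficients zero. Your preliminary reduction to the zero function via Fact~\ref{lem:diff} is a harmless repackaging of the paper's direct comparison of the two equivariance identities.
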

\begin{proof}
For any $A\in \Gamma$ and any $x$, using the definition of equivariance, we obtain
$\langle A\cdot x,\overline{\tau(A)}\rangle_d-\tau(A)_{d+1}
= \langle A\cdot x,\overline{\tau'(A)}\rangle_d-\tau'(A)_{d+1}$.
\end{proof}

The following fact is clear from the definition of $\tau$-equivariant map and Lemma~\ref{lem:action hessien}.

\begin{fact}\label{lem:hessien inv}
Let $h:B^d\to \R$ be a $C^2$ $\tau$-equivariant function. Then
$L^{-1}\operatorname{Hess}h$ is $\Gamma$-invariant:
$$(L^{-1}\operatorname{Hess}h)(x)(X,Y)=(L^{-1}\operatorname{Hess}h)(A\cdot x)\left(DA(x)(X),DA(x)(Y)\right)~.$$
\end{fact}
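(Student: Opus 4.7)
The plan is to deduce the statement as a one-line consequence of Lemma~\ref{lem:action hessien}, combined with the group structure of $\Gamma_\tau$.

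By definition, $h$ being $\tau$-equivariant means $(A,\tau(A))h = h$ for every $A\in\Gamma$. Since $\Gamma_\tau$ is a group (cf.\ \eqref{group structure}), the inverse element $(A^{-1},\tau(A^{-1}))$ also lies in $\Gamma_\tau$, so the equivariance relation applied to this inverse gives $(A^{-1},\tau(A^{-1}))h = h$ as well, for every $A\in\Gamma$.

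I would then apply Lemma~\ref{lem:action hessien} to the pair $(A^{-1},\tau(A^{-1}))\in O_+(d,1)\ltimes\R^{d,1}$ acting on $h$. The lemma expresses $\hess[(B,v)h](x)$ in terms of $\hess h(B^{-1}\cdot x)$ and $DB^{-1}(x)$, so with $B=A^{-1}$ one has $B^{-1}=A$ and $B^{-1}\cdot x = A\cdot x$. The lemma then reads
$$\hess h(x)(X,Y) \;=\; \hess\bigl[(A^{-1},\tau(A^{-1}))h\bigr](x)(X,Y) \;=\; \frac{L(x)}{L(A\cdot x)}\,\hess h(A\cdot x)\bigl(DA(x)X,\,DA(x)Y\bigr).$$
Dividing both sides by $L(x)$ yields exactly
$$(L^{-1}\hess h)(x)(X,Y) \;=\; (L^{-1}\hess h)(A\cdot x)\bigl(DA(x)X,\,DA(x)Y\bigr),$$
which is the $\Gamma$-invariance asserted.

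There is no real obstacle: the only point requiring care is to apply the lemma to $A^{-1}$ rather than to $A$, so that the Jacobian that surfaces is $DA(x)$ and the evaluation point is $A\cdot x$, matching the statement. All the nontrivial work (the chain-rule manipulations and the use of \eqref{eq:klein}, \eqref{eq: LL affine}) has already been absorbed into Lemma~\ref{lem:action hessien}.
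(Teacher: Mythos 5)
Your proof is correct and follows exactly the route the paper intends: the paper simply states that the fact ``is clear from the definition of $\tau$-equivariant map and Lemma~\ref{lem:action hessien}'', and your argument is precisely that deduction, carried out with the (correct and worthwhile) care of applying the lemma to the inverse element $(A^{-1},\tau(A^{-1}))$ so that the Jacobian and evaluation point match the stated formula.
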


\begin{remark}\label{remark cod quotient}{\rm
Fact~\ref{lem:hessien inv} says that the second fundamental form of the hypersurface which is the graph of $h$ (see Section~\ref{sec: extr}) defines a symmetric $(0,2)$-tensor on $\H^d/\Gamma$. Moreover this tensor is a symmetric Codazzi tensor, see Remark~\ref{rem II cod}.

}\end{remark}

It can be useful to note the following converse to Fact~\ref{lem:hessien inv}.

\begin{lemma}\label{difference hessien}
Let $h:B^d\to \R$ be a $C^2$ map such that $L^{-1}\operatorname{Hess} h$ is $\Gamma$-invariant. Then there exists a unique $\tau\in Z^1(\Gamma,\R^{d,1})$ such that $h$ is $\tau$-equivariant.
\end{lemma}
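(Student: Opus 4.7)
The plan is to show that for each $A \in \Gamma$, the function $f_A := h - (A,0)h$ on $B^d$ is the restriction of an affine map of $\R^d$, and then read off the cocycle from its coefficients.

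First, I would compute the Euclidean Hessian of $(A,0)h$. By Lemma~\ref{lem:action hessien},
\[
L^{-1}(x)\,\Hess[(A,0)h](x)(X,Y) \;=\; (L^{-1}\Hess h)(A^{-1}\!\cdot x)\bigl(DA^{-1}(x)X,\,DA^{-1}(x)Y\bigr).
\]
Applying the hypothesis that $L^{-1}\Hess h$ is $\Gamma$-invariant (substituting $x \mapsto A^{-1}\!\cdot x$ in the invariance identity of Fact~\ref{lem:hessien inv}) and using $DA(A^{-1}\!\cdot x)\circ DA^{-1}(x) = \operatorname{Id}$ via the chain rule for $A\circ A^{-1}=\operatorname{Id}$, the right-hand side collapses to $(L^{-1}\Hess h)(x)(X,Y)$. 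Therefore $\Hess[(A,0)h] = \Hess h$ pointwise on $B^d$, so $\Hess f_A \equiv 0$.

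Since $B^d$ is connected (in fact convex), $f_A$ is affine, so there exist unique $\bar v(A)\in\R^d$ and $v_{d+1}(A)\in\R$ with
\[
h(x) - (A,0)h(x) \;=\; \langle x, \bar v(A)\rangle_d - v_{d+1}(A)\qquad \text{for all }x\in B^d.
\]
Set $\tau(A) := (\bar v(A), v_{d+1}(A)) \in \R^{d,1}$. Using the formula for the action on functions recalled in \eqref{def action h}, this identity is exactly $(A,\tau(A))h = h$, so $h$ is $\tau$-equivariant.

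It remains to verify the cocycle identity and uniqueness. For each $A$, the vector $\tau(A)$ is the unique $v\in\R^{d,1}$ with $(A,v)h=h$: indeed, two choices $v,v'$ would give $\langle x,\bar v\rangle_d-v_{d+1}=\langle x,\bar v'\rangle_d-v'_{d+1}$ for all $x\in B^d$, forcing $v=v'$. Applied to $AB$, both $\tau(AB)$ and $\tau(A)+A\tau(B)$ satisfy $(AB,\cdot)h=h$, since by \eqref{group structure}
\[
(A,\tau(A))\cdot(B,\tau(B)) = (AB,\,\tau(A)+A\tau(B)),
\]
and $(A,\tau(A))(B,\tau(B))h = (A,\tau(A))h = h$. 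Hence $\tau(AB) = \tau(A)+A\tau(B)$, so $\tau\in Z^1(\Gamma,\R^{d,1})$, and the same uniqueness of the translation part gives uniqueness of $\tau$ (this is also a direct consequence of Fact~\ref{fact unique cocycle}).

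The only non-routine ingredient is the Hessian computation in the first paragraph; the algebraic manipulation of the differentials $DA$ and $DA^{-1}$ through the $\Gamma$-invariance of $L^{-1}\Hess h$ is the step where one has to be careful, but it is a direct chain-rule calculation. Everything afterward is formal.
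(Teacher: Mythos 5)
Your proof is correct and follows essentially the same route as the paper's: show that the Hessian of the transformed function equals that of $h$, conclude the difference is affine on the convex domain $B^d$, read off $\tau(A)$ from the affine coefficients, and get the cocycle identity from the group law together with uniqueness of the translation part. In fact your version is slightly more careful than the paper's, which writes the comparison as $\Hess(h\circ A)=\Hess h$ and thereby elides the conformal weight $L/(L\circ A^{-1})$ appearing in the action \eqref{def action h}; your use of Lemma~\ref{lem:action hessien} handles that factor explicitly.
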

\begin{proof}
Let $A\in \Gamma$. As $A$ acts as an affine map on $B^d\times \R$, by the rule of the Hessian of a composition \eqref{hessien compose} and the invariance of the Hessian, we obtain

 $$\Hess (h \circ A) (x)(X,Y) = \hess h (A \cdot x)(DA(x)(X),DA(x)(Y))=\Hess h (x)(X,Y)~, $$
 hence $h$ and $h\circ A$ differ by an affine map, that in turn gives a vector $\tau(A^{-1})\in \R^{d,1}$:
 $$h(x)-h(A\cdot x)=\langle \tau(A^{-1}), \binom{x}{1}\rangle_-~.$$
 Writing $h(x)-h(A\cdot (B\cdot x))$ as
 $h(x)-h(B\cdot x) + h(B\cdot x) -h(A\cdot (B\cdot x))$, it follows that $\tau$ satisfies the cocycle relation. Uniqueness is given by Fact~\ref{fact unique cocycle}.
\end{proof}

Now let us  check that the discussion is not void. First there are easy examples in the coboundary case.

\begin{fact}\label{fact couboundary}
Let $\tau_v$ be a coboundary, i.e. there is $v\in \R^{d,1}$ such that $\tau_v(A)=Av-v$. Then
$h_v(x)=-\langle x,\bar{v}\rangle_{d}+v_{d+1}$ is a $\tau_v$-invariant map.
\end{fact}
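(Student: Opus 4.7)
The plan is to verify $(A, \tau_v(A)) h_v = h_v$ for all $A \in \Gamma$ directly from the action formula \eqref{def action h}, but the conceptual reason is transparent through the duality of Section~\ref{sec duality}. By Fact~\ref{equation plan dual}, one observes that
$$h_v(x) = -\langle x, \bar v\rangle_d + v_{d+1} = \langle \binom{x}{1}, -v\rangle_{d,1},$$
so the graph of $h_v$ is exactly the hyperbolic hyperplane of $\co$ dual to the point $-v \in \R^{d,1}$. The isometry $(A, \tau_v(A))$ of $\co$ acts on Minkowski space as the affine map $P \mapsto AP + \tau_v(A)$, which sends $-v$ to $A(-v) + (Av - v) = -v$. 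Since duality is equivariant under $O_+(d,1) \ltimes \R^{d,1}$, the dual hyperplane, namely the graph of $h_v$, is preserved.

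To turn this into a direct calculation, I would first apply \eqref{A klein} in the form $\binom{A^{-1}\cdot x}{1} = \frac{L(A^{-1}\cdot x)}{L(x)}\, A^{-1}\binom{x}{1}$ to get
$$\frac{L(x)}{L(A^{-1}\cdot x)}\, h_v(A^{-1}\cdot x) = \langle A^{-1}\binom{x}{1}, -v\rangle_{d,1} = \langle \binom{x}{1}, -Av\rangle_{d,1},$$
where the last equality uses that $A$ preserves $\langle\cdot,\cdot\rangle_{d,1}$. Then, noting that $\langle x, \bar w\rangle_d - w_{d+1} = \langle \binom{x}{1}, w\rangle_{d,1}$ for any $w \in \R^{d,1}$, formula \eqref{def action h} with $w = \tau_v(A) = Av - v$ yields
$$(A,\tau_v(A)) h_v(x) = \langle \binom{x}{1}, -Av\rangle_{d,1} + \langle \binom{x}{1}, Av - v\rangle_{d,1} = \langle \binom{x}{1}, -v\rangle_{d,1} = h_v(x),$$
as required.

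There is essentially no obstacle; the only care needed is tracking the two distinct actions of $A$, namely the linear action on $\R^{d,1}$ versus the projective Klein-model action on $B^d$, and bridging them with \eqref{A klein}. The duality viewpoint makes the statement almost a tautology: the coboundary $\tau_v$ encodes a translation of $\Gamma$ by $v$, and the correct invariant hyperplane in $\co$ is the dual of the point $-v$, which is the obvious fixed point of the resulting twisted affine action on Minkowski space.
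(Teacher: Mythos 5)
Your proof is correct. The paper states this Fact without any proof, so there is nothing to compare against; your verification fills the gap cleanly and in the paper's own spirit: the rewriting $h_v(x)=\langle \binom{x}{1},-v\rangle_{d,1}$ identifies the graph of $h_v$ with the hyperplane dual to the point $-v$ (Fact~\ref{equation plan dual}), which is visibly the fixed point of the affine map $P\mapsto AP+\tau_v(A)$, and your direct computation via \eqref{A klein} and \eqref{def action h} confirms $(A,\tau_v(A))h_v=h_v$ with all signs and the two actions of $A$ (linear on $\R^{d,1}$ versus projective on $B^d$) handled correctly.
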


In full generality, if the cocycle is equal to zero, we know the function $-L$  which is a  $C^\infty$  $\Gamma$-invariant function with positive definite Hessian. By the very general
 ``Ehresmann--Weil--Thurston holonomy principle'' \cite{gold}, for cocycles close to $0$ enough, there exist $\tau$-equivariant maps, which depends continuously on the cocycle. For convenience we recall the argument in our very simplified case, which follows the lines from
\cite[Lemma~I.1.7.2]{CEG}. We need to take care about convexity, that is also classical \cite{ghom}.

\begin{proposition}\label{prop exist smooth cvx}
For any cocycle $\tau$ there exists a $C^\infty$ convex (resp. concave) $\tau$-equivariant  function $h(\tau)$.

Moreover, if $\tau_n\to \tau$, then there exist $C^\infty$ convex (resp. concave)
$\tau_n$-equivariant functions
$h(\tau_n)$ such that $(h(\tau_n))_{n \in \N}$   converges to $h(\tau)$, and the second partial derivatives of $h(\tau_n)$ converge to the second partial derivatives of $h(\tau)$.
\end{proposition}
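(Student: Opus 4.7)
The plan is to first build an equivariant function by a partition-of-unity averaging procedure, and then add a large multiple of the $\Gamma$-invariant convex function $-L$ to enforce (strict) convexity. Recall from \eqref{eq hess L} that $\Hess(-L)=Lg_{\H^d}$ is positive definite, so $-L$ itself is $\Gamma$-invariant and convex; the idea is to exploit this reservoir of convexity.

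First I would fix once and for all a smooth compactly supported function $\varphi\colon B^{d}\to [0,1]$ with $\sum_{A\in\Gamma}\varphi(A^{-1}\cdot x)=1$ for every $x\in B^{d}$. Such a $\varphi$ is obtained by starting from any smooth bump function equal to $1$ on a fundamental domain for $\Gamma$ (which is relatively compact in $B^{d}$ since $\Gamma$ is cocompact on $\H^{d}$), extending it compactly in $B^{d}$, and normalizing. Then, given $\tau\in Z^{1}(\Gamma,\R^{d,1})$, define the affine functions $a_{A}(x):=\langle x,\overline{\tau(A)}\rangle_{d}-\tau(A)_{d+1}$, which are exactly the functions $(A,\tau(A))\cdot 0$ in the notation of \eqref{def action h}, and set
\begin{equation*}
h_{0}(\tau)(x):=\sum_{A\in\Gamma}\varphi(A^{-1}\cdot x)\,a_{A}(x).
\end{equation*}
Because $\varphi$ has compact support in $B^{d}$ and $\Gamma$ acts properly discontinuously, the sum is locally finite, so $h_{0}(\tau)$ is $C^{\infty}$ and depends linearly on $\tau$. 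The key equivariance check uses two observations: the cocycle identity gives $(B,\tau(B))\cdot(A,\tau(A))=(BA,\tau(BA))$, and since $\sum_{A}\varphi(A^{-1}\cdot x)=1$, the $(B,\tau(B))$-action distributes through a partition-of-unity weighted sum of functions (the affine translation term is absorbed by the partition). Reindexing $C=BA$ in the resulting sum then yields $(B,\tau(B))h_{0}(\tau)=h_{0}(\tau)$.

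To enforce convexity, observe that by Fact~\ref{lem:hessien inv} the tensor $L^{-1}\Hess h_{0}(\tau)$ is $\Gamma$-invariant, hence descends to a (continuous) $(0,2)$-tensor on the compact manifold $\H^{d}/\Gamma$. Thus there exists a constant $C(\tau)>0$ such that $L^{-1}\Hess h_{0}(\tau)\geq -C(\tau)g_{\H^{d}}$ pointwise. Setting $h(\tau):=h_{0}(\tau)-(C(\tau)+1)L$, which is still $\tau$-equivariant because $L$ is $\Gamma$-invariant, we get
\begin{equation*}
L^{-1}\Hess h(\tau)=L^{-1}\Hess h_{0}(\tau)+(C(\tau)+1)g_{\H^{d}}\geq g_{\H^{d}}>0,
\end{equation*}
so $h(\tau)$ is strictly convex; the concave version is obtained by $h_{0}(\tau)+(C(\tau)+1)L$.

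For the continuity statement, linearity of $\tau\mapsto h_{0}(\tau)$ together with the local finiteness of the defining sum shows that whenever $\tau_{n}\to\tau$ in the finite-dimensional space $Z^{1}(\Gamma,\R^{d,1})$, we have $h_{0}(\tau_{n})\to h_{0}(\tau)$ in the $C^{\infty}$ topology on compact subsets of $B^{d}$. In particular the descended tensors $L^{-1}\Hess h_{0}(\tau_{n})$ converge uniformly to $L^{-1}\Hess h_{0}(\tau)$ on the compact quotient, so a single constant $C$ can be chosen uniformly in $n$. Setting $h(\tau_{n}):=h_{0}(\tau_{n})-(C+1)L$ (and analogously for the limit) gives convex $\tau_{n}$-equivariant functions with the required convergence of the functions and their second partial derivatives. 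The one step requiring care is this uniform choice of $C$: it is precisely what prevents the convexification from destroying continuity, and it relies essentially on the compactness of $\H^{d}/\Gamma$.
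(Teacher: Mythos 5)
Your proof is correct, but it takes a genuinely different route from the paper's. The paper implements the Ehresmann--Weil--Thurston principle by a local gluing: it starts from $-L$ on a ball, propagates it equivariantly along the orbit, and interpolates with bump functions over a finite cover of the compact quotient; convexity survives only because the construction is first carried out for cocycles close to $0$ (the general case being recovered by linearity, Fact~\ref{lem:diff}), and continuity in $\tau$ must be tracked through each gluing step. You instead average the $\Gamma_\tau$-orbit of the zero function against a $\Gamma$-partition of unity, producing in one stroke a globally defined \emph{linear} map $\tau\mapsto h_0(\tau)$ which is $\tau$-equivariant: your reindexing computation is correct, and it correctly handles the one delicate point, namely that the action \eqref{def action h} is affine rather than linear in $h$, so it distributes over an $x$-dependent weighted sum only because the weights sum to $1$ and absorb the translation term. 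You then decouple convexity from equivariance by adding a large multiple of the $\Gamma$-invariant strictly convex function $-L$, the constant being furnished by Fact~\ref{lem:hessien inv} and compactness of $\H^d/\Gamma$ (for the concave case one should take $C(\tau)$ to bound the full operator norm of the descended tensor, i.e.\ from above as well as below, but that is the same compactness argument). This decoupling is what each approach ``buys'' differently: your construction makes the continuity statement essentially automatic, since linearity and local finiteness of the sum give $C^\infty_{\mathrm{loc}}$ convergence of $h_0(\tau_n)$ and a single convexifying constant works for the whole sequence, whereas the paper has to re-examine first and second derivatives at every gluing stage. Both arguments yield the same locally uniform mode of convergence, which is all that is used later.
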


\begin{proof}
Clearly it suffices to prove the statement for the convex case.
Also by Fact~\ref{lem:diff}, it suffices to prove it for any cocycle close to $0$.

Let $\{B_i(r_i)\}_{i=1,\ldots,k}$ be
disjoint open balls of $\H^d$, such that $\Gamma \cdot \cup_i B_i(r_i)$ is a covering of $\H^d$.
 On $B_1(r_1)$, let us set $h_1=-L$. For $A\in \Gamma$ and $y\in A \cdot B_1(r_1)$,
let us set $h_1(y)=-L(y) + \langle \binom{y}{1}, \tau (A)\rangle_{d,1}$.
Such a function $h_1$ is $C^\infty$ and $\tau$-equivariant on $\Gamma \cdot B_1(r_1)$. The function $h_1$ converge to $-L$ uniformly on each orbit of $B_1(r_1)$ if $\tau$ goes to $0$.
Also the first partial derivatives of $h_1$ converge to
the ones of $-L$ uniformly on each orbit of $B_1(r_1)$ if $\tau$ goes to $0$. Moreover,
the Hessian of $h_1$ is equal to the one of $-L$ on $\Gamma \cdot B_1(r_1)$, in particular it is positive definite.

Let $r_i'<r_i$ for all $i$, such that
$\Gamma\cdot \cup_i B_i(r_i')$ is still a covering of $\H^d$.
Up to change the indices, suppose that $B_2(r_2)$ has non empty intersection with the orbit of $B_1(r_1)$.
Let $W$ be an open neighborhood of
$B_2(r_2') \cap \Gamma\cdot B_1(r_1')$
such that its closure is contained in $B_2(r_2)\cap \Gamma \cdot B_1(r_1)$. Let $\phi$ be a bump function
which is equal to $1$ on $B_2(r_2') \cap \Gamma\cdot B_1(r_1')$ and whose support is contained in $W$. Note that
the function $\phi h_1$ is well-defined and $C^\infty$ on $\H^d$, by setting the zero value out of $W$.

Let us define
 $f=\phi h_1 +  (1-\phi) (-L)$ on $B_2(r_2)$. The function $f$ is $C^\infty$, and equal to $h_1$ on $\Gamma\cdot B_1(r_1') \cap B_2(r_2')$.
  When the cocycle goes to $0$, $f$ and its first and second derivatives go to $-L$ and to its respective derivatives, uniformly on $B_2(r_2')$. In particular, we suppose that the cocycle is sufficiently small, so that the Hessian of $f$ is positive definite.

Then we define $h_2=f$ on $B_2(r_2')$, and by equivariance we define $h_2$ on $\Gamma \cdot B_2(r_2')$.
Also we set $h_2=h_1$ on
 $\Gamma \cdot B_1(r_1')$. By construction,
$h_2$ is well defined on
 the non-empty intersections between orbits of $B_1(r_1')$ and orbits of $B_2(r_2')$.
 Clearly, $h_2$ converges to $-L$ when the cocycle goes to
 $0$.
As the Hessian of $h_2$ converges to the one of $-L$ uniformly on $B_2(r_2')$, by Fact~\ref{lem:hessien inv}, this is true on each element in the orbit of $B_2(r'_2)$, in particular the Hessian of $h_2$ is positive definite.

 In the same way, if $r_i''<r_i'$ is such that
$\Gamma\cdot \cup_i B_i(r_i'')$ is still a covering of $\H^d$, then we can construct a function $h_3$, equivariant on the orbit of $B_1(r_1'')\cup B_2(r_2'')\cup B_3(r_3'')$ and satisfying the statement of the proposition. After a finite number of steps, we have constructed the wanted functions.

\end{proof}

\begin{corollary}\label{prop btau}
For any $\tau\in Z^1(\Gamma,\R^{d,1})$, there exists a continuous map $b_\tau:\partial B \to \R$ such that any $\tau$-equivariant map
extends continuously as $b_\tau$ on $\partial B$.

Moreover if  $\tau_1, \tau_2 \in Z^1(\Gamma,\R^{d,1})$ and $\alpha \in \R$, then $b_{\alpha \tau_1+\tau_2}=\alpha b_{\tau_1}+b_{\tau_2}$. And  $\tau$ is a coboundary if and only if $b_\tau$ is the restriction to $\partial B^d$ of an affine map of $\R^d$.
\end{corollary}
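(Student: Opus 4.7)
The approach combines the existence of smooth convex equivariant functions from Proposition~\ref{prop exist smooth cvx} with the boundary-regularity estimate of Proposition~\ref{lem Li}. First, I pick a smooth convex $\tau$-equivariant function $h(\tau)$ as provided by Proposition~\ref{prop exist smooth cvx}. By Fact~\ref{lem:hessien inv}, $L^{-1}\operatorname{Hess}h(\tau)$ is $\Gamma$-invariant, hence formula \eqref{mean eucl} shows that the mean curvature $\mean(h(\tau))$ descends to a continuous function on the compact quotient $\H^d/\Gamma$ and is in particular bounded on $B^d$. Proposition~\ref{lem Li} then yields a continuous extension of $h(\tau)$ to $\bar B^d$, and I define $b_\tau$ as the restriction of this extension to $\partial B^d$. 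If $h$ is any $\tau$-equivariant map, then $h-h(\tau)$ is $\Gamma$-invariant by Fact~\ref{lem:diff} and therefore extends as zero on $\partial B^d$ by Fact~\ref{ext fuch}, so $h$ too extends as $b_\tau$; in particular $b_\tau$ is independent of the choice of $h(\tau)$.

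Linearity of $\tau\mapsto b_\tau$ is immediate from Fact~\ref{lem:diff}: $\alpha h(\tau_1)+h(\tau_2)$ is $(\alpha\tau_1+\tau_2)$-equivariant, so passage to the boundary gives $b_{\alpha\tau_1+\tau_2}=\alpha b_{\tau_1}+b_{\tau_2}$. The forward implication in the coboundary characterization is likewise immediate from Fact~\ref{fact couboundary}: if $\tau=\tau_v:A\mapsto Av-v$, then $h_v(x)=-\langle x,\bar v\rangle_d+v_{d+1}$ is a $\tau_v$-equivariant map that is affine on all of $\R^d$, so $b_{\tau_v}=h_v|_{\partial B^d}$ is the restriction of an affine map.

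The converse is the one point requiring real work. Assume $b_\tau=a|_{\partial B^d}$ for an affine $a:\R^d\to\R$; writing $a=h_v$ via Fact~\ref{fact couboundary} and setting $\sigma:=\tau-\tau_v$, linearity gives $b_\sigma=0$, and it suffices to show $\sigma=0$, since then $\tau=\tau_v$ is the desired coboundary. For this I pick any $\sigma$-equivariant map $h$; unpacking \eqref{def action h} yields
\[ h(A\cdot x)-\frac{L(A\cdot x)}{L(x)}\,h(x)=\langle A\cdot x,\overline{\sigma(A)}\rangle_d-\sigma(A)_{d+1}\qquad(A\in\Gamma,\ x\in B^d). \]
Letting $x\to\xi\in\partial B^d$, both $h(x)$ and $h(A\cdot x)$ tend to $0$; meanwhile \eqref{eq: LL affine} identifies $L(A\cdot x)/L(x)$ with $1/(A\binom{x}{1})_{d+1}$, a function that is continuous and strictly positive on the compact set $\bar B^d$, hence bounded above. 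The left-hand side therefore tends to $0$, so the affine function $y\mapsto\langle y,\overline{\sigma(A)}\rangle_d-\sigma(A)_{d+1}$ vanishes on the whole of $\partial B^d$. Since the sphere $\partial B^d$ is not contained in any affine hyperplane of $\R^d$, this forces $\sigma(A)=0$ for every $A\in\Gamma$, completing the argument.

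The main obstacle is precisely this last step, ruling out nontrivial cocycles with zero boundary data. Everything else is a direct reassembly of the previously established properties of convex equivariant functions, their Hessians and the action of the isometry group on equivariant maps.
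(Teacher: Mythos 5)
Your proof is correct and follows the same route as the paper: existence of $b_\tau$ via a smooth convex $\tau$-equivariant map (Proposition~\ref{prop exist smooth cvx}), boundedness of its $\Gamma$-invariant mean curvature, Proposition~\ref{lem Li}, and Facts~\ref{lem:diff} and~\ref{ext fuch} to make $b_\tau$ independent of the choice. The only place you diverge is the converse of the coboundary characterization, where the paper merely cites Fact~\ref{fact couboundary} (which really only gives the forward direction); your limit argument --- subtracting the coboundary $\tau_v$, passing to the boundary in the equivariance relation using the boundedness of $L(A\cdot x)/L(x)$ via \eqref{eq: LL affine}, and concluding from the fact that $\partial B^d$ spans --- is a correct and genuinely more complete justification of that step.
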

\begin{proof}
From Proposition~\ref{prop exist smooth cvx}, there exists
a $C^\infty$ convex $\tau$-equivariant map.
From Fact~\ref{lem:hessien inv}, $\mean(h)$ is a $\Gamma$-invariant function, hence bounded, so by
Proposition~\ref{lem Li}, there exists a continuous function
$b_\tau:\partial B \to \R$ that extends continuously $h$.
As the difference of two $\tau$-equivariant map is a $\Gamma$-invariant function, and as a $\Gamma$-invariant function extends continuously as the zero function on $\partial B$ (Fact~\ref{ext fuch}), it follows that $b_\tau$ is the continuous extension of any $\tau$-equivariant map.

The second property is obvious from the definition of $b_\tau$ and Fact~\ref{lem:diff}.
The last property follows from Fact~\ref{fact couboundary}
\end{proof}

From the existence of $b_\tau$ we deduce easily the existence of a unique $\tau$-equivariant \minmax map in the following lemma. The maps whose graphs are the boundary of the convex hull
of the graph of $b_\tau$ will be introduced in Section~\ref{sec vol}.

\begin{corollary}\label{lem equ minimal}
Let $\tau\in Z^1(\Gamma,\R^{d,1})$.
 There exists a unique
 $C^\infty$ $\tau$-equivariant map, denoted by $h^{\crit}_\tau$, satisfying $\mean(h^{\crit}_\tau)=0$.
 Moreover, for $\alpha \in \R$ and $\tau'\in Z^1(\Gamma,\R^{d,1})$,  $h^{\crit}_{\tau+\alpha\tau'}=
h^{\crit}_\tau+\alpha h^{\crit}_{\tau'}$, and $h^{\crit}_\tau$ is the restriction to $B^d$ of an affine map if and only if $\tau$ is a coboundary.
\end{corollary}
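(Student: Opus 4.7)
The plan is to define $h^{\crit}_\tau$ as the unique smooth mean $b_\tau$-map provided by Proposition~\ref{prop critical gal}, where $b_\tau$ is the boundary function supplied by Corollary~\ref{prop btau}, and then extract all of the claimed properties from the uniqueness statement of Proposition~\ref{prop critical gal}. The main point to check is that this purely boundary-driven construction automatically yields a $\tau$-equivariant function; everything else will follow by short linearity arguments.

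For existence, set $h:=h^{\crit}_{b_\tau}$ and fix $A\in\Gamma$. I want to show $\tilde h:=(A,\tau(A))h=h$. Certainly $\tilde h$ is $C^\infty$. Its mean curvature vanishes: Lemma~\ref{lem:action hessien} gives
$$L^{-1}(x)\hess\tilde h(x)(X,Y)=L^{-1}(A^{-1}\cdot x)\hess h(A^{-1}\cdot x)(DA^{-1}(x)X,DA^{-1}(x)Y),$$
and since $A$ is a hyperbolic isometry, taking the trace with respect to $g_{\H^d}$ yields $\mean(\tilde h)(x)=\mean(h)(A^{-1}\cdot x)=0$. To check that $\tilde h$ also extends continuously to $b_\tau$ on $\partial B^d$, I will first observe from \eqref{def action h} and \eqref{eq: LL affine} that the boundary value of $(A,v)g$ is determined by the boundary value of $g$ (the ratio $L(x)/L(A^{-1}\cdot x)$ has a finite positive limit on $\partial B^d$, and $\langle x,\bar v\rangle_d-v_{d+1}$ is continuous up to the boundary). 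Apply this to a $C^\infty$ convex $\tau$-equivariant function $h_0$ provided by Proposition~\ref{prop exist smooth cvx}: since $h_0$ and $h$ share the boundary value $b_\tau$, the functions $(A,\tau(A))h_0=h_0$ and $\tilde h$ share a boundary value, which is $b_\tau$. The uniqueness part of Proposition~\ref{prop critical gal} then gives $\tilde h=h$, so $h$ is $\tau$-equivariant.

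For uniqueness of $h^{\crit}_\tau$, if $h_1,h_2$ are both $C^\infty$, $\tau$-equivariant and mean-free, their difference is $\Gamma$-invariant by Fact~\ref{lem:diff}, hence extends to the zero function on $\partial B^d$ by Fact~\ref{ext fuch}; by linearity of $\mean$ (apparent from \eqref{mean eucl}) it is also mean-free, so Proposition~\ref{prop critical gal} forces it to vanish. Linearity in $\tau$ follows at once: the function $h^{\crit}_\tau+\alpha h^{\crit}_{\tau'}$ is $C^\infty$, mean-free, and $(\tau+\alpha\tau')$-equivariant by Fact~\ref{lem:diff}, so by the uniqueness just proved it coincides with $h^{\crit}_{\tau+\alpha\tau'}$. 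Finally, if $\tau$ is a coboundary then $b_\tau$ is the restriction of an affine map of $\R^d$ by Corollary~\ref{prop btau}; that affine map is itself smooth and mean-free, so uniqueness in Proposition~\ref{prop critical gal} identifies it with $h^{\crit}_\tau$. Conversely, if $h^{\crit}_\tau$ is the restriction of an affine map, its boundary value $b_\tau$ is affine on $\partial B^d$, and the second clause of Corollary~\ref{prop btau} forces $\tau$ to be a coboundary.

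The only genuinely delicate step is verifying that $(A,\tau(A))h$ retains the boundary value $b_\tau$, which is exactly where the compatibility between the boundary action of $\Gamma_\tau$ and Corollary~\ref{prop btau} must be leveraged; the remaining steps are formal consequences of linearity of $\mean$ and of the elliptic uniqueness statement in Proposition~\ref{prop critical gal}.
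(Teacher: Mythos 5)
Your proof is correct and takes essentially the same route as the paper: define $h^{\crit}_\tau$ as the \minmax\ $b_\tau$-map from Proposition~\ref{prop critical gal}, deduce $\tau$-equivariance from the uniqueness of the Dirichlet solution, and obtain linearity and the coboundary characterization from Fact~\ref{lem:diff}, the linearity of $\mean$, and Corollary~\ref{prop btau}. The only difference is that you carefully justify why $(A,\tau(A))h^{\crit}_\tau$ still has boundary value $b_\tau$ (via the boundedness of the ratio $L/L\circ A^{-1}$ and a reference equivariant function), a step the paper leaves implicit.
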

\begin{proof}

By Corollary~\ref{prop btau} and Proposition~\ref{prop critical gal}, we know that there exists a unique
 $C^\infty$ map, denoted by $h^{\crit}_\tau$, having $b_\tau$ as values on
 $\partial B$, and such that $\mean(h^{\crit}_\tau)=0$. This map is  $\tau$-equivariant. Indeed, apply an element of
 $\Gamma_\tau$ to the graph of $h^{\crit}_\tau$.
 Then we obtain the graph of a map with vanishing $\mean$ and boundary value $b_\tau$, so it has to be
  $h^{\crit}_\tau$ by uniqueness.

The second point is clear from Fact~\ref{lem:diff} and the fact that $h^{\crit}_\tau$ satisfies a linear equation, namely
$(\frac{1}{d}\Delta^{\H^d} - 1) h^{\crit}_\tau = 0$.
The last point is immediate from Corollary~\ref{prop btau}.
\end{proof}

\begin{remark}\label{rem: foliation CMC}{\rm
For any $t\in \R$, the map $h^{\crit}_\tau-tL$ is $\tau$-equivariant, with mean curvature equal to $t$. Hence the graphs of these maps gives a smooth foliation of $\co/\Gamma_\tau$ by hypersurfaces of constant mean curvature.
}\end{remark}

Corollary~\ref{lem equ minimal} allows to recover a classical relation between cocycles and traceless Codazzi tensors \cite{lafontaine,OS83,bsep}.
Let $\cod_0$ be the vector space
 of traceless symmetric Codazzi tensors on $\H^d/\Gamma$.
Let $\tau\in Z^1(\Gamma,\R^{d,1})$. By  Corollary~\ref{lem equ minimal}, there is a map $h^{\crit}_\tau$ whose second fundamental form is a $\Gamma$-invariant traceless Codazzi tensor (see Remark~\ref{remark cod quotient}), hence it defines an element of  $\cod_0$, denoted by $\Cod(\tau)$. By Corollary \ref{lem equ minimal}, the map $\Cod:Z^1(\Gamma,\R^{d,1}) \to \cod_0$ is linear. The kernel of this map corresponds to the $\tau$ such that $h^{\crit}_\tau$ is affine, hence to the coboundaries by Corollary \ref{lem equ minimal}. We thus obtain an injective morphism from
$H^1(\Gamma,\R^{d,1})$ to $\cod_0$, still denoted by $\Cod$.

\begin{proposition}\label{prop:ident H1 cod}
The map $\Cod:H^1(\Gamma,\R^{d,1})\to \cod_0$ is an isomorphism.
\end{proposition}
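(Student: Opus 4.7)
The plan is to prove surjectivity of $\Cod$, since injectivity has already been established in the paragraph preceding the statement (the kernel coincides with coboundaries, via Corollary~\ref{lem equ minimal}). So let me start with a traceless symmetric Codazzi tensor on $\H^d/\Gamma$ and build a cocycle mapping to it.

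Given $C \in \cod_0$, I would first lift $C$ to a $\Gamma$-invariant, traceless, symmetric Codazzi tensor $\tilde C$ on $B^d$. By Lemma~\ref{lem tens cod}, there exists a smooth function $f:B^d\to\R$ such that
\[
\tilde C = L^{-1}\hess f.
\]
This is the key structural input from the co-Minkowski picture: traceless symmetric hyperbolic Codazzi tensors on the ball are realized by Hessians of functions (up to the $L^{-1}$ factor), which is exactly the form of the second fundamental form of a graph, see \eqref{def II}.

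The next step is to promote $f$ to an equivariant function. Since $\tilde C$ is $\Gamma$-invariant by construction, Lemma~\ref{difference hessien} applies and gives a unique cocycle $\tau\in Z^1(\Gamma,\R^{d,1})$ such that $f$ is $\tau$-equivariant. Now I would use the tracelessness of $C$: tracing against $\gh$ gives $\mean(f)=0$ by definition of the mean curvature (equation \eqref{mean eucl}), so $f$ is a $\tau$-equivariant \minmax function. By the uniqueness clause of Corollary~\ref{lem equ minimal}, we conclude $f = h^{\crit}_\tau$. Therefore $\Cod(\tau)$ is represented by $L^{-1}\hess h^{\crit}_\tau = \tilde C$, whose projection to $\H^d/\Gamma$ is $C$. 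This proves surjectivity.

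I do not expect any genuine obstacle: every step is already present in the preceding theory. The only point requiring minor attention is the verification that the $f$ produced by Lemma~\ref{lem tens cod} from a $\Gamma$-invariant Codazzi tensor automatically satisfies the hypothesis of Lemma~\ref{difference hessien} (namely, that $L^{-1}\hess f$ is $\Gamma$-invariant), but this is tautological since $\tilde C = L^{-1}\hess f$ was $\Gamma$-invariant to begin with. The proof is essentially a matching of three dictionaries: traceless Codazzi tensors $\leftrightarrow$ \minmax functions $\leftrightarrow$ cohomology classes of cocycles, each direction of which has been set up in the preceding subsections.
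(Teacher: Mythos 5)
Your proof is correct and follows essentially the same route as the paper: lift $C$ to a $\Gamma$-invariant tensor, realize it as $L^{-1}\hess f$ via Lemma~\ref{lem tens cod}, extract the cocycle from Lemma~\ref{difference hessien}, and identify $f$ with $h^{\crit}_\tau$ using tracelessness and the uniqueness in Corollary~\ref{lem equ minimal}. No discrepancies to report.
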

 \begin{proof}
 Let $C\in \cod_0$, that defines a $\Gamma$-invariant symmetric traceless Codazzi tensor $\tilde C$ on $\H^d$. By Lemma~\ref{lem tens cod}, there exists $h:B^d\to \R$ such that $\tilde C=\II_h$.
 From Lemma~\ref{difference hessien}, there exists a cocycle $\tau$ such that $h$ is $\tau$-equivariant, hence as $\II_h$ is traceless, by the uniqueness part of Corollary~\ref{lem equ minimal}, we will have $h=h^{\crit}_\tau$.

 \end{proof}

\subsection{Volume of the convex core and asymmetric norm}\label{sec vol}

\subsubsection{Convex core}

Let $\tau\in Z^1(\Gamma,\R^{d,1})$. There is an associated map $b_\tau:\partial B^d\to \R$ given by  Corollary~\ref{prop btau}. This map has a graph $\Lambda(b_\tau)$, and we will look at its
convex hull $\CH(\tau)$ in the affine space $\R^{d+2}$, as well as the functions $h^-_{b_\tau}$ and $h^+_{b_\tau}$ (see Section~\ref{sec convex hull}) whose graphs are the boundary  of $\CH(\tau)$. We will denote those two last maps  by $h^-_\tau$ and $h^+_\tau$ respectively.

The argument to check the following fact is analogous to the one used in the proof of Corollary~\ref{lem equ minimal}.

\begin{fact}
The map $h^-_\tau$ and $h^+_\tau$ are $\tau$-equivariant, in particular $\CH(\tau)$ is globally invariant for the action of $\Gamma_\tau$.

\end{fact}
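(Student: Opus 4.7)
The plan is to imitate the uniqueness argument used in Corollary~\ref{lem equ minimal}: I will apply an arbitrary element $(A,\tau(A))\in\Gamma_\tau$ to the graph of $h^-_\tau$, verify that the image still satisfies the property characterizing $h^-_\tau$, and deduce equivariance; the argument for $h^+_\tau$ will be symmetric, and the statement about $\CH(\tau)$ will follow formally.

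First I would set up a ``boundary action'' induced by \eqref{def action h}. Using \eqref{eq: LL affine}, the coefficient $L(x)/L(A^{-1}\cdot x)$ appearing in $(A,v)h$ equals $(A^{-1}\binom{x}{1})_{d+1}$ and so extends continuously to $\partial B^d$ with a strictly positive finite limit, since $A^{-1}\in O_+(d,1)$ preserves the future cone. Hence continuous extensions behave well under the action: if $h$ extends continuously by $b$ on $\partial B^d$, so does $(A,v)h$, with boundary value $\widehat{(A,v)}b$ given by an explicit formula. Applying this to any $\tau$-equivariant function (for instance $h^{\crit}_\tau$ from Corollary~\ref{lem equ minimal}), whose boundary value is $b_\tau$ by definition, the identity $(A,\tau(A))h^{\crit}_\tau=h^{\crit}_\tau$ passes to the boundary and gives $\widehat{(A,\tau(A))}b_\tau=b_\tau$. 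In particular, $(A,\tau(A))$ sends $b_\tau$-maps to $b_\tau$-maps.

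Next I would invoke Proposition~\ref{prop h+}, which says that $h^-_\tau=h^-_{b_\tau}$ is the pointwise largest element of the set $\mathcal C$ of convex $b_\tau$-maps. By Lemma~\ref{iso convexe} the action of $(A,\tau(A))$ preserves convexity, and by the previous paragraph it preserves the boundary datum $b_\tau$; hence $(A,\tau(A))h^-_\tau\in\mathcal C$ and maximality in $\mathcal C$ yields $(A,\tau(A))h^-_\tau\leq h^-_\tau$ pointwise. Running the same argument with the inverse isometry $(A^{-1},\tau(A^{-1}))$ produces $(A^{-1},\tau(A^{-1}))h^-_\tau\leq h^-_\tau$, and pushing this forward by $(A,\tau(A))$---which by \eqref{action isom} preserves vertical fibres and acts on each as an orientation-preserving affine map, hence preserves pointwise order of functions---gives the reverse inequality. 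Therefore $h^-_\tau$ is $\tau$-equivariant; the case of $h^+_\tau$ is identical, either by using minimality among concave $b_\tau$-maps, or by applying the same proof to $-h^-_{-b_\tau}$ together with \eqref{-id}.

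Finally, the invariance of $\CH(\tau)$ is immediate: $\CH(\tau)$ is the closed slab $\{(x,t)\in\bar B^d\times\R:h^-_\tau(x)\leq t\leq h^+_\tau(x)\}$ between the two now invariant graphs, and by \eqref{action isom} each $(A,\tau(A))$ preserves every vertical fibre and the order along it. The one delicate step I foresee is the boundary analysis of the second paragraph: one has to check that \eqref{def action h} genuinely passes to a well-defined action on $C^0(\partial B^d)$ fixing $b_\tau$, which hinges on the asymptotic behaviour of $L$ near $\partial B^d$ and on the existence of at least one $\tau$-equivariant function realising $b_\tau$ as its boundary value. Once this is established, the remaining steps are direct consequences of the maximality in Proposition~\ref{prop h+} and of the explicit formula \eqref{action isom}.
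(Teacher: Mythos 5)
Your proof is correct and carries out exactly the route the paper intends: the paper gives no written proof, only the remark that the argument is ``analogous to the one used in the proof of Corollary~\ref{lem equ minimal}'', and your argument --- apply $(A,\tau(A))$ to the graph, check via \eqref{def action h} and \eqref{eq: LL affine} that convexity and the boundary datum $b_\tau$ are preserved, then conclude from the maximality characterization of $h^-_\tau$ in Proposition~\ref{prop h+} together with the inverse isometry --- is precisely that analogy fleshed out, including the one genuinely non-trivial ingredient (that the action passes to the boundary and fixes $b_\tau$). One wording quibble only: $(A,\tau(A))$ does not preserve each vertical fibre but permutes them (sending the fibre over $x$ to the fibre over $A\cdot x$) while acting on each by an orientation-preserving affine map, which is all your order-preservation step actually needs.
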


\begin{lemma}\label{lem h+-}
Let $\tau\in Z^1(\Gamma,\R^{d,1})$. Then:
\begin{enumerate}[noitemsep]
\item for any convex (resp. concave) $\tau$-equivariant map $h$, then
$h\leq h^-_\tau$ (resp. $h\geq h^+_\tau$),
\item  $h^+_{\tau}=-h^-_{-\tau}$,
\item For $\alpha>0$, $h_{\alpha \tau}^-=\alpha h^-_\tau$,
\item $h^-_\tau + h^-_{\tau'} \leq h^-_{\tau+\tau'}$
and $h^+_\tau + h^+_{\tau'} \geq h^+_{\tau+\tau'}$.
\end{enumerate}
\end{lemma}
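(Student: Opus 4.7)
The plan is to derive all four statements as straightforward consequences of earlier results (Proposition~\ref{prop h+}, Corollary~\ref{prop btau}) together with direct manipulation of the variational definition $h^-_b(x)=\sup\{a(x)\mid a\in\mathcal{A}_b\}$ and the identity $h^+_b=-h^-_{-b}$. None of the items looks genuinely hard; the only care needed is to consistently use the linearity $b_{\alpha\tau_1+\tau_2}=\alpha b_{\tau_1}+b_{\tau_2}$ supplied by Corollary~\ref{prop btau}.

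For (1), I would note that a $\tau$-equivariant map $h$ extends continuously to $b_\tau$ on $\partial B^d$ by Corollary~\ref{prop btau}, so if in addition $h$ is convex then $h$ is a convex $b_\tau$-map, and Proposition~\ref{prop h+} gives $h\leq h^-_{b_\tau}=h^-_\tau$; the concave case is identical. For (2), it is purely a matter of unwinding definitions: by Corollary~\ref{prop btau} we have $b_{-\tau}=-b_\tau$, so
\[
h^+_\tau \;=\; h^+_{b_\tau} \;\stackrel{\eqref{def h-}}{=}\; -h^-_{-b_\tau} \;=\; -h^-_{b_{-\tau}} \;=\; -h^-_{-\tau}.
\]

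For (3), again using $b_{\alpha\tau}=\alpha b_\tau$ with $\alpha>0$, one checks that $a\in \mathcal{A}_{\alpha b_\tau}$ if and only if $a/\alpha\in\mathcal{A}_{b_\tau}$; therefore, for each $x\in B^d$,
\[
h^-_{\alpha\tau}(x)\;=\;\sup_{a\in\mathcal{A}_{\alpha b_\tau}}a(x)\;=\;\alpha\sup_{a'\in\mathcal{A}_{b_\tau}}a'(x)\;=\;\alpha h^-_\tau(x).
\]

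For (4), I would argue that if $a\in\mathcal{A}_{b_\tau}$ and $a'\in\mathcal{A}_{b_{\tau'}}$, then $a+a'$ is affine with $(a+a')|_{\partial B^d}\leq b_\tau+b_{\tau'}=b_{\tau+\tau'}$, so $a+a'\in\mathcal{A}_{b_{\tau+\tau'}}$. Hence $h^-_{\tau+\tau'}(x)\geq a(x)+a'(x)$, and taking suprema independently over $a$ and $a'$ yields $h^-_{\tau+\tau'}\geq h^-_\tau+h^-_{\tau'}$. The $h^+$ inequality is then deduced by applying this to $-\tau,-\tau'$ and negating, using (2):
\[
-h^+_{\tau+\tau'}\;=\;h^-_{-\tau-\tau'}\;\geq\;h^-_{-\tau}+h^-_{-\tau'}\;=\;-h^+_\tau-h^+_{\tau'}.
\]
The main (mild) subtlety is simply to keep track of the direction of inequalities under the sign flip in (2); beyond that the lemma is essentially bookkeeping on top of Proposition~\ref{prop h+} and Corollary~\ref{prop btau}.
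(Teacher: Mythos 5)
Your proof is correct and follows essentially the same route as the paper's: items (1)--(3) are handled exactly as in the paper, via Proposition~\ref{prop h+}, Corollary~\ref{prop btau} and the definitions \eqref{eq:h-}, \eqref{def h-}. The only (minor) divergence is in item (4): the paper deduces it from item (1) by noting that $h^-_\tau + h^-_{\tau'}$ is a convex $(\tau+\tau')$-equivariant map (which uses the preceding Fact that $h^-_\tau$ is $\tau$-equivariant), whereas you argue directly from the variational definition via $\mathcal{A}_{b_\tau}+\mathcal{A}_{b_{\tau'}}\subset \mathcal{A}_{b_{\tau+\tau'}}$ and the additivity $b_{\tau+\tau'}=b_\tau+b_{\tau'}$; both arguments are equally short and valid, yours having the small advantage of not needing the equivariance of $h^-_\tau$.
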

\begin{proof}
The two first points are from the definitions of $h^+_\tau$ and
$h^-_\tau$, Proposition~\ref{prop h+} and Corollary~\ref{prop btau}.
The third point follows from \eqref{eq:h-} and the fact that $b_{\alpha \tau}=\alpha b_\tau$.
The forth point follows from the first point,
as $h^-_\tau + h^-_{\tau'} $ is a convex $(\tau+\tau')$-equivariant function.
\end{proof}

\begin{lemma}\label{lem couboundary }
Let $\tau_v$ be a coboundary, then $h^-_{\tau_v}=h^{\crit}_{\tau_v}$ is an affine map and  $h^-_{\tau+\tau_v}=h^-_\tau+h^-_{\tau_v}$.
Conversely, if $h^-_\tau=h^{\crit}_\tau$, then $h^{\crit}_\tau$ is affine and $\tau$ is a coboundary.

\end{lemma}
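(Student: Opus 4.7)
The plan is to prove the two implications separately, leveraging the interplay between convexity, equivariance, and the characterization of affine maps among convex ones with vanishing mean curvature.

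For the forward direction, I would first invoke Fact~\ref{fact couboundary} to produce the explicit affine $\tau_v$-equivariant map $h_v(x)=-\langle x,\bar v\rangle_d+v_{d+1}$. Being affine, $h_v$ is simultaneously convex and concave, so Lemma~\ref{lem h+-}(1) applied in both guises yields $h_v\leq h^-_{\tau_v}$ and $h_v\geq h^+_{\tau_v}$. The general inequality $h^-_{\tau_v}\leq h^+_{\tau_v}$ of Proposition~\ref{prop h+} then sandwiches everything to force $h^-_{\tau_v}=h^+_{\tau_v}=h_v$, which is affine. Since an affine map has vanishing Hessian and hence vanishing mean curvature, the uniqueness statement in Corollary~\ref{lem equ minimal} identifies $h_v$ with $h^{\crit}_{\tau_v}$.

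Next I would establish additivity $h^-_{\tau+\tau_v}=h^-_\tau+h^-_{\tau_v}$. One direction is immediate from Lemma~\ref{lem h+-}(4). For the opposite inequality, observe that $-\tau_v$ is again a coboundary (with vector $-v$), so by the step just completed $h^-_{-\tau_v}$ is affine; combining Lemma~\ref{lem h+-}(2) with the equality $h^+_{\tau_v}=h^-_{\tau_v}$ just derived yields the normalization $h^-_{-\tau_v}=-h^-_{\tau_v}$. Then applying Lemma~\ref{lem h+-}(4) to the pair $(\tau+\tau_v,\,-\tau_v)$ gives $h^-_{\tau+\tau_v}+h^-_{-\tau_v}\leq h^-_\tau$, which rearranges to the reverse inequality.

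For the converse, assume $h^-_\tau=h^{\crit}_\tau$. Then $h^{\crit}_\tau$ is convex since $h^-_\tau$ is (Proposition~\ref{prop h+}), while $h^{\crit}_\tau$ is $C^\infty$ (Proposition~\ref{prop critical gal}) with zero mean curvature (Corollary~\ref{lem equ minimal}). Remark~\ref{rem H positif} then forces $h^{\crit}_\tau$ to be affine, and the last part of Corollary~\ref{lem equ minimal} recognizes this as equivalent to $\tau$ being a coboundary. The only mildly delicate point is the sign bookkeeping in the additivity step: one must verify $h^-_{-\tau_v}=-h^-_{\tau_v}$ (rather than merely $h^-_{-\tau_v}=-h^+_{\tau_v}$), and this hinges precisely on having first established that for a coboundary $h^-$ and $h^+$ coincide.
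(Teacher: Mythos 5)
Your argument is correct; every step you invoke is available in the paper and the inequalities all point the right way. The converse direction is identical to the paper's (convex $+$ zero mean curvature $\Rightarrow$ affine by Remark~\ref{rem H positif}, then the characterization of coboundaries). Where you genuinely diverge is in the two positive assertions. For the affineness of $h^-_{\tau_v}$, the paper simply observes that since $b_{\tau_v}$ is the restriction of an affine map, $\Lambda(b_{\tau_v})$ lies in a hyperplane and its convex hull is a flat piece of that hyperplane; your sandwich
$$h^+_{\tau_v}\ \leq\ h_v\ \leq\ h^-_{\tau_v}\ \leq\ h^+_{\tau_v}$$
via the extremality statements of Lemma~\ref{lem h+-}(1) and Proposition~\ref{prop h+} reaches the same conclusion by purely order-theoretic means, and has the added benefit of delivering $h^+_{\tau_v}=h^-_{\tau_v}$ explicitly, which you then need. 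For the additivity $h^-_{\tau+\tau_v}=h^-_\tau+h^-_{\tau_v}$, the paper argues directly from the definition \eqref{eq:h-}: adding the affine map $h_{\tau_v}$ gives a bijection between the affine minorants of $b_\tau$ and those of $b_{\tau+\tau_v}$, so the suprema differ by $h_{\tau_v}$. You instead apply the superadditivity of Lemma~\ref{lem h+-}(4) twice, to the pairs $(\tau,\tau_v)$ and $(\tau+\tau_v,-\tau_v)$, and close the loop with the normalization $h^-_{-\tau_v}=-h^-_{\tau_v}$, which you correctly reduce (via Lemma~\ref{lem h+-}(2)) to the equality $h^+_{\tau_v}=h^-_{\tau_v}$ established in the first step. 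Your route is slightly longer but stays entirely at the level of the formal properties listed in Lemma~\ref{lem h+-}, whereas the paper's is a one-line appeal to the explicit formula for $h^-$; both are valid, and your closing remark correctly identifies the one sign-sensitive point.
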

\begin{proof}
If $\tau$ is a coboundary, we know that there exists a $\tau$-equivariant affine map (Fact~\ref{fact couboundary}).
Hence the convex hull of $\Lambda(b_\tau)$ is a piece of an hyperplane, and this hyperplane is also the $\tau_v$-\minmax hypersurface. Then $h^-_{\tau+\tau_v}=h^-_\tau+h^-_{\tau_v}$ follows from  \eqref{eq:h-} because $h_{\tau_v}$ is an affine map.
For the second part, on the one hand, $
\mean(h^{\crit}_\tau)=0$. On the other hand,   if $h^{\crit}_\tau=h^-_\tau$, then $h^{\crit}_\tau$ is convex, hence  affine (Remark~\ref{rem H positif}), so
 $b_\tau$ is the restriction to $\partial B^d$ of  an affine map. By Corollary~\ref{prop btau}, $\tau$ is a coboundary.

\end{proof}

\begin{definition} The \emph{convex core}
of $\co / \Gamma_\tau$, denoted by $\mathrm{CC}(\tau)$, is the smallest non-empty convex set of  $\co / \Gamma_\tau$.
\end{definition}

In the above definition, ``convex'' has to be understood in the strong sense of geodesically convex: $C$ is convex if for $x,y\in C$, any geodesic between $x$ and $y$ belongs to $C$. So for example, a single point or a small open ball may not be convex. In the cylindrical model of the universal cover, this notion of convexity coincides with the affine one.

Clearly, $\mathrm{CC}(\tau)=\CH(\tau)/\Gamma_\tau$. Hence $\co / \Gamma_\tau$ has a compact convex core, so the action of $\Gamma_\tau$ on $\co$ is \emph{convex cocompact}, in the sense of \cite{DGK1,DGK2}.

Recall the volume form on co-Minkowski space, Section~\ref{sec volume form}.
Let us denote by $\operatorname{Vol}$ the induced volume on
$^*\R^{d,1}$. It is then immediate than for any $\tau \in Z^1(\Gamma,\R^ {d,1})$,
\begin{equation}\label{eq vol cc}\operatorname{Vol}(\mathrm{CC}(\tau))=\int_{\mathbb{H}^d/\Gamma}  h^+_\tau - h^-_{\tau}~.
 \end{equation}

Here by abuse of notation, we denote in the same way the $\Gamma$-invariant function $h^+_\tau - h^-_{\tau}$ and the corresponding function on the compact hyperbolic manifold $\mathbb{H}^d/\Gamma$.
The integration is implicitly with respect to the volume form given by the hyperbolic metric.

\begin{definition}
The function $\vol: H^1(\Gamma,\R^{d,1})\to \R$
associates $\operatorname{Vol}(\mathrm{CC}(\tau))$ to any representative $\tau$ of an element of
  $H^1(\Gamma,\R^{d,1})$.
\end{definition}

By Lemma~\ref{lem couboundary }, $\vol$ is well-defined. Actually, the following result is straightforward to check
from Lemma~\ref{lem couboundary } and Lemma~\ref{lem h+-}.

\begin{proposition}
$\vol$ is a norm on $H^1(\Gamma,\R^{d,1})$.
\end{proposition}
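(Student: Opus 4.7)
The plan is to verify the four defining properties of a norm directly from the explicit formula \eqref{eq vol cc} together with the comparison properties of $h^-_\tau$ and $h^+_\tau$ collected in Lemma~\ref{lem h+-} and Proposition~\ref{prop h+}. Because $h^+_\tau\ge h^-_\tau$ pointwise (Proposition~\ref{prop h+}), non-negativity is immediate from \eqref{eq vol cc}; well-definedness on $H^1$ has already been reduced to Lemma~\ref{lem couboundary }, so it remains to check homogeneity, subadditivity, and definiteness.

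For homogeneity I would first treat $\alpha>0$ using Lemma~\ref{lem h+-}(3), combined with the identity $h^+_\tau=-h^-_{-\tau}$ of Lemma~\ref{lem h+-}(2), which gives $h^+_{\alpha\tau}=\alpha h^+_\tau$; integrating the difference yields $\vol(\alpha\tau)=\alpha\,\vol(\tau)$. For $\alpha=-1$, the same identity gives $h^+_{-\tau}-h^-_{-\tau}=h^+_\tau-h^-_\tau$, proving $\vol(-\tau)=\vol(\tau)$ and hence $\vol(\alpha\tau)=|\alpha|\,\vol(\tau)$ for every $\alpha\in\R$. Subadditivity follows at once from Lemma~\ref{lem h+-}(4): subtracting the two inequalities gives
\begin{equation*}
h^+_{\tau+\tau'}-h^-_{\tau+\tau'}\le (h^+_\tau-h^-_\tau)+(h^+_{\tau'}-h^-_{\tau'}),
\end{equation*}
and integrating over $\H^d/\Gamma$ yields the triangle inequality.

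The one step requiring a little more care is definiteness, namely $\vol(\tau)=0\Leftrightarrow [\tau]=0$. The reverse implication is contained in Lemma~\ref{lem couboundary }: if $\tau$ is a coboundary then $h^-_\tau$ is affine, hence equal to $h^+_\tau$ (both extend the same affine boundary function $b_\tau$), so the integrand vanishes. For the forward direction, non-negativity of the continuous integrand $h^+_\tau-h^-_\tau$ on the compact quotient $\H^d/\Gamma$ forces $h^+_\tau\equiv h^-_\tau$; by the last statement of Proposition~\ref{prop h+} this implies that $b_\tau$ is the restriction to $\partial B^d$ of an affine map, and the final assertion of Corollary~\ref{prop btau} then gives that $\tau$ is a coboundary. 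The main (mild) obstacle is simply being sure the integrands are genuinely $\Gamma$-invariant so that the integral over the compact quotient makes sense; this is already justified at the level of Fact~\ref{lem:hessien inv} and the preceding discussion, so no further work is needed.
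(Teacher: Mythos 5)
Your proof is correct and follows exactly the route the paper intends: the paper states the result as ``straightforward to check from Lemma~\ref{lem couboundary } and Lemma~\ref{lem h+-}'', and you have supplied precisely those verifications (homogeneity and subadditivity from Lemma~\ref{lem h+-}, definiteness via Proposition~\ref{prop h+} and Corollary~\ref{prop btau}). The only quibble is that the $\Gamma$-invariance of $h^+_\tau-h^-_\tau$ comes from Fact~\ref{lem:diff} (difference of two $\tau$-equivariant maps) rather than Fact~\ref{lem:hessien inv}, but this does not affect the argument.
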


\begin{remark}\label{rem flat ghmc}{\rm
The volume of the convex core
has the following geometric meaning in Minkowski space.
From a cocycle $\tau$, we have the boundary map $b_\tau$, that defines two convex sets $\Omega^+_{b_\tau}$ and $\Omega^-_{b_\tau}$ in Minkowski space, see Remark~\ref{rem domain of dependance}.
It follows from the previous section that those two sets (here denoted by
$\Omega^+_\tau$ and $\Omega^-_\tau$), are invariant under the action of $\Gamma_\tau$ on Minkowski space. Actually the action is free and properly discontinuous on $\Omega^+_\tau\cup \Omega^-_\tau$, and the quotient $\Omega_\tau^-$ (resp. $\Omega_\tau^+$) is a \emph{future complete flat} (resp. \emph{past complete}) Globally Hyperbolic Maximal Cauchy Compact (in short, GHMC)  spacetime. As the addition of a coboundary to the cocycle $\tau$ will only change the origin in Minkowski space, then $H^1(\Gamma,\R^{d,1})$ parametrizes the space of future complete (or past complete) flat GHMC spacetimes with a given linear holonomy, up to conjugacy. See \cite{Bar05,Bon05} for more details.

Moreover, for any cocycle $\tau$, we have that
$-\Omega^-_{\tau}=\Omega_{-\tau}^+$. Now, for any $x\in B^d$, let us denote by $\textbf{width}(x)$ the Lorentzian distance between the support plane
of $\Omega_{\tau}^+$ with outward unit normal $\binom{x}{1}$, and the support plane of $\Omega^-_{\tau}$ with inward unit normal $\binom{x}{1}$. Note that the map
$x\mapsto \mathbf{width}(x)$ is $\Gamma$-invariant. Then the mean width, defined as $\int_{\H^d/\Gamma}\mathbf{width}(\cdot)$, is given by \eqref{eq vol cc}, see Figure~\ref{fig:mean distance}.
}\end{remark}

\begin{figure}
\begin{center}
\psfrag{O+}{$\Omega_\tau^-$}
\psfrag{O}{$\Omega_\tau^+$}
\psfrag{x}{$x$}
\psfrag{h+-}{$|h_--h_+|(x)$}
\psfrag{h-}{$(x,h_+(x))$}
\psfrag{h+}{$(x,h_-(x))$}
\psfrag{h-e}{$(x,h_-(x))^*$}
\psfrag{h+e}{$(x,h_+(x))^*$}
\includegraphics[width=0.9\hsize]{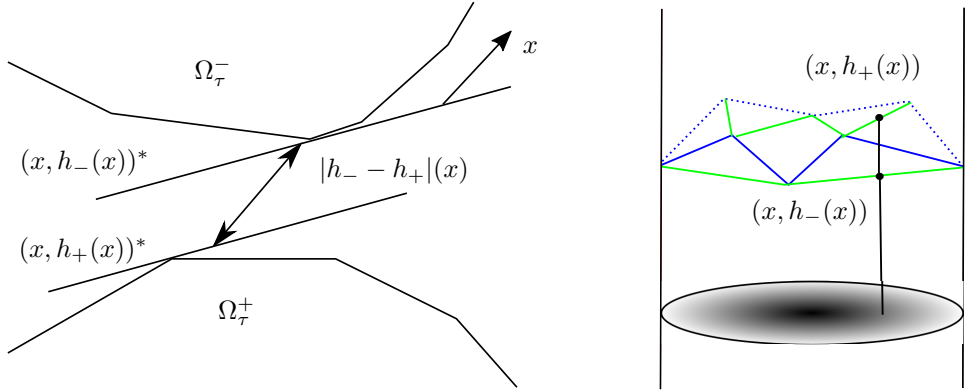}
\end{center}\caption{The volume of the convex core $\mathrm{CC}(\tau)$ is a ``mean distance'' between  a past complete flat GHMC spacetime and a future complete flat GHMC spacetime with the same holonomy.}\label{fig:mean distance}
\end{figure}

\subsubsection{Asymmetric norm}\label{sec asy}

In the previous section we showed that the volume of the convex core is a norm on $H^1(\Gamma,\R)$. We now see that it is actually the symmetrization of an asymmetric norm on
$H^1(\Gamma,\R)$.

For a cocycle $\tau$,
{the $S_1$ norm is defined as follows
\begin{equation}\label{eq asym}\|\tau \|_{S^1} = \int_{\H^d/\Gamma} h^{\crit}_\tau-h_\tau^-~. \end{equation}
(The denomination will be motivated in Remark~\ref{remS1}.)

By Lemma~\ref{lem couboundary }, if $\tau_v$ is a coboundary,
then $\|\tau+\tau_v\|=\|\tau\|+\|\tau_v\|=\|\tau \|$. Hence
$\|\cdot \|$ is well defined on   $H^1(\Gamma,\R^{d+1})$.

\begin{proposition}\label{prop:s1 norm}
 The $S_1$ norm $\|\cdot\|_{S_1}$ defines an asymmetric norm on $H^1(\Gamma,\R^{d+1})$, i.e. $\forall [\tau],[\tau']\in H^1(\Gamma,\R^{d+1})$
\begin{enumerate}[noitemsep]
 \item $\|[\tau]\|_{S_1}\geq 0$;
 \item $\|[\tau]\|_{S_1}=0$ if and only if $[\tau]=0$.
 \item $\|[\tau]+[\tau']\|_{S_1}\leq \|[\tau]\|_{S_1}+\|[\tau']\|_{S_1}$.
  \item\label{prop S1 3} $\forall \alpha \geq 0$, $\|\alpha [\tau]\|_{S_1}=\alpha \|[\tau]\|_{S_1}$;
\end{enumerate}
\end{proposition}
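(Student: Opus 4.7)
The plan is to verify each of the four axioms directly by combining the properties of $h^{\crit}_\tau$ (Corollary~\ref{lem equ minimal}), $h^-_\tau$ (Lemma~\ref{lem h+-}, Lemma~\ref{lem couboundary }), and the comparison $h^-_\tau \leq h^{\crit}_\tau \leq h^+_\tau$ (which descends from Lemma~\ref{lem:crit conv} applied with $b = b_\tau$). Before beginning, I would note that for any cocycle $\tau$, the difference $h^{\crit}_\tau - h^-_\tau$ is a difference of two $\tau$-equivariant functions, hence $\Gamma$-invariant by Fact~\ref{lem:diff}, so the integrand in \eqref{eq asym} descends to the compact quotient $\H^d/\Gamma$ and the integral is finite. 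I would also remark once at the outset that well-definedness on $H^1$ (independence of the representative $\tau$ modulo coboundaries) is already contained in Lemma~\ref{lem couboundary }, which gives $h^-_{\tau+\tau_v} = h^-_\tau + h^-_{\tau_v}$ with $h^-_{\tau_v}$ affine and $h^{\crit}_{\tau+\tau_v} = h^{\crit}_\tau + h^{\crit}_{\tau_v}$ with $h^{\crit}_{\tau_v} = h^-_{\tau_v}$, so the integrand is unchanged when $\tau$ is shifted by a coboundary.

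For (1), non-negativity is immediate: applying Lemma~\ref{lem:crit conv} to the boundary datum $b_\tau$ (which is defined by Corollary~\ref{prop btau} and is the boundary extension of every $\tau$-equivariant map, in particular of both $h^{\crit}_\tau$ and $h^-_\tau$) yields $h^-_\tau \leq h^{\crit}_\tau$ pointwise on $B^d$, so the integrand in \eqref{eq asym} is non-negative. For (4), homogeneity with $\alpha > 0$ follows by combining $h^{\crit}_{\alpha\tau} = \alpha h^{\crit}_\tau$ (Corollary~\ref{lem equ minimal}) and $h^-_{\alpha\tau} = \alpha h^-_\tau$ (Lemma~\ref{lem h+-}(3)); the case $\alpha = 0$ is obvious since $h^{\crit}_0 = h^-_0 = 0$.

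For (3), the triangle inequality follows from the pointwise estimate
\begin{equation*}
h^{\crit}_{\tau+\tau'} - h^-_{\tau+\tau'} \;\leq\; (h^{\crit}_\tau + h^{\crit}_{\tau'}) - (h^-_\tau + h^-_{\tau'}) \;=\; (h^{\crit}_\tau - h^-_\tau) + (h^{\crit}_{\tau'} - h^-_{\tau'}),
\end{equation*}
where the equality on the left uses the linearity $h^{\crit}_{\tau+\tau'} = h^{\crit}_\tau + h^{\crit}_{\tau'}$ from Corollary~\ref{lem equ minimal}, and the inequality uses the superadditivity $h^-_\tau + h^-_{\tau'} \leq h^-_{\tau+\tau'}$ from Lemma~\ref{lem h+-}(4). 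Integrating over $\H^d/\Gamma$ yields the desired inequality.

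For (2), only the implication $\|[\tau]\|_{S_1} = 0 \Rightarrow [\tau] = 0$ requires argument; the converse follows from (4) applied with $\alpha = 0$, or more transparently from Lemma~\ref{lem couboundary } which gives $h^{\crit}_{\tau_v} = h^-_{\tau_v}$ for any coboundary $\tau_v$. If $\|[\tau]\|_{S_1} = 0$, then since the continuous non-negative function $h^{\crit}_\tau - h^-_\tau$ has vanishing integral on $\H^d/\Gamma$, it vanishes identically on $B^d$; that is, $h^{\crit}_\tau = h^-_\tau$, and Lemma~\ref{lem couboundary } then forces $h^{\crit}_\tau$ to be affine and $\tau$ to be a coboundary, i.e.\ $[\tau] = 0$ in $H^1(\Gamma,\R^{d,1})$. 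The only mildly delicate point in the whole argument is ensuring that the almost-everywhere vanishing of a continuous function implies pointwise vanishing, which is immediate from continuity; there is no genuine obstacle, the proof being essentially a bookkeeping assembly of the earlier results.
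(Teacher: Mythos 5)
Your proof is correct and follows essentially the same route as the paper's: non-negativity from Lemma~\ref{lem:crit conv}, definiteness from Lemma~\ref{lem couboundary }, and homogeneity and subadditivity from Lemma~\ref{lem h+-} together with the linearity of $h^{\crit}_\tau$ from Corollary~\ref{lem equ minimal}. The paper's proof is just a terse citation of these same lemmas; you have merely spelled out the pointwise inequalities and the continuity argument for the definiteness, all of which are sound.
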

\begin{proof}
 The first property comes from Lemma~\ref{lem:crit conv}.
The second point is Lemma~\ref{lem couboundary }.
 The third and forth points are immediate consequence of Lemma~\ref{lem h+-}.
\end{proof}

It is obvious from  \eqref{eq asym} and \eqref{eq vol cc} that
 $\vol$ the symmetrization of $\|\cdot\|_{S_1}$:

$$\vol([\tau]) = \frac{1}{2}\left(\|[\tau]\|_{S_1}+\|-[\tau]\|_{S_1} \right)~. $$

\subsubsection{Mean curvature measure}

We now explain how $\|\cdot\|_{S^1}$  is related to the mean curvature measure introduced in Section~\ref{sec mean curv measure}.
From Lemma~\ref{lem: action sur mesure}, we have that for any convex $\tau$-equivariant map $h$, the measure $\meanm(h)$ is $\Gamma$-invariant, and then defines a Radon measure  $\meanm^{\Gamma}(h)$ on  $\H^d/\Gamma$.
Actually, there is a nice expression for this measure.
Let $h$ be a convex $\tau$-equivariant function.
By definition of $h^\crit_\tau$, $\meanm(h)=\meanm(h)-\meanm(h^\crit_\tau)=\meanm(h-h^\crit_\tau)$. On the other hand, $h-h^\crit_\tau$ is $\Gamma$-invariant, so we deduce easily that for any $C^\infty$ function $\varphi$ on $\H^d/\Gamma$,
$$\meanm^{\Gamma}(h)(\varphi)=\int_{\H^d/\Gamma}(h-h^\crit_\tau)(\frac{1}{d}\Delta^{\H^d}-1)\varphi~. $$

Taking $\varphi=1$,
$$\meanm^\Gamma(h)(\H^d/\Gamma)= \int_{\H^d/\Gamma} h_\tau^\crit - h~, $$
in particular, if $h=h^-_\tau$, by definition of the $S_1$ norm,
\begin{equation}\label{eq norme=measure}\|\tau \|_{S^1}=\meanm^\Gamma(h^-_\tau)(\H^d/\Gamma)~. \end{equation}

\begin{remark}\label{remS1}{\rm
Consider the convex set $\Omega_\tau^-$ in Minkowski space, as well as the
$\epsilon$-equidistant convex set $\Omega_\tau^-(\epsilon)$ (it is the dual convex set in Minkowski space of the epigraph of $h^-_\tau-\epsilon$ in $B^d\times \R$). By a Lorentzian version of the Steiner formula proved in \cite{fv}, the volume of
$(\Omega_\tau^- \setminus \Omega_\tau^-(\epsilon))/\Gamma_\tau$ is a polynomial in $\epsilon$ of degree $d+1$. Up to a dimensional constant, the coefficient in front of $\epsilon^d$ is nothing but $\meanm^\Gamma(h^-_\tau)(\H^d/\Gamma)$. The analoguous quantity in the classical theory of convex bodies is called the (total) area measure of order one \cite{schneider}, and usually denoted by $S_1$, that explains our terminology (see also Remark~\ref{remark christ}).

}\end{remark}

\begin{lemma}\label{prop conv coc}
Let  $\tau_n \to \tau$. Then $b_{\tau_n}$ (resp. $h^-_{\tau_n}$) pointwise converge to $b_\tau$ (resp. $h^-_{\tau}$).
\end{lemma}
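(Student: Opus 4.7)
The plan is to handle the convergences of $b_{\tau_n}$ and of $h^-_{\tau_n}$ by largely independent arguments, both anchored on the $C^2$-continuous family of smooth equivariant functions produced by Proposition~\ref{prop exist smooth cvx}. First, I would pick $C^\infty$ convex $\tau_n$-equivariant functions $h_n$ converging to a convex $\tau$-equivariant $h$, and similarly concave $\tau_n$-equivariant $h'_n\to h'$, with $C^2$ convergence on compact subsets of $B^d$. Since $\mean(h_n)$ is $\Gamma$-invariant by Fact~\ref{lem:hessien inv}, $C^2$ convergence on a compact fundamental domain upgrades to uniform convergence on all of $B^d$, so the $\mean(h_n)$ admit a uniform upper bound $C$ independent of $n$.

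For pointwise convergence of $b_{\tau_n}$, I would fix $\theta\in\partial B^d$ and analyze the radial function $h_{n,\theta}(r):=h_n(r\theta)$. The estimate inside the proof of Proposition~\ref{lem Li} gives $h'_{n,\theta}(r)\leq h'_{n,\theta}(1/2)+2C(1-r)^{-1/2}$ on $[1/2,1)$, while convexity of $h_n$ along the ray provides the monotone lower bound $h'_{n,\theta}(r)\geq h'_{n,\theta}(1/2)$. The reference values $h'_{n,\theta}(1/2)$ are uniformly bounded by $C^1$ convergence on the ball of radius $1/2$, so $(h'_{n,\theta})$ is dominated by an integrable function independent of $n$. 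Dominated Convergence applied to
\[
b_{\tau_n}(\theta)=h_n(\tfrac{1}{2}\theta)+\int_{1/2}^{1} h'_{n,\theta}(r)\,dr
\]
then yields $b_{\tau_n}(\theta)\to b_\tau(\theta)$.

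For the $h^-_{\tau_n}$ part, I would run a subsequential compactness argument. The sandwich $h_n\leq h^-_{\tau_n}\leq h^+_{\tau_n}\leq h'_n$ from Lemma~\ref{lem h+-} and Proposition~\ref{prop h+} gives local uniform bounds on $h^-_{\tau_n}$; together with convexity, Arzela--Ascoli extracts a subsequence converging locally uniformly on $B^d$ to a convex function $\tilde h$. Passing the equivariance formula \eqref{def action h} to the limit (using $\tau_n(A)\to\tau(A)$) shows $\tilde h$ is $\tau$-equivariant, so Lemma~\ref{lem h+-} already gives $\tilde h\leq h^-_\tau$.

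The hard part is the reverse inequality $\tilde h\geq h^-_\tau$, for which I would introduce a strict-convexity perturbation using $-L$. Fix $\delta>0$ and set $\tilde g:=h^-_\tau+\delta(-L)$. Since $-L$ is $\Gamma$-invariant, Fact~\ref{lem:diff} shows $\tilde g$ is still $\tau$-equivariant, and by \eqref{eq hess L} its second fundamental form satisfies $\II_{\tilde g}=\II_{h^-_\tau}+\delta g_{\mathbb{H}^d}\geq \delta g_{\mathbb{H}^d}$. Define $\tilde g_n:=\tilde g+(h_n-h)$, which is $\tau_n$-equivariant by Fact~\ref{lem:diff}. The tensor $\II_{h_n-h}$ is $\Gamma$-invariant by Fact~\ref{lem:hessien inv} and converges to $0$ uniformly on a compact fundamental domain (from $C^2$ convergence on compacta), hence uniformly on all of $B^d$ by cocompactness. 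Thus $\II_{\tilde g_n}\geq \tfrac{\delta}{2}g_{\mathbb{H}^d}>0$ for $n$ large, so $\tilde g_n$ is convex and Lemma~\ref{lem h+-} gives $\tilde g_n\leq h^-_{\tau_n}$. Passing to the subsequential limit yields $\tilde g\leq \tilde h$, and letting $\delta\to 0$ gives $h^-_\tau\leq \tilde h$. Every subsequential limit therefore agrees with $h^-_\tau$, forcing $h^-_{\tau_n}\to h^-_\tau$ locally uniformly, which is more than the pointwise convergence claimed.
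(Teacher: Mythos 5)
Your argument is correct, but it follows a genuinely different route from the paper's. The paper bounds the region between the graphs of the convex and the concave smooth $\tau_n$-equivariant functions to get convex bodies $K_n\subset \bar B^d\times\R$, invokes the Blaschke selection theorem and the standard facts about Hausdorff convergence of convex bodies to show $K_n\to K$, and reads off first the pointwise convergence $b_{\tau_n}\to b_\tau$ and then, via Hausdorff convergence of the convex hulls $\CH(\tau_n)\to\CH(\tau)$, the convergence of $h^-_{\tau_n}$. You instead treat the two claims separately and more analytically: for the boundary values you exploit the quantitative radial estimate hidden in the proof of Proposition~\ref{lem Li} (uniform in $n$ because $\mean(h_n)$ is $\Gamma$-invariant and converges on a fundamental domain) to dominate the radial derivatives by a fixed integrable function and conclude by dominated convergence; for $h^-_{\tau_n}$ you combine Arzel\`a--Ascoli with the barrier $h^-_\tau-\delta L$, whose strict convexity survives the $C^2$-small, $\Gamma$-invariantly controlled perturbation $h_n-h$ needed to turn it into a convex $\tau_n$-equivariant competitor, so that Lemma~\ref{lem h+-} pins down every subsequential limit. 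Your method buys locally uniform convergence of $h^-_{\tau_n}$ directly (which the paper only obtains afterwards from general convexity) and avoids the convex-body machinery; the paper's proof is shorter once Schneider's results are granted and handles $h^+$ and $h^-$ symmetrically in one stroke. Two points worth making explicit if you write this up: the convergence of first and second derivatives in Proposition~\ref{prop exist smooth cvx} is \emph{uniform on compact sets} (this is what the construction gives, and it is what both your uniform bound on $\mean(h_n)$ and the uniform bound on $h'_{n,\theta}(1/2)$ require), and the convexity of $\tilde g_n$ should be justified by splitting it as the convex function $h^-_\tau$ plus the $C^2$ function $-\delta L+(h_n-h)$, whose Euclidean Hessian is nonnegative for $n$ large since $L^{-1}\operatorname{Hess}(h_n-h)\geq -\epsilon_n \gh$ globally by $\Gamma$-invariance.
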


\begin{proof}
By Proposition~\ref{prop exist smooth cvx}, we have convex (resp.  concave) $\tau_n$-equivariant functions converging to a $\tau$-equivariant convex (resp. concave) function. For any $n$, as the concave and the convex $\tau_n$-equivariant functions coincide
on $\partial B^d$ with $b_{\tau_n}$ given by Corollary~\ref{prop btau}, they bound a convex body $K_n$ of $\R^{d+1}$. Let us denote by $K$ the convex body bounded by the $\tau$-equivariant convex and concave functions.

Let us denote by $C^{d+1}$ the space of non-empty compact sets of $\R^{d+1}$, endowed with the Hausdorff topology.
Suppose that there is a subsequence $(K_{n_i})$ of $(K_n)$ that converges  to $K'$ in $C^{d+1}$. Then $K'$ is a convex body \cite[Theorem~1.8.6]{schneider}. Moreover, each point of $K'$ is the limit of a sequence  of points $(x_{n_i})$ with $x_{n_i}\in K_{n_i}$ \cite[Theorem~1.8.8]{schneider}. From this it is easy to deduce that   $K'=K$.

Now as the $\tau_n$-equivariant functions are converging, they are bounded, and in turn the sequence of convex bodies $(K_n)_n$ is bounded in $B^d\times \R\subset \R^{d+1}$.
By the Blaschke selection theorem \cite[Theorem~1.8.7]{schneider}, there is a subsequence $K_{n_i}$ converging to a convex body $K'$.
Moreover, the sequence $K_n$ is contained in a compact subspace of
$C^{d+1}$ \cite[Theorem~1.8.4]{schneider}. As we saw that any convergent subsequence of $(K_n)$ converges to $K$, it follows that $(K_n)$ converges to $K$.

As the limit of any  convergent sequence $(x_{n_i})$ with $x_{n_i}\in K_{n_i}$ must belong to $K$ \cite[Theorem~1.8.8]{schneider}, it is easy to deduce that
$b_{\tau_n}\to b_\tau$. This easily implies the Hausdorff convergence of
 $\CH(\tau_n)$ to  $\CH(\tau)$, see e.g. \cite[Lemma~2.1]{graham2}, which in turn gives the convergence of $h^-_{\tau_n}$ to $h^-_\tau$, as the Hausdorff
 convergence of convex bodies implies the Hausdorff convergence of the boundaries \cite[Lemma 1.8.1]{schneider}.
\end{proof}

\begin{remark}{\rm
By standard properties of convex functions, it follows from Lemma~\ref{prop conv coc} that $h^-_{\tau_n}$ converges to $h^-_\tau$ uniformly on any compact sets of $B^d$. But one cannot deduce a uniform convergence of $b_{\tau_n}$
from the pointwise convergence \cite{overflow}.

Actually we will obtain the uniform convergence of the $h^-_{\tau_n}$ as a byproduct of the considerations of Section~\ref{sec:anosov}.
In particular, Lemma~\ref{le:tauuniform} will imply the following proposition, without mention to the mean curvature measure.

}\end{remark}

\begin{proposition}\label{prop continuite seminorme}
The $S_1$ norm $\| \cdot \|_{S_1}:H^1(\Gamma,\R^{d,1})\to \R$ is continuous.
\end{proposition}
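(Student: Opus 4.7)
The plan is to exploit the identity $\|\tau\|_{S_1}=\meanm^\Gamma(h^-_\tau)(\H^d/\Gamma)$ from \eqref{eq norme=measure}, combined with the weak continuity of mean curvature measures under pointwise convergence of convex functions (Lemma~\ref{lem conv measures}) and the pointwise continuity of the map $\tau\mapsto h^-_\tau$ established in Lemma~\ref{prop conv coc}. This avoids having to prove the (more delicate) uniform convergence of $b_{\tau_n}$ to $b_\tau$, which the paper defers to Section~\ref{sec:anosov}.

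The first step is to express the total mass $\meanm^\Gamma(h^-_\tau)(\H^d/\Gamma)$ as the value of the $\Gamma$-invariant distribution $\meanm(h^-_\tau)$ on a \emph{fixed} compactly supported smooth test function. Since $\Gamma$ acts cocompactly on $\H^d$ and every hyperbolic fundamental domain has compact Euclidean closure inside the open ball $B^d$, a standard partition-of-unity argument (take any non-negative bump $\chi_0\in C_0^\infty(B^d)$ whose $\Gamma$-orbit-sum is strictly positive, and normalize) produces $\chi\in C_0^\infty(B^d)$ satisfying
$$\sum_{\gamma\in\Gamma}\chi(\gamma^{-1}x)=1\quad\text{for all }x\in B^d.$$
By Lemma~\ref{lem: action sur mesure}, the $\tau$-equivariance of $h^-_\tau$ implies that the measure $\meanm(h^-_\tau)$ is $\Gamma$-invariant, so unfolding the integral gives
$$\meanm(h^-_\tau)(\chi)\;=\;\meanm^\Gamma(h^-_\tau)(\H^d/\Gamma)\;=\;\|\tau\|_{S_1}.$$
The same identity applies for every cocycle $\tau'$ with the \textbf{same} test function $\chi$.

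For the continuity itself, suppose $\tau_n\to\tau$ in $Z^1(\Gamma,\R^{d,1})$. Lemma~\ref{prop conv coc} gives pointwise (hence locally uniform, by convexity) convergence $h^-_{\tau_n}\to h^-_\tau$ on $B^d$. Lemma~\ref{lem conv measures} then yields weak convergence $\meanm(h^-_{\tau_n})\to\meanm(h^-_\tau)$ as distributions (equivalently as Radon measures, tested against compactly supported continuous functions). Evaluating both sides at the fixed $\chi\in C_0^\infty(B^d)$ constructed above,
$$\|\tau_n\|_{S_1}=\meanm(h^-_{\tau_n})(\chi)\longrightarrow\meanm(h^-_\tau)(\chi)=\|\tau\|_{S_1}.$$
Since $\|\cdot\|_{S_1}$ is well-defined on $H^1(\Gamma,\R^{d,1})$ (Lemma~\ref{lem couboundary }), this gives the continuity on the cohomology space.

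The principal subtlety, and the only nontrivial ingredient, is the existence of the normalizing test function $\chi$ in $C_0^\infty(B^d)$: it is crucial that $\chi$ have compact support \emph{inside the open ball} (rather than being a characteristic function of a fundamental domain), so that weak convergence of the Radon measures can be applied. This is precisely what cocompactness of $\Gamma$ on $\H^d$ buys, since hyperbolic-bounded sets sit uniformly away from $\partial B^d$.
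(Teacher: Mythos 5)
Your proof is correct and follows essentially the same route as the paper's: pointwise convergence $h^-_{\tau_n}\to h^-_\tau$ from Lemma~\ref{prop conv coc}, weak convergence of the mean curvature measures from Lemma~\ref{lem conv measures}, a partition of unity subordinate to the cocompact $\Gamma$-action, and the identity \eqref{eq norme=measure}. The only difference is that you spell out explicitly the partition-of-unity unfolding that the paper leaves as ``it is not hard to deduce,'' which is a welcome addition.
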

\begin{proof}
Let $\tau_n\to \tau$. From Lemma~\ref{prop conv coc},  $h^-_{\tau_n}$ converges to  $h^-_{\tau}$, and, using a partition of the unity, it is not hard to deduce from
Lemma~\ref{lem conv measures}
that $\meanm^\Gamma(h^-_{\tau_n})$ weakly converge to  $\meanm^\Gamma(h^-_{\tau})$, so that the result follows from \eqref{eq norme=measure}.

\end{proof}

\subsubsection{Simplicial measured geodesic laminations}\label{simlicial}

We use the notations and definitions of Section~\ref{space of cocycle}, where we have considered a simplicial measured geodesic lamination $\lambda$ on the compact hyperbolic manifold $\H^d/\Gamma$. Namely we have supposed that $\H^d/\Gamma$ contains $n$ disjoints embedded totally geodesic hypersurfaces $H_1,\ldots,H_n$ with
positive weights $\omega_i$.

Let us push the construction a step forward.
For any $y\in B^d$, let $c:[0,1]\to B^d$ be any curve transverse to
$\tilde{L}$ joining the base point $\tilde{x}$ to $y$, and define
$$h_\lambda(y)=\sum_{j\in c[0,1]\cap \tilde{L}} \mu_j h_{l_j}(y) $$
where $h_{l_j}$ is the canonical map associated with $l_j$ (Definition~\ref{def can map}).

\begin{fact} If $\tau_\lambda$ is the cocycle given by \eqref{taulambda}, then
$h_\lambda=h^-_{\tau_\lambda}$.
\end{fact}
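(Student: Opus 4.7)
The plan is to prove $h_\lambda = h^-_{\tau_\lambda}$ through four coordinated steps.

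First, I would check that $h_\lambda$ is well-defined and $\tau_\lambda$-equivariant. Independence of the transverse path $c$ follows by the same telescoping argument as in Fact~\ref{fact: tau gamma 2}, using the sign convention that a crossing of $l_j$ from $l_j^-$ to $l_j^+$ contributes $+\omega_j h_{l_j}(y)$ while the reverse crossing contributes $-\omega_j h_{l_j}(y)$. For equivariance, I would decompose a transverse path from $\tilde x$ to $x$ as $c_0 \ast c_1$, where $c_0$ runs from $\tilde x$ to $A\cdot \tilde x$ and $c_1$ from $A\cdot \tilde x$ to $x$. The sum along $c_0$ evaluates via Remark~\ref{rem angle wedge} and the definition of $\tau_\lambda$ to $\langle \binom{x}{1}, \tau_\lambda(A)\rangle_{d,1} = \langle x, \overline{\tau_\lambda(A)}\rangle_d - \tau_\lambda(A)_{d+1}$. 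Reparametrizing $c_1$ as $A \cdot \tilde c_1$ with $\tilde c_1$ joining $\tilde x$ to $A^{-1}\cdot x$, Fact~\ref{fact h can isom} combined with the $\Gamma$-invariance of the weights yields $\frac{L(x)}{L(A^{-1}\cdot x)} h_\lambda(A^{-1}\cdot x)$ for the sum along $c_1$. Together these reproduce exactly the equivariance formula \eqref{def action h}.

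Next, $h_\lambda$ is convex and $h_\lambda \leq h^-_{\tau_\lambda}$. On each connected component $R$ of $B^d \setminus \tilde L$, $h_\lambda$ is an affine function $a_R$ (the set of crossings is locally constant in $y$). Each crossing of a hyperplane $l_j$ contributes a wedge of positive angle $\omega_j$, which is convex by Fact~\ref{fact convex wedge}; since $\tilde L$ is locally finite, $h_\lambda$ is locally a finite sum of convex wedges plus an affine term, hence convex. Because $h_\lambda - h^{\crit}_{\tau_\lambda}$ is $\Gamma$-invariant and continuous, Fact~\ref{ext fuch} shows $h_\lambda$ extends continuously to $b_{\tau_\lambda}$ on $\partial B^d$. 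Lemma~\ref{lem h+-}(1) applied to this convex $\tau_\lambda$-equivariant function yields $h_\lambda \leq h^-_{\tau_\lambda}$.

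The reverse inequality $h_\lambda \geq h^-_{\tau_\lambda}$ is the geometric core of the argument. The continuity of $h_\lambda$ up to $\partial B^d$, combined with $h_\lambda|_R = a_R$, forces $a_R = b_{\tau_\lambda}$ pointwise on the arc $\bar R \cap \partial B^d$. Because the hyperplanes of $\tilde L$ are pairwise disjoint in $B^d$ (two lifts that met would project to a nonempty intersection of the disjoint embedded hypersurfaces in $\H^d/\Gamma$), the compact convex region $\bar R$ has no extreme points strictly inside $B^d$: extreme points can only lie on arcs of $\partial B^d$ or at corners $l_j \cap \partial B^d$. Hence $\bar R$ coincides with the convex hull of $\bar R \cap \partial B^d$ in $\R^d$. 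For $x \in R$ written as $x = \sum_i \lambda_i x_i$ with $x_i \in \bar R \cap \partial B^d$, $\lambda_i \geq 0$, $\sum_i \lambda_i = 1$, the affinity of $a_R$ and the identification $a_R = b_{\tau_\lambda}$ on the arc give
$$(x, h_\lambda(x)) = (x, a_R(x)) = \sum_i \lambda_i (x_i, b_{\tau_\lambda}(x_i)) \in \CH(\tau_\lambda)~,$$
from which $h_\lambda(x) \geq h^-_{\tau_\lambda}(x)$; continuity extends this to all of $B^d$.

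The main obstacle I expect is the extreme-point analysis in the last step: verifying that the compact convex region $\bar R$ has all its extreme points on $\partial B^d$. This hinges crucially on the pairwise disjointness of $\tilde L$; without it, intersections of two or more $l_j$'s inside $B^d$ could produce extreme points not accessible from $\partial B^d$, invalidating the convex-combination representation of $(x,h_\lambda(x))$ as a point of $\CH(\tau_\lambda)$.
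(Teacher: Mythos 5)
Your proof is correct and follows essentially the same route as the paper: equivariance plus convexity (via Lemma~\ref{lem h+-}) gives $h_\lambda\leq h^-_{\tau_\lambda}$, and the containment of the graph of $h_\lambda$ in $\CH(b_{\tau_\lambda})$ gives the reverse inequality. Your extreme-point analysis of the regions $\bar R$ is simply a careful justification of the step the paper dispatches in one sentence (``the graph of $h_\lambda$ is made of segments joining points of the graph of $b_{\tau_\lambda}$''), and it correctly identifies the disjointness of the lifted hyperplanes as the reason this works.
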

\begin{proof}
As the weights are positive, by Fact~\ref{fact convex wedge}, $h_\lambda$ is a convex map.

Let us check that $h_\lambda$ is $\tau_\lambda$-equivariant. Let $\tilde{c}:[0,1]\to B^d$ be a path joining $\tilde{x}$ to $A\cdot \tilde{x}$, and let $c':[0,1]\to B^d$ be a path joining $\tilde{x}$ to $y$, both assumed to be transverse to $\tilde{L}$. Then the concatenation of $c$ with $A\cdot c'$ is a path joining $\tilde{x}$ to $A\cdot y$, hence
$$h_\lambda(A\cdot y)= \sum_{j\in (c([0,1])\cap\tilde{L})}\omega_jh_{l_j}(y)+\sum_{j\in (A\cdot c'([0,1])\cap\tilde{L})}\omega_jh_{l_j}(y)~,$$
and as by definition $h_l(y)=\langle \binom{y}{1},v_l\rangle_{d,1}$,
then $\sum_{j\in (c([0,1])\cap\tilde{L})}\omega_jh_{l_j}(y)=\langle \binom{y}{1}, \tau_\lambda(A)\rangle_{d,1}$. Also,
$\sum_{j\in (A\cdot c'([0,1])\cap\tilde{L})}\omega_jh_{l_j}(y)=\sum_{j\in ( c'([0,1])\cap\tilde{L})}\omega_jh_{A\cdot l_j}(A\cdot y)$, and by
Fact~\ref{fact h can isom}, $h_{A\cdot l_j}(A\cdot y)=\frac{L(A\cdot y)}{L(y)}h_l(y)$. The equivariance is proved.

A $h_\lambda$ is a convex $\tau_\gamma$-equivariant map, then $h_\lambda\leq h^-_{\tau_\lambda}$. By construction, the graph of $h_\lambda$ is made of segments joining
points of graph of $b_{\tau_\lambda}$, hence it is contained in $\CH(b_{\tau_\lambda})$, so $h_\lambda\geq h^-_{\tau_\lambda}$.

\end{proof}

The \emph{length} $\length(\lambda)$ of a simplicial
measured geodesic lamination $\lambda$
on $\H^d/\Gamma$ is defined as sum of the weights times the total volume of the corresponding totally geodesic hypersurfaces. By Lemma~\ref{lem:mesure = angle} and \eqref{eq norme=measure}, we obtain the following.

\begin{proposition}\label{prop length simplicial}
Let $\lambda$ be a simplicial measured geodesic lamination on $\H^d/\Gamma$.
Then $$\length(\lambda)=\|\tau_\lambda\|_{S^1}~.$$
\end{proposition}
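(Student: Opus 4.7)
The plan is to combine the identification $h^{-}_{\tau_\lambda} = h_\lambda$ (just established) with the formula \eqref{eq norme=measure}, namely
$$\|\tau_\lambda\|_{S^1} = \meanm^\Gamma(h^-_{\tau_\lambda})(\H^d/\Gamma) = \meanm^\Gamma(h_\lambda)(\H^d/\Gamma),$$
and then to compute the mean curvature measure of $h_\lambda$ explicitly, reducing it to a disjoint sum of wedge contributions handled by Lemma~\ref{lem:mesure = angle}.

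First I would analyze the local structure of $h_\lambda$. The family $\tilde L$ consists of lifts of disjoint embedded totally geodesic hypersurfaces, so $\tilde L$ is a locally finite disjoint union of hyperplanes of $B^d$. Around each $l_j \in \tilde L$ one can pick an open neighborhood $U_j$ meeting $\tilde L$ only in $l_j$, and a neighborhood $U_0$ of the complement of $\bigcup_j U_j$ that avoids $\tilde L$. By the defining path-integral formula for $h_\lambda$, crossing $l_j$ once adds the canonical affine function $\omega_j h_{l_j}$; hence on $U_j$ the function $h_\lambda$ agrees with a wedge on $l_j$ of angle $\omega_j$ (in the sense of \eqref{def angle}), up to an affine function, while on $U_0$ it agrees with an affine function. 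Since the mean curvature measure $\meanm(\cdot)$ depends only on the Hessian part (in the distributional sense), adding an affine function does not alter it; so on $U_j$ we have $\meanm(h_\lambda) = \meanm(\text{wedge of angle }\omega_j\text{ on }l_j)$ and $\meanm(h_\lambda)$ vanishes on $U_0$.

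Second, I would apply Lemma~\ref{lem:mesure = angle} on each $U_j$ to get $\meanm(h_\lambda)|_{U_j} = \omega_j\, \omega_{l_j}^{\mathbb H}$, and then patch: using a partition of unity subordinate to $\{U_0\} \cup \{U_j\}$ one obtains the global identity of Radon measures on $B^d$
$$\meanm(h_\lambda) = \sum_{l_j \in \tilde L} \omega_j\, \omega_{l_j}^{\mathbb H}.$$
This measure is manifestly $\Gamma$-invariant, as it should be by Lemma~\ref{lem: action sur mesure} applied to $h_\lambda$.

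Finally, I would push the computation down to $\H^d/\Gamma$. A fundamental domain meets each $\Gamma$-orbit of hyperplanes in $\tilde L$ in a fundamental domain for the induced action on $H_i$, so
$$\meanm^\Gamma(h_\lambda)(\H^d/\Gamma) = \sum_{i=1}^n \omega_i\, \vol_{d-1}(H_i) = \length(\lambda),$$
which together with the first display proves the proposition. The only step requiring any care is the additivity of $\meanm$ over the disjoint wedge pieces; once one observes that $\tilde L$ is a disjoint union and that $\meanm$ is insensitive to affine perturbations, this is routine via localization of distributions (as already used for Lemma~\ref{lem conv measures}), so I do not expect a genuine obstacle.
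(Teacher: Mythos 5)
Your proposal is correct and follows essentially the same route as the paper, whose (very terse) proof consists precisely of invoking the identification $h_\lambda=h^-_{\tau_\lambda}$, the formula \eqref{eq norme=measure}, and Lemma~\ref{lem:mesure = angle}. You merely make explicit the localization argument (that $h_\lambda$ is, near each leaf and up to an affine function, a wedge of angle $\omega_j$, and that $\meanm$ is local and insensitive to affine perturbations) which the paper leaves implicit.
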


\begin{remark}{\rm
There is no reason why for $d\geq 3$  any cocycle should arise from a (simplicial) measured geodesic lamination on $\H^d/\Gamma$.  So for $d\geq 3$, the concept of measured geodesic lamination is not sufficient. A more suitable concept is the one of \emph{measured geodesic stratification},  introduced in \cite{Bon05}. In contrast, we will see in the next section that for $d=2$, any cocycle arises from a measured geodesic lamination.
}\end{remark}

%%%%%%%%%%%%%%%%%%%%%%%%%%%%%%%%%%%%%%%%%%%%%%%%%%%%%%%%%%%%%%%%%%%%%
%%%%%%%%%%%%%%%%%%%%%%%%%%%%%%%%%%%%%%%%%%%%%%%%%%%%%%%%%%%%%%%%%%%%%

\subsection{The case of dimension $2+1$}\label{cas 2+1}

In this part we study the particularities of the $d=2$ case.
We will denote by $\teich$ the Teichm\"uller space of a compact surface
homeomorphic to $\H^2/\Gamma$.  We will denote by
 $g$ the genus of $S$.

\subsubsection{Goldman isomorphism}

The Teichm\"uller space $\teich$ can be defined  as the space of faithful and discrete representations of $\pi_1S$ into  $\operatorname{Isom}_0(\mathbb{H}^2)$ up to conjugacy.  Let $\rho$
be such a representation  such that $\Gamma=\rho(\pi_1S)$. Then the tangent space of $\teich$ at $\rho$ naturally identifies with
$H^1(\pi_1(S),\mathfrak{isom}(\mathbb{H}^2))$, where
$\pi_1S$ acts on the Lie group $\mathfrak{isom}(\mathbb{H}^2)$ via $\operatorname{Ad}\rho$ \cite{gol84}. Using the hyperboloid model $\mathcal{H}^2$
for $\mathbb{H}^2$, $\mathfrak{isom}(\mathbb{H}^2)$ can be identified with
$\mathfrak{o}(2,1)$. Let us write it as follows.

\begin{theorem}[{\cite{gol84}}]
There is a vector space isomorphism
$$\gold: H^{1}(\Gamma,\mathfrak{o}(2,1))  \to T_{\H^2/\Gamma}\teich~.$$
\end{theorem}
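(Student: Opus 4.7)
The plan is to construct $\gold$ as the differential, at $\rho$, of the parametrization of $\teich$ by discrete faithful representations of $\pi_1 S$ into $\mathrm{Isom}_0(\H^2)$ modulo conjugation. Given a smooth one-parameter family $\rho_t$ of such representations with $\rho_0=\rho$, I would set
$$u(\gamma) := \left.\frac{d}{dt}\right|_{t=0} \rho_t(\gamma)\cdot\rho(\gamma)^{-1} \in \mathfrak{o}(2,1).$$
Differentiating the homomorphism identity $\rho_t(\alpha\beta)=\rho_t(\alpha)\rho_t(\beta)$ at $t=0$ yields the twisted cocycle relation $u(\alpha\beta)=u(\alpha)+\mathrm{Ad}(\rho(\alpha))\,u(\beta)$, so $u\in Z^1(\Gamma,\mathfrak{o}(2,1))$. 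Declaring $\gold([\rho_t]):=[u]$ provides a candidate linear map from naive tangent vectors at $\rho$ into $H^1$.

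Next I would verify compatibility with the conjugation action: if $\rho_t=g_t\rho g_t^{-1}$ with $g_0=\mathrm{Id}$ and $v=\dot g_0\in\mathfrak{o}(2,1)$, a short computation gives $u(\gamma)=v-\mathrm{Ad}(\rho(\gamma))v$, which is the canonical coboundary. Hence the class $[u]$ depends only on the equivalence class $[\rho_t]$ modulo conjugation, i.e. only on the tangent vector in $T_\rho\teich$, and conversely a tangent direction mapped to zero in $H^1$ can be killed, to first order, by a path of conjugations and so represents the zero tangent vector of $\teich$. This shows $\gold$ is well-defined and injective.

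The main obstacle, and the heart of the argument, is surjectivity: one must show that every cocycle actually integrates to an honest smooth path of representations, equivalently that $\rho$ is a smooth point of the representation variety $\mathrm{Rep}:=\mathrm{Hom}(\pi_1 S,\mathrm{Isom}_0(\H^2))$. I would use the standard presentation of $\pi_1 S$ with $2g$ generators and one relator, so that $\mathrm{Rep}$ is cut out inside the $6g$-dimensional manifold $\mathrm{Isom}_0(\H^2)^{2g}$ by the three scalar equations encoding the relator, giving the a priori bound $\dim_\rho\mathrm{Rep}\geq 6g-3$. Since $\rho$ is Fuchsian and hence non-elementary, $H^0(\Gamma,\mathfrak{o}(2,1))$ vanishes, and a standard Euler-characteristic computation (using Poincar\'e duality together with the self-duality of the $\mathrm{Ad}$-representation supplied by the Killing form on $\mathfrak{o}(2,1)$) gives $\dim H^1(\Gamma,\mathfrak{o}(2,1))=-\chi(S)\cdot\dim\mathfrak{o}(2,1)=6g-6$, whence $\dim Z^1=\dim B^1+\dim H^1=3+(6g-6)=6g-3$. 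This saturates the lower bound, so $\rho$ is a smooth point of $\mathrm{Rep}$ and every cocycle class is realized by a genuine path. The conjugation action by $\mathrm{Isom}_0(\H^2)$ is locally free at $\rho$ (again by $H^0=0$), so the quotient $\teich$ is a manifold of dimension $6g-6$ near $[\rho]$ and $\gold$ is the claimed isomorphism.
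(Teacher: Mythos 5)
The paper offers no proof of this statement: it is imported verbatim from Goldman's article \cite{gol84}, so there is nothing internal to compare your argument against. What you have written is the standard Weil--Goldman argument, and it is essentially correct: differentiating a path of representations produces an $\mathrm{Ad}\,\rho$-twisted cocycle, conjugation paths produce exactly the coboundaries, and surjectivity follows from the dimension count $\dim Z^1=\dim B^1+\dim H^1=3+(6g-6)=6g-3$, which saturates the a priori lower bound on $\dim_\rho\mathrm{Hom}(\pi_1S,\mathrm{Isom}_0(\H^2))$ coming from the one-relator presentation and forces $\rho$ to be a smooth point whose tangent space is all of $Z^1$.

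If this were to be written out in full, three points should be made explicit. First, the injectivity step needs more than the computation that conjugation paths give coboundaries: you must know that the conjugation action of $\mathrm{Isom}_0(\H^2)$ near $\rho$ is proper and locally free (the latter is your $H^0=0$), so that its orbit is a closed $3$-dimensional submanifold and the fiber of $\mathrm{Rep}\to\teich$ through $\rho$ is exactly that orbit; only then is the kernel of the differential precisely $B^1$. Second, identifying $T_{[\rho]}\teich$ with the tangent space of the full character variety uses that discrete faithful representations form an open subset of $\mathrm{Hom}(\pi_1S,\mathrm{Isom}_0(\H^2))$ --- true, and an instance of the Ehresmann--Weil--Thurston principle the paper invokes elsewhere, but not automatic. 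Third, Poincar\'e duality with coefficients in $\mathfrak{o}(2,1)$ requires an $\mathrm{Ad}$-invariant nondegenerate pairing on the coefficient module together with the orientability of $S$; the Killing form supplies it, as you indicate. With these caveats your sketch is complete and is, in substance, the argument of the cited reference.
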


There is also a one-to-one correspondence between vectors of $\R^{2,1}$ and infinitesimal Minkowski isometries of $\mathfrak{o}(2,1)$ ---this may be seen for example using the Minkowski cross product, see e.g. \cite{DG}. This identification gives a vector space isomorphism
$$\mathbf{C} : H^1( \Gamma,\R^{2,1})\to H^1(\Gamma,\mathfrak{o}(2,1))~, $$
and in turn we have the following vector space isomorphism
$$\xi=\gold \circ \mathbf{C} : H^1( \Gamma,\R^{2,1}) \to  T_{\H^2/\Gamma}\teich~.$$

In particular, we obtain the following.

\begin{corollary}\label{cor dim H}
The vector space $H^1(\Gamma,\R^{2,1})$ has dimension $6g-6$.
\end{corollary}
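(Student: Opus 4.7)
The plan is to read the dimension off directly from the isomorphism $\xi = \gold \circ \mathbf{C}$ constructed in the paragraph preceding the corollary. Since $\mathbf{C}: H^1(\Gamma,\R^{2,1}) \to H^1(\Gamma,\mathfrak{o}(2,1))$ and $\gold: H^1(\Gamma,\mathfrak{o}(2,1)) \to T_{\H^2/\Gamma}\teich$ are both stated to be vector space isomorphisms, their composition $\xi$ is a linear isomorphism of finite-dimensional real vector spaces, and so
\[
\dim H^1(\Gamma,\R^{2,1}) = \dim T_{\H^2/\Gamma}\teich.
\]

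It then remains only to invoke the classical fact that the Teichm\"uller space of a closed orientable surface of genus $g \geq 2$ is a real-analytic manifold of dimension $6g-6$, hence every tangent space has dimension $6g-6$. This is standard and can be obtained in several equivalent ways (Fenchel--Nielsen coordinates, the Riemann--Roch computation of the dimension of the space of holomorphic quadratic differentials, or directly from the real dimension of the character variety of faithful discrete representations of $\pi_1 S$ into $\operatorname{Isom}_0(\H^2) \cong \mathrm{PSL}(2,\R)$).

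I expect no real obstacle: the content of the corollary is entirely packaged in the two isomorphisms and the dimension count of $\teich$. The only thing worth noting is that the hypothesis that $\H^2/\Gamma$ is a compact oriented hyperbolic surface (imposed at the start of Section~\ref{space of cocycle}) forces $g \geq 2$, so that $6g-6 > 0$ and $\teich$ is genuinely a positive-dimensional manifold. The case $g=1$ is automatically excluded since $\H^2/\Gamma$ must carry a hyperbolic metric.
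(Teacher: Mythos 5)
Your proposal is correct and follows exactly the paper's route: the corollary is stated as an immediate consequence of the isomorphism $\xi=\gold\circ\mathbf{C}$ onto $T_{\H^2/\Gamma}\teich$, combined with the classical fact that Teichm\"uller space of a closed genus-$g$ hyperbolic surface has dimension $6g-6$. Your remark that compactness and hyperbolicity force $g\geq 2$ is a harmless (and correct) addition.
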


\subsubsection{Mess homeomorphism}

Let us call an \emph{entire segment} of $B^2$ a segment of $B^2$
whose endpoints are in $\partial B^2$.
A \emph{geodesic lamination} $\tilde{L}$ of $B^2$ is a non-empty closed union of
disjoint entire segments of $B^2$.
Let $\tilde{L}$ be a geodesic lamination on $B^2$ which is invariant under the action of $\Gamma$. Then the image $L$ of $\tilde{L}$ under the projection is a geodesic lamination on the compact hyperbolic surface
$B^2/\Gamma$.
A \emph{measured geodesic lamination} $\lambda=(L,\mu)$ on $B^2/\Gamma$ is the data of a geodesic lamination $L$ together with a \emph{transverse measure} $\mu$, that is the data of a Radon measure
on each compact rectifiable curve transverse to $L$, such that
\begin{itemize}[nolistsep]
\item the support of the measure is the intersection of the arc with $L$,
\item if two arcs are homotopic through arc transverse to $L$, then the
homotopy sends the measure on one segment to the measure on the other one.
\end{itemize}

A simplicial measured geodesic lamination on $B^2/\Gamma$ is a set of non-intersecting closed simple geodesics weighted by positive numbers. Note that the action of $\Gamma$ onto $B^2$ is via the identification of the disc with the Klein model of the hyperbolic plane, but the notation $B^2$ stands for reminding the affine nature of measured geodesic lamination on the universal cover.

Let $\mathcal{M}\mathcal{L}_\Gamma$ be the set of measured geodesic laminations on the compact hyperbolic surface $\H^2/\Gamma$.
 $\mathcal{M}\mathcal{L}_\Gamma$ is endowed with the following topology.
We say that $\lambda_n$ converge to $\lambda$ if, for any compact segment $c$ transverse to $L$ then
\begin{itemize}[nolistsep]
\item $c$ is transverse to $L_n$ for $n$ big,
\item $\mu_n$ weakly converge to $\mu$ on $c$.
\end{itemize}

We have the following classical result of Thurston, see e.g. \cite{Bon01} and the references therein.

\begin{theorem}[Thurston]\label{thm dim lam}
For the topology defined above, $\mathcal{M}\mathcal{L}_\Gamma$  is a manifold of dimension $6g-6$.
\end{theorem}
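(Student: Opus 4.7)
The plan is to exhibit a homeomorphism
$$\Phi : \mathcal{M}\mathcal{L}_\Gamma \longrightarrow H^1(\Gamma,\R^{2,1})$$
and then invoke Corollary~\ref{cor dim H}, which gives $\dim H^1(\Gamma,\R^{2,1}) = 6g-6$. Transporting the linear atlas on the target across $\Phi$ will endow $\mathcal{M}\mathcal{L}_\Gamma$ with the structure of a $(6g-6)$-dimensional manifold.

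First I would extend the assignment $\lambda\mapsto[\tau_\lambda]$ of Section~\ref{space of cocycle} from the simplicial case to an arbitrary $(L,\mu)\in\mathcal{M}\mathcal{L}_\Gamma$. Lift $(L,\mu)$ to a $\Gamma$-invariant measured lamination $(\tilde L,\tilde\mu)$ on $B^2$, fix a basepoint $\tilde x\notin\tilde L$, and for each $A\in\Gamma$ choose a compact path $c$ from $\tilde x$ to $A\cdot\tilde x$ transverse to $\tilde L$. Define
$$\tau_\lambda(A) \;=\; \int_{c} v_{\ell(t)}\,d\tilde\mu(t),$$
where $\ell(t)$ is the leaf of $\tilde L$ through $c(t)$ and $v_\ell$ is the Minkowski normal vector introduced in~\eqref{vl}. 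The arguments of Facts~\ref{fact: tau gamma 1} and~\ref{fact: tau gamma 2} adapt verbatim to show that $\tau_\lambda\in Z^1(\Gamma,\R^{2,1})$ and that its class in $H^1(\Gamma,\R^{2,1})$ is independent of the choices of $\tilde x$ and $c$. Continuity of $\Phi$ is immediate from the definition of convergence in $\mathcal{M}\mathcal{L}_\Gamma$: for each generator $A$ one uses a single compact transversal and weak convergence of transverse measures forces convergence of the integrals in $\R^{2,1}$, hence convergence of $[\tau_{\lambda_n}]$ in $H^1(\Gamma,\R^{2,1})$.

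For the inverse $\Psi$, I would use the lower boundary $h^-_\tau$ of the convex core $\CH(\tau)$. The convex $\tau$-equivariant function $h^-_\tau$ possesses the $\Gamma$-invariant mean curvature measure $\meanm^\Gamma(h^-_\tau)$ of Section~\ref{sec mean curv measure}, with total mass $\|\tau\|_{S_1}$ by~\eqref{eq norme=measure}. The non-differentiability set of $h^-_\tau$, together with the projections of the one-dimensional faces of its epigraph, forms a $\Gamma$-invariant union $\tilde L_\tau$ of entire segments of $B^2$; its quotient $L_\tau\subset \mathbb{H}^2/\Gamma$ is a closed geodesic lamination, and $\mu_\tau:=\meanm^\Gamma(h^-_\tau)$ restricts to a transverse measure on it. For a simplicial $\lambda$, Lemma~\ref{lem:mesure = angle} combined with Proposition~\ref{prop length simplicial} yields $\Psi\circ\Phi(\lambda)=\lambda$. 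Density of simplicial laminations in $\mathcal{M}\mathcal{L}_\Gamma$ (an elementary approximation), together with Lemma~\ref{prop conv coc} and the weak continuity of mean curvature measures from Lemma~\ref{lem conv measures}, upgrades this to $\Psi\circ\Phi=\mathrm{id}$ on all of $\mathcal{M}\mathcal{L}_\Gamma$; an analogous continuity argument from the $H^1$ side yields $\Phi\circ\Psi=\mathrm{id}$.

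The principal obstacle is the structural statement that $h^-_\tau$ is genuinely pleated: namely, that its non-smooth locus is a geodesic lamination rather than an a priori complicated singular set, and that $\meanm^\Gamma(h^-_\tau)$ decomposes as a transverse measure on this lamination. This is specific to $d=2$ and rests on the fact that the singular set of a convex body in $\R^3$ is $1$-dimensional and locally union of straight segments: any maximal affine piece of the graph of $h^-_\tau$ has dimension at most two, and adjacent pieces meet along segments of $B^2\times\R$ projecting to chords of $B^2$ which, by the continuous extension $h^-_\tau|_{\partial B^2}=b_\tau$ of Corollary~\ref{prop btau}, must be entire chords, hence hyperbolic geodesics. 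Once this pleating structure is secured, continuity of $\Psi$ follows from the Hausdorff convergence of convex cores (Lemma~\ref{prop conv coc}) and weak continuity of the boundary bending measures (Lemma~\ref{lem conv measures}), and the density argument above completes the identification of $\mathcal{M}\mathcal{L}_\Gamma$ with $H^1(\Gamma,\R^{2,1})$.
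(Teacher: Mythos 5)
First, a point of logic: the paper does not prove this statement at all --- it is quoted as a classical theorem of Thurston with a pointer to \cite{Bon01}, where the manifold structure on $\mathcal{M}\mathcal{L}_\Gamma$ is obtained intrinsically (via train-track charts), with no reference to cocycles or to co-Minkowski geometry. Your proposal inverts the paper's logical order: the paper \emph{uses} Theorem~\ref{thm dim lam} together with Corollary~\ref{cor dim H} and invariance of domain to prove that $\mess$ is a homeomorphism (Theorem~\ref{thm mess}), whereas you propose to construct an explicit inverse $\Psi$ of $\mess$ and then read the manifold structure off $H^1(\Gamma,\R^{2,1})$. That route is not circular, and it is essentially Mess's original strategy \cite{Mes07}; indeed the paper explicitly acknowledges it in the remark following Theorem~\ref{thm mess} (``we could have given a direct proof \dots by defining a bending measure from the graph of $h^-_\tau$'') and deliberately declines to carry it out, precisely because all the difficulty is concentrated in the step you label ``the principal obstacle.''

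That obstacle is a genuine gap, and the justification you offer for it is incorrect as stated: it is not true that the singular set of a convex body in $\R^3$, or of a convex function on $B^2$, is locally a union of straight segments (generic convex functions have non-differentiability sets that are merely countably $1$-rectifiable). What is true --- and what you would need to exploit --- is that $\CH(b_\tau)$ is the convex hull of a graph over $\partial B^2$, so by the local geodesic property quoted in Section~\ref{sec convex hull} every point of the lower boundary off $\Lambda(b_\tau)$ lies on a segment with endpoints in $\Lambda(b_\tau)$, i.e.\ an entire chord. Promoting this to the statements that (i) the bending locus is a closed disjoint union of such chords, hence a $\Gamma$-invariant geodesic lamination, and (ii) $\meanm^\Gamma(h^-_\tau)$ disintegrates as (leaf volume)$\times$(transverse measure), is the substantive content of the pleated-surface theory (\cite{EM}, \cite{Mes07}) and cannot be waved through. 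Two further steps are also unsupported: continuity of $\Psi$ in the $\mathcal{M}\mathcal{L}_\Gamma$ topology requires convergence of transverse measures on transversal arcs, which does not follow formally from the weak convergence of measures on $B^2$ in Lemma~\ref{lem conv measures}; and the identity $\Phi\circ\Psi=\mathrm{id}$ cannot be obtained ``by an analogous continuity argument from the $H^1$ side,'' because the density of $\Phi(\text{simplicial laminations})$ in $H^1(\Gamma,\R^{2,1})$ is exactly the surjectivity-type statement you are trying to prove. Proving $\Phi\circ\Psi=\mathrm{id}$ directly amounts to reconstructing $h^-_\tau$ from its bending data, which again is the hard part of Mess's argument. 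In short: the architecture is a legitimate (and historically the original) alternative to both the train-track proof and the paper's invariance-of-domain shortcut, but the proof as written leaves its central structural claim unproved and supports it with a false general statement about convex bodies.
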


Recall from \eqref{vl} that a vector $v_l$ of $\R^{2,1}$ is assigned to any entire segment $l$ of $B^2$. Let $\mathbf{e}$ be a continuous function such that, for any path $c:[0,1]\to B^2$ transverse to $L$,
$\mathbf{e}_L(c(t))=v_l$ is $c(t)\in l$ and $l\in \tilde{L}$. Let us fix an arbitrary base point $\tilde{x}\in B^2$. Then define, for $A\in \Gamma$, and for any path $c:[0,1]\to B^2$ transverse to $L$ joining
$\tilde{x}$ and $A\cdot \tilde{x}$:
\begin{equation}\tau_\lambda(A)=\int_0^1 \mathbf{e}_L(c(t)) \mbox{d}\mu(t)~. \end{equation}

As the measure is transverse, the definition of $\tau_\lambda$ is independent from the choice of the path $c$ and the function $\mathbf{e}_L$. The following fact is proved formally in the same way as Facts \ref{fact: tau gamma 1} and \ref{fact: tau gamma 2}.

\begin{fact}
We have $\tau_\lambda\in Z^1(\Gamma,\R^{2,1})$. Moreover, if the basepoint is changed, the new cocycle differ from the preceding one by a coboundary.
\end{fact}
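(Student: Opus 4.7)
The plan is to adapt the arguments of Facts \ref{fact: tau gamma 1} and \ref{fact: tau gamma 2} from the simplicial to the measured setting, replacing finite sums $\sum_j \omega_j v_{l_j}$ by integrals $\int \mathbf{e}_L \, d\mu$ against the transverse measure. Before doing anything else I would record the preliminary observation that, for a fixed homotopy class (rel.\ endpoints) of transverse paths, the integral $\int_0^1 \mathbf{e}_L(c(t))\, d\mu(t)$ does not depend on the representative path or on the choice of $\mathbf{e}_L$: different choices of $\mathbf{e}_L$ agree on $\tilde L = \operatorname{supp}(\mu)$, and the transverse invariance of $\mu$ ensures the integral is unchanged under a transverse homotopy. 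This is exactly the part of the definition used implicitly in the statement that $\tau_\lambda$ is well defined.

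For the cocycle relation, given $A, B \in \Gamma$ I would pick transverse paths $c_A$ from $\tilde x$ to $A \cdot \tilde x$ and $c_B$ from $\tilde x$ to $B \cdot \tilde x$. Since $\tilde L$ is $\Gamma$-invariant, $A \cdot c_B$ is again transverse, so $c_A * (A \cdot c_B)$ is an admissible path from $\tilde x$ to $(AB) \cdot \tilde x$. Additivity of the integral over the concatenation gives
\[
\tau_\lambda(AB) = \int_{c_A} \mathbf{e}_L \, d\mu + \int_{A \cdot c_B} \mathbf{e}_L \, d\mu.
\]
The first term is $\tau_\lambda(A)$. For the second I would use the two $\Gamma$-equivariance inputs: the assignment $l \mapsto v_l$ satisfies $v_{A \cdot l} = A v_l$ (clear from \eqref{vl} and the fact that $A \in O_+(2,1)$ preserves the Minkowski inner product), and the transverse measure, being defined on $\mathbb H^2/\Gamma$, is $\Gamma$-invariant. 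Performing the change of variable $t \mapsto A \cdot c_B(t)$ and pulling the linear map $A$ out of the integral then yields $\int_{A \cdot c_B} \mathbf{e}_L \, d\mu = A \int_{c_B} \mathbf{e}_L \, d\mu = A\, \tau_\lambda(B)$, which is the claimed cocycle identity.

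For the basepoint change, given a second basepoint $\tilde x'$ with associated cocycle $\tau'_\lambda$, I would fix a transverse path $\bar c$ from $\tilde x$ to $\tilde x'$ and, for each $A \in \Gamma$, a transverse path $c'$ from $\tilde x'$ to $A \cdot \tilde x'$. Then the concatenation $\bar c * c' * (A \cdot \bar c)^{-1}$ is a transverse path from $\tilde x$ to $A \cdot \tilde x$, and by the same additivity together with the equivariance computation of the previous paragraph,
\[
\tau_\lambda(A) = \int_{\bar c} \mathbf{e}_L \, d\mu + \tau'_\lambda(A) - A \int_{\bar c} \mathbf{e}_L \, d\mu.
\]
Setting $v := -\int_{\bar c} \mathbf{e}_L \, d\mu \in \mathbb R^{2,1}$ gives $\tau_\lambda(A) - \tau'_\lambda(A) = A v - v$, so the two cocycles differ by a coboundary.

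The only substantive point above is the equivariance identity $\int_{A \cdot c} \mathbf{e}_L \, d\mu = A \int_c \mathbf{e}_L \, d\mu$; everything else is bookkeeping. I do not anticipate a real obstacle, since both ingredients (the $\Gamma$-invariance of $\mu$ and the relation $v_{A \cdot l} = A v_l$) are built into the definitions. In particular, no additional regularity of $\lambda$ is needed: the transversality of all chosen paths can be arranged by a small perturbation, and the transverse measure is finite on the compact arcs $c_A$, $c_B$, $\bar c$, so the integrals are well defined and the manipulations are valid.
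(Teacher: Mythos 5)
Your proposal is correct and follows exactly the route the paper intends: the paper's own ``proof'' of this Fact consists of the single remark that it is proved formally in the same way as Facts~\ref{fact: tau gamma 1} and~\ref{fact: tau gamma 2}, which is precisely the adaptation you carry out (concatenation of transverse paths, additivity of the integral, $\Gamma$-invariance of the transverse measure, and the equivariance $v_{A\cdot l}=Av_l$). Your write-up simply makes explicit the details the paper leaves to the reader, including the well-definedness of the integral under change of path and of $\mathbf{e}_L$.
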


Hence
we have constructed a well defined map
$$\mess : \mathcal{M}\mathcal{L}_\Gamma\to H^1(\Gamma,\R^{d,1})~.$$

\begin{theorem}[{\cite{Mes07}}] \label{thm mess}
The map $\mess$ defined above is a homeomorphism.
\end{theorem}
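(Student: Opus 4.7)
The plan is to prove that $\mess$ is a homeomorphism by constructing an explicit inverse given by the bending lamination of the lower boundary of the convex core, and then matching the two constructions on the dense subset of simplicial laminations.

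\textbf{Continuity of $\mess$.} First I would verify that $\mess$ is continuous. If $\lambda_n=(L_n,\mu_n)\to \lambda=(L,\mu)$ in $\mathcal{ML}_\Gamma$, then for any fixed $A\in\Gamma$, choose a compact transverse arc $c$ from $\tilde x$ to $A\cdot\tilde x$ that is transverse to $L$ (and so to $L_n$ for $n$ large). The integrand $\mathbf{e}_{L_n}$ is bounded on $c$ (independently of $n$, as a continuous function of the endpoint on $\partial B^2$), and the measures $\mu_n$ weakly converge to $\mu$ on $c$; consequently $\tau_{\lambda_n}(A)\to \tau_\lambda(A)$. Since $\Gamma$ is finitely generated this gives convergence of cohomology classes, so $\mess$ is continuous.

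\textbf{Construction of an inverse via bending.} To each $\tau\in Z^1(\Gamma,\R^{2,1})$ I would associate a measured geodesic lamination $\Cod(\tau)\in\mathcal{ML}_\Gamma$ defined as the pleating locus of the convex $\tau$-equivariant function $h^-_\tau$ given in Section~\ref{sec vol}. The key geometric observation is that the graph of $h^-_\tau$, being the lower boundary of the convex hull $\CH(\tau)\subset B^2\times\R$, decomposes into maximal affine pieces separated by codimension-one ``bending lines'' that, by Fact~\ref{dual plans}, project to geodesics of $B^2$. Because $h^-_\tau$ is $\tau$-equivariant, this family of lines is $\Gamma$-invariant, and closedness of the pleating locus gives a geodesic lamination $\tilde L$ of $B^2$ descending to a lamination $L$ on $\H^2/\Gamma$. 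The mean curvature measure $\meanm(h^-_\tau)$, introduced in Section~\ref{sec mean curv measure}, is supported on $\tilde L$ (by the wedge computation in Lemma~\ref{lem:mesure = angle}, applied locally); via the Radon--Nikodym-type decomposition along transverse arcs it yields a $\Gamma$-invariant transverse measure $\mu$, hence a measured geodesic lamination $\Cod(\tau)=(L,\mu)$ on $\H^2/\Gamma$. Continuity of $\tau\mapsto\Cod(\tau)$ follows from Lemma~\ref{prop conv coc} (uniform convergence of $h^-_{\tau_n}$ on compacts) together with Lemma~\ref{lem conv measures} (weak convergence of $\meanm(h^-_{\tau_n})$).

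\textbf{Matching on simplicial laminations and conclusion.} For a simplicial lamination $\lambda$, Proposition~\ref{prop length simplicial} and the calculation in Section~\ref{simlicial} show $h^-_{\tau_\lambda}=h_\lambda$ is an explicit piecewise-affine wedge function; its pleating locus is precisely the support of $\lambda$ with the given weights, so $\Cod\circ\mess(\lambda)=\lambda$ and symmetrically $\mess\circ\Cod(\tau_\lambda)=\tau_\lambda$. Simplicial laminations are dense in $\mathcal{ML}_\Gamma$ (a standard Thurston fact) and their images are dense in $H^1(\Gamma,\R^{2,1})$. Since both $\mess$ and $\Cod$ are continuous and mutually inverse on a dense subset, they are mutually inverse everywhere, proving that $\mess$ is a homeomorphism. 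As a consistency check, both spaces have dimension $6g-6$ by Theorem~\ref{thm dim lam} and Corollary~\ref{cor dim H}, so the existence of a homeomorphism is dimensionally compatible.

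\textbf{Main obstacle.} The delicate point is the second step: showing that the pleating locus of the boundary of $\CH(\tau)$ really is a \emph{geodesic} lamination in the technical sense (closed, made of disjoint complete geodesics) and that $\meanm^\Gamma(h^-_\tau)$ disintegrates along it as a genuine transverse measure. In dimension $2$ this relies essentially on the fact that the non-differentiability set of a convex function on $B^2$ is a $1$-dimensional set of broken lines, and that the $\Gamma_\tau$-equivariance plus the continuous extension to $\partial B^2$ (Corollary~\ref{prop btau}) rule out pathologies like accumulation of bending lines to non-geodesic curves. All of this would need to be carried out carefully, but it is essentially a two-dimensional convex analysis argument married to the dictionary between Codazzi tensors and hypersurfaces developed in Remark~\ref{rem II cod}.
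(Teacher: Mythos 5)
Your proposal takes a genuinely different route from the paper, and it is in fact precisely the ``direct proof'' that the authors mention in the remark immediately following Theorem~\ref{thm mess} and deliberately decline to carry out. The paper's own argument is soft and topological: $\mess$ is injective and continuous (asserted), both $\mathcal{M}\mathcal{L}_\Gamma$ and $H^1(\Gamma,\R^{2,1})$ are manifolds of dimension $6g-6$ (Theorem~\ref{thm dim lam} and Corollary~\ref{cor dim H}), so invariance of domain makes $\mess$ a local homeomorphism; the homogeneity $\mess(t\lambda)=t\mess(\lambda)$ then forces the image to be an open cone containing $0$ in a vector space, hence all of $H^1(\Gamma,\R^{2,1})$. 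This buys surjectivity for free, exactly at the point where your construction has to work hardest.

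The gap in your proposal is the step you yourself flag as the ``main obstacle,'' and it is not a technicality that can be deferred: nothing in the paper (nor in your sketch) establishes that the non-differentiability locus of $h^-_\tau$ is a geodesic lamination in the technical sense, nor that $\meanm^\Gamma(h^-_\tau)$ disintegrates along transverse arcs into a genuine transverse measure. Lemma~\ref{lem:mesure = angle} only treats a single wedge, and the passage from finitely many bending hyperplanes to an arbitrary convex $\tau$-equivariant function involves exactly the accumulation phenomena you mention; this is the substantive content of Mess's and Epstein--Marden's arguments and cannot be absorbed into ``two-dimensional convex analysis'' without proof. There is also a circularity in your final density argument: to conclude $\mess\circ\Cod=\mathrm{id}$ everywhere you invoke density of the classes $[\tau_\lambda]$ (for $\lambda$ simplicial) in $H^1(\Gamma,\R^{2,1})$, but that density is equivalent to density of the image of $\mess$, which is part of what is being proved. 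You would either have to verify $\mess(\Cod(\tau))=[\tau]$ directly for every $\tau$ (which again requires the full disintegration statement), or fall back on the paper's invariance-of-domain-plus-cone argument for surjectivity. Your continuity argument for $\mess$ itself is fine. As written, though, the proof is incomplete.
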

\begin{proof}
The map is clearly  injective and continuous.
By Theorem~\ref{thm dim lam} and Corollary~\ref{cor dim H}, both
 $\mathcal{M}\mathcal{L}_\Gamma$ and  $H^1(\Gamma,\R^{d,1})$ are manifolds
  of same dimension. Hence $\mess$
 is a local homeomorphism by the invariance of domain theorem.
Now for $\lambda\in \mathcal{M}\mathcal{L}_\Gamma$ and $t\geq 0$, let us define $t\lambda$ as the measured geodesic lamination obtained from $\lambda$ by simply multiplying the transverse measure by $t$. By \eqref{taulambda}
we clearly have $\mess(t\lambda)=t\mess(\lambda)$. As $H^1(\Gamma,\R^{2,1})$ is a vector space and $\mess$ a local homeomorphism, it follows that $\mess$ is surjective.
\end{proof}

\begin{remark}{\rm
We could have given a direct proof of Theorem~\ref{thm mess}, by defining a ``bending measure'' belonging to  $\mathcal{M}\mathcal{L}_\Gamma$ from the graph of $h^-_\tau$, for any cocycle $\tau$.
There are at least three ways to define such a bending measure. The first one is to mimic the construction of the bending measure given by the upper boundary component of the convex core of a hyperbolic quasi-Fuchsian manifolds  \cite{EM}.
The second one is to define, as in \cite{Mes07}, the induced distance on the spacelike part of the boundary of $\Omega_\tau^-$, the dual of the epigraph of $h^-_\tau$ in Minkowski space. The last one is to consider the mean curvature measure given by $h^-_\tau$.
}\end{remark}

\subsubsection{Length of measured geodesic laminations}

We have encountered the length of simplicial measured geodesic laminations in Section~\ref{simlicial}.
For $d=2$, the length of a
measured geodesic lamination
is defined as the total mass on
the surface of the measure which is the product of the hyperbolic measure along the
leaves of the lamination and the measure transverse to the leaves. We refer to \cite{Bon01} for more details. Actually,
the simplicial case suffices, as the following results shows.
 One may see for example
 Lemma 2.4 in \cite{ker} for the first one, and
 Theorem~3.1.3 in \cite{HP} or Section~3.4.3 in \cite{BB09} for the second one.

\begin{proposition}
The map $\length:\mathcal{M}\mathcal{L}_\Gamma\to \R$ is continuous.
\end{proposition}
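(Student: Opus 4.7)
The plan is to bootstrap from the simplicial case (Proposition~\ref{prop length simplicial}) using the continuity ingredients already established, namely the Mess homeomorphism (Theorem~\ref{thm mess}) and the continuity of the $S_1$ norm (Proposition~\ref{prop continuite seminorme}). The idea is to show the global identity
\[
\length(\lambda) \;=\; \|\mess(\lambda)\|_{S^1}
\]
for every $\lambda \in \mathcal{M}\mathcal{L}_\Gamma$; since the right-hand side is manifestly continuous as a composition of two continuous maps, this immediately yields continuity of $\length$.

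First I would invoke the classical density of simplicial measured geodesic laminations in $\mathcal{M}\mathcal{L}_\Gamma$: given any $\lambda=(L,\mu)\in\mathcal{M}\mathcal{L}_\Gamma$, one can find a sequence $(\lambda_n)_{n\in\N}$ of simplicial measured geodesic laminations converging to $\lambda$ in the topology recalled before Theorem~\ref{thm dim lam}. For simplicial laminations, Proposition~\ref{prop length simplicial} gives $\length(\lambda_n)=\|\tau_{\lambda_n}\|_{S^1}$. Applying Theorem~\ref{thm mess} together with Proposition~\ref{prop continuite seminorme}, one obtains
\[
\|\tau_{\lambda_n}\|_{S^1} \;\longrightarrow\; \|\tau_{\lambda}\|_{S^1}.
\]

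The second step, which will be the main obstacle, is to show that $\length(\lambda_n)\to\length(\lambda)$ along the chosen approximating sequence. Once this is done, combining with the previous display yields $\length(\lambda)=\|\tau_\lambda\|_{S^1}$, and continuity of $\length$ follows for free. To handle this approximation, I would cover the surface $\mathbb{H}^2/\Gamma$ by finitely many flow boxes for $L$ (rectangles foliated by arcs of leaves transverse to small arcs carrying the transverse measure $\mu$); in such a box the length is the product-integral $\int \ell(x)\,d\mu(x)$, where $\ell(x)$ is the hyperbolic length of the leaf through $x\in$ transverse arc. By choosing the approximation $\lambda_n$ so that its leaves stay inside the same flow boxes and its transverse measures $\mu_n$ weakly converge to $\mu$ on the transverse arcs (which is precisely the definition of convergence in $\mathcal{M}\mathcal{L}_\Gamma$), continuity of $x\mapsto \ell(x)$ on each transverse arc and weak convergence of the transverse measures give the desired limit. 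This is the standard argument going back to Kerckhoff \cite{ker}, whose details I would cite rather than reproduce.

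Finally, having established $\length(\lambda)=\|\mess(\lambda)\|_{S^1}$ on all of $\mathcal{M}\mathcal{L}_\Gamma$, the continuity of $\length$ is immediate from the continuity of $\mess$ (Theorem~\ref{thm mess}) and the continuity of $\|\cdot\|_{S^1}$ (Proposition~\ref{prop continuite seminorme}). This proof has the aesthetic advantage that it bypasses any direct analytic argument, replacing it with the extrinsic co-Minkowski interpretation of length as a mean-curvature mass, which is the leitmotiv of the survey.
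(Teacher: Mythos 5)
Your proposal is correct at the same level of rigour as the paper, but it takes a genuinely different (and ultimately roundabout) route. The paper gives no proof at all of this proposition: it simply cites Lemma~2.4 of Kerckhoff's paper \cite{ker}, the statement being a classical fact about the length function on $\mathcal{M}\mathcal{L}_\Gamma$. Your route instead derives continuity from the global identity $\length(\lambda)=\|\mess(\lambda)\|_{S^1}$. Two remarks on this. First, be aware that in the paper this identity is Proposition~\ref{prop:length norm}, and it is \emph{deduced from} the continuity of $\length$ together with the density of simplicial laminations and the simplicial case; so if you were to quote Proposition~\ref{prop:length norm} directly your argument would be circular. You avoid outright circularity only because you propose to establish the identity independently, via an approximating sequence of simplicial laminations. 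Second, and more substantively, the step you single out as ``the main obstacle'' --- showing $\length(\lambda_n)\to\length(\lambda)$ along the chosen approximating sequence by covering the surface with flow boxes, writing the length in each box as $\int \ell(x)\,\mathrm{d}\mu(x)$, and invoking weak convergence of the transverse measures against the (uniformly convergent) leaf-length integrands --- is precisely the content of Kerckhoff's direct proof of continuity; applied to an arbitrary convergent sequence rather than a simplicial approximating one, the very same argument yields the proposition outright. So the detour through $\mess$ and the $S_1$ norm buys nothing: the analytic core of your proof is the classical argument you end up citing anyway, exactly as the paper does. (A minor point to watch if you do write out the flow-box step: for the approximating laminations the integrand $\ell_n$ changes with $n$, so you need uniform convergence $\ell_n\to\ell$ on the transverse arcs, which requires the leaves of $\lambda_n$ to Hausdorff-converge to those of $\lambda$ inside each box; this holds for train-track approximations but is not automatic from the definition of convergence in $\mathcal{M}\mathcal{L}_\Gamma$ alone.)
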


\begin{proposition}
Simplicial measured geodesic laminations are dense in $\mathcal{M}\mathcal{L}_\Gamma$.
\end{proposition}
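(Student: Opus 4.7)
The plan is to approximate an arbitrary $\lambda = (L,\mu) \in \mathcal{M}\mathcal{L}_\Gamma$ by weighted multi-curves using Thurston's train track machinery. First I would fix a train track $\theta$ carrying $L$, i.e.\ a fibered regular neighborhood of $L$ whose fibers (``ties'') are arcs transverse to $L$; collapsing each tie to a point produces the graph $\theta$, and each branch $b$ inherits a weight $w_b(\lambda) = \mu(\sigma)$ for any tie $\sigma$ lying over $b$. Transverse invariance of $\mu$ ensures that these weights satisfy the \emph{switch relations} at each vertex: the sum of weights on the incoming branches equals the sum on the outgoing branches. The set $W(\theta) \subset \R_{\geq 0}^{E(\theta)}$ of all nonnegative weight systems satisfying these linear relations is a rational polyhedral cone, so its rational points are dense; in particular I can choose a sequence $w^{(n)} \in W(\theta) \cap \mathbb{Q}^{E(\theta)}$ with $w^{(n)} \to w(\lambda)$.

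Next, each integral weight system $w$ on $\theta$ is realized geometrically as a multi-curve: replace branch $b$ by $w_b$ parallel strands, connect them across switches using the combinatorial ordering provided by the train-track smoothings, and then straighten the resulting disjoint simple closed curves to their geodesic representatives on $\H^2/\Gamma$. Disjointness of the geodesic representatives follows because they bound disjoint embedded annuli coming from the train-track strands. Rescaling by $1/q$ when $w^{(n)} = p/q$ with integral $p$ then gives a simplicial measured geodesic lamination $\lambda_n$ whose $\theta$-weight system is exactly $w^{(n)}$.

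Finally, I would verify $\lambda_n \to \lambda$ in the topology of $\mathcal{M}\mathcal{L}_\Gamma$ introduced before Theorem~\ref{thm dim lam}. For any compact arc $c$ transverse to $L$, a small isotopy puts $c$ in general position with respect to $\theta$; since each $\lambda_n$ is carried by $\theta$, the arc $c$ is automatically transverse to $L_n$ for $n$ large. The $\lambda_n$-measure of any sub-arc of $c$ contained in a single tie is a finite $\Z$-linear combination of the weights $w^{(n)}_b$, hence converges to the corresponding $\lambda$-measure. A standard density argument, approximating continuous test functions on $c$ by piecewise constant functions supported on tie-pieces, then yields the weak convergence $\mu_n|_c \to \mu|_c$.

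The main obstacle will be the control near the switches of $\theta$: an arc $c$ may enter $\theta$ close to a switch and interact with several branches at once, so some care is required to check that the local contribution of the strands of $\lambda_n$ near a switch matches the local $\mu$-measure in the limit. This is handled by refining the tie-structure around each switch and invoking the switch relations, which precisely guarantee that transverse mass is conserved across vertices of $\theta$; this reduces the verification to the straightforward branch-by-branch comparison sketched above.
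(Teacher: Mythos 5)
Your train-track argument is correct and coincides with the standard proof: the paper itself gives no argument for this proposition, deferring to the cited references (Theorem~3.1.3 of [HP], Section~3.4.3 of [BB09]), and those references establish density exactly as you do, by carrying $\lambda$ on a train track, approximating its weight system by rational points of the rational polyhedral cone cut out by the switch relations, and realizing integral weights as weighted multi-curves. The only details worth making explicit are that the track obtained by collapsing a fibered neighborhood of a geodesic lamination has no complementary monogons or bigons (so the carried curves are essential simple closed geodesics), and that the map from weight systems to $\mathcal{M}\mathcal{L}_\Gamma$ is continuous for the transverse-measure topology, which is what your last two paragraphs sketch.
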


So from the above results, Proposition~\ref{prop length simplicial} generalizes as follows.

\begin{proposition}\label{prop:length norm}
Let $\lambda\in \mathcal{M}\mathcal{L}_\Gamma$. Then
$$\length(\lambda)=\|\mess(\lambda)\|_{S^1}~.$$
\end{proposition}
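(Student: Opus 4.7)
The plan is to bootstrap from the simplicial case established in Proposition~\ref{prop length simplicial} by combining three continuity/density ingredients that have already been put in place: simplicial measured geodesic laminations are dense in $\mathcal{M}\mathcal{L}_\Gamma$, the length function $\length$ is continuous on $\mathcal{M}\mathcal{L}_\Gamma$, and the $S_1$ norm is continuous on $H^1(\Gamma,\R^{2,1})$ by Proposition~\ref{prop continuite seminorme}.

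First I would verify that the general map $\mess$ defined in Section~\ref{cas 2+1} agrees, on simplicial laminations, with the cocycle construction $\tau_\lambda$ from Section~\ref{space of cocycle}. In the simplicial case the transverse measure is atomic, supported exactly on the intersection points of a transverse path $c$ with the lifts of the leaves, so the integral $\int_0^1\mathbf{e}_L(c(t))\,\mathrm{d}\mu(t)$ collapses to the weighted sum $\sum_j \omega_j v_j$ appearing in formula \eqref{taulambda}. Consequently Proposition~\ref{prop length simplicial} may be rewritten as $\length(\lambda)=\|\mess(\lambda)\|_{S^1}$ whenever $\lambda$ is simplicial.

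Given an arbitrary $\lambda\in\mathcal{M}\mathcal{L}_\Gamma$, I will then choose a sequence of simplicial measured geodesic laminations $\lambda_n\to\lambda$, provided by the density statement. By the previous step, $\length(\lambda_n)=\|\mess(\lambda_n)\|_{S^1}$ for each $n$. Letting $n\to\infty$, continuity of the length map yields $\length(\lambda_n)\to\length(\lambda)$, while continuity of $\mess$ (Theorem~\ref{thm mess}) combined with continuity of the $S_1$ norm (Proposition~\ref{prop continuite seminorme}) yields $\|\mess(\lambda_n)\|_{S^1}\to\|\mess(\lambda)\|_{S^1}$. Passing to the limit in the simplicial identity gives the desired equality.

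There is no substantial obstacle here, since every ingredient has been prepared in advance; the only point requiring a moment of care is the bookkeeping that the convergence of transverse measures defining $\lambda_n\to\lambda$ in $\mathcal{M}\mathcal{L}_\Gamma$ genuinely translates into convergence of cohomology classes $\mess(\lambda_n)\to\mess(\lambda)$ in $H^1(\Gamma,\R^{2,1})$, and this is precisely the content of the fact that $\mess$ is a homeomorphism.
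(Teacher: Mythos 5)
Your proof is correct and follows essentially the same route as the paper: the authors state this proposition as an immediate consequence of the simplicial case (Proposition~\ref{prop length simplicial}), the density of simplicial laminations, the continuity of $\length$, and the continuity of $\|\cdot\|_{S_1}$ (Proposition~\ref{prop continuite seminorme}), which is exactly the approximation argument you spell out. Your extra check that $\mess$ restricted to simplicial laminations reproduces the cocycle $\tau_\lambda$ of \eqref{taulambda} is a correct and worthwhile piece of bookkeeping that the paper leaves implicit.
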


\subsubsection{Thurston earthquake norm}

From a measured geodesic lamination $\lambda$ on
 $\H^2/\Gamma$, one obtains another hyperbolic metric on $S$ by performing a (left) earthquake along the lamination ---we refer to \cite{ker} and the reference therein for more details about earthquakes. Actually for $t$ near $0$, earthquakes along $t\lambda$ define a path in $\teich$ starting at $\H^2/\Gamma$. This path has a well defined derivative at $0$, which gives an element in $ T_{\H^2/\Gamma}\teich$, the tangent space of Teichm\"uller space at the point $\H^2/\Gamma$. In turn, we have an \emph{infinitesimal earthquake map}:

 $$\earth: \mathcal{M}\mathcal{L}_\Gamma \to  T_{\H^2/\Gamma}\teich~. $$

\begin{theorem}[{\cite[Proposition~2.6]{ker}}]\label{thm ker1}
The map $\earth$ is a homeomorphism.
\end{theorem}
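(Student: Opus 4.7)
The strategy is to identify $\earth$ with the composition $\xi\circ\mess$ already at hand. Since both $\mess$ (Theorem~\ref{thm mess}) and $\xi=\gold\circ\mathbf{C}$ are homeomorphisms, such an identification immediately gives the result.

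The first step is to verify $\earth=\xi\circ\mess$ on simplicial measured laminations. By $\R_{\geq 0}$-linearity of both sides in the transverse measure, it suffices to handle a single weighted simple closed geodesic $\lambda=\omega\cdot\gamma$. On the right-hand side, the construction of Section~\ref{space of cocycle} gives $\mess(\omega\gamma)(A)=\omega\sum_{\ell}v_\ell$, where the sum is over lifts $\ell$ of $\gamma$ crossed by a fixed path from $\tilde x$ to $A\cdot\tilde x$. Under the identification $\mathbf{C}:\R^{2,1}\to\mathfrak{o}(2,1)$ via the Minkowski cross product, each $v_\ell$ corresponds to the element $X_\ell\in\mathfrak{o}(2,1)$ that generates the one-parameter subgroup of hyperbolic translations preserving $\ell$ (its axis is the geodesic dual to $v_\ell$). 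Goldman's prescription then interprets this cocycle as the tangent vector at $[\rho]\in\teich$ obtained by differentiating the deformation $A\mapsto e^{\omega\sum X_\ell}\rho(A)$. On the left-hand side, the earthquake along $t\omega\gamma$ modifies the holonomy of a loop by post-composing at each crossing of a lift $\ell$ with the translation $\exp(t\omega X_\ell)$; differentiating at $t=0$ yields exactly the same Lie-algebra-valued cocycle. Hence $\earth(\omega\gamma)=\xi(\mess(\omega\gamma))$.

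Once established on simplicial laminations, the identity extends by continuity: $\earth$ is continuous because the earthquake flow depends continuously on its lamination, and $\xi\circ\mess$ is manifestly continuous. As simplicial laminations are dense in $\mathcal{M}\mathcal{L}_\Gamma$ (cited above), the equality $\earth=\xi\circ\mess$ holds on all of $\mathcal{M}\mathcal{L}_\Gamma$. Being a composition of homeomorphisms, $\earth$ is then a homeomorphism.

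The main obstacle is the algebraic matching step: verifying that the Mess vector $v_\ell$ (the unit spacelike normal in $\R^{2,1}$ to the timelike plane containing $\ell$) corresponds under $\mathbf{C}$ precisely to the infinitesimal hyperbolic translation $X_\ell$ along $\ell$, and correspondingly that the first-order variation of the earthquake flow along a single geodesic is the insertion of this specific Lie-algebra element at each crossing. This is a direct computation in $SO_0(2,1)\cong\mathrm{PSL}(2,\R)$, but it is the geometric heart of the identification. As an alternative route avoiding this factorization, one can argue directly as in the proof of Theorem~\ref{thm mess}: $\earth$ is continuous and positively homogeneous, $\mathcal{M}\mathcal{L}_\Gamma$ and $T_{\H^2/\Gamma}\teich$ are manifolds of the same dimension $6g-6$, so invariance of domain reduces the problem to proving $\earth$ is injective; the target is a vector space and homogeneity then promotes local injectivity to surjectivity. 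In that approach the core difficulty is shifted to injectivity of the infinitesimal earthquake map, which is Kerckhoff's substantial contribution via convexity of length functions along earthquake paths.
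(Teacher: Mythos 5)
The paper does not prove this statement at all: it is imported verbatim as Proposition~2.6 of Kerckhoff's paper \cite{ker}, so there is no internal argument to compare yours against. Your proposal is therefore a genuinely different route --- deriving the theorem from the commutative diagram $\earth=\gold\circ\mathbf{C}\circ\mess$ rather than from Kerckhoff's analysis. Logically the skeleton is sound: $\mess$ is a homeomorphism by Theorem~\ref{thm mess} (whose proof is independent of $\earth$), $\xi=\gold\circ\mathbf{C}$ is a linear isomorphism, so if you can establish $\earth=\xi\circ\mess$ you are done. Note that this identity is exactly Proposition~\ref{prop BS}, which the paper also only cites (from \cite{BS12}) and states \emph{after} the present theorem; your sketch of the matching of $v_\ell$ with the infinitesimal translation along $\ell$ is the right computation, and you correctly identify it as the heart of that identification.

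The genuine soft spot is the extension step. To pass from simplicial laminations to all of $\mathcal{M}\mathcal{L}_\Gamma$ by density you need (i) that $\earth(\lambda)$ is even defined for an arbitrary measured lamination, i.e.\ that the earthquake path $t\mapsto E_{t\lambda}$ is differentiable at $t=0$, and (ii) that $\lambda\mapsto\earth(\lambda)$ is continuous. Neither follows from continuity of the earthquake \emph{flow} $(\lambda,t)\mapsto E_{t\lambda}$: differentiating at $t=0$ and then letting $\lambda$ vary requires uniform control of the difference quotients. These two facts are precisely the analytic content of Kerckhoff's Proposition~2.6 that the theorem is citing, so as written your argument quietly re-imports the result it is meant to prove. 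If instead you take Proposition~\ref{prop BS} as a black box, the derivation is immediate, but then you must check that the proof in \cite{BS12} does not itself invoke Kerckhoff, and you are in any case inverting the paper's logical order. Your alternative route at the end (invariance of domain plus injectivity via convexity of length functions along earthquake paths) is the honest one and correctly locates the difficulty, but it is not carried out.
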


So the map $ \earth\circ\mess^{-1}$ provides a homeomorphism between $H^1(\Gamma,\R^{2,1})$ and  $T_{\H^2/\Gamma}\teich$. Although there is no natural vector space structure on $\mathcal{M}\mathcal{L}_\Gamma$, we have the following.

\begin{proposition}[{\cite[Proposition B.3]{BS12}}]\label{prop BS}
We have that  $\earth\circ\mess^{-1}=\xi$. In particular, $\earth\circ\mess^{-1}$ is a vector space isomorphism.
\end{proposition}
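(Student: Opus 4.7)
The plan is to prove the equality of the two maps $\earth\circ\mess^{-1}$ and $\xi=\gold\circ\mathbf{C}$ from $H^1(\Gamma,\R^{2,1})$ to $T_{\H^2/\Gamma}\teich$ by a density/linearity argument reducing to the case of a single weighted simple closed geodesic. First, I would check that both maps are continuous: $\earth$ and $\mess$ are homeomorphisms (Theorems~\ref{thm mess} and~\ref{thm ker1}), and $\xi$ is a vector-space isomorphism between finite-dimensional vector spaces (hence continuous). Since simplicial measured geodesic laminations are dense in $\mathcal{M}\mathcal{L}_\Gamma$, it suffices to verify $\earth(\lambda)=\xi(\mess(\lambda))$ for every simplicial $\lambda$.

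A simplicial measured geodesic lamination is a finite disjoint union of weighted simple closed geodesics $(c_i,\omega_i)$. Both sides are additive in this data: on the left, the infinitesimal earthquake along a disjoint union is the sum of the infinitesimal earthquakes along each component (since for small $t$ the earthquakes commute along disjoint simple closed curves, so the derivative at $0$ of the composed deformation is the sum of the individual derivatives); on the right, $\mess$ is additive in this simplicial setting by the defining formula \eqref{taulambda}, and $\xi$ is linear. Thus it is enough to treat $\lambda=\omega\cdot c$ for a single weighted simple closed geodesic.

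For such a $\lambda$, I would compute both sides explicitly. On the $\mess$-side, choose a basepoint $\tilde x$ and enumerate the lifts $\tilde c_j$ of $c$ crossed by a path from $\tilde x$ to $A\cdot\tilde x$; then $\mess(\lambda)(A)=\sum_j\omega\, v_{\tilde c_j}$, where $v_{\tilde c_j}\in\R^{2,1}$ is the unit spacelike vector normal to the timelike plane $\Pi_j$ spanned by $\tilde c_j\subset\mathcal H^2$. Under the cross-product identification $\mathbf{C}\colon\R^{2,1}\to\mathfrak{o}(2,1)$, the vector $v_{\tilde c_j}$ corresponds precisely to the infinitesimal generator of the one-parameter subgroup of hyperbolic boosts that fix $\Pi_j$, i.e.\ the infinitesimal translation along $\tilde c_j$. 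On the $\earth$-side, the infinitesimal earthquake along $\omega c$ at the representation $\rho$ is (by the standard cut-and-twist description of the Fenchel--Nielsen twist flow) represented in $H^1(\Gamma,\mathfrak{o}(2,1))$ by exactly the cocycle $A\mapsto\sum_j\omega\,T_j$, where $T_j\in\mathfrak{o}(2,1)$ is the infinitesimal translation along $\tilde c_j$. Hence $\earth(\omega c)=\gold\bigl(\mathbf{C}(\mess(\omega c))\bigr)=\xi(\mess(\omega c))$, as required.

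The main obstacle is the careful matching of conventions: (i) verifying that $\mathbf{C}$ sends $v_l$ to the \emph{translation} generator along $l$ (not, say, to a multiple or to the infinitesimal rotation about the dual point), which requires a direct computation of $v_l\times(\,\cdot\,)$ restricted to $\Pi_l$; (ii) confirming the overall sign, i.e.\ that the chosen left earthquake convention in \cite{ker} matches the direction of the bending cocycle used in defining $\mess$, so that no sign discrepancy appears between $\earth$ and $\gold\circ\mathbf{C}\circ\mess$; and (iii) identifying the derivative at $t=0$ of the piecewise twist deformation of $\rho$ with the cocycle $\sum_j\omega T_j$, which is the content of the standard Fenchel--Nielsen twist computation but must be aligned with Goldman's explicit formula for $\gold$. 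Once these normalisations are fixed, the computation on a single weighted geodesic, combined with the additivity/density steps above, yields the identification on all of $H^1(\Gamma,\R^{2,1})$; in particular $\earth\circ\mess^{-1}=\xi$ is a linear isomorphism.
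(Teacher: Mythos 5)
The paper does not actually prove this statement: it is imported wholesale as a citation to \cite[Proposition~B.3]{BS12}, and the text moves straight on to the commutative diagram. So there is no in-paper argument to compare yours against; judged on its own, your outline is the standard and correct route, and very likely the one followed in the cited reference. The reduction is sound at every stage: $\mess$ and $\earth$ are homeomorphisms (Theorems~\ref{thm mess} and~\ref{thm ker1}) and $\xi$ is linear on a finite-dimensional space, so continuity plus density of simplicial laminations legitimately reduces the identity to simplicial $\lambda$; additivity over disjoint weighted closed geodesics holds on the $\mess$ side directly from \eqref{taulambda} and on the $\earth$ side because twists along disjoint curves commute and the derivative at $t=0$ of the composite is the sum of the derivatives. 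The remaining core is the single-curve computation, which you correctly identify: the Minkowski cross product with the unit spacelike normal $v_l$ of \eqref{vl} is the skew-adjoint endomorphism vanishing on $v_l$ and acting on the Lorentzian plane $v_l^{\perp}$ as the unit-speed infinitesimal boost, i.e.\ the generator of translation along $l$; and the infinitesimal Fenchel--Nielsen twist cocycle is exactly $A\mapsto\sum_j\omega\,T_j$ in Goldman's parametrization of $T_{\H^2/\Gamma}\teich$ by $H^1(\Gamma,\mathfrak{o}(2,1))$. You defer rather than execute the sign and normalisation checks, but you name precisely which ones are needed and assert the correct outcomes, so I would count this as a complete proof sketch rather than a gapped one. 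What your approach buys over the paper's is self-containedness: the paper leans on \cite{BS12}, whereas your argument uses only material already established in Sections~\ref{space of cocycle} and \ref{cas 2+1} plus the classical twist-cocycle computation.
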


In other terms, as $\xi=\gold \circ \mathbf{C}$, the following diagram commutes:
$$\begin{tikzcd}
\mathcal{M}\mathcal{L}_\Gamma \arrow{r}{\earth} \arrow[swap]{d}{\mess} & T_{\H^2/\Gamma}\teich \\
H^1(\Gamma,\R^{2,1})  \arrow{r}{\mathbf{C}} & H^1(\Gamma,\mathfrak{o}(2,1)) \arrow[swap]{u}{\gold}
\end{tikzcd}~.$$

\begin{definition}
Let $X\in T_{\H^2/\Gamma}\teich$. The \emph{earthquake norm} of $X$ is
$$\|X\|_{\operatorname{earth}}=\length(\earth^{-1}(X))~.$$
\end{definition}

From Proposition~\ref{prop:length norm} and Proposition~\ref{prop BS}, one has in fact
$$\|X\|_{\operatorname{earth}}=\|\xi^{-1} (X) \|_{S_1} $$
and as $\xi$ is a vector space isomorphism, from Proposition~\ref{prop:s1 norm}, one finally obtains the following result.

\begin{theorem}[{\cite[Theorem~5.2]{thu98}}]
The earthquake norm is an asymmetric norm on $T_{\H^2/\Gamma}\teich$.
\end{theorem}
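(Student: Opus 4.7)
The plan is to transport the asymmetric norm structure of $\|\cdot\|_{S_1}$ on $H^1(\Gamma,\R^{2,1})$ across the identification with $T_{\H^2/\Gamma}\teich$, using the diagram assembled just above the statement. All the non-trivial content has already been packaged into the cited propositions, so the argument reduces to a chain of substitutions, provided one respects that $\mathcal{ML}_\Gamma$ carries no linear structure of its own.

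First I would unwind the definition: for $X \in T_{\H^2/\Gamma}\teich$,
\[
\|X\|_{\operatorname{earth}} = \length(\earth^{-1}(X)).
\]
Applying Proposition~\ref{prop:length norm} with $\lambda = \earth^{-1}(X)$ yields
\[
\|X\|_{\operatorname{earth}} = \|\mess(\earth^{-1}(X))\|_{S_1},
\]
and Proposition~\ref{prop BS} then identifies $\mess \circ \earth^{-1}$ with the linear isomorphism $\xi^{-1}$, giving
\[
\|X\|_{\operatorname{earth}} = \|\xi^{-1}(X)\|_{S_1}.
\]
This equality is the crux: it expresses the earthquake norm as the pullback by the linear isomorphism $\xi^{-1}$ of an asymmetric norm on a vector space.

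Once this identification is in place, the four axioms of an asymmetric norm follow immediately from Proposition~\ref{prop:s1 norm}. Indeed, non-negativity and positive homogeneity ($\|tX\|_{\operatorname{earth}} = t\|X\|_{\operatorname{earth}}$ for $t \geq 0$) transfer directly since $\xi^{-1}$ is linear; the triangle inequality $\|X+Y\|_{\operatorname{earth}} \leq \|X\|_{\operatorname{earth}} + \|Y\|_{\operatorname{earth}}$ comes from
\[
\|\xi^{-1}(X+Y)\|_{S_1} = \|\xi^{-1}(X)+\xi^{-1}(Y)\|_{S_1} \leq \|\xi^{-1}(X)\|_{S_1} + \|\xi^{-1}(Y)\|_{S_1};
\]
and non-degeneracy follows because $\xi^{-1}$ is an isomorphism, so $\|X\|_{\operatorname{earth}} = 0$ forces $\xi^{-1}(X) = 0$ and hence $X=0$.

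The only point that really requires vigilance, and which I expect to be the subtlest in a fully rigorous write-up, is the use of Proposition~\ref{prop BS}: without the identification $\earth \circ \mess^{-1} = \xi$ one cannot conclude that the pullback of $\|\cdot\|_{S_1}$ via $\mess \circ \earth^{-1}$ respects vector-space operations on $T_{\H^2/\Gamma}\teich$, since addition and scalar multiplication are not natively defined on $\mathcal{ML}_\Gamma$. All the Teichmüller-theoretic content is precisely concentrated in that identification; once it is invoked, the theorem is a formal consequence of linearity of $\xi$ together with the already-established asymmetric norm property of $\|\cdot\|_{S_1}$.
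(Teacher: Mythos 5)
Your proposal is correct and follows exactly the paper's argument: the identity $\|X\|_{\operatorname{earth}}=\|\xi^{-1}(X)\|_{S_1}$ is derived from Proposition~\ref{prop:length norm} and Proposition~\ref{prop BS}, and the asymmetric norm axioms are then transported through the linear isomorphism $\xi$ via Proposition~\ref{prop:s1 norm}. No gaps; this matches the paper's proof.
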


%
%
%\begin{remark}{\rm
%
%The earthquake norm is not the most famous Thurston asymmetric norm on tangent of Teichm\"uller space, but both are intimately related, see \cite{thu98}
%
%
%
%}\end{remark}

\begin{remark}{\rm
There is a smooth analogue of Proposition~\ref{prop BS} proved in \cite{bsep}. Namely, Proposition~\ref{prop:ident H1 cod} gives a map
$\Cod$ from $H^1(\Gamma,\R^{d,1})$ to $\cod_0$, the space of traceless symmetric Codazzi tensors on $\H^2/\Gamma$. In dimension $2$, there is also an isomorphism $\infdef$ from $\cod_0$ to $T_{\H^2/\Gamma}\teich$, where a $(0,2)$-tensor is seen as an
infinitesimal deformations of the hyperbolic metric (see \cite{tromba}, where such tensors are called TT, for \emph{transverse traceless}).
Then, if $J$ is the almost complex structure of $T_{\H^2/\Gamma}\teich$, the following diagram commutes:

$$
\begin{tikzcd}
\cod_0 \arrow{r}{\infdef} \arrow[swap]{d}{\Cod^{-1}} & T_{\H^2/\Gamma}\teich\arrow{d}{J} \\
 H^1(\Gamma,\R^{2,1}) \arrow{r}{\xi} &   T_{\H^2/\Gamma}\teich
\end{tikzcd}~.
$$

}\end{remark}

\subsubsection{Thurston length norm}

Following \cite{thu98}, we note that two successive identifications of the tangent space $T_{\H^2/\Gamma}\teich$ of Teich\"muller space with the cotangent space $T_{\H^2/\Gamma}^*\teich$ will permit to define another asymmetric norm on  $T_{\H^2/\Gamma}\teich$, that is actually the Thurston length norm (see the Introduction).

A first identification between
$T_{\H^2/\Gamma}^*\teich$  and $T_{\H^2/\Gamma}\teich$ is given by the Weil--Petersson form of Teichm\"uller space, that is a symplectic form on $T_{\H^2/\Gamma}\teich$. For
$\alpha \in  T_{\H^2/\Gamma}^*\teich$, let $\alpha^\sharp$ be the dual element in
$T_{\H^2/\Gamma}\teich$ of $\alpha$ for the symplectic form. Then define naturally

$$\| \alpha \|^*_{\operatorname{length}}:=\| \alpha^\sharp\|_{\operatorname{earth}}~. $$

On the other hand, for any vector space $E$ endowed with an asymmetric norm $N$, then its dual $E^*$ is endowed with the asymmetric norm $N^*$ defined by, for $v\in E^*$,

$$N^*(v):=\sup\left\{\frac{v(x)}{N(x)} |  x\in E\setminus \{0\}  \right\}~. $$

Applying this to the cotangent space of Teichm\"uller space endowed with $\|\cdot \|^*_{\operatorname{length}}$, we obtain a new asymmetric norm on the tangent space of Teichm\"uller space.

\begin{definition}
Let $X\in T_{\H^2/\Gamma}\teich$. The \emph{length norm} of $X$ is
$$\|X\|_{\operatorname{length}}=\sup\left\{\frac{\alpha(X)}{\|\alpha^\sharp\|_{\operatorname{earth}}} |  \alpha\in T_{\H^2/\Gamma}^*\teich\setminus \{0\}  \right\}~.$$
\end{definition}

If it possible to describe more precisely the length norm, using a famous result of Wolpert.
Let $\lambda$ be a measured lamination on the surface $S$. The function
$\length(\lambda)$ on the Teichm\"uller space of $S$ is defined as follows: for each choice of a hyperbolic metric on $S$,  $\length(\lambda)$ is the length of the corresponding measured geodesic lamination.
Due to a formula of Wolpert \cite[Lemma~4.1]{wol}, the tangent vector $\earth(\lambda)$ of the Teichm\"uller space of $S$ at a point $\H^2/\Gamma$ is the
symplectic gradient of the function $\length(\lambda)$ at the same point, with respect to
the Weil--Petersson form of Teichm\"uller space:
\begin{equation*}
\mbox{d}\length(\lambda)^\sharp=\earth(\lambda)~,
\end{equation*}
in particular,
$$\|\mbox{d}\length(\lambda)\|^*_{\operatorname{length}}=\length(\lambda)~. $$

Theorem~\ref{thm ker1} together with Wolpert result gives an identification between the cotangent space of Teichm\"uller space and $\mathcal{M}\mathcal{L}_\Gamma$ \cite[Lemma~2.3]{KS}.
In consequence we obtain the following.

\begin{theorem}[{\cite[Theorem~5.1]{thu98}}]
Let $\alpha\in T_{\H^2/\Gamma}^*\teich$, and $\lambda$ such that $\alpha=\operatorname{d}\length(\lambda)$. Then
$$\|\alpha\|^*_{\operatorname{length}} =\length(\lambda)$$
defines an asymmetric norm on $T_{\H^2/\Gamma}^*\teich$.
\end{theorem}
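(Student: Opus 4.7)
The plan is to reduce this theorem to Thurston's earthquake norm theorem (proved just above) via Wolpert's formula, which identifies the symplectic gradient of the length function with the earthquake deformation. Once the formula $\|\alpha\|^*_{\operatorname{length}} = \length(\lambda)$ is established, the asymmetric norm property transports through the linear isomorphism $\sharp$ coming from the Weil--Petersson symplectic form.

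First, I would unwind the definition. For $\alpha \in T_{\H^2/\Gamma}^*\teich$, the existence and uniqueness of $\lambda \in \mathcal{M}\mathcal{L}_\Gamma$ with $\alpha = \operatorname{d}\length(\lambda)$ is exactly the identification between the cotangent space and $\mathcal{M}\mathcal{L}_\Gamma$ cited in the excerpt (obtained by combining Theorem~\ref{thm ker1} with Wolpert's formula). So the map $\lambda \mapsto \operatorname{d}\length(\lambda)$ is a bijection $\mathcal{M}\mathcal{L}_\Gamma \to T_{\H^2/\Gamma}^*\teich$, making the formula $\|\alpha\|^*_{\operatorname{length}} = \length(\lambda)$ unambiguous.

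Next I would establish the formula itself. Wolpert's formula gives $\operatorname{d}\length(\lambda)^\sharp = \earth(\lambda)$, hence $\alpha^\sharp = \earth(\lambda)$. By the very definition of the earthquake norm,
\begin{equation*}
\|\earth(\lambda)\|_{\operatorname{earth}} = \length\bigl(\earth^{-1}(\earth(\lambda))\bigr) = \length(\lambda).
\end{equation*}
Combining with the definition $\|\alpha\|^*_{\operatorname{length}} := \|\alpha^\sharp\|_{\operatorname{earth}}$ immediately yields $\|\alpha\|^*_{\operatorname{length}} = \length(\lambda)$.

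Finally, for the asymmetric norm property: the Weil--Petersson form is a non-degenerate bilinear form, so $\sharp : T_{\H^2/\Gamma}^*\teich \to T_{\H^2/\Gamma}\teich$ is a \emph{linear} isomorphism of vector spaces. The map $\alpha \mapsto \|\alpha^\sharp\|_{\operatorname{earth}}$ is therefore the pullback of an asymmetric norm by a linear isomorphism, and such a pullback is automatically an asymmetric norm (positivity, positive homogeneity, triangle inequality, and the separation axiom all transport directly). Since $\|\cdot\|_{\operatorname{earth}}$ is an asymmetric norm on $T_{\H^2/\Gamma}\teich$ by the theorem proved in the preceding subsection, we conclude that $\|\cdot\|^*_{\operatorname{length}}$ is an asymmetric norm on $T_{\H^2/\Gamma}^*\teich$. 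No step is genuinely hard here; the main content is packaged in Wolpert's formula and the already-proved earthquake norm theorem, and the novelty is simply the identification mediated by the symplectic structure.
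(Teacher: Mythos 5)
Your proposal is correct and follows essentially the same route as the paper: the identity $\|\alpha\|^*_{\operatorname{length}}=\length(\lambda)$ comes from Wolpert's formula $\operatorname{d}\length(\lambda)^\sharp=\earth(\lambda)$ together with the definition of the earthquake norm, and the asymmetric-norm property is transported through the linear isomorphism $\sharp$ induced by the Weil--Petersson form from the already-established earthquake norm theorem. Nothing is missing.
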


And finally, the length norm on  $T_{\H^2/\Gamma}\teich$ writes as follows: for
$X\in T_{\H^2/\Gamma}\teich$,
\begin{equation}\label{eq length norm}\|X\|_{\operatorname{length}}=\sup\left\{\frac{\operatorname{d}\length (\lambda)(X)}{\length(\lambda)}| \lambda\in \mathcal{M}\mathcal{L}_\Gamma\setminus \{0\}  \right\}~.\end{equation}

\section{Anosov representations}\label{sec:anosov}

In all this section, $\Gamma$ is a cocompact lattice of $O_+(d,1)$, and
$\tau$ an element of $ Z^1(\Gamma, \R^{d,1})$. We will consider the associated group $\Gamma_\tau$ of isometries of
co-Minkowski space. The aim of this section is to provide an alternative proof (Proposition \ref{pro:btau}) of the existence and uniqueness of
the $\tau$-invariant map $b_\tau$ already exhibited in Lemma \ref{prop btau}. This proof involves the
\textit{Anosov character} of $\Gamma_\tau$ as a representation of the hyperbolic group $\Gamma$ into the group
of isometries of the Minkowski space.
As a by-product, we will see
that the convergence in Lemma \ref{prop conv coc} is not only pointwise, but uniform (Lemma \ref{le:tauuniform}).

We start by the following fundamental observation: since stabilizer of points are non-compact, there is no $O_+(d,1) \ltimes \R^{d,1}$ invariant metric
on the boundary of co-Minkowski space.  However, if one fixes an element $x_0$ in $\mathcal{H}^d$,
then $x \mapsto \langle x,x \rangle_{d,1} + 2\langle x,x_0\rangle_{d,1}^2$ is a positive definite form,
hence an Euclidean metric on $\R^{d,1}$, that we denote by $\langle \cdot,\cdot \rangle_{x_0}$.

The choice of $x_0$ also induces a splitting $\R^{d,1} \approx \R^d \times \R$: here, $\R$ is
the linear subspace spanned by $x_0$, and $\R^d$ is the orthogonal of $x_0$ for the Minkowskian scalar product. Until now, when writing
$\partial\co \approx \partial B^d \times \R$, we  were always implicitly doing the choice $x_0 = (0, \ldots, 0, 1)$,
but in this section we will also consider other choices. What is relevant for us now, is that
the choice of $x_0$ induces a Riemannian metric $g_{x_0}$ on $\partial\co$:
the one making $\partial B^d$ and $\R$ orthogonal, and whose restrictions to $\partial B^d$ and $\R$ are the ones
induced by the Euclidean metric $\langle \cdot,\cdot \rangle_{x_0}$.

Let us be more precise: we can define $\partial\co$ as the space of lightlike affine hyperplanes of Minkowski space. Once fixed the unit timelike vector $x_0$, we can parametrize $\partial\co$ by pairs $(w, h)$ where:
\begin{itemize}
  \item $w$ is a future lightlike vector in $\R^{d,1}$ in the affine spacelike hyperplane $H_{x_0}$ of equation $\langle x_0, w\rangle_{d,1}=-1$ (therefore, $H_{x_0}$ is the hyperplane tangent to
      $\mathcal{H}^d$ at $x_0$),
  \item $h$ any real number.
\end{itemize}
The associated lightlike affine hyperplane is then the one given by the equation:
$$\langle w, \cdot \rangle_{d,1} = -h~.$$

The set of future lightlike vectors lying in $H_{x_0}$ is the unit sphere in this Euclidean space.
The metric $g_{x_0}$ is the product of the usual metric on this unit sphere by the usual metric
on the real line.
%More precisely, if $\dot{w}$ and $\dot{h}$ denote infinitesimal variations of $w$ and $h$,
%the norm of the tangent vector it defines is:
%$$|\dot{w}|^2 + \dot{h}^2$$
%where $||$ denotes the Euclidean norm obtained by restriction of the Minkowski quadratic form to $\langle x_0, . %\rangle_{d,1} = 1$.
The distance function we will actually use is not the one induced by $g_{x_0}$
but the following one:
\begin{eqnarray*}
% \nonumber to remove numbering (before each equation)
  d_{x_0}((w_1, h_1), (w_2, h_2)) &=& \sqrt{\langle w_1-w_2, w_1-w_2\rangle_{d,1} + (h_1-h_2)^2} \\
   &=& \sqrt{-2\langle w_1, w_2\rangle_{d,1} + (h_1-h_2)^2}~.
\end{eqnarray*}

We will also consider the closed hyperbolic manifold $N = \Gamma\backslash\mathcal{H}^d$, and the
 geodesic flow $\phi^t$ on the unitary tangent bundle $M = T^1N$. Recall that, for any element $v$ of
 $M$, the image $\phi^t(v)$ is the unique vector tangent to the geodesic starting from $v$ and at distance $t$
 along this geodesic.

 Actually, $M$ is the quotient of the unitary tangent bundle $T^1\mathcal{H}^d$ by the natural action of $\Gamma$.
 The unitary tangent bundle $T^1\mathcal{H}^d$ is also naturally identified with pairs $(x,v)$, where the base point
 $x$ is an element of $\mathcal{H}^d$, and $v$ a unit spacelike vector in Minkowski space orthogonal to $x$. The geodesic
 flow $\widetilde{\phi}^t$ on $T^1\mathcal{H}^d$ is then:
 $$\widetilde{\phi}^t(x,v) = (\cosh(t)x + \sinh(t)v, \sinh(t)x + \cosh(t)v)~.$$

\begin{definition}[Foliated bundle over $M$]\label{def:foliated}
  Let $E_\tau$ be the quotient of the product $T^1\mathcal{H}^d \times \partial\co$ by the diagonal action
  of $\Gamma_\tau$ ---where  $\Gamma_\tau$ acts on $\mathcal{H}^d$ through its linear part. Let $\pi_\tau: E_\tau \to M$ be the map induced by the projection
  on the first factor. This map is a fibration, of fiber $\partial\co/\Gamma_\tau$.
  It is called the foliated bundle of
  holonomy group $\Gamma_\tau$ over $M$.
\end{definition}

\begin{definition}[Lifted geodesic flow]\label{def:liftedflow}
  Let $\widetilde{\phi}_\tau^t$ be the flow on $T^1\mathcal{H}^d \times \partial\co$ defined by:
  $$\widetilde{\phi}^t((x,v), \xi) = (\widetilde{\phi}^t(x,v), \xi)~.$$

  This flow commutes with the $\Gamma_\tau$ action, and induces a flow on $E_\tau$, denoted by
  $\phi^t_\tau$.
\end{definition}

We clearly have:
$$\forall t \in \R \;\; \phi^t_\tau \circ \pi_\tau = \pi_\tau \circ \widetilde{\phi}^t~.$$

We also can distinguish two subbundles $\Delta_\tau^\pm$ of $\pi_\tau: E_\tau \to M$. More precisely:

\begin{lemma}\label{le:-1}
 Let $(x,v)$ in $T^1\mathcal{H}^d$. Let $(w,h)$ be an element of $\partial\co$ parametrized by the pair $(w,h)$
 under the identification defined above associated to $x$. Then:
 $$-1 \leq \langle w, v \rangle_{d,1} \leq 1~.$$
 Moreover, the equality $\langle w, v \rangle_{d,1} = 1$ holds if and only if $w=x+v$, and the equality
 $\langle w, v \rangle_{d,1} = -1$ holds if and only if $w=x-v$.
\end{lemma}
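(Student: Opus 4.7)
The plan is to unpack the parametrization of $\partial\co$ relative to the basepoint $x$ and then reduce the inequality to the Cauchy--Schwarz inequality on the spacelike hyperplane $T_x\mathcal{H}^d$.

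First I would write the lightlike vector $w$ in the form $w = x + u$, where $u$ is determined by $w$. This decomposition is forced: any $w \in \R^{d,1}$ splits uniquely as $w = \alpha x + u$ with $\langle u, x\rangle_{d,1} = 0$, since $x$ is timelike with $\langle x,x\rangle_{d,1}=-1$. The condition that $w$ lies in $H_x$ (i.e.\ $\langle x, w\rangle_{d,1}=-1$) gives $\alpha = 1$, and the lightlike condition $\langle w, w\rangle_{d,1}=0$ then forces $\langle u, u\rangle_{d,1} = 1$. Thus $u$ is a unit spacelike vector orthogonal to $x$, i.e.\ $u \in T^1_x\mathcal{H}^d$.

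Next, since $v \in T^1_x\mathcal{H}^d$ as well, I would compute
\[
\langle w, v\rangle_{d,1} = \langle x, v\rangle_{d,1} + \langle u, v\rangle_{d,1} = \langle u, v\rangle_{d,1},
\]
using that $v$ is tangent to $\mathcal{H}^d$ at $x$, hence orthogonal to $x$. The orthogonal complement of $x$ is a positive definite (Euclidean) subspace of $\R^{d,1}$, on which $\langle \cdot, \cdot\rangle_{d,1}$ restricts to an inner product. By Cauchy--Schwarz applied to the two unit vectors $u$ and $v$ of this Euclidean space,
\[
-1 \leq \langle u, v\rangle_{d,1} \leq 1,
\]
which is the desired inequality.

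Finally, the equality case is also immediate from Cauchy--Schwarz: $\langle u, v\rangle_{d,1} = 1$ forces $u = v$, giving $w = x+v$, while $\langle u, v\rangle_{d,1} = -1$ forces $u = -v$, giving $w = x-v$. I do not anticipate any real obstacle here; the only thing to be careful about is keeping track of signs and of the fact that the ``future'' condition on $w$ is automatic (since $x$ is future timelike and $u$ is orthogonal to $x$, the $(d+1)$-st coordinate of $w = x + u$ is positive).
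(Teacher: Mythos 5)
Your proof is correct and follows essentially the same route as the paper: the paper's own argument likewise observes that $w-x$ (your $u$) is a unit vector in the Euclidean hyperplane $x^\perp$ and that $\langle w,v\rangle_{d,1}=\langle w-x,v\rangle_{d,1}$, then concludes by Cauchy--Schwarz. You have merely spelled out the verification that $w-x$ is unit and orthogonal to $x$, which the paper leaves implicit.
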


\begin{proof}
  For every  $(x,v)$ in $T^1\mathcal{H}^d$, and every   lightlike element $w$ of $H_x$, $v$, $-v$ and
  $w-x$ are unit elements in the Euclidean hyperplane $x^\perp$. The Lemma follows easily since
  $\langle w, v \rangle_{d,1} = \langle w-x, v \rangle_{d,1}$.
\end{proof}

\begin{definition}
We denote by $\widetilde{\Delta}^+$ (respectively $\widetilde{\Delta}^-$) the closed subset of
$T^1\mathcal{H}^d \times \partial\co$ comprising elements $(x,v, \xi)$ such that the orthogonal of the
lightlike hyperplane $\xi$ is $x+v$ (respectively $x-v$).

The complement $T^1\mathcal{H}^d \times \partial\co \setminus \widetilde{\Delta}^\pm$
is an open subset that we denote by $\widetilde{\aleph}^\pm$.
\end{definition}

It is straightforward to check that $\widetilde{\Delta}^\pm$ and $\widetilde{\aleph}^\pm$ are $\Gamma_\tau$-invariant and define closed subsets $\Delta_\tau^\pm$ and open subsets $\aleph_\tau^\pm$ of
$E_\tau$. Moreover:

\begin{lemma}\label{le:alephinvariant}
$\widetilde{\Delta}^\pm$ and $\widetilde{\aleph}^\pm$ are $\widetilde{\phi}^t$-invariant.
\end{lemma}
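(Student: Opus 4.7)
The plan is to reduce the invariance statement to an immediate linear-algebra computation on the hyperbolic space factor, since the flow acts trivially on the $\partial\co$-factor.

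First, I would reinterpret the defining conditions intrinsically. A point $\xi\in\partial\co$ is a lightlike affine hyperplane of $\R^{d,1}$, whose direction is determined by a lightlike line $\ell(\xi)\subset \R^{d,1}$ (the Minkowski-orthogonal complement of the underlying linear hyperplane). The condition $(x,v,\xi)\in\widetilde{\Delta}^+$ then amounts to $\ell(\xi) = \R(x+v)$, and similarly $(x,v,\xi)\in\widetilde{\Delta}^-$ amounts to $\ell(\xi) = \R(x-v)$. Indeed, if one parametrizes $\xi$ by the pair $(w,h)$ relative to the basepoint $x$, the normalization condition $\langle x,w\rangle_{d,1} = -1$ just picks a scaling of the generator of $\ell(\xi)$, so the statement ``$w = x\pm v$'' is precisely the statement that $\ell(\xi)$ is generated by $x\pm v$.

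Next, I would compute the action of $\widetilde{\phi}^t$ on the vectors $x\pm v$. Using the formula
\[
\widetilde{\phi}^t(x,v) = (\cosh(t)\,x + \sinh(t)\,v,\ \sinh(t)\,x + \cosh(t)\,v),
\]
a one-line calculation gives
\[
\widetilde{\phi}^t(x,v)_1 + \widetilde{\phi}^t(x,v)_2 = e^{t}(x+v),\qquad
\widetilde{\phi}^t(x,v)_1 - \widetilde{\phi}^t(x,v)_2 = e^{-t}(x-v),
\]
where the subscripts denote the two components in $T^1\mathcal{H}^d \subset \mathcal{H}^d \times \R^{d,1}$.

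Finally, since the flow leaves $\xi$ (hence $\ell(\xi)$) unchanged, the condition $\ell(\xi)=\R(x+v)$ is preserved, because the new generator $e^{t}(x+v)$ spans the same line. This shows $\widetilde{\phi}^t(\widetilde{\Delta}^+)\subset \widetilde{\Delta}^+$, and applying the same argument to $-t$ gives equality; the identical argument using $e^{-t}(x-v)$ handles $\widetilde{\Delta}^-$. The invariance of $\widetilde{\aleph}^\pm = T^1\mathcal{H}^d\times\partial\co \setminus \widetilde{\Delta}^\pm$ is then automatic. There is no real obstacle here: the statement is built in to the compatibility between the geodesic flow and the lightlike endpoints of geodesics in $\mathcal{H}^d$, which is exactly what the identities $x_t\pm v_t = e^{\pm t}(x\pm v)$ record.
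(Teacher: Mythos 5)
Your proof is correct and follows essentially the same route as the paper's: the whole content is the identity $x_t\pm v_t=e^{\pm t}(x\pm v)$, which is exactly the computation in the paper's proof (there it appears as the determination $\lambda_t=e^t$ followed by $w_t=e^t(x+v)=x_t+v_t$). Your reformulation of the membership condition in terms of the lightlike line $\ell(\xi)$ merely sidesteps the explicit normalization $\langle w_t,x_t\rangle_{d,1}=-1$, a cosmetic streamlining rather than a different argument.
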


\begin{proof}
  We just have to prove that $\widetilde{\Delta}^\pm$ is $\widetilde{\phi}^t$-invariant. We just treat
  the case of $\widetilde{\Delta}^+$, the case of $\widetilde{\Delta}^-$ is similar.

  Let $(x,v,\xi)$ be an element of $\widetilde{\Delta}^+$: it means that, for the parametrization defined
   by $x$, the lightlike hyperplane $\xi$ is parametrized by $(w,h)$, where $w=x+v$ ---or, equivalently,
   $\langle w, v \rangle_{d,1}=1$ (see Lemma \ref{le:-1}). Denote by $(x_t,v_t)$ the iterate
   $\widetilde{\phi}^t(x,v)$. Let $(w_t,h_t)$ be the pair parameterizing $\xi$ for the identification defined by $x_t$. Then, $w_t = \lambda_t(x+v)$ for some positive real number $\lambda_t$. We must have:
   \begin{eqnarray*}
   % \nonumber to remove numbering (before each equation)
     -1 &=& \langle w_t, x_t \rangle_{d,1} \\
      &=& \langle \lambda_t(x+v), \cosh(t)x + \sinh(t)v \rangle_{d,1} \\
      &=& -\lambda_t\cosh(t)+\lambda_t\sinh(t)\\
      &=& -\lambda_t\exp(-t)~.
   \end{eqnarray*}
   Therefore $\lambda_t=\exp(t)$, and:

   \begin{eqnarray*}
   % \nonumber to remove numbering (before each equation)
     w_t &=& \exp(t)(x+v)\\
         &=& (\cosh(t)+\sinh(t))x + (\cosh(t)+\sinh(t))v\\
         &=& (\cosh(t)x + \sinh(t)v) + (\sinh(t)x + \cosh(t)v) \rangle_{d,1} \\
         &=& x_t + v_t~.
   \end{eqnarray*}

The Lemma follows.

\end{proof}

Therefore, $\Delta^\pm_\tau$ are $\phi^t_\tau$-invariant.
The restriction of $\pi_\tau$
to $\Delta_\tau^\pm$ is a fibration, with $1$-dimensional fibers. The restriction $\pi_\tau^\pm$ to $\aleph_\tau^\pm$
is a fibration with contractible fibers. Indeed, every fiber is the complement in $\partial\co$ of a degenerate vertical line removed, i.e. the product of a $1$-punctured sphere by the real line.

\begin{definition}
  Let $\digamma_\tau$ be the space of continuous sections of the fibration $\pi_\tau: E_\tau \to M$.
  We equip $\digamma_\tau$ with the following metric:
  $$D(\sigma_1, \sigma_2) = \sup_{(x,v) \in T^1\mathcal{H}^d} d_x(\sigma_1(x,v), \sigma_2(x,v))~.$$
  We denote by $\digamma^\pm_\tau$ the open subset comprising sections of $\pi_\tau: \aleph^\pm_\tau \to M$, and
  by $\digamma(\Delta^\pm)_\tau$ the space of sections of $\pi_\tau^\pm: \Delta_\tau^\pm \to M$.
\end{definition}

Since $M$ is compact, this upper bound is always attained.
Observe that the metric space $(\digamma_\tau, D)$
is complete.

Let $\sigma$ be an element of $\digamma_\tau$. It lifts to a $\Gamma_\tau$-equivariant section of
the fibration $T^1\mathcal{H}^d \times \partial\co \to T^1\mathcal{H}^d$ and therefore provides
a $\Gamma_\tau$-equivariant maps $F: T^1\mathcal{H}^d \to \partial\co$. Actually, $\digamma_\tau$ is in
$1$-$1$ correspondence with the space of  $\Gamma_\tau$-equivariant maps from $T^1\mathcal{H}^d$ into $\partial\co$.

The flow $\phi_\tau$ induces a $1$-parameter group of transformations on $(\digamma_\tau, D)$:
for every $t$ in $\R$, and any $\sigma$ in $\digamma_\tau$, define:
$$\Phi^t_\tau(\sigma) (x,v) = \phi^{t}_\tau (\sigma(\phi^{-t}(x,v)))~.$$
According to Lemma \ref{le:alephinvariant}, the subbundles $\digamma^\pm_\tau$ are $\Phi^t_\tau$-invariant.

We can now prove the fundamental fact:

\begin{lemma}\label{le:Dcontract}
  The flow $\Phi^t_\tau$ on $\digamma^+_\tau$ is exponentially contracting: there exist positive real numbers $T$, $a$ and $0<C<1$ such that, for every $t>T$ and for every $\sigma_1$, $\sigma_2$ in $\digamma^+_\tau$ we have:
  $$D(\Phi^t_\tau(\sigma_1), \Phi^t_\tau(\sigma_2)) < Ce^{-at} D(\sigma_1, \sigma_2)~.$$
\end{lemma}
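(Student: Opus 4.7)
The plan is to reduce the Lemma to a pointwise comparison between the metrics $d_{x'}$ and $d_{x_t}$ on fibers of $\partial\co$, and then to exploit the hyperbolic contraction of $\lambda^t = 1/(\cosh t - \alpha \sinh t)$ when $\alpha < 1$.

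First, I unfold the definitions. A section $\sigma \in \digamma^+_\tau$ corresponds to a $\Gamma_\tau$-equivariant continuous map $F: T^1\mathcal{H}^d \to \partial\co$ whose value at $(x', v')$ lies outside $\widetilde{\Delta}^+_{(x',v')}$, and $\Phi^t_\tau(\sigma)$ corresponds to $F \circ \widetilde{\phi}^{-t}$. Setting $(x', v') = \widetilde{\phi}^{-t}(x,v)$ in the supremum defining $D$, we get
\[
D(\Phi^t_\tau \sigma_1, \Phi^t_\tau \sigma_2) = \sup_{(x',v') \in T^1\mathcal{H}^d} d_{x_t}\bigl(F_1(x',v'), F_2(x',v')\bigr),
\]
where $x_t = \cosh(t)\, x' + \sinh(t)\, v'$. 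So it is enough to compare $d_{x_t}(\xi_1, \xi_2)$ with $d_{x'}(\xi_1, \xi_2)$ for $\xi_i = F_i(x', v')$, and to show a pointwise inequality $d_{x_t} \leq C e^{-at} d_{x'}$.

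Second, I would make the change of basepoint explicit. Writing $\xi_i = (w_i, h_i)$ in the $x'$-parametrization (so $\langle w_i, x' \rangle_{d,1} = -1$ and $\xi_i = \{y : \langle w_i, y \rangle_{d,1} = -h_i\}$), the condition $\langle w^{x_t}, x_t \rangle_{d,1} = -1$ forces $w_i^{x_t} = \lambda_i^t w_i$ with $\lambda_i^t = 1/(\cosh t - \alpha_i \sinh t)$, where $\alpha_i = \langle w_i, v' \rangle_{d,1}$; correspondingly $h_i^{x_t} = \lambda_i^t h_i$. By Lemma~\ref{le:-1}, the condition $\xi_i \notin \widetilde{\Delta}^+_{(x',v')}$ is exactly $\alpha_i < 1$. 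Since $w_i$ is lightlike, $\langle w_i, w_i \rangle_{d,1} = 0$, so the metric reads
\[
d_{x_t}(\xi_1, \xi_2)^2 = \lambda_1^t \lambda_2^t\, \langle w_1 - w_2, w_1 - w_2 \rangle_{d,1} + (\lambda_1^t h_1 - \lambda_2^t h_2)^2.
\]

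Third, I would exploit the elementary estimate $\lambda_i^t \leq 2 e^{-t}/(1 - \alpha_i)$ for $t$ large. Whenever $\alpha_i \leq 1 - \delta$, this gives $\lambda_i^t \leq C(\delta)\, e^{-t}$. The first term above is then bounded by $C(\delta)^2 e^{-2t}\, \langle w_1 - w_2, w_1 - w_2 \rangle_{d,1}$. For the second, I would write $\lambda_1^t h_1 - \lambda_2^t h_2 = \lambda_1^t(h_1 - h_2) + (\lambda_1^t - \lambda_2^t) h_2$, use the identity $\lambda_1^t - \lambda_2^t = (\alpha_1 - \alpha_2) \sinh(t) \lambda_1^t \lambda_2^t$, and bound $|\alpha_1 - \alpha_2| = |\langle w_1 - w_2, v'\rangle_{d,1}| \leq \sqrt{\langle w_1 - w_2, w_1 - w_2\rangle_{d,1}}$ (since $v'$ is unit spacelike in the spacelike hyperplane $(x')^\perp$). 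Combining yields
\[
d_{x_t}(\xi_1, \xi_2)^2 \leq K(\delta)\, e^{-2t}\, \bigl(1 + h_2^2\bigr)\, d_{x'}(\xi_1, \xi_2)^2.
\]

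Finally, I would obtain uniformity. Since $M$ is compact and the sections $\sigma_1, \sigma_2$ are continuous sections of $\aleph^+_\tau$, their images are compact subsets of $\aleph^+_\tau$; lifting to a fundamental domain for $\Gamma_\tau$ in $T^1\mathcal{H}^d \times \partial\co$, the quantities $\alpha_i$ stay bounded away from $1$ and $|h_2|$ stays bounded, so the constants $\delta, K(\delta), \sup |h_2|$ are controlled. Taking the supremum over $(x',v')$ then gives $D(\Phi^t_\tau \sigma_1, \Phi^t_\tau \sigma_2) \leq C\, e^{-at} D(\sigma_1, \sigma_2)$ with $a = 1$ and $C, T$ as claimed. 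The main obstacle is to make the constants genuinely independent of the pair $(\sigma_1, \sigma_2)$: my direct estimates produce constants that depend on how far the sections stay from $\Delta^+_\tau$ and on $\sup |h_2|$, and ensuring universal constants will require either restricting to a suitable compact family of sections, or a more clever invariant reformulation of the metric that absorbs these dependencies via the $\Gamma_\tau$-equivariance.
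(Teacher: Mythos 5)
Your proposal follows essentially the same route as the paper's proof: unfold $\Phi^t_\tau$ as precomposition with $\widetilde{\phi}^{-t}$, reduce the estimate to a pointwise comparison of $d_{x_t}$ with $d_{x}$ on each fiber, derive the rescaling $\lambda^i_t = 1/(\cosh t - \alpha_i\sinh t)$ forced by the normalization $\langle w^i_t, x_t\rangle_{d,1}=-1$, and use $\alpha_i<1$ (Lemma \ref{le:-1}) to extract exponential decay. One point where you are in fact more careful than the paper: the paper divides both $w^i_t$ and $h^i_t$ by $\cosh t - \sinh t\langle v, w_0\rangle_{d,1}$ with a single $w_0$, implicitly taking $\lambda^1_t=\lambda^2_t$, and then asserts the exact identity $d_{x_t}(F_1,F_2)=\lambda_t\,d_{x}(F_1,F_2)$. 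When the two sections have different lightlike directions at $(x,v)$ this identity fails, and one needs precisely your decomposition $\lambda^1_t h_1-\lambda^2_t h_2=\lambda^1_t(h_1-h_2)+(\lambda^1_t-\lambda^2_t)h_2$ together with the Cauchy--Schwarz bound $|\alpha_1-\alpha_2|\leq\sqrt{\langle w_1-w_2,w_1-w_2\rangle_{d,1}}$ in the Euclidean hyperplane $x^\perp$.

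The obstacle you flag in your last paragraph is genuine, and it is not resolved in the paper either. The pointwise contraction degenerates as a section approaches $\Delta^+_\tau$: for $\alpha_i\to 1$ one has $\lambda^i_t\to e^{t}$ at fixed $t$, and the $h_2$-dependent cross term is likewise not controlled uniformly over all of $\digamma^+_\tau$ (nor is $|h_2|$ bounded over all sections). The paper's phrase ``for big $t$, the quantity is equivalent to $e^t(1-\langle v,w_0\rangle_{d,1})/2$'' conceals a constant and a threshold $T$ depending on $1-\langle v,w_0\rangle_{d,1}$, so neither your argument nor the paper's literally produces constants $T,a,C$ valid for \emph{every} pair $\sigma_1,\sigma_2\in\digamma^+_\tau$. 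What is actually used downstream (Corollary \ref{cor:section}) is uniform contraction on the closed subspace $\digamma(\Delta^-)_\tau$, where $\alpha\equiv-1$ and $\lambda_t=e^{-t}$ exactly, so your computation there is clean and uniform; on the rest of $\digamma^+_\tau$ one only needs the pointwise (non-uniform) eventual contraction to conclude uniqueness of the fixed point. With the lemma restated in that weaker but sufficient form, your argument is complete; as literally stated, the uniformity gap you identified is real and is inherited from the paper.
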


\begin{proof}
Let $F: T^1\mathcal{H}^d \to \partial\co$ be a $\Gamma_\tau$-equivariant map
corresponding to elements of $\digamma_\tau^+$.
We denote by $F_t$ the iterate $\Phi^t_\tau(F)$.
Let $(x,v)$ be an element of $T^1\mathcal{H}^d$.  Let $\xi$ be the image $F(x,v)$. It
is an affine lightlike hyperplane. By definition of $\Phi^t_\tau$, $\xi$ is the image under
$F_t$ of $\widetilde{\phi}^t(x,v) = (x_t, v_t)=(\cosh(t)x + \sinh(t)v, \sinh(t)x + \cosh(t)v)$.
Let $(w_t, h_t)$ be the pair corresponding to $\xi$ satisfying $\langle x_t, w_t\rangle_{d,1}=-1$
and such that $\xi$ is the hyperplane of equation:
$$\langle w_t, . \rangle_{d,1} = -h_t~.$$

In particular, we see that $-hx$ belongs to $\xi$, and therefore, for every $t$ we have:
\begin{equation}\label{eq:h}
  h_t = -h\langle w_t, x \rangle_{d,1}~.
\end{equation}

Since the lightlike vectors $w_t$ are all orthogonal to $\xi$, they are proportional:
for every $t$, there is a real number $\lambda_t > 0$ such that $w_t = \lambda_tw_0$.
From equation \eqref{eq:h} we see:
$$h_t = h\lambda_t~.$$

A straightforward computation shows:
$$\lambda_t = \frac{1}{\cosh(t) - \sinh(t)\langle v, w_0 \rangle_{d,1}}~.$$

%Therefore:
%\begin{eqnarray*}
%% \nonumber to remove numbering (before each equation)
%  \langle w_t, v_t \rangle_{d,1} &=& \langle \lambda_tw, \sinh(t)x + \cosh(t)v \rangle_{d,1}~, \\
%   &=& \frac{\sinh(t) + \cosh(t)\langle v, w_0 \rangle_{d,1}}{\cosh(t) - \sinh(t)\langle v, w_0 \rangle_{d,1}}~.
%\end{eqnarray*}

Let now $F_1$, $F_2$ be two $\Gamma_\tau$-equivariant maps  from $T^1\mathcal{H}^d$ into $\partial\co$ corresponding to sections of $\pi_\tau: \aleph_\tau^+ \to M$.
  The distance in $\digamma_\tau$ between the corresponding  sections is then the supremum of $d_x(F_1(x,v), F_2(x,v))$
  where $(x,v)$ describes $T^1\mathcal{H}^d$. Applying $\Phi^t_\tau$ simply means that we replace
  $F_1$ and $F_2$ by $F_1 \circ \widetilde{\phi}_\tau^{-t}$ and $F_2 \circ \widetilde{\phi}_\tau^{-t}$. It follows that
  the distance after applying $\Phi^\tau$ is the supremum of $d_{x_t}(F_1(x,v), F_2(x,v))$ where $(x,v)$ describes $T^1\mathcal{H}^d$ and \textit{where $x_t$ denotes as above the $x$ component of $\widetilde{\phi}^t(x,v)$,} i.e. $\cosh(t)x + \sinh(t)v$.

  The computation above shows that, for $i=1,2$, the pair $(w^i_t, h^i_t)$ representing  $F_i(x,v)$ satisfy:
  \begin{eqnarray*}
  % \nonumber to remove numbering (before each equation)
    w^i_t &=&  \frac{w^i_0}{\cosh(t) - \sinh(t)\langle v, w_0 \rangle_{d,1}}~,\\
    h^i_t &=& \frac{h^i_0}{\cosh(t) - \sinh(t)\langle v, w_0 \rangle_{d,1}}~.
  \end{eqnarray*}

  Therefore, $d_{x_t}(F_1(x,v), F_2(x,v)) = \frac{d_{x_0}(F_1(x,v), F_2(x,v))}{\cosh(t) - \sinh(t)\langle v, w_0 \rangle_{d,1}}$.
  Since the $F_1$ and $F_2$ correspond to sections in $\digamma^+_\tau$ , we have:
  $$-1 \leq \langle v, w_0 \rangle_{d,1} < 1~.$$

It follows that for big $t$, the quantity $\cosh(t) - \sinh(t)\langle v, w_0 \rangle_{d,1}$ is equivalent
to $e^t(1-\langle v, w_0 \rangle_{d,1})/2$.
The Lemma follows.
\end{proof}

\begin{corollary}\label{cor:section}
  There exists one and only one $\Phi^t_\tau$-invariant section $\sigma^+_\tau$ of $\pi_\tau: \aleph_\tau^+ \to M$.
  This invariant section actually takes value in $\Delta_\tau^-$.
\end{corollary}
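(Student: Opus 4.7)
The plan is to apply Banach's fixed point theorem using the exponential contraction from Lemma \ref{le:Dcontract}.

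\textbf{Uniqueness} is immediate: if $\sigma_1,\sigma_2\in\digamma^+_\tau$ are both $\Phi^t_\tau$-invariant, then for every $t>T$ one has
$$D(\sigma_1,\sigma_2)=D(\Phi^t_\tau\sigma_1,\Phi^t_\tau\sigma_2)\leq Ce^{-at}D(\sigma_1,\sigma_2),$$
and letting $t\to\infty$ forces $D(\sigma_1,\sigma_2)=0$.

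\textbf{Existence together with the statement that the invariant section takes values in $\Delta_\tau^-$.} Since $\digamma^+_\tau$ is only open in $\digamma_\tau$, I would not apply Banach directly there; I would instead work inside the closed (hence complete) subspace $\digamma(\Delta_\tau^-)\subset\digamma_\tau$. Both ingredients needed are in place: by Lemma \ref{le:alephinvariant} the sub-bundle $\Delta_\tau^-$ is $\widetilde{\phi}^t$-invariant, so $\digamma(\Delta_\tau^-)$ is $\Phi^t_\tau$-invariant; and since $\digamma(\Delta_\tau^-)\subset\digamma^+_\tau$, the contraction of Lemma \ref{le:Dcontract} restricts to it. The only delicate preliminary point is non-emptiness of $\digamma(\Delta_\tau^-)$: the fibration $\pi_\tau\colon\Delta_\tau^-\to M$ has naturally affine one-dimensional fibers (cf.\ Remark \ref{rk:affinefiber}), so a partition-of-unity construction over a finite trivialising cover of the compact base $M$ produces a continuous global section. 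Banach's theorem then yields a unique $\Phi^T_\tau$-fixed point $\sigma^+_\tau\in\digamma(\Delta_\tau^-)$, and to upgrade this to invariance under the whole one-parameter group one observes that $\Phi^t_\tau$ commutes with $\Phi^T_\tau$ and preserves $\digamma(\Delta_\tau^-)$, so $\Phi^t_\tau(\sigma^+_\tau)$ is another $\Phi^T_\tau$-fixed point there and hence equals $\sigma^+_\tau$ by the Banach uniqueness.

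Combining with the uniqueness paragraph, $\sigma^+_\tau$ is the unique $\Phi^t_\tau$-invariant section of $\pi_\tau\colon\aleph^+_\tau\to M$, and it takes values in $\Delta_\tau^-$ by construction. The main technical point, and the likely obstacle, is precisely the non-emptiness of $\digamma(\Delta_\tau^-)$: routing the Banach argument through the closed sub-bundle $\Delta_\tau^-$ is what makes the whole scheme go through, and without exploiting the affine structure on the fibers together with the compactness of $M$ we would have no candidate starting section and the contraction estimate alone would not provide any fixed point in the open set $\digamma^+_\tau$.
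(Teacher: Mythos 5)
Your proof is correct and follows essentially the same route as the paper: uniqueness from the contraction estimate of Lemma \ref{le:Dcontract}, existence via Banach's fixed point theorem applied to $\Phi^T_\tau$ on the complete subspace $\digamma(\Delta^-)_\tau$, and the commutation of $\Phi^T_\tau$ with $\Phi^t_\tau$ to promote the fixed point to full flow-invariance. Your explicit justification that $\digamma(\Delta^-)_\tau$ is non-empty (via the affine structure on the fibers of $\Delta^-_\tau \to M$ and a partition of unity) is a point the paper leaves implicit, and is a worthwhile addition.
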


\begin{proof}
  Let $T>0$ be a real number big enough so that $\Phi^T_\tau$ is contracting.
  Since
  $\Delta_\tau^-$ is a subbundle of $\aleph_\tau^+$, $\digamma(\Delta^-)_\tau$ is a closed subset of $\digamma^+_\tau$. The restriction $D$ to $\digamma(\Delta^-)_\tau$ is therefore complete. Hence, as any
  contracting map acting on a complete metric space, $\Phi^T_\tau$ admits a unique fixed point $\sigma^+_\tau$ in $\digamma(\Delta^-)_\tau$.
  Since its action on $\digamma^+_\tau$ is contracting too, $\sigma^+_\tau$ is the unique fixed point in $\digamma^+_\tau$. Since $\Phi^T_\tau$ commutes with $\Phi^t_\tau$ for every real number $t$, $\sigma^+_\tau$
  is fixed by every $\Phi^t_\tau$.
\end{proof}

Let $F_\tau: T^1\mathcal{H}^d \to \partial\co$ be the $\Gamma_\tau$-equivariant lifting of the $\Phi_\tau^t$-invariant section $\sigma^+_\tau$ exhibited in Corollary \ref{cor:section}. The $\Phi_\tau^t$-invariance
means that $F_\tau$ is constant along the orbits of the geodesic flow $\widetilde{\phi}^t$ of $T^1\mathcal{H}^d$.
The following Lemma shows that we have much more:

\begin{lemma}\label{le:constantu}
The map $F_\tau$ is constant along the leaves of the weak unstable foliation of the geodesic flow $\phi^t$.
\end{lemma}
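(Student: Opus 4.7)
The strategy is to reduce to strong unstable leaves: since the weak unstable foliation is the $\widetilde{\phi}^t$-saturate of the strong unstable foliation and $F_\tau$ is already constant along orbits of $\widetilde{\phi}^t$ by Corollary~\ref{cor:section}, it suffices to prove $F_\tau(y) = F_\tau(y')$ whenever $y = (x,v)$ and $y' = (x',v')$ lie on the same strong unstable leaf in $T^1\mathcal{H}^d$.

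Write $\xi := F_\tau(y)$ and $\xi' := F_\tau(y')$. I would begin by checking two geometric facts. First, on a strong unstable leaf one has $x - v = x' - v'$: the base points lie on the same horosphere centred at the common past endpoint and the normalisations $\langle x, x-v\rangle_{d,1} = -1 = \langle x', x'-v'\rangle_{d,1}$ force the proportionality constant to be $1$. Since $\sigma^+_\tau$ takes values in $\Delta^-_\tau$ by Corollary~\ref{cor:section}, the lightlike hyperplanes $\xi$ and $\xi'$ share the common lightlike orthogonal direction $n := x - v$ and are therefore parallel. Second, a direct computation with $\widetilde{\phi}^{-t}(x,v) = (\cosh(t)x - \sinh(t)v,\, -\sinh(t)x + \cosh(t)v)$ gives
$$ x_{-t} - x'_{-t} = \tfrac{1}{2}e^{-t}\bigl((x+v) - (x'+v')\bigr), \qquad v_{-t} - v'_{-t} = e^{-t}(x - x'),$$
so $y_{-t} := \widetilde{\phi}^{-t}(y)$ and $y'_{-t} := \widetilde{\phi}^{-t}(y')$ converge to each other at rate $O(e^{-t})$ in $T^1\mathcal{H}^d$.

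Next I would combine the $\Phi^t_\tau$-invariance $F_\tau(y_{-t}) = \xi$ and $F_\tau(y'_{-t}) = \xi'$ (from Corollary~\ref{cor:section}) with a uniform-continuity bound on $F_\tau$. The section $\sigma^+_\tau$ is continuous over the compact base $M = T^1 N$, so trivialising $E_\tau$ over balls smaller than the injectivity radius of $M$ yields a modulus of continuity for $F_\tau$ valid inside any such ball in $T^1\mathcal{H}^d$. For $t$ large enough, the pair $(y_{-t}, y'_{-t})$ lies in such a ball, and therefore for every fixed $\varepsilon > 0$ one has $d_{x_{-t}}(\xi, \xi') < \varepsilon$ for all sufficiently large $t$.

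The conclusion comes from an explicit scaling formula for $d_{x_{-t}}(\xi, \xi')$. Specialising the change-of-base-point formulas from the proof of Lemma~\ref{le:Dcontract} to $t \mapsto -t$ with $\langle v, n\rangle_{d,1} = -1$ gives the representatives $w_{-t}(\xi) = w_{-t}(\xi') = e^t\, n$ at the new base $x_{-t}$, and offsets scaling as $h_{-t} = e^t h_0$. Since $\langle n, n\rangle_{d,1} = 0$, the definition of $d_{x_{-t}}$ collapses to
$$ d_{x_{-t}}(\xi, \xi') \;=\; e^t\,\bigl|h_0(\xi) - h_0(\xi')\bigr|,$$
which tends to $+\infty$ unless $h_0(\xi) = h_0(\xi')$, i.e.\ unless $\xi = \xi'$. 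Comparing with the $\varepsilon$-bound from the previous step forces $\xi = \xi'$. I expect the main obstacle to be the careful bookkeeping in the uniform-continuity step, since $F_\tau$ lives on the noncompact $T^1\mathcal{H}^d$ whereas continuity is inherited from the compact base $M$; this is handled by compactness of $M$ together with the exponential closeness of $y_{-t}$ and $y'_{-t}$ established above, which eventually confines both points to a single trivialising ball.
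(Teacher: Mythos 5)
Your proof is correct and follows essentially the same route as the paper's: reduce to strong unstable leaves, use that the section takes values in $\Delta^-_\tau$ to get parallel lightlike hyperplanes, combine the exponential backward convergence of the orbits with uniform continuity of $F_\tau$ (inherited from the compact base $M$), and conclude by the exponential expansion of the fibre distance under $\widetilde{\phi}^{-t}$. The only difference is cosmetic: you compute the expansion rate $d_{x_{-t}}(\xi,\xi')=e^t|h_0(\xi)-h_0(\xi')|$ explicitly, whereas the paper invokes Lemma~\ref{le:Dcontract} for the same conclusion.
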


\begin{proof}
  Let $\theta_1$, $\theta_2$ be two orbits of $\widetilde{\phi}^t$ in the same \textit{unstable leaf}, i.e. such that
for every $(x_1, v_1)$ in $\theta_1$ and every $(x_2, v_2)$ in $\theta_2$ the isotropic vectors
$x_1-v_1$ and $x_2-v_2$ are proportional, i.e. represent the same element of $\partial\mathcal{H}^d$.
On the other hand, since the invariant section takes value in $\Delta_\tau^-$, we have that $F_\tau(x_1,v_1)$ and $F_\tau(x_2,v_2)$ are lightlike hyperplanes orthogonal to respectively $x_1-v_1$ and $x_2-v_2$. Therefore, they are
parallel.

Let $p_1$, $p_2$ be the projections of  $(x_1, v_1)$ and $(x_2, v_2)$
in $M$. Then, by replacing $p_2$ by another element of its $\phi^t$-orbit, one can assume that $p_1$ and $p_2$
lies in the same \textit{strong unstable leaf}, i.e. that the hyperbolic distance between $\phi^t(p_1)$ and $\phi^t(p_2)$ converge exponentially to $0$ when $t$ goes to $-\infty$.

It follows that the hyperbolic distance between $\widetilde{\phi}^{t}(x_1, v_1)$ and
$\widetilde{\phi}^{t}(x_2, v_2)$ converges to $0$ when $t$ is going to $-\infty$. Let $\xi_1=F_\tau(x_1, v_1)$ and
$\xi_2 = F_\tau(x_2, v_2)$. Since $F_\tau$ is
(uniformly) continuous, it follows that $d_{t}(\xi_1, \xi_2)$ converges to $0$,
where $d_{t}$ is the distance on $\partial\co$ defined by $\widetilde{\phi}^{t}(x_1, v_1)$.
But this is almost a contradiction with Lemma \ref{le:Dcontract}, that shows that this distance should be exponentially
increasing when $t$ is going to $-\infty$. The only possibility is that this distance is actually vanishing,
i.e. $\xi_1=\xi_2$. The Lemma is proved.

\end{proof}

In the sequel, we use the cylindrical affine model
of the co-Minkowski space, i.e. write elements of $\partial\co$
as pairs $(w,h)$ where $w$ is a lightlike vector satisfying $\langle x_0,w \rangle_{d,1}=-1$, where
$x_0$ denotes the element $(0, \ldots, 0, 1)$ of $\R^{d,1}$.

\begin{proposition}\label{pro:btau}
  There is a continuous map $b_\tau: \partial B^d \to \R$ such that the $\Gamma_\tau$-equivariant map
   $F_\tau: T^1\mathcal{H}^d \to \partial\co$ is given by:
$$(x,v) \mapsto \left(-\frac{x-v}{\langle x_0,x-v \rangle_{d,1}}, b_\tau\left(-\frac{x-v}{\langle x_0,x-v \rangle_{d,1}}\right)\right)~.$$
\end{proposition}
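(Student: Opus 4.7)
The plan is to read off both coordinates of $F_\tau(x,v)$ separately using the structure already established. Corollary~\ref{cor:section} provides a unique $\Phi^t_\tau$-invariant section taking values in $\Delta_\tau^-$; let $F_\tau : T^1\mathcal{H}^d \to \partial\co$ be the corresponding $\Gamma_\tau$-equivariant lift. The $\Delta_\tau^-$-valuedness says that $F_\tau(x,v)$ is a lightlike affine hyperplane whose (future) lightlike orthogonal direction is $x-v$.

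First I would pin down the $w$-coordinate of $F_\tau(x,v)$. By the parametrization of $\partial\co$ attached to the point $x\in\mathcal{H}^d$, we must write $F_\tau(x,v)=(w,h)$ with $w$ future lightlike and $\langle x_0,w\rangle_{d,1}=-1$. Since $x-v$ is itself a future lightlike vector (one checks that $\bar{x}\cdot\bar{v}=x_{d+1}v_{d+1}$ forces $v_{d+1}^2\le x_{d+1}^2-1$, so $x_{d+1}-v_{d+1}>0$, hence $\langle x_0,x-v\rangle_{d,1}<0$), the unique positive rescaling of $x-v$ landing in the affine hyperplane $H_{x_0}$ is
\[
w = -\frac{x-v}{\langle x_0,x-v\rangle_{d,1}},
\]
which is exactly the first coordinate in the claimed formula.

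Next I would argue that the remaining scalar $h=h(x,v)$ depends only on $w$, hence only on the past endpoint of the geodesic through $(x,v)$. By Lemma~\ref{le:constantu}, $F_\tau$ is constant on weak unstable leaves of $\widetilde\phi^t$. Two vectors $(x_1,v_1),(x_2,v_2)\in T^1\mathcal{H}^d$ lie on the same weak unstable leaf precisely when $x_1-v_1$ and $x_2-v_2$ are positively proportional, equivalently when their associated normalized representatives $w_1,w_2$ coincide. Thus $h(x,v)$ is a function of $w$ alone.

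Finally I would identify the parameter space. The set of future lightlike vectors $w$ with $\langle x_0,w\rangle_{d,1}=-1$ is, via $w=(\bar w,1)$ with $\|\bar w\|=1$, in natural bijection with $\partial B^d$, and continuously so. Since $F_\tau$ is continuous (being a continuous section of $\pi_\tau$), the induced $b_\tau:\partial B^d\to \R$, defined by $h(x,v)=b_\tau(w(x,v))$, is continuous. This yields the stated formula. The only non-routine ingredients are Lemma~\ref{le:constantu} together with the verification that $\langle x_0,x-v\rangle_{d,1}\neq 0$, and both are already in hand; the rest is bookkeeping with the parametrization introduced before Lemma~\ref{le:-1}.
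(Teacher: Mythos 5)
Your proof is correct and follows essentially the same route as the paper's: the $\Delta^-_\tau$-valuedness from Corollary~\ref{cor:section} determines the $w$-coordinate as the normalization of $x-v$, and Lemma~\ref{le:constantu} shows $h$ depends only on that lightlike direction. Your added checks that $x-v$ is future lightlike (so $\langle x_0,x-v\rangle_{d,1}<0$) and that $b_\tau$ is continuous are details the paper leaves implicit, but they do not change the argument.
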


\begin{proof}
We still parameterize
the unit tangent bundle of the hyperbolic space by pairs $(x,v)$ where $x$ is a unit timelike vector and $v$ a unit spacelike vector orthogonal to $x$.

Since the invariant section takes value in the subbundle $\Delta^-$, the map $F_\tau$
is such that $F_\tau(x,v) = (w(x,v), h(x,v))$ where $w(x,v)$ is proportional
to $x-v$, hence is equal to $-\frac{x-v}{\langle x_0,x-v \rangle_{d,1}}$. Moreover, according to Lemma \ref{le:constantu}, $h(x,v)$ depends only on $x-v$, hence, only on $-\frac{x-v}{\langle x_0,x-v \rangle_{d,1}}$. Therefore,
$F_\tau$ is given by:
$$(x,v) \mapsto \left(-\frac{x-v}{\langle x_0,x-v \rangle_{d,1}}, b_\tau\left(-\frac{x-v}{\langle x_0,x-v \rangle_{d,1}}\right)\right)$$
for some map $b_\tau: \partial B^d \to \R$.
\end{proof}

As a corollary, we get the following amelioration of Lemma \ref{prop conv coc}:

\begin{lemma}\label{le:tauuniform}
Let  $\tau_n \to \tau$. Then $b_{\tau_n}$ (resp. $h^\pm_{\tau_n}$, $h_{\tau_n}^\crit$) converge uniformly to $b_\tau$ (resp. $h^\pm_{\tau}$, $h^\crit_\tau$).
\end{lemma}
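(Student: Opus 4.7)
The plan is first to reduce to the uniform convergence $b_{\tau_n} \to b_\tau$ on the compact set $\partial B^d$; uniform convergence of $h^\pm_{\tau_n}$ and $h^\crit_{\tau_n}$ then follows essentially for free. Indeed, given $\|b_{\tau_n}-b_\tau\|_\infty < \epsilon$, the proof of Lemma \ref{le:btau} with the constant $\epsilon$ now $x$-uniform gives $\|h^\pm_{\tau_n}-h^\pm_\tau\|_\infty \leq 2\epsilon$: for each $x \in B^d$, pick an affine $a \leq b_\tau$ within $\epsilon$ of $h^-_\tau(x)$; then $a - \epsilon \leq b_{\tau_n}$ everywhere, so $h^-_{\tau_n}(x) \geq h^-_\tau(x) - 2\epsilon$, and symmetrically. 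For $h^\crit_{\tau_n}$, applying the maximum principle of Lemma \ref{conv min} to the \minmax function $h^\crit_{\tau_n}-h^\crit_\tau$, whose boundary values are $b_{\tau_n}-b_\tau$, yields $\|h^\crit_{\tau_n}-h^\crit_\tau\|_\infty \leq \|b_{\tau_n}-b_\tau\|_\infty$.

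The main step is uniform convergence of $b_{\tau_n}$. By Proposition \ref{pro:btau}, $b_\tau$ is encoded in the $\Gamma_\tau$-equivariant map $F_\tau: T^1\mathcal{H}^d \to \partial\co$ characterized by Corollary \ref{cor:section} as the unique fixed point of the contraction $\Phi^T_\tau$ on $\digamma^+_\tau$. The crucial structural feature to exploit is that the contraction rate $\lambda = Ce^{-aT} < 1$ from Lemma \ref{le:Dcontract} depends only on the geodesic flow and is uniform in the cocycle. Fix a compact fundamental domain $D \subset T^1\mathcal{H}^d$ for $\Gamma$ and measure continuous maps by $D^D(F_1,F_2) := \sup_{(x,v)\in D} d_x(F_1(x,v), F_2(x,v))$. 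Using Proposition \ref{prop exist smooth cvx}, produce for each $n$ a continuous $\Gamma_{\tau_n}$-equivariant approximant $G_n: T^1\mathcal{H}^d \to \widetilde{\aleph}^+$ with $D^D(G_n, F_\tau) \to 0$. Because the operator $\Phi^T$ acts as $F \mapsto F \circ \widetilde{\phi}^{-T}$ independently of the cocycle and $F_\tau$ is itself $\Phi^T$-fixed (being constant along orbits), iterating $\Phi^T$ on $G_n$ within $\digamma^+_{\tau_n}$ gives a sequence converging exponentially to $F_{\tau_n}$ while remaining uniformly close to $F_\tau$ on $D$; a standard perturbation-of-fixed-point estimate then forces $D^D(F_{\tau_n}, F_\tau) \to 0$. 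The cocycle relation, combined with $\tau_n(A) \to \tau(A)$ for each $A \in \Gamma$, transfers uniform closeness from $D$ to any compact subset of $T^1\mathcal{H}^d$; projecting by the past-endpoint map $(x,v) \mapsto -(x-v)/\langle x_0, x-v\rangle_{d,1}$, which surjects onto $\partial B^d$ already from the compact sphere $T^1_p\mathcal{H}^d$ at any basepoint $p \in \mathcal{H}^d$, delivers the desired uniform convergence $b_{\tau_n} \to b_\tau$.

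The principal technical obstacle is that $F_{\tau_n}$ and $F_\tau$ live in \emph{different} equivariant function spaces, so the global metric $D^\infty$ used in Lemma \ref{le:Dcontract} becomes infinite when comparing them. The uniformity of $\lambda$ across cocycles is exactly what makes it possible to restrict to the compact $D$, perform the perturbation-of-fixed-points argument there, and control the mismatch between the two equivariance structures by combining the contraction rate with the pointwise convergence $\tau_n(A) \to \tau(A)$ on the finite collections of group elements needed to cover any given compact set of $T^1\mathcal{H}^d$.
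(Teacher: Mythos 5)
Your proposal follows essentially the same route as the paper's own (sketched) proof: reduce to the uniform convergence of $b_{\tau_n}$ via the arguments of Lemma~\ref{le:btau} and Lemma~\ref{conv min}, then exploit the cocycle-independent contraction rate of $\Phi^T$ together with a nearly-fixed-point / perturbation-of-fixed-points argument applied to a $\Gamma_{\tau_n}$-equivariant approximant of $F_\tau$ (the paper phrases this approximant as the image of $\sigma^+_\tau$ under an isomorphism of the foliated bundles $E_\tau\cong E_{\tau_n}$, while you build it directly as an equivariant map compared on a fundamental domain, which amounts to the same thing). The argument is correct and at the same level of detail as the paper's sketch.
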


\begin{proof}
  We just give a sketch of proof. First, we observe that we just have to prove the statement for $b_{\tau_n}$,
  since the uniform convergence of $h^\pm_{\tau_n}$ (resp. $h^\crit_\tau$) follows then from Lemma \ref{le:btau} (resp. Lemma~\ref{conv min}). The key point
  is that when $n$ is big enough, the fibration $\pi_{\tau_n}: E_{\tau_n} \to M$ is isomorphic to
  the fibration $\pi_{\tau}: E_{\tau} \to M$. More precisely, (the inverse of) this isomorphism of fibrations
  send the graph of the section $\sigma_{\tau}$ to the graph of some section which is already a nearly fixed point
 for $\Phi^t_{\tau_n}$. The bigger $n$ is, the closer (for the metric $D$) is this nearly fixed point
 to the eventual fixed point $\sigma^+_{\tau_n}$. In other words, the bigger is $n$, the closer to
 $\sigma^+_\tau$ is $\sigma^+_{\tau_n}$ for the compact-open topology. The Lemma clearly follows, due to
 the form of the lifts $F_\tau$ and $F_{\tau_n}$ given by Proposition \ref{pro:btau}.
\end{proof}

\begin{remark}\label{rk:repeler}
{\rm \emph{Mutandi mutandis,} one can show that there is also a unique fixed point for $\Phi^t_\tau$ in $\digamma^-_\tau$, which is
this time an exponential repeller, and which is actually a section of the subbundle $\Delta^+$. It provides,
as in Proposition \ref{pro:btau} a map from $\partial B^d$ into $\R$, which is actually
the map $b_\tau$. Details are left to the reader. }
\end{remark}

\begin{remark}{\rm
Instead of considering the fiber bundle $\aleph^\pm_\tau$, one might have restricted the study to
the subbundles $\Delta^\pm$, which are simpler since with one-dimensional fibers. However, the most efficient way
to deal with these bundles is to consider them as subbundles of $E_\tau$.
}\end{remark}

We conclude this section by an interpretation of its content in term of Anosov representations.
Let $G$ be a general Lie group acting on some space $X$, and let $\rho: \Gamma \to G$ be a representation.
Consider as in Definition \ref{def:foliated} the foliated bundle $\pi_\rho: E_\rho(X) \to M$ where $E_\rho(X)$
is the quotient of the product $T^1\mathcal{H}^d \times X$ by the diagonal action of $\Gamma$ ---where the
action of $\Gamma$ on $X$ is given by $\rho$. As in Definition \ref{def:liftedflow}, the geodesic flow
$\phi^t$ lifts to some horizontal flow $\phi^t_\rho$ on $E_\rho(X)$ so that the bundle map $\pi_\rho$ is equivariant.

The representation $\rho$ is said \emph{$(G,X)$-Anosov} if the following holds: there
is a section $\sigma: M \to E_\rho(X)$ which is equivariant for the flows, and such that the graph $\Lambda$
of $\sigma$ is a \emph{closed hyperbolic subset} for the lifted flow $\phi^t_\rho$: it means
that the restriction $T_\Lambda E_\rho(X)$ of the tangent bundle of $E_\rho(X)$ to $\Lambda$ splits as a Whitney sum
of subbundles $E^+ \oplus E^- \oplus \Phi$, where:
\begin{itemize}
  \item $\Phi$ is the one dimensional bundle tangent to the flow $\phi^t_\rho$,
  \item $E^+$ is exponentially contracted by the flow,
  \item $E^-$ is exponentially expanded by the flow.
\end{itemize}
For more details, see \cite{labourie} or \cite{barflag, bardefor}.

In our case, the inclusion $\Gamma \approx \Gamma_\tau \subset SO_+(d,1) \ltimes \R^{d,1}$ is
$(G,X)$-Anosov where $X$ is the space of oriented $(d-1)$-dimensional spacelike affine subspaces of
$\R^{d,1}$. Indeed, $X$ identifies with the open domain in $\partial\co \times \partial\co$ made of
pairs $(\xi_1, \xi_2)$, where $\xi_1$ and $\xi_2$ are non parallel affine lightlike hyperplanes. Therefore, the two
equivariant sections $\sigma^\pm_\tau$ define altogether a section $\sigma$ of $\pi_\rho: E_\rho(X) \to M$. Moreover,
it follows from Lemma \ref{le:Dcontract} and Remark \ref{rk:repeler} that the graph of $\sigma$ is
a closed hyperbolic subset for $\phi^t_\rho$.

\begin{spacing}{0.9}
\begin{footnotesize}
\bibliography{mesure-S1}
\bibliographystyle{alpha}
\end{footnotesize}
\end{spacing}

\end{document}